\newtheorem{thm}{Theorem}[section]
\newtheorem{cor}[thm]{Corollary}
\newtheorem{prop}[thm]{Proposition}
\newtheorem{lem}[thm]{Lemma}
\newtheorem{rmk}[thm]{Remark}
\newtheorem{qst}[thm]{Question}
\newtheorem{example}[thm]{Example}
\newtheorem{claim}[thm]{Claim}
\theoremstyle{plain}
\newtheorem{maintheorem}{Theorem}
\newtheorem{main-cor}[maintheorem]{Corollary}
\newtheorem{main-prop}[maintheorem]{Proposition}
\numberwithin{equation}{section}
\newcommand{\quand}{\quad\text{and}\quad}
\newcommand{\mcc}{\mathcal{C}}
\newcommand{\mcl}{\mathcal{L}}
\newcommand{\mcf}{\mathcal{F}}
\newcommand{\mtt}{\mathbb{T}^2\to\mathbb{T}^2}
\newcommand{\mrr}{\mathbb{R}^2\to\mathbb{R}^2}
\newcommand{\mrt}{\mathbb{R}^2\to\mathbb{T}^2}
\def\NN{\mathbb{N}}
\def\RR{\mathbb{R}}
\def\TT{\mathbb{T}}
\def\ZZ{\mathbb{Z}}
\def\tildeL{\widetilde{\mathcal{L}}}
\def\e{{\varepsilon}}
\renewcommand*{\backref}[1]{}
\renewcommand*{\backrefalt}[4]{%
	\ifcase #1 (Not cited.)%
	\or        (Cited on page~#2.)%
	\else      (Cited on pages~#2.)%
	\fi}
\address[R. Gu]{School of Mathematical Sciences, 
    Tongji University,
    Shanghai 200092, P.R. China}
\email{rhgu@tongji.edu.cn}
\address[M. Xia]{School of Mathematical Sciences,
    Dalian University of Technology,
    Dalian, Liaoning 116024, P.R. China}
\email{xiamingyang@dlut.edu.cn}
\title[Semi-conjugacy of DA endomorphisms]
{Semi-conjugacy rigidity for endomorphisms derived from Anosov on the 2-torus}
\author[R. Gu and M. Xia]{Ruihao Gu and Mingyang Xia}
\date{\today}
\subjclass[2020]
{Primary: 37D30;     % Part Hyper/Dom Splitting
 Secondary: 37C05,   % mapping/diffeo
            37C15.   % top/smooth conjugacy
}
\keywords{Partial hyperbolicity, derived from Anosov endomorphism, 
semi-conjugacy, rigidity of Lyapunov exponents, volume-preserving endomorphism.}
\begin{document}
\begin{CJK}{UTF8}{gbsn}
     
\begin{abstract}
	\begin{sloppypar}
        Let $f$ be a non-invertible partially hyperbolic endomorphism on $\TT^2$ 
        which is derived from a non-expanding Anosov endomorphism. 
        Differing from the case of diffeomorphisms derived from Anosov automorphisms, 
        there is no a priori semi-conjugacy between $f$ and its linearization on $\TT^2$.
        We show that $f$ is semi-conjugate to its linearization 
        if and only if 
        $f$ admits a partially hyperbolic splitting with two $Df$-invariant subbundles. 
		
        Moreover, if we assume that $f$ has an unstable subbundle,
        then the semi-conjugacy is exactly a topological conjugacy, 
        and the center Lyapunov exponents of the periodic points of $f$ coincide 
        with its linearization. 
        In particular, $f$ is an Anosov endomorphism 
        and the conjugacy is smooth along the stable foliation.
		
        For the case that $f$ has a stable subbundle, 
        there is still some rigidity in its stable Lyapunov exponents. 
        However, we also give examples which admit a partially hyperbolic splitting 
        with center subbundle 
        but the semi-conjugacy is indeed non-injective.  
        
        Finally, we present some applications under the volume-preserving assumption.
	\end{sloppypar}
\end{abstract}
	
\maketitle

\section{Introduction}

%%%%%%%%%%%%%%%%%%%%%%%%%%%%%%%%%%%%%%%%%%%%%%%%%%%%%%%%%%%%%%%%%%%%%%%%%%%%%%%%
%%%%%%%%%%%%%%%%%%%%%  Story starts from the Franks' work  %%%%%%%%%%%%%%%%%%%%%
%%%%%%%%%%%%%%%%%%%%%%%%%%%%%%%%%%%%%%%%%%%%%%%%%%%%%%%%%%%%%%%%%%%%%%%%%%%%%%%%
    
    The Smale's classification conjecture \cite{Smale1967} of Anosov diffeomorphisms and expanding endomorphisms gives rise to numerous important works \cite{Franks1969,F70,Gro81,Manning1974,Newhouse1970,Shub1969}.
    It is well known that an expanding endomorphism is always conjugate to an algebraic model of an infra-nilmanifold \cite{F70,Gro81,Shub1969}, 
    while the conjugacy classification for Anosov diffeomorphisms is widely open. 
    A celebrated work of Franks \cite{F70} completely classifies, up to semi-conjugacy, the homotopic class of Anosov automorphisms on the torus. 
    Such systems are called \textit{derived from Anosov} (abbr. \textit{DA}).  
    Precisely, let $A$ be a hyperbolic toral automorphism and $f$ be a diffeomorphism homotopic to $A$, 
    then there exists a continuous surjection $h$ homotopic to identity such that $h\circ f=A\circ h$. 
    Here, $h$ can be obtained by projecting a semi-conjugacy between lifts of $f$ and $A$ on the universal cover.

    DA diffeomorphisms have been studied for several decades since Smale \cite{Smale1967} introduced them for a non-trivial basic set.
	Later, Shub \cite{Shub1971} and Ma\~{n}\'{e} \cite{M78} constructed DA diffeomorphisms on $\TT^4$ and $\TT^3$, respectively, 
    to illustrate that uniform hyperbolicity is not a necessary condition for topological transitivity in a robust manner. 
	Their examples are some of the most well-known DA examples, which are \textit{partially hyperbolic}.  
    In a certain sense, partially hyperbolic diffeomorphisms on $\TT^3$ 
    correspond to partially hyperbolic endomorphisms on $\TT^2$.
    The partially hyperbolic DA endomorphisms on $\TT^2$, called \emph{Ma\~{n}\'{e}-type}, are the main objects concerned in the present paper.
 
    Benefiting from the semi-conjugacy to hyperbolic automorphisms, 
    partially hyperbolic DA diffeomorphisms on $\TT^3$, as one of the three types of partially hyperbolic diffeomorphisms on 3-manifolds with solvable fundamental group \cite{HP15}, have been studied extensively. 
    They have nice geometric structure including dynamical coherence and leaf conjugacy to linear part \cite{BBI2004,FPS2014,Ha13,Potrie2015}. 
    Moreover, they have abundant dynamical properties, such as 
    robust minimality of strong foliations \cite{HUY22},
    uniqueness of measures of maximal entropy \cite{U12,BFSV12}, 
    uniqueness of equilibrium states \cite{CFT19},
    flexibility of center Lyapunov exponents \cite{PT2014,BKR2022,CS2022}, 
    rigidity of center Lyapunov exponents \cite{GS20,HS21}, 
    ergodicity with respect to the volume \cite{HU14,GS20} even Bernoulli in certain settings \cite{PTV2018} and so on.

    However, the situation is different for the toral endomorphisms. 
    There may not exist a semi-conjugacy, on the torus, relating the original endomorphism and its hyperbolic linearization, though there is still a semi-conjugacy on the level of the universal cover space \cite{AH94}.
    Indeed, Franks mentions in \cite{F70}  that his proof for the existence of the semi-conjugacy between the DA diffeomorphism and its linearization fails in the non-invertible case. 
    Precisely, in \cite[Question in Section 2]{F70}, Franks asks the following question.

    \begin{qst}\label{qst-franks}
        Let $A:M\to M$ be a hyperbolic infra-nilmanifold endomorphism and $f:M\to M$ be a local diffeomorphism homotopic to $A$. Is there a semi-conjugacy (homotopic to the identity ${\rm Id}_M$) between $f$ and $A$?
    \end{qst}

    \begin{rmk}
    Here we briefly explain the reason of Franks' method being invalid in the non-invertible case. 
    Let $A$ be a hyperbolic endomorphism of $\TT^2$, $f$ be a local diffeomorphism on $\TT^2$ homotopic to $A$, and $A,F:\RR^2\to\RR^2$ be their lifts, respectively.
    The key step in Franks' work, see \cite[Proposition 2.1]{F70}, is constructing an isomorphism $T:Q\to Q$, where 
    \[Q:=\big\{ H\in C^0(\RR^2)\ |\ H(x+n)=H(x) \ {\rm for \ any}\  x\in\RR^2\ {\rm and}\  n\in\ZZ^2 \big\},\]
    and $T(H):=A^{-1}\circ H\circ F$. 
    However, when $A$ is non-invertible, 
    $T$ is not an isomorphism on $Q$.
    To see this, one needs to notice that the period of $T^{-1}(H)=A\circ H\circ F^{-1}$ is \emph{a priori} only $A\ZZ^2$-type. 
    Indeed, let $x\in\RR^2$ and $n^*\in A\ZZ^2$, that is, 
    $n^*=An_0$ for some $n_0\in\ZZ^2$.
    Then one has 
    \[
    A\circ H\circ F^{-1}(x+n^*)
    =A\circ H\big(F^{-1}(x)+n_0\big)
    =A\circ H\circ F^{-1}(x).
    %=A\circ H\circ F^{-1}(x)+n^*.
    \] 
    Note that the first equation requires $n^*$ to be $A\ZZ^2$-type.
    Hence, $T^{-1}(Q)$ may not be $Q$, and $T$ may not be an isomorphism on $Q$.

    \end{rmk}

    In the present paper, we would like to respond to the question of Franks 
    and show that there is no \emph{a priori} semi-conjugacy, in the homotopic class of identity, 
    between the original endomorphism and its hyperbolic linearization on the $2$-torus. 
    Moreover, we will explore
    \emph{what rigidity the semi-conjugacy on the torus would impose on the non-invertible case of derived from Anosov systems.}
	More precisely, we are going to focus on Ma\~{n}\'{e}-type DA endomorphisms on $\TT^2$. 

    Throughout this paper, 
    by \emph{DA} we mean partially hyperbolic and derived from Anosov; 
    by \emph{endomorphisms} we mean non-invertible local diffeomorphisms which are not expanding.
    Differing from the diffeomorphism case, a partially hyperbolic endomorphism has no a priori invariant dominated splitting in the sense of the direct sum of subbundles. Instead of this, we define the partial hyperbolicity by using the language of \emph{cone-fields}. For conciseness, we leave the precise definitions in Subsection \ref{subsec ph}.
    We also note that the absence of invariant dominated splitting is an obstacle for generalizing the results or the analysis of partially hyperbolic diffeomorphisms to the endomorphism case, as we will see in the next subsection.

%%%%%%%%%%%%%%%%%%%%%%%%%%%%%%%%%%%%%%%%%%%%%%%%%%%%%%%%%%%%%%%%%%%%%%%%%%%%%%%%
%%%%%%%%%%%%%%%%%%%%%%%%  Semiconjugacy <=> SPECIAL  %%%%%%%%%%%%%%%%%%%%%%%%%%%    
%%%%%%%%%%%%%%%%%%%%%%%%%%%%%%%%%%%%%%%%%%%%%%%%%%%%%%%%%%%%%%%%%%%%%%%%%%%%%%%%

\subsection{The semi-conjugacy and special property}

    Anosov endomorphisms are of course DA systems.
    Differing from Anosov diffeomorphisms and expanding maps, 
    every Anosov endomorphism $f$ of a closed Riemannian manifold $M$ is not structurally stable \cite{ManePugh1975,Pr76}, 
    i.e., in any $C^1$-neighborhood of $f$, there exists an endomorphism which cannot be conjugate to $f$. 
    Reviewing the proofs in \cite{ManePugh1975,Pr76},  
    an obvious obstruction for the existence of conjugacy is lacking of an invariant unstable bundle. 
    Indeed, an Anosov endomorphism on the torus is conjugate to its linearization if and only if it admits an invariant unstable bundle \cite{AH94,S95}.
    
    An Anosov endomorphism with invariant unstable bundle is called \textit{special} 
    (see the definition in Subsection \ref{subsub sec se}). 
    In fact, the proof in \cite{ManePugh1975} implies that the special property is not $C^r$-open ($r\geqslant 1$). 
    Moreover, Przytycki \cite{Pr76} $C^1$-slightly perturbs a linear Anosov endomorphism on the $3$-torus 
    such that the unstable directions of the perturbation at a certain point 
    contain a curve which is homeomorphic to an interval.

    The non-special property is generic in partially hyperbolic endomorphisms \cite{CM22,MT16}.
    We call a DA endomorphism \textit{special}
    if it admits an invariant partially hyperbolic splitting into subbundles (see Subsection \ref{subsec ph} in detail).
    To establish a connection between the existence of semi-conjugacy and the special property for DA endomorphisms, we first give the following result.

    \begin{maintheorem}\label{main-thm-scu}
        Let $f:\mtt$ be a $C^1$-smooth DA endomorphism with a hyperbolic linearization $A:\mtt$.
        If $f$ is special, then it is semi-conjugate to $A$.
    \end{maintheorem}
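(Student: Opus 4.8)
The plan is to work on the universal cover and exploit the partially hyperbolic splitting to produce the semi-conjugacy, then descend to $\TT^2$. Let $F,A:\RR^2\to\RR^2$ be the lifts of $f$ and its linearization, so that $F = A + P$ with $P:\RR^2\to\RR^2$ a $\ZZ^2$-periodic continuous map. Since $f$ is special, on $\TT^2$ there is a $Df$-invariant splitting $T\TT^2 = E^1 \oplus E^2$ with one bundle uniformly contracted or expanded and the other dominated accordingly; lift this to an $F$-invariant splitting $\widetilde E^1 \oplus \widetilde E^2$ on $\RR^2$ that is uniformly continuous and $\ZZ^2$-periodic. The first step is to integrate the dominated/hyperbolic bundles into $F$-invariant foliations $\widetilde{\mcf}^1, \widetilde{\mcf}^2$ of $\RR^2$ by $C^1$ curves (in dimension two each line field is uniquely integrable), with the property that the relevant one is quasi-isometric: because the corresponding bundle for $A$ is the linear eigenspace, a standard cone-field estimate gives that leaves of $\widetilde{\mcf}^i$ stay within bounded distance of the affine lines in the eigendirection $\ell^i$ of $A$.

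Next I would run the classical Franks–Manning shadowing argument \emph{on the cover}. For each $x\in\RR^2$ one wants to define $H(x)$ as the unique point whose $A$-orbit (forward and backward along the appropriate expanding/contracting splitting for $A$) shadows the $F$-pseudo-orbit through $x$; invariance of the splitting for $F$ is exactly what is needed to make this coherent, i.e. to get $A\circ H = H\circ F$ as maps on $\RR^2$. Concretely: the contracting bundle of $A$ lets one define the "stable coordinate" of $H(x)$ by forward iteration (using that $F$ contracts $\widetilde E^s$ at a definite rate and $F = A + (\text{bounded})$, so the forward $F$-orbit stays a bounded distance from the forward $A$-orbit of its $A$-component), and the expanding bundle of $A$ lets one define the "unstable coordinate" by backward iteration along the $F$-invariant unstable (or center-unstable) leaf — here is where one uses that the $F$-invariant foliation exists globally and its leaves are quasi-isometric, so backward iterates are exponentially contracted along the leaf. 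Standard telescoping series estimates show $H$ is well-defined, continuous, and $H - \mathrm{Id}$ is bounded; a uniqueness argument (two bounded-distance orbits of a hyperbolic linear map that shadow the same $F$-orbit must coincide) gives $A\circ H = H\circ F$.

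The last step is descent: one must check $H(x+n) = H(x) + n$ for $n\in\ZZ^2$, so that $H$ projects to a continuous map $h:\TT^2\to\TT^2$ homotopic to the identity with $h\circ f = A\circ h$, and then that $h$ is surjective (standard degree-one argument, or density of the image together with compactness). The periodicity is where the earlier Remark's warning is relevant and where the specialness hypothesis genuinely buys something: since the $F$-invariant foliations descend to foliations of $\TT^2$ (their line fields are $\ZZ^2$-periodic), the point $H(x+n)$ constructed from $x+n$ is obtained by shadowing the $F$-orbit of $x+n = $ (translate of the orbit of $x$), and the uniqueness characterization forces $H(x+n) = H(x)+n$.

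\textbf{Main obstacle.} The crux — and the place the non-invertibility bites — is defining the "unstable coordinate" of $H(x)$ by \emph{backward} iteration when $F$ is not invertible. On $\TT^2$ the map $f$ has no well-defined inverse, so one cannot simply iterate $f^{-1}$; one must instead work along a chosen $F$-invariant leaf on the cover, where $F$ restricted to a leaf \emph{is} injective (being a local diffeomorphism whose leaf is a properly embedded line mapped onto a leaf), and control that the backward branch along the leaf contracts. Making this precise — showing the leafwise inverse exists globally, that leaves are complete and quasi-isometric, and that the resulting $H$ does not depend on auxiliary choices — is exactly what requires the full strength of the "$f$ is special" assumption (an invariant subbundle, integrated to a global invariant foliation on the cover), and I expect this leafwise-backward-iteration estimate to be the technical heart of the argument.
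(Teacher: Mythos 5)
There is a genuine gap, and it sits exactly at the step you dispatch in one sentence. First, a miscalibration: your ``main obstacle'' (leafwise backward iteration because $f$ is non-invertible) is not the real difficulty. The lift $F:\RR^2\to\RR^2$ of a local diffeomorphism of $\TT^2$ is a genuine diffeomorphism of the plane, so backward iteration on the cover is unproblematic, and the semi-conjugacy $H$ on $\RR^2$ with $H\circ F=A\circ H$ and $\|H-{\rm Id}\|_{C^0}$ bounded exists and is unique \emph{without any specialness assumption} (this is Proposition \ref{2 prop semi-conj in R2}, quoted from Aoki--Hiraide). The entire content of Theorem \ref{main-thm-scu} is the descent step: showing $H(x+n)=H(x)+n$ for all $n\in\ZZ^2$.

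Your argument for descent is that ``the point $H(x+n)$ is obtained by shadowing the $F$-orbit of $x+n$, which is a translate of the orbit of $x$, so uniqueness forces $H(x+n)=H(x)+n$.'' This is false for the backward half of the orbit, which is precisely the half that determines the unstable coordinate of $H$. One has $F^k(x+n)=F^k(x)+A^kn$ for $k\geqslant 0$ because $F-A$ is $\ZZ^2$-periodic and $A^kn\in\ZZ^2$; but for $k\geqslant 1$ the point $F^{-k}(x+n)$ is \emph{not} $F^{-k}(x)+A^{-k}n$ unless $A^{-k}n\in\ZZ^2$, which fails since $A$ is not invertible over $\ZZ$. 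This is exactly the obstruction recorded in Remark 1.2 of the paper, and its quantitative consequence is Proposition \ref{2 prop approach property of H}: without specialness one can only conclude that $H(x+n)-n$ lies on the \emph{stable} leaf of $H(x)$, i.e.\ the stable coordinates match but the unstable ones a priori do not. Uniqueness of $H$ gives you nothing here, because $T_{-n}\circ H\circ T_n$ does not satisfy $T_{-n}\circ H\circ T_n\circ F=A\circ T_{-n}\circ H\circ T_n$ in general. The paper closes this gap by a construction your proposal does not contain: fixing a fixed point $x_0$ of $F$, defining $\overline{H}(x)=H(x-n)+n$ on each leaf $\mcf^2(x_0+n)=\mcf^2(x_0)+n$ (this is where specialness, i.e.\ $\ZZ^2$-periodicity of the invariant foliation $\mcf^2$, enters), proving $\overline{H}$ is a well-defined, uniformly continuous, deck-equivariant semi-conjugacy on the union $\mathbf{F}$ of these leaves, and then using minimality of the linear foliation to show this union is dense (in $\RR^2$ in the $sc$-case, or at least contains the closure of the $H$-injectivity set with full image in the $cu$-case), whence $\overline{H}=H$ and $H$ is equivariant. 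The $cu$-case in particular requires substantial extra work (Proposition \ref{cu-H-semi-h-1}) because $\overline{\mathbf{F}}$ need not be all of $\RR^2$; none of this is visible in, or recoverable from, your outline.
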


    \begin{rmk}
        The proofs of Theorem \ref{main-thm-scu} and the following results in this section 
        need the dynamical coherence, i.e., the center bundle on the universal cover space is integrable. 
        DA endomorphisms on $\TT^2$ are actually dynamically coherent \cite{HH21}, 
        see also \cite{HSW19} in the setting of absolutely partially hyperbolic endomorphisms 
        (see the definition in Subsection \ref{subsec ph}) on $\TT^2$. 
        However, it is still unknown if it holds or not for high-dimensional case. 
        Indeed, for DA diffeomorphisms or absolutely partially hyperbolic diffeomorphisms,  
        there are similar results on $\TT^3$ and open questions on $\TT^d\ (d\geqslant 4)$ 
        \cite{BBI2004,FPS2014,Potrie2015,HHU16}. 
    \end{rmk}

    It seems that the special property is also a necessary condition for the existence of semi-conjugacy, 
    because it is true for the Anosov case as mentioned above. 
    Indeed, considering a non-special Anosov endomorphism $f$ on $\TT^2$, 
    there is a point $x$ with at least two transverse unstable manifolds $\mcf^u_1(x)$ and $\mcf^u_2(x)$ \cite{Pr76}. 
    If $f$ is conjugate via $h$ to its linearization $A$,
    $h$ will map $\mcf^u_1(x)$ and $\mcf^u_2(x)$ to the unstable manifold of $A$ at $h(x)$. 
    It contradicts the fact that $h$ is a homeomorphism. 
    However, this trick does not work for semi-conjugacy or DA endomorphisms. 

    Besides the difference between Anosov endomorphisms and DA endomorphisms, 
    there is an interesting observation restricted to the context of DA endomorphisms. 
    \begin{itemize}
        \item When a DA endomorphism admits a stable bundle (i.e., it is uniformly contracting on a subbundle),
    called \textit{sc-DA} 
    (see the definition in Subsection \ref{subsubsec DA}), 
    by a similar argument to the Anosov case, it can be easily shown that the semi-conjugacy guarantees the special property,  
    see Corollary \ref{2 cor semi-conj. Fc}. 
    \item When a DA endomorphism admits an unstable cone-field (i.e., it is uniformly expanding on a cone-field), 
    called \textit{cu-DA} 
    (see also Subsection \ref{subsubsec DA}), 
    the proof will be much more complicated. 
    Indeed, the study of the $cu$-DA endomorphism needs an intuitively stronger rigidity of the semi-conjugacy as follows: 
    the existence of semi-conjugacy between $f$ and its linearization implies that \textit{the center Lyapunov exponents of $f$ on the periodic points coincide with its linearization}. 
    \end{itemize}
    Then we conclude that a DA endomorphism (for both cases of $sc$-DA and $cu$-DA) is semi-conjugate to its linearization if and only if it is special. 
    
    In what follows, we concentrate on the rigidity phenomenon 
    that is the main body of the paper.

%%%%%%%%%%%%%%%%%%%%%%%%%%%%%%%%%%%%%%%%%%%%%%%%%%%%%%%%%%%%%%%%%%%%%%%%%%%%%%%%
%%%%%%%%%%%%%%%%%%%%%%%%  Stable Lya-Exp RIGIDITY  %%%%%%%%%%%%%%%%%%%%%%%%%%%%%   
%%%%%%%%%%%%%%%%%%%%%%%%%%%%%%%%%%%%%%%%%%%%%%%%%%%%%%%%%%%%%%%%%%%%%%%%%%%%%%%%

\subsection{Rigidity of the semi-conjugacy} 

    The rigidity issue in smooth dynamics focuses on the phenomenon that ``weak equivalence implies strong equivalence''. 
    A typical type of rigidity question is the following. 
    Let Anosov local diffeomorphisms $f,g:M\to M$ be conjugate via a homeomorphism $h:M\to M$, that is, $h\circ f=g\circ h$. 
    It is clear that a necessary condition of $h$ being smooth is the matrices $Df^{\pi(p)}(p)$ and $Dg^{\pi(p)}(h(p))$ being conjugate for every periodic point $p$ of $f$ with period $\pi(p)$. 
    The rigidity means that such a necessary condition is also sufficient for the conjugacy $h$ being smooth. 
    Besides matching the derivatives of return maps at periodic points, one can also consider more necessary conditions of the smooth conjugacy $h$, 
    such as matching other periodic data (the Lyapunov exponents or Jacobian), matching the Lyapunov exponents with respect to the volume, and so on. 
    There are many results on this type of rigidity, e.g., \cite{CV23,dL87,dL92,G08,M23,SY19,GRH23}.

    In this subsection, we will see that the semi-conjugacy guarantees partially matching periodic data. 
    For DA endomorphisms on $\TT^2$, we will present the rigidity 
    which the existence of the semi-conjugacy on the torus imposes on the non-invertible dynamics, 
    with respect to the two cases of Ma\~{n}\'{e}-type: $cu$-DA and $sc$-DA. 
    Actually, the study of the differences between these two types of DA endomorphisms 
    seems to be a subject of independent interest as well.

For the DA endomorphisms with unstable cone-fields, we have the following result. 
    Denote by Per$(f)$ the periodic points set of $f$,
    and $\lambda^s(p,f)$ the stable Lyapunov exponent at $p\in {\rm Per}(f)$. 
    
\begin{maintheorem}\label{main-thm-cu}
	Let $f:\mtt$ be a $C^{1}$-smooth %non-invertible  
    $cu$-DA endomorphism semi-conjugate to its hyperbolic linearization $A:\mtt$. 
    Then $f$ is a special Anosov endomorphism.  
    In particular, the semi-conjugacy is an injection, hence, the semi-conjugacy is a topological conjugacy.  
    Moreover, $\lambda^s(p,f)=\lambda^s(A)$, for any $p\in {\rm Per}(f)$.
\end{maintheorem}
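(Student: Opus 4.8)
The plan is to pass to the universal cover and exploit the rigidity of the lifted semi-conjugacy. Let $F,A,H\colon\RR^2\to\RR^2$ be the lifts with $H\circ F=A\circ H$ and $C:=\sup_{\RR^2}\|H-\mathrm{Id}\|<\infty$, let $v^s,v^u$ be unit eigenvectors of $A$ with eigenvalues $0<\lambda^s<1<\lambda^u$, and let $\mathcal W^s_A,\mathcal W^u_A$ be the linear foliations of $\RR^2$ by translates of $\RR v^s,\RR v^u$. Since $cu$-DA endomorphisms on $\TT^2$ are dynamically coherent \cite{HH21}, there is an $F$-invariant center foliation $\widetilde{\mathcal F}^{c}$ on $\RR^2$ whose leaves are uniformly quasi-isometrically embedded and lie within bounded Hausdorff distance of the translates of $\RR v^s$ (the center of a $cu$-DA being the ``weakened stable''); meanwhile the unstable cone-field gives through each point a strong unstable curve, uniformly expanded by $F$ and within bounded Hausdorff distance of a translate of $\RR v^u$. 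I would first record two soft consequences of the intertwining. \emph{(a)}~$H$ sends each leaf $\widetilde{\mathcal F}^{c}(x)$ into the single $\mathcal W^s_A$-leaf through $H(x)$: for $z\in\widetilde{\mathcal F}^{c}(x)$ the points $F^nz,F^nx$ stay on a common center leaf, so the $v^u$-component of $F^nz-F^nx$, hence of $A^n(H(z)-H(x))$, stays bounded in $n$; since $A^n$ expands $v^u$ by $(\lambda^u)^n$, this forces $H(z)-H(x)\in\RR v^s$. \emph{(b)}~$H$ carries each strong unstable curve homeomorphically onto a $\mathcal W^u_A$-leaf: a collapsed non-trivial strong unstable arc would have $\mathrm{diam}\,F^n(\cdot)\le 2C$ for all $n\ge 0$, contradicting uniform forward expansion, so $H$ is injective along it, and a symmetric argument along inverse orbits shows in addition that the image is exactly a $\mathcal W^u_A$-leaf. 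Combining \emph{(a)} and \emph{(b)} with the global product structure of $\widetilde{\mathcal F}^{c}$ and the strong unstable curves on $\RR^2$ reduces the injectivity of $H$ to the single assertion that $H$ collapses no non-trivial arc of a center leaf.

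The dynamical heart is to prove that $f$ is Anosov, i.e.\ that $E^{c}$ is uniformly contracted. For $p\in\mathrm{Per}(f)$ of period $\pi(p)$, lift to $\tilde p$ with $F^{\pi(p)}\tilde p=\tilde p+w$, $w\in\ZZ^2$; since $\widetilde{\mathcal F}^{c}$ is $\ZZ^2$- and $F$-invariant, $\psi:=(\,\cdot\,-w)\circ F^{\pi(p)}$ is a self-map of $L_p:=\widetilde{\mathcal F}^{c}(\tilde p)$ fixing $\tilde p$ with $|\psi'(\tilde p)|=e^{\pi(p)\lambda^{c}(p,f)}$, and by \emph{(a)} the monotone proper map $\phi:=H|_{L_p}$ (with $\phi-\mathrm{id}$ bounded) conjugates $\psi$ to the linear contraction $s\mapsto(\lambda^s)^{\pi(p)}s$ of the $\mathcal W^s_A$-leaf through $H(\tilde p)$. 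Iterating $\phi\circ\psi=(\lambda^s)^{\pi(p)}\phi$ and using that $\phi$ is proper shows that $\psi$ cannot be locally expanding at $\tilde p$, so $\lambda^{c}(p,f)\le 0$. Upgrading this to the \emph{uniform} contraction of $E^{c}$ is the crucial step: using the $C^{1+\alpha}$-smoothness (so that Pesin and closing arguments, together with distortion control along center arcs, exclude ergodic measures with non-negative center exponent — such a measure would, via Oseledets and the bounded displacement of $H$ and \emph{(a)}, drive two points on a center leaf apart while $H$ collapses their images, contradicting $\|H-\mathrm{Id}\|\le C$) and the domination of $E^{c}$ by the unstable cone-field, a Ma\~{n}\'{e}--Cao type argument gives that $E^{c}$ is uniformly contracted; hence $f$ is an Anosov endomorphism with $E^{s}=E^{c}$.

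Once $f$ is Anosov it has a stable foliation $\mathcal F^{s}$ on $\TT^2$, and the diameter estimate of \emph{(b)} run along inverse orbits — whose stable arcs expand — shows $H$ is injective along $\mathcal F^{s}$ as well; together with \emph{(b)} and the product structure, $H$ is injective, so $h$ is a topological conjugacy. An Anosov endomorphism of $\TT^2$ conjugate to its linearization is special \cite{AH94,S95}, so $f$ is a special Anosov endomorphism; and since the lift of a conjugacy is a semi-conjugacy at bounded distance from $\mathrm{Id}$, which is unique (project the difference of two of them onto $v^s$ and $v^u$ and use the surjectivity of $F$), this conjugacy coincides with $h$. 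Finally, the stable foliation of a $C^{1+\alpha}$ special Anosov endomorphism is $C^{1+\alpha}$, and the matched contractions along stable leaves can be simultaneously linearized in the spirit of de la Llave-type regularity \cite{dL87,dL92}; thus $Df^{\pi(p)}(p)|_{E^{s}_{p}}$ and $DA^{\pi(p)}|_{E^{s}_A}$ are linearly conjugate for every $p\in\mathrm{Per}(f)$, giving $\lambda^{s}(p,f)=\lambda^{s}(A)$.

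I expect the main obstacle to be the middle paragraph — passing from the one-sided periodic bound $\lambda^{c}(p,f)\le 0$ to the uniform hyperbolicity of $f$. This is precisely where the $C^{1+\alpha}$-hypothesis is indispensable and a purely topological argument fails: as the $sc$-DA examples (which admit a center subbundle yet have genuinely non-injective semi-conjugacy) show, non-positivity of the relevant exponent alone does not force injectivity of the semi-conjugacy, and one has to combine the unstable cone-field of the $cu$-case, the $C^{1+\alpha}$-distortion control, and the bounded displacement of $H$ together.
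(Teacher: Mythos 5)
Your preparatory steps (a) and (b) are fine and match Propositions \ref{2 prop approach property of H}--\ref{2-3 prop foliation on R2}, but the ``dynamical heart'' of your proposal --- the middle paragraph --- has a genuine gap, and it is exactly where the paper does something quite different. First, your claim that $\lambda^{c}(p,f)\le 0$ for \emph{every} periodic point is unjustified: the map $\phi=H|_{L_p}$ is only a monotone \emph{semi}-conjugacy, and if $\tilde p$ lies in the interior of a collapsed center arc $H^{-1}(H(\tilde p))$ (a compact, $\psi$-invariant interval), then $\psi$ can perfectly well be locally expanding at $\tilde p$; properness of $\phi$ controls nothing inside such an arc. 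A priori such periodic points exist --- the $sc$-DA example $g_0$ of Proposition \ref{5 prop counter-example} has several fixed points inside a single collapsed arc --- and ruling them out is equivalent to the conclusion $\Lambda=\TT^2$, which you cannot assume. Second, even granting the periodic bound, your route to uniform contraction of $E^{c}$ via a Ma\~n\'e--Cao argument requires that \emph{every} ergodic invariant measure have strictly negative center exponent; your Oseledets/separation argument (points on a center leaf driven apart while their $H$-images collapse) only excludes measures with $\lambda^{c}(\mu)>0$ and says nothing about $\lambda^{c}(\mu)=0$. No mechanism in your proposal produces a strictly negative exponent anywhere.

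The paper fills precisely this hole by a different device: it applies the inverse Ledrappier--Young (folding-entropy) formula \eqref{eq. 6. shulin} to the measure $\mu=h^{*}(\mathrm{Leb})$, which is isomorphic to $(A,\mathrm{Leb})$, and deduces $\lambda^{c}(\mu,f)<0$ from $\lambda^{u}(A)>\log|\det A|$ (Lemma \ref{6 lem attractor}); the shadowing Lemma \ref{4 lem measure shadowing} then yields a periodic point in $\Lambda$ with negative center exponent, the length-comparison rigidity of Proposition \ref{4 prop s-rigidity} equalizes all periodic center exponents on $\Lambda$, and the adapted metric of Proposition \ref{4 prop adapted metric} makes $\Lambda$ a uniformly hyperbolic attractor. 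Uniform hyperbolicity is thus obtained only on $\Lambda$, and a separate volume-packing argument (Lemma \ref{6 lem Ansoov}: infinitely many disjoint $\e$-balls around preimages of a non-injective point) shows $\Lambda=\TT^{2}$. None of these three ingredients appears in your proposal. Finally, your last step also runs backwards: de la Llave-type regularity derives smoothness of the conjugacy \emph{from} matching periodic data, so it cannot be used to establish $\lambda^{s}(p,f)=\lambda^{s}(A)$; the paper instead gets this either from \cite{AGGS23} or from the elementary comparison in the proof of Theorem \ref{5 thm} using the quasi-isometry of $\mcf^{s}_{F}$ and $\|H-\mathrm{Id}\|\le C_{H}$.
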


\begin{rmk}\label{rmk AGGS}
    The recent work about the rigidity of Anosov endomorphisms on the existence of topological conjugacy \cite{AGGS23} shows that a $C^{1+}$-smooth Anosov endomorphism $g$ on $\TT^2$ is conjugate to its linearization $A$ if and only if $g$ is special
    if and only if its stable Lyapunov exponents at periodic points are the same as $A$. 
    In particular, the $C^{1+}$-regularity assumption is used only for proving that the constant periodic stable Lyapunov exponent implies the existence of conjugacy and the special property.
    A key tool in \cite{AGGS23} is the $A$-preimage set of every point becoming dense exponentially, see Lemma \ref{2 lem preimage dense linear}. 
    Note that $g$ inherits this property, since the conjugacy is in fact H\"older continuous\cite{KH95}. 
    However, the semi-conjugacy relating $f$ and $A$ in Theorem \ref{main-thm-cu} is a priori non-injective. 
    And the method in the present paper is more general.
\end{rmk}

For the DA endomorphisms with stable subbundles, we have the following result.

\begin{maintheorem}\label{main-thm-sc}
    Let $f:\mtt$ be a  $C^1$-smooth %non-invertible  
    $sc$-DA endomorphism semi-conjugate to its hyperbolic linearization $A:\mtt$ via a surjection $h:\mtt$.  
    Then $f$ is special. Moreover, the closure of $h$-injective points set
    \[
    \Lambda:=\overline{\big\{x\in\TT^2 ~|~ h^{-1}\circ h(x)=\{x\}\big\}}
    \]
    is invariant and satisfies
    \begin{enumerate}
         \item $\overline{{\rm Per}(f|_{\Lambda})}=\Lambda$;
         \item $\lambda^s(p,f)=\lambda^s(A)$, 
         for any $p\in {\rm Per}(f|_{\Lambda})$.
    \end{enumerate}
\end{maintheorem}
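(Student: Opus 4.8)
The plan is to lift the semi-conjugacy to $\RR^2$, transport the linear stable and unstable foliations of $A$ back to $f$, and then analyse the set on which $h$ is one-to-one. First I would lift $f$ and $A$ to diffeomorphisms $F,A$ of $\RR^2$ (note the lift $F$ is a genuine diffeomorphism of $\RR^2$ even though $f$ is non-invertible on $\TT^2$) and lift $h$ to $H\colon\RR^2\to\RR^2$ with $H\circ F=A\circ H$, $H(x+n)=H(x)+n$ for $n\in\ZZ^2$, and $\|H-\mathrm{Id}\|_\infty<\infty$. Since $f$ is $sc$-DA it carries a unique $Df$-invariant stable bundle $E^s$, tangent to a foliation whose lift I denote $\widetilde{\mcf}^s_f$; comparing the uniform contraction rate along $E^s$ with the stable eigenvalue of $A$, and using $\|H-\mathrm{Id}\|_\infty<\infty$, one sees that $H$ sends every leaf of $\widetilde{\mcf}^s_f$ injectively into a stable leaf of $A$, the injectivity coming from running the same estimate backwards along the leaf (legitimate, since $F$ is injective on stable leaves). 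Dynamical coherence of DA endomorphisms on $\TT^2$ (\cite{HH21}) supplies a center foliation $\widetilde{\mcf}^c$ on $\RR^2$, and a Franks-type global product structure argument then shows that $H$ maps each center leaf \emph{onto} an unstable leaf of $A$. Consequently every fibre $H^{-1}(z)$ is a compact connected arc lying inside a single center leaf, and the $h$-injective points are exactly those sitting in a trivial fibre.

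Next, that $f$ is special follows from the corollary already flagged in the Introduction (Corollary~\ref{2 cor semi-conj. Fc}): as in the Anosov case, if the center direction were not $Df$-invariant one would find a point from which infinitely many distinct center curves emanate, and these would all be forced into the single unstable leaf of $A$ above the image point, hence into a single center leaf, which is absurd. Granting this, $f$ has the $Df$-invariant splitting $E^s\oplus E^c$ and a center foliation $\mcf^c$ on $\TT^2$ itself. I would then verify that $\Lambda$ is $f$-invariant: if $h(x')=h(x)$ then $h(fx')=Ah(x')=Ah(x)=h(fx)$, and since $f$ is a local diffeomorphism it cannot collapse a non-degenerate center arc; combining this with the finite-to-one preimage combinatorics of $h$ relative to $A$ yields the invariance of the $h$-injective locus up to closure, hence of $\Lambda$. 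In particular $h|_\Lambda$ is a homeomorphism onto the closed $A$-invariant set $h(\Lambda)$ conjugating $f|_\Lambda$ to $A|_{h(\Lambda)}$.

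For the density assertion (1): for each periodic point $q$ of $A$, the fibre $h^{-1}(q)$ is a compact connected arc in a center leaf mapped into itself by $f^{\pi(q)}$, hence it contains a periodic point of $f$; and since periodic points of $A$ are dense while $A$ expands along the unstable direction, one can find periodic $q$ with \emph{trivial} fibre arbitrarily close to $h(x)$ for any $x\in\Lambda$, so the corresponding periodic point of $f$ lies in $\Lambda$ and is close to $x$, giving $\overline{\mathrm{Per}(f|_\Lambda)}=\Lambda$. For the rigidity (2), fix $p\in\mathrm{Per}(f|_\Lambda)$, $q=h(p)$, $k=\pi(q)$. Because $h$ carries $\mcf^s_f$ into $\mcf^s_A$ and is a conjugacy on $\Lambda$, and because the $A$-preimage sets $A^{-n}(q)$ equidistribute at an exponential rate — a property transported to $f$ through the (H\"older) semi-conjugacy in the spirit of \cite{AGGS23} — one can run a Liv\v{s}ic-type comparison between the Birkhoff sums of $\log\|Df|_{E^s}\|$ along the orbit of $p$ and $k\log|\lambda^s(A)|$; the exponential density of preimages forces the associated cocycle to have the prescribed average, so $\lambda^s(p,f)=\lambda^s(A)$.

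The hard part will be the rigidity step together with the structural description of $\Lambda$ that feeds it: the semi-conjugacy is a priori only H\"older, so one cannot simply push the stable contraction rate across $h$, and one must instead exploit the exponential equidistribution of $A$-preimages to close a cohomological equation — and, crucially, do so over the possibly complicated invariant set $\Lambda$, whose $h$-image need not be all of $\TT^2$, rather than over the whole torus. A second, more technical difficulty, absent for diffeomorphisms, is the bookkeeping in the first two steps: the fibre- and preimage-combinatorics of $h$ under the non-invertible map $A$ are delicate, and care is needed both to establish that $\Lambda$ is genuinely invariant and to ensure that the periodic points produced on the collapse arcs actually lie in $\Lambda$.
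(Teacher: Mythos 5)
The structural half of your proposal — lifting to $\RR^2$, the behaviour of $H$ on stable and center leaves, specialness via Corollary \ref{2 cor semi-conj. Fc}, invariance of $\Lambda$, and density of periodic points obtained as endpoints of collapse arcs over periodic points of $A$ — matches the paper's route through Propositions \ref{2 prop foliation on R2}, \ref{2-3 prop foliation on R2} and \ref{2 prop injective point set} and is essentially sound. One small repair: you do not need periodic points of $A$ with \emph{trivial} fibre near $h(x)$; it suffices that the endpoints of \emph{any} fibre $h^{-1}(p')$ lie in $\Lambda$ (equation \eqref{eq. 2. Lambda}), combined with the fact that $h\big(B_\e(x)\cap\Lambda\big)$ has non-empty interior (Lemma \ref{2 lem Cantor property}).

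The genuine gap is in item (2), the exponent rigidity, which is the core of the theorem. You propose to ``run a Liv\v{s}ic-type comparison'' by transporting the exponential equidistribution of $A$-preimages to $f$ ``through the (H\"older) semi-conjugacy in the spirit of \cite{AGGS23}''. That step is not available here: $h$ is a priori only continuous and non-injective, not H\"older (the H\"older regularity exploited in \cite{AGGS23} comes from $h$ being a \emph{conjugacy} between Anosov systems), the regularity is only $C^1$, and there is no Liv\v{s}ic theorem in this setting — the paper flags exactly this obstruction in Question \ref{qst 1 sc-smooth}. Even granting preimage density for $f$, closing a cohomological equation over $\Lambda$, which may be a proper fractal subset of $\TT^2$, is not what finishes the argument. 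The paper's substitute is where the real work lies and is of a different nature: first (Proposition \ref{4 prop s-rigidity}) all periodic points of $f|_{\Lambda}$ are shown to share one stable exponent by comparing the length of a stable arc $I$ at $p$ with arcs $J_\e$ satisfying $f^{K_\e}(J_\e)=f^{K_\e}(I)$ near $q$; the lengths of $I$ and $J_\e$ are comparable because $h$ is a homeomorphism on stable leaves (Claim \ref{4 claim start rate}), and the key quantitative input is the ratio $N_\e/K_\e\geqslant\alpha>0$ (Claim \ref{4 claim time rate}), computed purely on the linear side from Lemma \ref{2 lem preimage dense linear}, so no regularity of $h$ is needed. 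Second, the common value is identified with $\lambda^s(A)$ by comparing $d_{\mcf^s_F}\big(F^{-k}(x),F^{-k}(y)\big)$ with $|A^{-k}H(x)-A^{-k}H(y)|$ using $\|H-\mathrm{Id}_{\RR^2}\|_{C^0}<C_H$ and the quasi-isometry of $\mcf^s_F$. Without an argument of this kind your step (2) does not go through.
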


\begin{rmk}
    There is also motivation for the investigation of $sc$-DA endomorphisms 
    for their relation to multivalued expanding mappings on branched manifolds. 
    We refer to \cite[page 76]{Pr87}.
\end{rmk}

    Moreover, related to the result of $sc$-DA endomorphisms, 
    we also have the following proposition indicating the non-negligible gap 
    between the semi-conjugacy and topological conjugacy.

\begin{main-prop}\label{main-prop}
    For any Anosov endomorphism  $A:\mtt$, %with non-trivial stable bundle, 
    there exists a $C^\infty$-smooth %non-invertible 
    $sc$-DA endomorphism $g_0:\mtt$ semi-conjugate to $A$  
    such that every semi-conjugacy $h:\mtt$ homotopic to ${\rm Id}_{\TT^2}$ is indeed non-injective.
\end{main-prop}

Combining the results above, we have some reformulations.
    Precisely, Theorems \ref{main-thm-scu} and \ref{main-thm-cu} imply the following corollary.
    Note that when $f$ is special, 
    there will be a natural way to define the map $h:\mtt$ 
    as the projection of a priori semi-conjugacy between the lifted maps on $\RR^2$.

\begin{main-cor}\label{main-cor-cu}
	Let $f:\mtt$ be a $C^{1+\alpha}$-smooth %non-invertible  
    $cu$-DA endomorphism and $A:\mtt$ be its hyperbolic linearization. 
    Then the following are equivalent:
  \begin{enumerate}
      \item $f$ is semi-conjugate to $A$ via $h:\mtt$;
      \item $f$ is conjugate to $A$ via $h:\mtt$;
      \item $f$ is special;
      \item $f$ is Anosov and $\lambda^s(p,f)=\lambda^s(A)$, for any $p\in {\rm Per}(f)$.
  \end{enumerate}
    In particular, each of them implies that the conjugacy $h$ is $C^{1+\alpha}$-smooth along the stable foliation.
\end{main-cor}		

\begin{rmk}\label{rmk C1+}
    We require $C^{1+\alpha}$-regularity of $f$ in Corollary \ref{main-cor-cu}. 
    Especially, for proving ``(4)$\implies$(1),(2),(3)" and the smoothness of $h$, 
    we will apply the main result of \cite{AGGS23}  mentioned in Remark \ref{rmk AGGS}, 
    where it needs the $C^{1+}$-smoothness to apply the Liv\v{s}ic Theorem (see \cite[Section 4 and Section 5]{AGGS23} for more details).
\end{rmk}

\begin{rmk}
    An interesting rigidity result in Corollary \ref{main-cor-cu} is that if the semi-conjugacy exists, 
    it is actually a conjugacy and smooth along the stable foliation. 
    Such a phenomenon as lower-regularity automatically implying higher-regularity is usually called ``bootstrap''. 
    To see recent works on bootstrap, we refer to  
    \cite{G17,KSW23} for Anosov diffeomorphisms,  
    \cite{GKS23} for partially hyperbolic diffeomorphisms,
    and \cite{GRH23b} for Anosov flows. 
    %In particular, we refer to \cite{GRH23b} for a bootstrap of codimension-one Anosov flows where a topological conjugacy between Anosov flows must be smooth under mild assumptions. 
\end{rmk}

    From Theorems \ref{main-thm-scu} and \ref{main-thm-sc}, we have the following corollary immediately.

\begin{main-cor}\label{main-cor-sc}
    Let $f:\mtt$ be a $C^{1}$-smooth %non-invertible 
    $sc$-DA endomorphism and $A:\mtt$ be its hyperbolic linearization. 
    Then the following are equivalent:
    \begin{enumerate}
      \item $f$ is semi-conjugate to $A$ via $h:\mtt$;
      \item $f$ is special.
    \end{enumerate}
    In particular, each of them implies that 
    the closure of $h$-injective points set
     \[
     \Lambda:=\overline{\big\{x\in\TT^2~ |~  h^{-1}\circ h(x)=\{x\}\big\}}
     \]
    is invariant and satisfies
     \begin{enumerate}
         \item $\overline{{\rm Per}(f|_{\Lambda})}=\Lambda$;
         \item $\lambda^s(p,f)=\lambda^s(A)$, for any $p\in {\rm Per}(f|_{\Lambda})$.
     \end{enumerate}
\end{main-cor} 

    Comparing this result with Corollary \ref{main-cor-cu}, there are the following remark and question.

\begin{rmk}\label{1 rmk sc}
    Based on Proposition \ref{main-prop}, 
    we further give an example of $sc$-DA endomorphism with periodic stable Lyapunov exponents 
    being the same as its linearization on an invariant set saturated by the stable foliation,
    but it is not semi-conjugate to its linearization (see Proposition \ref{5 prop counter-example}).
\end{rmk}

\begin{qst}\label{qst 1 sc-smooth}
    Under the assumption of Corollary \ref{main-cor-sc},  
    it is known that $h$ preserves the stable foliation and is a homeomorphism restricted to each stable leaf 
    (see Proposition \ref{2 prop foliation on R2}). 
    There is still a question whether the semi-conjugacy is smooth along each stable leaf or not.
    Note that even if $f$ is $C^2$, there is no Liv\v{s}ic Theorem \cite{L72} in this situation,  
    which is important to the rigidity issue 
    when one wants to get the smoothness of the conjugacy from the periodic data, see for example \cite{dL92,G08,AGGS23}.
\end{qst}

%%%%%%%%%%%%%%%%%%%%%%%%%%%%%%%%%%%%%%%%%%%%%%%%%%%%%%%%%%%%%%%%%%%%%%%%%%%%%%%%
%%%%%%%%%%%%%%%%%%%%%%% Applications for the volume-preserving case  %%%%%%%%%%%
%%%%%%%%%%%%%%%%%%%%%%%%%%%%%%%%%%%%%%%%%%%%%%%%%%%%%%%%%%%%%%%%%%%%%%%%%%%%%%%%

\subsection{Applications for the volume-preserving case}  

    In what follows, we would like to give some applications. 
    Recall that Theorem \ref{main-thm-cu} implies that a special $cu$-DA endomorphism must be Anosov, while Proposition \ref{main-prop} asserts that 
    a special $sc$-DA endomorphism may not have to be Anosov. 
    A direct reason behind this difference is that the injective points set $\Lambda$ in Theorem \ref{main-thm-sc} may not be hyperbolic for the $sc$-DA case, while it must be hyperbolic in the setting of $cu$-DA (see Lemma \ref{6 lem attractor} and Remark \ref{rmk cs no hyper} for more details).  
    Thus, to show more rigidity for the $sc$-DA case, we introduce some assumptions to control the dynamics on the center direction, in particular, we focus on the volume-preserving condition. 
    
    {\color{black} In this paper, $f:\mtt$ is called \textit{volume-preserving}, 
    if $f$ admits an invariant smooth measure $m$, 
    i.e., the density  $\rho:\TT^2\to \RR_+$ of $m$ is smooth, with $dm=\rho d{\rm Leb}$, 
    and ${\rm Leb}$ is the Lebesgue measure of $\TT^2$. 
    Such a measure $m$ will be referred to as a \textit{smooth volume measure}.} 

    First, parallel to Corollary \ref{main-cor-cu}, we have the following result as a corollary of Theorem \ref{main-thm-sc}.

\begin{main-cor}\label{main-cor-scvp} 
    Let $f:\mtt$ be a  $C^{1+\alpha}$-smooth  $sc$-DA endomorphism and $A:\mtt$ be its hyperbolic linearization.  
    If $f$ is volume-preserving, 
    then the following are equivalent: 
    \begin{enumerate} 
        \item $f$ is semi-conjugate to $A$ via $h:\mtt$; 
        \item $f$ is conjugate to $A$ via $h:\mtt$; 
        \item $f$ is special;
        \item $f$ is Anosov and $\lambda^s(p,f)=\lambda^s(A)$, for any $p\in {\rm Per}(f)$. 
    \end{enumerate} 
    In particular,
    each of items implies that the conjugacy $h$ is $C^{1+\alpha}$-smooth 
    along the stable foliation.  
\end{main-cor}

\begin{rmk}
    As we mentioned in Remark \ref{rmk C1+}, the only two parts of Corollary \ref{main-cor-scvp} requiring $C^{1+\alpha}$-regularity are ``(4) $\implies$ (1),(2),(3)''  and the smoothness of $h$.
 \end{rmk}

    So the $sc$-DA case (Corollary \ref{main-cor-scvp}) and the $cu$-DA case (Corollary \ref{main-cor-cu}) 
    can be unified in the volume-preserving setting. 
    Moreover, since a $C^{1+\alpha}$ volume-preserving Anosov endomorphism is ergodic \cite{MT16},
    as an immediate corollary, we also obtain the ergodicity of DA endomorphisms as follows.

\begin{main-cor}\label{main-cor-ergodic}
    Let $f:\mtt$ be a special $C^{1+\alpha}$-smooth DA endomorphism.   
    If $f$ is volume-preserving, then it is ergodic with respect to the volume measure.
\end{main-cor}

\begin{qst}
    Is a $C^{1+\alpha}$-smooth volume-preserving DA endomorphism on $\TT^2$  ergodic with respect to the volume without the special assumption?
    Note that a volume-preserving $C^{1+\alpha}$-smooth DA diffeomorphism $g$ on $\TT^3$ is always special and semi-conjugate to its linearization,  
    so Corollary \ref{main-cor-ergodic} can be seen as an analogue of the main results of \cite{GS20, HU14} in the non-invertible case.
\end{qst}

    Finally, we consider special DA endomorphisms with the same periodic Jacobian as the linearizations.
    We can get that these DA endomorphisms are smoothly conjugate to their linearization. 
    We refer to \cite{GRH23} for a Jacobian rigidity result for expanding maps,
    where the authors introduce a method called ``matching function'' (see also \cite{GRH23b} for this technique in detail) that is not used for the endomorphism case here. 
    For $r>1$, denote by 
    \[r_*=\Big\{ 
        \begin{array}{lr}
        r-1+{\rm Lip}, \  r\in\NN  \\  r, \ r\notin  \NN \ {\rm or}\ r=+\infty
        \end{array}.
    \]
 
\begin{main-cor}\label{main-cor-Jac-rigidity}
    Let $f:\mtt$ be a $C^r$-smooth $(r>1)$ DA endomorphism.  
    Assume that $f$ is semi-conjugate to its hyperbolic linearization $A$, 
    and for any $n\in\NN$ and $p\in{\rm Fix}(f^n)$, 
    \[{\rm Jac}(Df^n(p))=|{\rm det}(A)|^n.\] 
    Then $f$ is a volume-preserving Anosov endomorphism and $C^{r_*}$-conjugate to $A$.
\end{main-cor}

\begin{rmk}\label{1 rmk vp and jacobian}
    It is known that when $g$ is a $C^{1+\alpha}$ transitive Anosov diffeomorphism, 
    $g$ {\color{black}admitting an invariant measure which is absolutely continuous with respect to Lebesgue measure}
    is equivalent to 
    $g$ having the same periodic Jacobian as its linearization (\cite[Theorem 4.14]{B75}). 
    However, for $C^{1+\alpha}$ transitive Anosov endomorphisms, these two conditions are not equivalent in general.  
    In fact, when $f$ is a transitive and non-invertible Anosov endomorphism, 
    the Jacobian condition will lead to an invariant measure {\color{black}with H\"older density}
    by the same method as \cite[Theorem 4.14]{B75} employing Liv\v{s}ic Theorem. 
    {\color{black} But the converse is not true, even if $f$ preserves a smooth measure.}
    And we give a counterexample in Example \ref{7 example}. 
\end{rmk}

%%%%%%%%%%%%%%%%%%%%%%%%%%%%%%%%%%%%%%%%%%%%%%%%%%%%%%%%%%%%%%%%%%%%%%%%%%%%%%%%
%%%%%%%%%%%%%%%%%%%% Story back again Diffeo V.S. Endo  %%%%%%%%%%%%%%%%%%%%%%%%   
%%%%%%%%%%%%%%%%%%%%%%%%%%%%%%%%%%%%%%%%%%%%%%%%%%%%%%%%%%%%%%%%%%%%%%%%%%%%%%%%

\subsection{Comparison to the case of DA diffeomorphisms}

    Other main motivations of this paper are inspired 
    by several interesting results for DA diffeomorphisms
    on $\TT^3$ \cite{HU14,GS20,HS21}
    and the related differences (also similarities) corresponding to the case of DA endomorphisms. 
    Here we would like to make them more clear. 
    
    Let $g:\TT^3\to\TT^3$ be a $C^{1+\alpha}$-smooth DA diffeomorphism 
    with a partially hyperbolic splitting $T\TT^3=E^s_g\oplus E^c_g\oplus E^u_g$, 
    and $L:\TT^3\to \TT^3$ be its hyperbolic linearization 
    with a partially hyperbolic splitting $T\TT^3=L^{ss}\oplus L^{cs}\oplus L^u$. Let $f:\mtt$ be a DA endomorphism with linear part $A$.
    
\begin{enumerate}
    \item In \cite{HU14}, Hammerlindl and Ures proved that 
        if $g$ is volume-preserving (or transitive) and $su$-integrable, 
        namely, the bundles $E^s_g$ and $E^u_g$ are jointly integrable,  
        then it is topologically conjugate to $L$. 
        This phenomenon is similar to ``$(3) \Rightarrow (2)$'' of both Corollaries \ref{main-cor-cu} and \ref{main-cor-scvp}, 
        if one sees the special property as the integrability of the dominating direction $E^2_f$ (for definition see \eqref{eq. dominating direction}) 
        joint with the direction induced by the generation set 
            \[P(x)=\left\{y\in\TT^2\ |\ f^k(y)=f^k(x),\ {\rm for}\ {\rm some}\ k\in\NN \right\}.\] 
        It's worth mentioning that
        this set $P(x)$ can be regarded as the strong stable ``leaf'' of $x$. 
        But in Corollary \ref{main-cor-cu}, 
        we will not assume that $f$ is volume-preserving or transitive.
   
    \item  In \cite{GS20}, Gan and Shi proved that 
        if $g$ is $su$-integrable and conjugate to $L$, 
        then $g$ is Anosov and its periodic points admit the same center Lyapunov exponents as $L$. 
        This result has the same taste as Theorems \ref{main-thm-cu} and \ref{main-thm-sc}, 
        if one keeps seeing $P(x)$ as the strong stable ``leaf'' of $f$. 
        However, we would not assume that 
        $f$ is special (corresponding to the $su$-integrability of $g$) in advance, 
        or there is a conjugacy between $f$ and $A$. 
        Here we only need a semi-conjugacy on the torus which always exists in the case of DA diffeomorphisms. 
   
    \item In \cite{HS21}, Hammerlindl and Shi further proved that 
        the existence of $su$-leaves of $g$, in fact the boundaries of open $su$-accessible classes, 
        implies that $g$ is Anosov and admits the same periodic center Lyapunov exponents as $L$. 
        Comparing with \cite{GS20}, 
        they need not \emph{a priori} the existence of conjugacy or $su$-integrability on the whole $\TT^3$. 
        Here, in Theorems \ref{main-thm-cu} and \ref{main-thm-sc}, 
        the semi-conjugacy between $f$ and $A$ actually guarantees that 
        there is some submanifold (not \emph{a priori} the ambient manifold)
        which is tangent to the dominating direction $E^2_f$ and independent of the negative orbits, 
        see also Corollary \ref{2 cor semi-conj. Fc} and Proposition \ref{2 prop injective point set}. 
        %It's worth noting that \cite{HS21} gets a dichotomy on $su$-accessibility and $su$-integrability of DA diffeomorphisms on $\TT^3$. 
        %And \cite{GS22} introduces $u$-accessibility by which they gives a similar result on toral Anosov endomorphisms. We would not deal with this topic in the present work. 
\end{enumerate}
    
    We also point out that 
    the $C^{1+\alpha}$-regularity of $g$ is necessary in the proofs of \cite{HU14,GS20,HS21}. 
    However, 
    %{\color{black} which naturally have a kind of u-accessible property introduced in \cite{GS22}), }
    the $C^1$-regularity is enough for most results in the present paper, except for  
    \begin{itemize}
        \item applying the Liv\v{s}ic Theorem 
        to get that the constant periodic stable Lyapunov exponent of $f$ implies $f$ being special in Corollaries \ref{main-cor-cu} and \ref{main-cor-scvp},
        and to show the smoothness of the conjugacy in Corollaries \ref{main-cor-cu}, \ref{main-cor-scvp} and \ref{main-cor-Jac-rigidity};
        
        \item applying the Hopf's argument to prove the ergodicity in Corollary \ref{main-cor-ergodic}.
    \end{itemize}

    Finally, considering the special property as a kind of joint integrability, we concentrate on its rigidity.
    Based on the work of \cite{GS20}, Gogolev and Shi \cite{GoS23} recently extend a wide range of the rigidity on integrability in the setting of Anosov diffeomorphisms on higher-dimensional tori. 
    Besides the hyperbolic case, there are also works on the rigidity of integrability for partially hyperbolic diffeomorphisms \cite{AV10,FRH05}. 
    Let us explain more about them for comparison with our results here. 
 
    Let $L:\TT^4\to \TT^4$ be a totally irreducible automorphism with exactly two eigenvalues of absolute value one. 
    Then there is no Franks' semi-conjugacy in this partially hyperbolic situation either. 
    Applying the KAM technique, Rodriguez Hertz \cite{FRH05} shows that 
    for a $C^{22}$-small perturbation $g$ which is also partially hyperbolic, 
    if $g$ is $su$-integrable, then $g$ is topologically conjugate to $L$ and the conjugacy is smooth along the center foliation. 
    Note that this result could be compared with Corollary \ref{main-cor-cu}. 
    In the symplectic setting,
    Avila and Viana \cite{AV10} show that 
    the $su$-integrability implies a volume-preserving conjugacy between $g$ and $L$ by using the Invariance Principle. 
    Corollary \ref{main-cor-Jac-rigidity} displays a similar phenomenon
    if one replaces the equivalent condition of semi-conjugacy by the special property. 
    We do not use the KAM theory and the Invariance Principle in our proofs.

%%%%%%%%%%%%%%%%%%%%%%%%%%%%%%%%%%%%%%%%%%%%%%%%%%%%%%%%%%%%%%%%%%%%%%%%%%%%%%%%
%%%%%%%%%%%%%%%%%%%%%%%%%%%%%%%%%%%%%%%%%%%%%%%%%%%%%%%%%%%%%%%%%%%%%%%%%%%%%%%%

    \vspace{8pt}\noindent
    \textbf{Organization of this paper.}
    The rest of the paper will be organized as follows:
        In Section \ref{sec-pre}, we collect some definitions and facts about DA endomorphisms on $\TT^2$, 
        and show directly that the existence of the semi-conjugacy $h$ on $\TT^2$ implies the integrability of the center. 
        Meanwhile, we describe the structure of $h$-injective points for further preparation.
    Next we prove Theorem \ref{main-thm-scu} in Section \ref{sec-special-to-semi} 
    by showing the existence of a semi-conjugacy which commutes with deck transformations on $\RR^2$. 
    The rigidity results implied by the existence of the semi-conjugacy on $\TT^2$ will be obtained in two steps.
        In Section \ref{sec-small-LyaExp}, 
        we analyze the periodic data inside the closure of $h$-injective points set 
        for both cases of $sc$-DA and $cu$-DA in an almost unified manner, 
        see Proposition \ref{4 prop s-rigidity} for the main rigidity of Lyapunov exponents.
    For the case of $sc$-DA, 
    we continue to complete the proofs of Theorem \ref{main-thm-sc} 
    in Section \ref{sec-DA-sc}, 
    while we also give the counterexamples mentioned in Proposition \ref{main-prop} and Remark \ref{1 rmk sc}.
        Later in Section \ref{sec-DA-cu}, 
        we present the proofs of Theorem \ref{main-thm-cu} and Corollary \ref{main-cor-cu} for the $cu$-DA case 
        by showing the existence of negative center Lyapunov exponents.
    Finally, we prove Corollary \ref{main-cor-scvp} and Corollary \ref{main-cor-Jac-rigidity} 
    in Section \ref{sec-volume-preserving} 
    with extended discussions on the volume-preserving systems.

    %\vspace{4pt}\noindent
    %\textbf{Supplement of the organization.}
    %    The main results are restated in the corresponding sections for convenience. 
    %    Among the results {\color{black}A--J} and the Corollaries \ref{main-cor-ergodic}  are not mentioned in the organization, 
    %    since they are direct corollaries with no more proof.
    
    %\vspace{4pt}\noindent
    %\textbf{Notation conventions and abbreviations.}
    %\textbf{Notation conventions.}
    %In this paper, we would use some naive notations: 
    % $ \forall $ is ``for any''; $\exists$ is ``there exists''; 
    %and \emph{s.t.} is ``such that''.
    %There are some frequent abbreviations: 
    %DA is ``derived from Anosov'';
    %resp. is ``respectively'';
    %and w.l.o.g. is ``without loss of generality''.
    %At most time, we would denote by $f$ and $A$ the DA and its linear part on $\TT^2$, resp., 
    %        and denote by $F$ and $A$ the lifted maps on $\RR^2$, resp.; 
    %denote by $\mcf$ and $\mcl$ the invariant foliations on $\TT^2$ of $f$ and $A$, resp., 
    %        and denote by $\tildeF$ and $\tildeL$ the ones on $\RR^2$ of $F$ and $A$, resp..

%%%%%%%%%%%%%%%%%%%%%%%%%%%%%%%%%%%%%%%%%%%%%%%%%%%%%%%%%%%%%%%%%%%%%%%%%%%%%%%%
%%%%%%%%%%%%%%%%%%%%%%%%%%%%%%%%%%%%%%%%%%%%%%%%%%%%%%%%%%%%%%%%%%%%%%%%%%%%%%%%

\section{Preliminaries}\label{sec-pre}

%%%%%%%%%%%%%%%%%%%%  Def of partial hyperbolicity  %%%%%%%%%%%%%%%%%%%%%%%%%%%%
\subsection{Partial hyperbolicity}\label{subsec ph}

    It is natural for endomorphisms to define partial hyperbolicity in terms of cone families.
    Let $f$ be an endomorphism on a compact Riemannian manifold $M$, for a continuous bundle splitting $TM=E_1\oplus E_2$ and a constant $\alpha>0$, 
    there are two related cone-fields $\mcc^{E_1}_\alpha$ and $\mcc^{E_2}_\alpha$ defined by
     \begin{align*}
        \mcc^{E_1}_\alpha
        &=\bigcup\limits_{x\in M}\mcc^{E_1}_\alpha(x)
        =\bigcup\limits_{x\in M}\big\{ v\in T_xM~\big|~
        v=(v^{E_1},v^{E_2}),|v^{E_2}|\leqslant\alpha|v^{E_1}|\big\},\\
        \mcc^{E_2}_\alpha
        &=\bigcup\limits_{x\in M}\mcc^{E_2}_\alpha(x)
        =\bigcup\limits_{x\in M}\big\{ v\in T_xM~\big|~
        v=(v^{E_1},v^{E_2}),|v^{E_1}|\leqslant\alpha|v^{E_2}|\big\}.
    \end{align*}

    {\color{black}
    A continuous splitting (may not be $Df$-invariant) $TM={E_1}\oplus {E_2}$ is called a \textit{dominated splitting} of $f$ denoted by $TM={E_1}\oplus_{<} {E_2}$, if there are constants $\alpha>0$ and $k\in\NN$
	such that
    \begin{align}
        Df^{k}(\mcc^{E_2}_\alpha)\subseteq {\rm int}\big(\mcc^{E_2}_{\alpha}\big), \label{eq. invariant cone}
    \end{align}
    where the interior of a cone ${\rm int}\big(\mcc^{E_2}_{\alpha}(x)\big)=\big\{ v\in T_xM~\big|~
        v=(v^{E_1},v^{E_2}),|v^{E_1}|<\alpha|v^{E_2}|\big\}\cup \{0_x\}$. 
    Note that \eqref{eq. invariant cone} also allows us to assume that $E_1$ is $Df$-invariant 
    (see for example \cite{CP15,HH21} or \eqref{eq. domination bundle}).
        
    Moreover, an endomorphism $f$ is called \emph{partially hyperbolic} if 
	there exist a dominated splitting $TM={E_1}\oplus_< {E_2}$ 
    and constants $\alpha>0$ and $k\in\NN$ such that
        \begin{align}\label{PH-conditions}
        |Df^k(v^{E_1})|<\frac{1}{2}
        \quad {\rm or} \quad
        |Df^k(v^{E_2})|>2,
        \tag{$\blacklozenge$}
        \end{align}
    for any $x\in M$ and unit vectors $v^{E_1}\in E_1(x)$ and $v^{E_2}\in \mcc^{E_2}_\alpha(x)$. }  
    In particular, there are two cases:
    \begin{itemize}
        \item We call $f$ an $sc$-partially hyperbolic endomorphism 
        if the first inequality of \eqref{PH-conditions} holds. 
        In this case, the cone-fields $\mcc^{E_1}_\alpha$ and $\mcc^{E_2}_\alpha$ will be called the \emph{stable cone-field} and \emph{center cone-field}, respectively.
        
        \item We call $f$ a $cu$-partially hyperbolic endomorphism 
        if the second inequality of \eqref{PH-conditions} holds. 
        In this case, the cone-fields $\mcc^{E_1}_\alpha$ and $\mcc^{E_2}_\alpha$ will be called the \emph{center cone-field} and \emph{unstable cone-field}, respectively.
    \end{itemize}

\begin{rmk}
    There are some remarks on the definition of partially hyperbolic endomorphisms involving the cone-field language. We refer the readers to \cite[Section 2]{CP15} for more details.{\color{black}
    \begin{enumerate}
        \item The domination \eqref{eq. invariant cone} is equivalent to the existence of constants $\alpha>0$ and $k\in\NN$ satisfying
    \begin{align}
        |Df^k(v^{E_1})|< \frac{1}{2}|Df^k(v^{E_2})|,\label{eq. domination}
    \end{align}
        for all $x\in M$ and unit vectors $v^{E_1}\in{E_1}(x)$ and $v^{E_2}\in\mcc^{E_2}_\alpha(x)$. 
        
        \item As usual, one can also define the partial hyperbolicity by choosing constants $C>1$ and $0<\lambda<1$ such that the domination 
        \[|Df^n(v^{E_1})|\leqslant C\lambda^n |Df^n(v^{E_2})|,\quad \forall\ n\in \NN,\]
        and the hyperbolicity
        \[ |Df^n(v^{E_1})|\leqslant C\lambda^n \quad {\rm or} \quad |Df^n(v^{E_2})|\geqslant C^{-1}\lambda^{-n},\quad \forall\ n\in \NN,   \]
        hold for any $x\in M$ and unit vectors $v^{E_1}\in{E_1}(x)$ and $v^{E_2}\in \mcc^{E_2}_\alpha(x)$.
        This definition is equivalent to inequalities of \eqref{eq. domination} and\eqref{PH-conditions}, up to taking $k$ via $(C,\lambda)$ or vice versa. 
       
        \item The definition of partial hyperbolicity is independent of the metric in the sense that another equivalent metric will just lead to different constants $\alpha$ (the size of cone-fields) and $k\in\mathbb{N}$.
    \end{enumerate}  }
\end{rmk}

    Furthermore, we call $f$ an \emph{absolutely partially hyperbolic endomorphism} 
        {\color{black} if it is partially hyperbolic and \eqref{eq. domination} holds 
        for any $x,y\in M$ and unit vectors $v^{E_1}\in E_1(x)$ and $v^{E_2}\in\mcc^{E_2}_\alpha(y)$.}
    In particular, we call $f$ an \emph{expanding endomorphism} 
        {\color{black} if it has trivial bundle $E_1$ in the dominated splitting,
        and there exists $k\in\NN$ such that $|Df^k(v)|>1$, for any $x\in M$ and unit vector $v\in T_xM$.}

%%%%%%%%%%%%%%%%%%%  Def of special endo  %%%%%%%%%%%%%%%%%%%%%%%%%%%%%%

\subsubsection{Special endomorphisms}\label{subsub sec se}
    Notice that there exists a $Df$-invariant bundle $E^1_f\subset \mcc^{E_1}_\alpha$  given by
     \begin{align}
        E^1_f(x)=\bigcap_{i\in\NN}Df_0^{-i}\big(\mcc^{E_1}_\alpha(f^i(x))\big),
        \label{eq. domination bundle}
     \end{align} 
    where $f^{-i}_0(f^i(x))=x$.  
    Similarly, for a given negative orbit $\Tilde{x}=(\dots,x_{-2},x_{-1},x)$ of $x\in M$, 
    there is a $Df$-invariant  direction (along this orbit $\Tilde{x}$) called \emph{dominating direction}
    \begin{align}\
        E^2_f(x,\Tilde{x})=\bigcap_{i\in\NN}Df^i\big(\mcc^{E_2}_\alpha(x_{-i})\big). \label{eq. dominating direction}
    \end{align}
    We refer to \cite{Pr76} for more properties, such as the continuity of directions with respect to orbits. 

    It should be noticed that there are generally cone-fields $\mcc^{E_2}_\alpha(x)$ 
    rather than $Df$-invariant bundles that are independent of the negative orbits, 
    unless the partially hyperbolic endomorphism is special. 
    
    Precisely, a partially hyperbolic endomorphism $f:M\to M$ is called \emph{special}, 
    if there exists a $Df$-invariant partially hyperbolic splitting 
    \begin{align}
        TM= E_f^1\oplus_< E_f^2,\label{eq. 1. special}
    \end{align}
    that is, the dominating direction of a point is independent of its negative orbits.

%%%%%%%%%%%%%%%%%%%  Def of DA-partial hyperbolicity  %%%%%%%%%%%%%%%%%%%%%%%%%%

\subsubsection{Derived from Anosov endomorphisms}\label{subsubsec DA}
    If both of these inequalities in (\ref{PH-conditions}) hold, 
    $f$ will be an \emph{Anosov endomorphism}.
    A partially hyperbolic endomorphism $f$ is called 
	\emph{derived from Anosov} (abbr. \emph{DA endomorphism}) 
	if its linear part $f_*$ is an Anosov endomorphism.
	More precisely,  
    there are also the following cases corresponding to the two inequalities in (\ref{PH-conditions}):
    \begin{itemize}
       \item When the DA endomorphism $f$ is  $sc$-partially hyperbolic, we call $f$ \emph{$sc$-DA}. In this case, $f$ admits a stable cone-field which by \eqref{eq. domination bundle} actually gives a $Df$-invariant uniformly contracting subbundle that will be called the \emph{stable bundle} and denoted by $E^s_f$.
       \item  When the DA endomorphism $f$ is  $cu$-partially hyperbolic, we call $f$ \emph{$cu$-DA}. In this case, $f$ admits an unstable cone-field, and by \eqref{eq. domination bundle} the center cone-field gives a $Df$-invariant subbundle that will be called the \emph{center bundle} and denoted by $E^c_f$.
    \end{itemize}
    %Note that there are stable bundles or unstable cone-fields with respect to Ma\~{n}\'{e}-type endomorphisms on $\TT^2$. 

\subsection{Semi-conjugacy}\label{subsec semi-conj}
    Throughout this subsection, we will use the following notations.
    Let $f:\mtt$ be a $C^1$-smooth DA endomorphism and $A:\mtt$ be its linearization. 
    Let $F:\mrr$ be a lift of $f$ via $\pi:\mrt$.  
    In short, we still denote the lift of $A:\mtt$ via $\pi$ by $A:\mrr$. 
    And we denote the stable/unstable bundles and foliations of $A$ by $L^{s/u}$ and $\mcl^{s/u}$ on $\TT^2$, $\widetilde{L}^{s/u}$ and $\tildeL^{s/u}$ on $\RR^2$, respectively.

    Recall that  $\mcl^{s/u}$  is minimal, 
    namely, for every $x\in\TT^2$, the leaf $\mcl^{s/u}(x)$ is dense in $\TT^2$. 
    Then there is the following property. 

\begin{prop}\label{2 prop minimal foliation}
    For any $x\in\RR^2$ and $k\in \NN$, the set
    \[\bigcup_{n\in A^k\ZZ^2}\tildeL^{\sigma}(x+n)\]
    is dense in $\RR^2$, where $\sigma=s,u$.
\end{prop}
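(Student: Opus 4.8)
The plan is to deduce the statement directly from the stated minimality of $\mcl^{\sigma}$ on $\TT^2$ by pulling the configuration back through a negative iterate of the linear map. The first point to record is that, although the induced endomorphism $A:\mtt$ need not be invertible, as a linear map $A:\RR^2\to\RR^2$ it is invertible, since it is hyperbolic; moreover it has two real eigenvalues with $|\lambda^{s}|<1<|\lambda^{u}|$ and eigendirections $v^{s},v^{u}$, so $A$ preserves each of the linear foliations $\tildeL^{s}$ and $\tildeL^{u}$ in the sense that $A\big(\tildeL^{\sigma}(y)\big)=\tildeL^{\sigma}(Ay)$ for every $y\in\RR^2$, and hence $A^{-k}\big(\tildeL^{\sigma}(y)\big)=\tildeL^{\sigma}(A^{-k}y)$.

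Next I would apply the homeomorphism $A^{-k}$ of $\RR^2$ to the set in question. Since $n\mapsto A^{-k}n$ is a bijection from $A^{k}\ZZ^2$ onto $\ZZ^2$ and $\tildeL^{\sigma}(y)+m=\tildeL^{\sigma}(y+m)$ for $m\in\ZZ^2$, one gets
\begin{align*}
A^{-k}\Big(\bigcup_{n\in A^{k}\ZZ^2}\tildeL^{\sigma}(x+n)\Big)
&=\bigcup_{n\in A^{k}\ZZ^2}\tildeL^{\sigma}\big(A^{-k}x+A^{-k}n\big)\\
&=\bigcup_{m\in\ZZ^2}\tildeL^{\sigma}\big(A^{-k}x+m\big)
=\pi^{-1}\big(\mcl^{\sigma}(\pi(A^{-k}x))\big),
\end{align*}
the last equality holding because the leaf $\mcl^{\sigma}(\pi(y))$ downstairs equals $\pi\big(\tildeL^{\sigma}(y)\big)$, whose full preimage under $\pi$ is exactly $\tildeL^{\sigma}(y)+\ZZ^2=\bigcup_{m\in\ZZ^2}\tildeL^{\sigma}(y+m)$.

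Now I would invoke the hypothesis. By minimality of $\mcl^{\sigma}$ the leaf $\mcl^{\sigma}(\pi(A^{-k}x))$ is dense in $\TT^2$, and since $\pi$ is a continuous open surjection its full preimage is dense in $\RR^2$. Therefore $A^{-k}\big(\bigcup_{n\in A^{k}\ZZ^2}\tildeL^{\sigma}(x+n)\big)$ is dense, and applying the homeomorphism $A^{k}$ we conclude that $\bigcup_{n\in A^{k}\ZZ^2}\tildeL^{\sigma}(x+n)$ is dense in $\RR^2$, as desired.

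I do not expect a genuine obstacle here; the only points requiring care are keeping the two roles of $A$ distinct (the non-invertible toral endomorphism versus its linear isomorphism on the universal cover) and checking that $A^{-k}$ carries the finite-index sublattice $A^{k}\ZZ^2$ exactly onto $\ZZ^2$. If one prefers an argument that avoids $A^{-k}$, the same conclusion follows arithmetically: minimality of $\mcl^{\sigma}$ forces $v^{\sigma}$ to have irrational slope, so the projection of $\ZZ^2$, hence of its finite-index sublattice $A^{k}\ZZ^2$, onto any line transverse to $v^{\sigma}$ is a non-cyclic (hence dense) subgroup of $\RR$, which makes the union of the parallel lines $\tildeL^{\sigma}(x+n)$ dense in $\RR^2$; the pull-back argument above is simply shorter and uses the stated minimality as a black box.
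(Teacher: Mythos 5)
Your argument is correct and is essentially the paper's own proof: both rest on the fact that the linear map $A$ is a homeomorphism of $\RR^2$ carrying the lattice $\ZZ^2$ onto $A^k\ZZ^2$ and preserving the foliations $\tildeL^{\sigma}$, combined with the density of $\bigcup_{m\in\ZZ^2}\tildeL^{\sigma}(y+m)$ coming from minimality of $\mcl^{\sigma}$ on $\TT^2$. The only cosmetic difference is that you pull back by $A^{-k}$ while the paper pushes forward by $A^{k}$ after first reducing to $x=0$ by translation.
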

\begin{proof}
    It suffices to verify for $x=0$ 
    since the foliation $\tildeL^{\sigma}$ consists of translations of 
    the leaf $\tildeL^{\sigma}(0)$, for $\sigma=s,u$. 
    Note that the lifted map of the linearization $A:\mrr$ is an Anosov diffeomorphism with no trivial bundles in the partially hyperbolic splitting.
    Thus, for a given $k\in\NN$, 
    one can get $A^k\{\tildeL^{\sigma}(0)+\ZZ^2\}$ is dense in $\RR^2$.
    Then it follows that the set $\{\tildeL^{\sigma}(0+A^k\ZZ^2)\}$ is dense in $\RR^2$.
\end{proof}

    Focusing on the $A$-preimage set, there is the following useful estimate.

\begin{lem}[\cite{AGGS23}, Proposition 2.10]\label{2 lem preimage dense linear}
    There exist $C_A>0$ and $0<\gamma<1$ such that for any $x\in\TT^2$, $k\in\NN$, 
    the $k$-preimage set of $x$ 
        \[ P_k(x):=\{\ y\in\TT^2\ |\ A^k(y)=x\ \} \]
    is $C_A\cdot\gamma^k$-dense in $\TT^2$. 
    Here one can take $\gamma=|{\rm det}(A)|^{-\frac{1}{2}}$.
\end{lem}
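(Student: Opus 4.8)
The plan is to translate the statement into one about the covering radius of the lattice $A^{-k}\ZZ^2$ and then control that radius using the hyperbolicity of $A$. First I would reduce to the case $x=0$: since $A^k:\mtt$ is surjective one may pick some $y_0\in P_k(x)$, and then $P_k(x)=y_0+P_k(0)$ because $A^k(y)=x$ is equivalent to $A^k(y-y_0)=0$. Translations are isometries of $\TT^2$, so $P_k(x)$ is $\varepsilon$-dense for exactly the same $\varepsilon$ as $P_k(0)$. Writing $\TT^2=\RR^2/\ZZ^2$ and lifting, $P_k(0)$ is the projection of the lattice $\Lambda_k:=A^{-k}\ZZ^2\subset\RR^2$, and $\ZZ^2\subseteq\Lambda_k$ since $A$ is an integer matrix and hence $A^k\ZZ^2\subseteq\ZZ^2$. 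Consequently it suffices to prove that $\Lambda_k$ itself is $C_A\gamma^k$-dense in $\RR^2$, i.e., to bound its covering radius $\mu(\Lambda_k):=\max_{z\in\RR^2}\operatorname{dist}(z,\Lambda_k)$.

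Next I would estimate $\mu(\Lambda_k)$ via the successive minima $\lambda_1(\Lambda_k)\leqslant\lambda_2(\Lambda_k)$ of $\Lambda_k$. Using a Gauss-reduced basis one has the elementary bound $\mu(\Lambda_k)\leqslant\tfrac12\bigl(\lambda_1(\Lambda_k)+\lambda_2(\Lambda_k)\bigr)\leqslant\lambda_2(\Lambda_k)$, while Minkowski's second theorem in dimension $2$ gives $\lambda_1(\Lambda_k)\,\lambda_2(\Lambda_k)\leqslant\tfrac{4}{\pi}\operatorname{covol}(\Lambda_k)=\tfrac{4}{\pi}|\det A|^{-k}$. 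Hence the whole problem comes down to the lower bound $\lambda_1(\Lambda_k)\geqslant c\,|\det A|^{-k/2}$; granting it, $\mu(\Lambda_k)\leqslant\lambda_2(\Lambda_k)\leqslant\tfrac{4}{\pi c}|\det A|^{-k/2}$, which is the claim with $\gamma=|\det A|^{-1/2}$.

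For the lower bound on $\lambda_1$ I would work in the eigenbasis of $A$. Let $\lambda_s,\lambda_u$ be the eigenvalues, with $0<|\lambda_s|<1<|\lambda_u|$ and $|\lambda_s\lambda_u|=|\det A|$; since $A$ is hyperbolic its eigenvalues are irrational (an integer eigenvalue of absolute value $<1$ would vanish, contradicting $\det A\neq0$), so both eigendirections $e_u,e_s$ have irrational slope. For $n\in\ZZ^2\setminus\{0\}$ write $n=\alpha(n)e_u+\beta(n)e_s$; up to a fixed factor depending only on $e_u,e_s$ one has $|A^{-k}n|\asymp|\alpha(n)||\lambda_u|^{-k}+|\beta(n)||\lambda_s|^{-k}$, so by the AM--GM inequality $|A^{-k}n|\gtrsim\sqrt{|\alpha(n)\beta(n)|}\,(|\lambda_u\lambda_s|)^{-k/2}=\sqrt{|\alpha(n)\beta(n)|}\,|\det A|^{-k/2}$. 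It then remains to bound $|\alpha(n)\beta(n)|$ below uniformly. Writing $A=\left(\begin{smallmatrix}a&b\\c&d\end{smallmatrix}\right)$, the eigenslopes $s_u,s_s$ are the (irrational) roots of $bt^2+(a-d)t-c$, and a direct computation gives $\alpha(n)\beta(n)=-q_A(n)\big/\bigl(b\,(s_u-s_s)^2\bigr)$, where $q_A(p,q)=-cp^2+(a-d)pq+bq^2$ is an integral binary quadratic form whose discriminant $(\operatorname{tr}A)^2-4\det A$ is positive and not a perfect square (otherwise the eigenvalues would be rational, hence integers, which is impossible). Thus $q_A$ does not represent $0$ over $\mathbb{Q}$, so $q_A(n)$ is a nonzero integer and $|q_A(n)|\geqslant1$; therefore $|\alpha(n)\beta(n)|\geqslant\bigl(|b|(s_u-s_s)^2\bigr)^{-1}=:c_0>0$ for every $n\in\ZZ^2\setminus\{0\}$, which yields $\lambda_1(\Lambda_k)\geqslant c\,|\det A|^{-k/2}$ with $c$ proportional to $\sqrt{c_0}$.

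The main obstacle is precisely this last arithmetic input: it is the hyperbolicity of $A$ — forcing the eigendirections to be quadratic irrationals, equivalently badly approximable — that prevents $\Lambda_k$ from degenerating into a long, thin lattice and keeps its shortest vector of order $|\det A|^{-k/2}$. The remaining ingredients (the reduction to $x=0$, the inclusion $\ZZ^2\subseteq\Lambda_k$, Minkowski's second theorem, and the AM--GM step) are routine, so I would keep those brief. One could alternatively phrase the argument dynamically, via the minimality of the unstable foliation of $A$ (Proposition \ref{2 prop minimal foliation}) together with the quantitative density rate $\asymp 1/(\text{length})$ of a leaf segment coming from bad approximability, but the lattice formulation above seems cleanest.
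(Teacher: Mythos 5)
Your argument is correct, and it is worth noting at the outset that the paper does not prove this lemma at all: it is quoted verbatim from \cite{AGGS23} (Proposition 2.10 there), so you have supplied a proof where the paper only supplies a citation. Your route is a clean lattice-geometric one: reduce to $x=0$, identify $P_k(0)$ with the projection of $\Lambda_k=A^{-k}\ZZ^2$, bound the covering radius by $\lambda_2(\Lambda_k)$, combine Minkowski's second theorem ($\lambda_1\lambda_2\lesssim|\det A|^{-k}$) with a lower bound $\lambda_1(\Lambda_k)\gtrsim|\det A|^{-k/2}$, and obtain the latter from the fact that $\alpha(n)\beta(n)$ is a nonzero multiple of an integral binary quadratic form whose discriminant $(\operatorname{tr}A)^2-4\det A$ is positive and not a perfect square (I checked the identity $\alpha\beta=-q_A(n)/(b(s_u-s_s)^2)$ and the AM--GM step; both are right, and the case $b=0$ is automatically excluded since it would force integer eigenvalues). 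The proof in \cite{AGGS23} is phrased more dynamically, essentially estimating the shape of the fundamental domain of $A^{-k}\ZZ^2$ directly via the expansion/contraction rates along the splitting $L^s\oplus L^u$; your version isolates the arithmetic input (the eigendirections are quadratic irrationals, so the associated form does not represent zero) more explicitly, and buys a self-contained statement with explicit constants, at the cost of invoking Minkowski's second theorem. One cosmetic caveat, already flagged in the paper's remark following the lemma: the exponent $\gamma=|\det A|^{-1/2}$ only yields a decaying density rate when $|\det A|>1$, which is the case relevant here; your proof is consistent with this, since for $|\det A|=1$ it only gives a constant bound on the covering radius.
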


\begin{rmk}
    When $|{\rm det}(A)|=1$, the conclusion obviously holds for large constant $C_A$. 
    The interesting case of this lemma is $|{\rm det}(A)|>1$, 
    which is exactly applicable to the linearization of DA endomorphism 
    that we concern in this paper.
\end{rmk}

\subsubsection{The semi-conjugacy and foliations on the universal cover}
    There are some basic facts for the dynamics on the universal cover, 
    especially, some properties related to the semi-conjugacy.

    We first include the following classical result for toral endomorphisms.
\begin{prop}[\cite{AH94}, Theorem 8.2.1]\label{2 prop semi-conj in R2}
    Let $F$ and $A$ be as in Section \ref{subsec semi-conj}.
    There exists a unique surjection $H:\mrr$ satisfying the following: 
    \begin{enumerate}
        \item $H\circ F=A\circ H$;
        \item $H$ is uniformly continuous;
        \item There exists $C_H>0$ such that $\| H-{\rm Id}_{\RR^2} \|_{C^0}<C_H$.
    \end{enumerate}
\end{prop}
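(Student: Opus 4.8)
The plan is to run the classical Franks-type fixed-point argument on the universal cover, exploiting the fact that---contrary to the situation on $\TT^2$---the lifted map $F$ \emph{is} invertible even though $f$ is not.

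First I would record the basic geometry of $F$. Since $f$ is homotopic to $A$, after the standard normalization of the lift we have $F(x+n)=F(x)+An$ for all $x\in\RR^2$ and $n\in\ZZ^2$ (exactly the relation highlighted in the Remark above), so that $\phi:=F-A$ is $\ZZ^2$-periodic, hence bounded and uniformly continuous. Moreover $F$ is a local diffeomorphism which is proper: writing any $x$ as $x_0+n$ with $x_0$ in a fixed fundamental domain, the identity $F(x)=F(x_0)+An$ bounds $F^{-1}$ of a ball. A proper local diffeomorphism of $\RR^2$ is a covering map, hence---$\RR^2$ being simply connected---a homeomorphism; and both $F$ and $F^{-1}$ are globally Lipschitz, because $DF$ and $DF^{-1}$ are $\ZZ^2$-periodic and continuous, thus bounded. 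It is precisely this invertibility of $F$ on the cover (which the non-invertibility of $f$ on $\TT^2$ does not destroy) that will make the construction below go through.

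Next I would solve for $H$ in the form $H={\rm Id}_{\RR^2}+u$ with $u$ bounded; the identity $H\circ F=A\circ H$ is then equivalent to the twisted equation $u\circ F-A\circ u=-\phi$. Since $A$ is hyperbolic with real eigenvalues in this (Ma\~n\'e-type) setting, decompose $\RR^2=\widetilde{L}^s\oplus\widetilde{L}^u$, write $A=A_s\oplus A_u$ with $\|A_s\|=:\mu<1$ and $\|A_u^{-1}\|=:\nu<1$ in an adapted norm, and split $\phi=\phi^s+\phi^u$, $u=u^s+u^u$ accordingly. The equation splits into $u^s\circ F=A_su^s-\phi^s$ and $u^u\circ F=A_uu^u-\phi^u$, which are solved by the explicit series
\[
u^s=-\sum_{k\geqslant 1}A_s^{\,k-1}\,\bigl(\phi^s\circ F^{-k}\bigr),
\qquad
u^u=\sum_{k\geqslant 0}A_u^{-k-1}\,\bigl(\phi^u\circ F^{k}\bigr).
\]
Both converge absolutely and uniformly since $\phi$ is bounded and the geometric factors $\mu^{k}$, $\nu^{k}$ decay, so $u:=u^s+u^u$ is bounded (item (3) with $C_H:=\|u\|_{C^0}+1$), and substituting and using $F\circ F^{-1}={\rm Id}$ verifies item (1). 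For item (2), uniform continuity of $H$ reduces to that of $u$: the $k$-th term of the $u^s$-series has modulus of continuity at most $C\mu^{k}\,\omega_\phi(L^k\,|\cdot|)$, where $\omega_\phi$ is a modulus of continuity of $\phi$ and $L=\max({\rm Lip}\,F,{\rm Lip}\,F^{-1})$; this is dominated by the summable sequence $2C\mu^{k}\|\phi\|_{C^0}$ and tends to $0$ for each fixed $k$ as the increment shrinks, so the series version of dominated convergence gives uniform continuity of $u^s$, and likewise of $u^u$.

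Finally, surjectivity and uniqueness. Since $\|H-{\rm Id}_{\RR^2}\|_{C^0}<C_H$, $H$ is proper, and it is homotopic to ${\rm Id}_{\RR^2}$ through the proper maps ${\rm Id}_{\RR^2}+tu$, so it has degree one and is onto by a standard degree argument. For uniqueness, if $H_1,H_2$ both satisfy (1)--(3) then $w:=H_1-H_2$ is bounded and satisfies $w\circ F=A\circ w$; decomposing $w=w^s+w^u$ and iterating gives $w^s=A_s^{k}\,(w^s\circ F^{-k})$ and $w^u=A_u^{-k}\,(w^u\circ F^{k})$ for all $k$, and letting $k\to\infty$ forces $w\equiv 0$. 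I expect the only step needing genuine care, rather than bookkeeping, to be the first one---checking that the lift $F$ is a homeomorphism of $\RR^2$. This is the exact point where non-invertibility of $f$ on $\TT^2$ would otherwise bite (the obstruction there being, as the Remark explains, that the analogous operator on $\ZZ^2$-periodic functions is not invertible), whereas on the simply connected cover properness of $F$ rescues the argument and the rest is the standard hyperbolic-cohomology machinery.
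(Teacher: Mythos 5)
Your proposal is correct and is essentially the standard Franks/Aoki--Hiraide argument that the paper invokes by citation: lift, observe $F(x+n)=F(x)+An$ so $\phi=F-A$ is periodic and bounded, note the lift $F$ is a homeomorphism of $\RR^2$, and solve $u\circ F-A\circ u=-\phi$ by the two geometric series along the stable/unstable splitting of $A$, with surjectivity by degree and uniqueness by iterating $w\circ F=A\circ w$. The paper gives no proof of its own (it cites \cite{AH94}), and every step you flag as delicate --- properness and hence invertibility of $F$ on the cover, and uniform continuity of the series via the $\mu^k\,\omega_\phi(L^k\,|\cdot|)$ bound --- is handled correctly.
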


    Then we collect useful properties for the semi-conjugacy on $\RR^2$. 
    These properties of DA endomorphisms 
    are adaptations of the ones of DA diffeomorphisms, Anosov endomorphisms 
    and $cu$-DA endomorphisms in \cite{Ha13,H13,HH21,HS21,AGGS23}. 
    For completeness, 
    we give proofs briefly based on the original ones.

    We start by introducing a few notations for the following results. 
    For the map $T_n:\RR^2\to \RR^2$ with $T_n(x)=x+n$,
    we say $H$ \emph{commutes with the deck transformations}
    if \[H\circ T_n=T_n\circ H,  \quad\forall\ n\in\ZZ^2.\]
    Note that the semi-conjugacy in Proposition \ref{2 prop semi-conj in R2} 
    may not commute with the deck transformations, 
    otherwise it can descend to $\TT^2$ and induce a semi-conjugacy between $f$ and $A$. 
    
    The map $T_{-n}\circ H\circ T_n$ still keeps some good properties, 
    for instance, it preserves the stable foliation and has the following asymptotic behavior.

\begin{prop}[\cite{AGGS23}]\label{2 prop approach property of H}
    Let $H$ and $C_H$ be as in Proposition \ref{2 prop semi-conj in R2}.
    Then 
    \begin{enumerate}
        \item For any $x\in\RR^2$ and $n\in\ZZ^2$, $H(x+n)-n\in \tildeL^s\big(H(x)\big)$.
        \item For any $x\in\RR^2$, $k\in\NN$, and $n^*\in A^k\ZZ^2$, 
        one has 
            \[
            \Big| H(x+n^*)-H(x)-n^* \Big|<2C_H\cdot \|A|_{L^s}\|^k.
            \]
    \end{enumerate}
\end{prop}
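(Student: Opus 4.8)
The plan is to derive both parts from a single uniform estimate for the ``boundary displacement'' $\delta_n(x):=H(x+n)-H(x)-n$ of $H$, combined with the hyperbolicity of $A$. \emph{Step 1 (a bounded twisted identity).} First I would note that writing $\delta_n(x)=\big(H(x+n)-(x+n)\big)-\big(H(x)-x\big)$ and using $\|H-{\rm Id}_{\RR^2}\|_{C^0}<C_H$ gives $|\delta_n(x)|<2C_H$ for all $x\in\RR^2$ and $n\in\ZZ^2$. Since $F$ is a lift of a map homotopic to $A$ we have $F\circ T_n=T_{An}\circ F$, hence $F^k(x+n)=F^k(x)+A^kn$ for all $k\in\NN$; combining this with $H\circ F^k=A^k\circ H$ yields
\[
A^k\delta_n(x)=H\big(F^k(x)+A^kn\big)-H\big(F^k(x)\big)-A^kn=\delta_{A^kn}\big(F^k(x)\big),
\]
so $|A^k\delta_n(x)|<2C_H$ for every $k\in\NN$ and every $x\in\RR^2$.

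\emph{Step 2 (the displacement is stable; this is (1)).} The linearization of a DA endomorphism on $\TT^2$ is Anosov with one-dimensional stable and unstable bundles, so on $\RR^2$ there is an $A$-invariant splitting $\RR^2=\widetilde L^s\oplus\widetilde L^u$ on which $A$ contracts and expands, respectively. Decomposing $\delta_n(x)=v^s+v^u$ accordingly and applying the bounded projection $\pi^u$ onto $\widetilde L^u$ along $\widetilde L^s$ to the identity of Step 1 (note $\pi^u$ commutes with $A$) gives $|A^k v^u|=|\pi^u(A^k\delta_n(x))|\le\|\pi^u\|\,|A^k\delta_n(x)|<2\|\pi^u\|C_H$ for all $k\in\NN$. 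Since $A$ expands $\widetilde L^u$, this forces $v^u=0$, i.e.\ $H(x+n)-n-H(x)\in\widetilde L^s(0)$. As $\tildeL^s$ is the translation-invariant linear foliation of $A$ we have $\widetilde L^s(0)=\tildeL^s(0)$, so the relation reads $H(x+n)-n\in\tildeL^s\big(H(x)\big)$, which is (1).

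\emph{Step 3 (quantitative version; this is (2)).} Finally, let $x\in\RR^2$, $k\in\NN$, and $n^*\in A^k\ZZ^2$, say $n^*=A^kn$ with $n=A^{-k}n^*\in\ZZ^2$; using that the lift $F:\RR^2\to\RR^2$ is a diffeomorphism, write $x=F^k(z)$ with $z=F^{-k}(x)$. The identity of Step 1 gives $H(x+n^*)-H(x)-n^*=A^k\delta_n(z)$, and $\delta_n(z)\in\widetilde L^s(0)$ by (1); since $A$ acts on the one-dimensional bundle $\widetilde L^s$ as multiplication by a scalar of modulus $\|A|_{L^s}\|$ and $|\delta_n(z)|<2C_H$,
\[
\big|H(x+n^*)-H(x)-n^*\big|=\|A|_{L^s}\|^k\,|\delta_n(z)|<2C_H\,\|A|_{L^s}\|^k,
\]
which is (2). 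The argument is essentially routine; the two points that need care are the twisted equivariance $F\circ T_n=T_{An}\circ F$ (which is exactly what amplifies a domain translation by $A$ upon iteration, thereby coupling the $C^0$-bound on $\delta_n$ to the hyperbolicity of $A$) and the non-orthogonality of $\widetilde L^s,\widetilde L^u$ (handled by the oblique projection $\pi^u$ rather than Euclidean components), so I do not anticipate a genuine obstacle.
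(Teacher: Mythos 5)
Your proof is correct and follows essentially the same route as the paper: both arguments bound the displacement $H(x+n)-H(x)-n$ by $2C_H$ uniformly under forward iteration (using $F^k(x+n)=F^k(x)+A^kn$ and $H\circ F^k=A^k\circ H$), conclude that the unstable component must vanish, and then obtain (2) by writing $n^*=A^kn$ and pushing the stable vector $\delta_n(F^{-k}(x))$ forward by $A^k$. Your Step 2 merely makes explicit, via the oblique projection $\pi^u$, the "no unstable component" deduction that the paper states without detail.
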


\begin{proof}
    By Proposition \ref{2 prop semi-conj in R2}, 
    one has $\| H-{\rm Id}_{\RR^2} \|_{C^0}<C_H$.
    Then, for any $k\in\NN$,
    \begin{align*}
        &d\Big(A^k(H(x+n)-n),A^k\circ H(x)\Big)\\
        &=d\Big(H(F^k(x+n))-A^k(n),H\circ F^k(x)\Big)\\
        &=d\Big(H\big(F^k(x)+A^k(n)\big)-A^k(n),H\circ F^k(x)\Big)\\
        &\leqslant 
        d\Big(H\big(F^k(x)+A^k(n)\big)-A^k(n),F^k(x)\Big)+d\Big(F^k(x),H\circ F^k(x)\Big)\\
        &<2C_H,
    \end{align*}
    where $d$ is the natural Euclidean distance on $\RR^2$.
    This uniform upper bound indicates that 
    there is no ``unstable component'' for the difference between $H(x+n)-n$ and $H(x)$, 
    that is, they are in the same stable leaf for any $n\in\ZZ^2$.
    
    Moreover, for any $n^*\in A^k\ZZ^2$, one has
        \[F^{-k}(x+n^*)-F^{-k}(x)=A^{-k}(n^*)\in\ZZ^2,\]
    so $H \big(F^{-k}(x)+A^{-k}(n^*)\big)$ and $H\circ F^{-k}(x)+A^{-k}(n^*)$ are in the same stable leaf.
    Note that
    \begin{align*}
        H(x+n^*)&=A^k\circ H\circ F^{-k}(x+n^*)=A^k\circ H \big(F^{-k}(x)+A^{-k}(n^*)\big),\\
        H(x)+n^*&=A^k\circ \big( H\circ F^{-k}(x)+A^{-k}(n^*)\big).
    \end{align*} 
    Hence, one can get 
        \[
        \Big| H(x+n^*)-H(x)-n^* \Big|<2C_H\cdot \|A|_{L^s}\|^k.
        \]
\end{proof}

    Note that  $f$ is a partially hyperbolic endomorphism if and only if $F$ is a partially hyperbolic diffeomorphism, by a similar argument to the Anosov case in \cite{ManePugh1975}.
    Hence, the lifted diffeomorphism $F:\RR^2\to \RR^2$ 
    admits an invariant partially hyperbolic splitting  
    \begin{align}
       T\RR^2=E^1\oplus_< E^2.\label{eq. Fsplitting}
    \end{align}
    One can also get the invariant splitting \eqref{eq. Fsplitting} by lifting the cone-fields of $f$ to $T\RR^2$ and using the formulas similar to  \eqref{eq. dominating direction} and \eqref{eq. domination bundle}.
    
    If $f$ is $sc$-DA, then $DF$ is contracting on $E^1$, 
    meanwhile, $E^1$ is uniquely integrable. 
    We denote the integral foliation on $\RR^2$ by  $\mcf_F^s$, the stable foliation. 
    Similarly, if $f$ is $cu$-DA, we can define the unstable foliation $\mcf_F^u\subset \RR^2$ 
    which is tangent to the expanding bundle $E^2$. 
    We call $\mcf_F^{s}$ and $\mcf_F^{u}$ the \emph{hyperbolic foliations}, 
    which are the stable and unstable foliations in the cases of $sc$-DA and $cu$-DA, respectively. 
    We refer to \cite{Pesinbook04} for the unique integrability of these hyperbolic foliations on $\RR^2$. 
    
    Two cases are a little different for the situation on $\TT^2$. 
    The stable foliation $\mcf^s_f$ of $sc$-DA always exists and is the projection of $\mcf^s_F$.
    However, the unstable foliation $\mcf^u_f$ for a $cu$-DA $f$ exists only if $f$ is special.
    
    Following the works of leaf conjugacy by Hammerlindl 
    (\cite{Ha13,H13}), 
    we know the lifted diffeomorphism $F:\RR^2\to \RR^2$ is dynamically coherent 
    (without a priori quasi-isometric property).
    Moreover, we have some common propositions for the cases of $sc$-DA and $cu$-DA as follows.

    \vspace{5pt}\noindent
    \textbf{Notation.}
    The following notations 
    include upper indexes $*^s,*^2$ for $sc$-DA and $*^1,*^u$ for $cu$-DA when we emphasize the hyperbolic part, respectively, 
    and only include upper indices $*^1,*^2$ for DA systems when we only point out the domination on $\RR^2$.

\begin{prop}[\cite{H13,HH21}]\label{2 prop foliation on R2}
    The bundle $E^i$ is uniquely integrable to a foliation $\mcf^i$ for $i=1,2$. 
    Denote the center foliation by $\mcf^c_F$. Then the following properties hold.
    \begin{enumerate}
        \item $\mcf^1$ and $\mcf^2$ admit the Global Product Structure, i.e., 
        for any $x,y\in\RR^2$, the manifold $\mcf^1(x)$ intersects with $\mcf^2(y)$ exactly once;
        \item The  foliation $\mcf^i$ is quasi-isometric, i.e., 
        there exist $C_1,C_2>0$ such that \[d_{\mcf^i}(x,y)<C_1|x-y|+C_2, \quad \forall\ x\in\RR^2\  {\rm and}\ y\in \mcf^i(x). \] 
   \end{enumerate}
\end{prop}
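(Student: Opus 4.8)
The plan is to transfer the corresponding statements for DA diffeomorphisms on $\TT^3$ (due to Hammerlindl, and the endomorphism adaptation in \cite{HH21}) to the lifted diffeomorphism $F:\RR^2\to\RR^2$. The key point is that once we know $f$ is a partially hyperbolic endomorphism on $\TT^2$, the lift $F$ is a partially hyperbolic diffeomorphism of $\RR^2$ with invariant splitting $T\RR^2=E^1\oplus_< E^2$ as in \eqref{eq. Fsplitting}, and $F$ is homotopic to the linear Anosov diffeomorphism $A:\RR^2\to\RR^2$ whose eigen-directions give $\widetilde{L}^s$ and $\widetilde{L}^u$. So $F$ behaves exactly like the lift of a DA diffeomorphism on $\TT^2$ — the obstruction to passing to the torus (the failure of the period to be $\ZZ^2$) never appears on the universal cover. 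First I would invoke the unique integrability of $E^1$ and $E^2$: for whichever of the two bundles is the hyperbolic one (contracting $E^1$ in the $sc$ case, expanding $E^2$ in the $cu$ case), unique integrability is classical stable/unstable manifold theory \cite{Pesinbook04}; for the center bundle, unique integrability on $\RR^2$ follows from Hammerlindl's leaf-conjugacy machinery \cite{Ha13,H13} applied to $F$, which yields dynamical coherence without assuming quasi-isometry a priori. This gives the foliations $\mcf^1,\mcf^2$ and hence $\mcf^c_F$.

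For item (1), the Global Product Structure, I would argue as follows. The linear model $A$ on $\RR^2$ has $\widetilde L^s$ and $\widetilde L^u$ in global product position (two transverse linear foliations by parallel lines always intersect exactly once). Using the a priori semi-conjugacy $H:\RR^2\to\RR^2$ of Proposition \ref{2 prop semi-conj in R2}, together with Proposition \ref{2 prop approach property of H}, one shows that $H$ maps leaves of $\mcf^1$ into leaves of $\widetilde L^{?}$ and leaves of $\mcf^2$ into leaves of $\widetilde L^{?}$ (the assignment depending on the $sc$ vs. $cu$ case), and that $H$ restricted to each leaf is proper and monotone. Existence of an intersection point of $\mcf^1(x)$ with $\mcf^2(y)$ then follows by pushing the intersection of the corresponding linear leaves back through $H$; uniqueness follows from the transversality of $E^1$ and $E^2$ together with a standard no-topological-circle argument (a second intersection would create a closed loop alternately tangent to $E^1$ and $E^2$, impossible since one of the two foliations has no compact leaves on $\RR^2$). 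This is essentially the argument in \cite{H13,HH21}, and I would only sketch it, citing those works for the details.

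For item (2), quasi-isometry of $\mcf^i$, the strategy is the usual bootstrap from the Global Product Structure plus the linear comparison. Having (1), one controls the "drift" of a leaf $\mcf^i(x)$ by comparing it, via $H$ and the uniform estimate $\|H-\mathrm{Id}\|_{C^0}<C_H$, with the corresponding straight linear leaf of $A$, which is trivially quasi-isometric; the Global Product Structure prevents the leaf from "turning back," so arc-length along $\mcf^i$ grows at most linearly in Euclidean distance. This again follows the template of \cite{H13,HH21} and I would present it as a short adaptation rather than in full.

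The main obstacle I expect is the center case: establishing unique integrability of $E^1$ (resp. $E^2$) in the non-hyperbolic slot and the dynamical coherence of $F$ on $\RR^2$ without assuming quasi-isometry, since the usual proofs on $\TT^3$ lean on compactness of the ambient manifold. The resolution is to quote Hammerlindl's results \cite{Ha13,H13} in the form adapted to endomorphisms in \cite{HH21}, where exactly this issue is handled; everything else (Global Product Structure, quasi-isometry) is then a comparatively routine consequence of the linear model together with Propositions \ref{2 prop semi-conj in R2} and \ref{2 prop approach property of H}.
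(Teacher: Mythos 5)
Your overall strategy --- work entirely on $\RR^2$ with the lifted diffeomorphism $F$ and import the Hammerlindl/Hammerlindl--Hertz machinery --- is the same as the paper's, and for the $cu$-DA case the paper indeed does nothing more than cite \cite{HH21} for dynamical coherence, the Global Product Structure and the quasi-isometry of $\mcf^u_F$ and $\mcf^c_F$. The genuine gap is in your item (2) for the \emph{center} foliation in the $sc$-DA case. You assert that, given the Global Product Structure and the comparison with the linear model via $\|H-{\rm Id}\|_{C^0}<C_H$, ``arc-length along $\mcf^i$ grows at most linearly in Euclidean distance'' because the leaf cannot ``turn back.'' This does not follow: the semi-conjugacy only controls the Euclidean displacement of endpoints, not arc length, and monotonicity of the projection $\pi^u$ along a center leaf (which is what GPS gives) does not by itself prevent a center segment of enormous length from having small endpoint displacement, since $E^c$ need not be uniformly transverse to the linear stable direction $\widetilde L^s$. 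Moreover, the proof of the center quasi-isometry in \cite[Lemma 3.3]{HH21} uses that $\mcf^c_F$ commutes with the $\ZZ^2$-action (so one can descend to the torus and use compactness); in the $sc$-DA case this is exactly what is \emph{not} known a priori --- indeed establishing it is essentially equivalent to the specialness/semi-conjugacy questions the paper is studying --- so the citation cannot be applied as a black box.

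The paper fills this gap with a quantitative argument adapted from \cite{H13}: it first establishes (i) a uniform bound $R$ on the length of $\pi^s(J^c)$ for any center segment $J^c$, (ii) a uniform bound on $\pi^u(J^s)$ for stable segments, and (iii) that any sufficiently long center segment has $\pi^u$-image of length at least any prescribed $C$; then a separation argument (the paper's Claim \ref{2 claim in 2.6}) upgrades (iii) to the statement that the \emph{endpoints} of a long center segment have $\pi^u$-projections far apart, and quasi-isometry follows by subdividing a long segment into blocks of fixed length. If you want your proof to stand, you must either reproduce this chain of estimates or find a substitute for the compactness step; the ``short adaptation'' you defer is precisely the part of the proposition that is not already in the cited references.
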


\begin{proof}
    By \cite{HH21}, this proposition holds for the $cu$-DA case. 
    Specifically, the dynamical coherence follows from \cite[Theorem A]{HH21}, 
    the Global Product Structure is given by \cite[Proposition 2.10]{HH21} 
    and the quasi-isometry of the foliations $\mcf^u_F$ and $\mcf^c_F$ are respectively given by \cite[Proposition 2.12(2)]{HH21} and \cite[Lemma 3.3]{HH21}. 
 
    Now we consider the $sc$-DA $f:\mtt$.
    The proofs for the dynamical coherence, the Global Product Structure, and the quasi-isometry of the foliation $\mcf^s_F$ are similar to the proofs in \cite{HH21}. 
    Here we briefly explain how it works for the $sc$-DA case. 
    Consider the inverse map $F^{-1}:\mrr$ of the lift of $sc$-DA $f$. 
    Note that $F^{-1}$ is partially hyperbolic diffeomorphism with invariant unstable bundle and is semi-conjugate to the hyperbolic diffeomorphism $A^{-1}:\mrr$. 
    Since the argument in \cite[Theorem A, Proposition 2.10 and Proposition 2.12(2)]{HH21} are completely in the universal cover $\RR^2$, 
    these properties hold for $F^{-1}$ and $\mcf^{\,c/u}_{F^{-1}}$. 
    Hence, we get that these properties also hold for $F$ and $\mcf^{\,s/c}_{F}$. 
    Then, it remains to prove the quasi-isometry of $\mcf^c_F$.

    The quasi-isometry of $\mcf^c_F$ in the $sc$-DA case is not as direct as above.   
    Note that the proof of \cite[Lemma 3.3]{HH21}, 
    i.e., the quasi-isometry of $\mcf^c_F$ in the $cu$-DA case, 
    needs $\mcf^c_F$ commuting with the $\ZZ^2$-actions. 
    This does not \emph{a priori} hold in the $sc$-DA case. 
    However, combining the argument in \cite{H13} and \cite{HH21}, $\mcf^c_F$ is also quasi-isometric in the $sc$-DA case. 
    Note that though the paper \cite{H13} deals with the $3$-nilmanifold case, the argument can be adapted for the $2$-torus. 
    We refer to the proof of \cite[Proposition 2.10]{HH21} where the authors of both \cite{H13, HH21} explain how the argument in \cite{H13} works for the $2$-torus as well.
   
    Now we give a brief proof of the quasi-isometry of $\mcf^c_F$ in the $sc$-DA case. 
    Denote by $\pi^{s/u}:\RR^2\to \tildeL^{s/u}(0)$ the projections along foliations $ \tildeL^{u/s}$, respectively. 
    The following properties are immediate from \cite{H13,HH21} to the $cu$-partially hyperbolic diffeomorphism $F^{-1}$.
    \begin{enumerate}
        \item (\cite[Lemma 2.3]{HH21}, \cite[Lemma 4.10]{H13}) 
        There exists $R>0$ such that for any center segment $J^c$, 
        the interval $\pi^s(J^c)$ has length at most $R$. 
        (This can be done by using the argument in \cite[Lemma 2.3]{HH21} or \cite[Lemma 4.10]{H13} to the $cu$-partially hyperbolic diffeomorphism $F^{-1}$.)
        
        \item (\cite[Corollary 2.5]{HH21}) There exists $R>0$ such that for any stable segment $J^s$, 
        the interval $\pi^u(J^s)$ has length at most $R$. 
        (This can be done by using the argument in \cite[Corollary 2.5]{HH21} to the $cu$-partially hyperbolic diffeomorphism $F^{-1}$.)
        
        \item (\cite[Lemma 2.9]{HH21}, \cite[Lemma 4.7]{H13}) 
        For any $C>0$, there exists $l>0$ such that any center segment $J^c$ of length greater than $l$ 
        is such that $\pi^u(J^c)$ has length greater than $C$. 
        (This can be proved by the same proof of \cite[Proposition 2.8 and Lemma 2.9]{HH21}, where one just needs to replace \cite[Corollary 2.5]{HH21} by the property given by the above item.) 
   \end{enumerate}

    Then, by modifying the proof of \cite[Lemma 4.13]{H13}, 
    we can get the following. 
\begin{claim}\label{2 claim in 2.6}
        For any $M>0$, there exists $l>0$ such that any center segment $J^c$ of length greater than $l$ has endpoints $p$ and $q$ with $|\pi^u(p)-\pi^u(q)|>M$.
\end{claim}
    \begin{proof}
       Let $J^c$ be a center curve with length great enough and $\alpha:[0,1]\to \RR^2$  be its parametrization. 
        Let $R$ be as in the items (1) and (2) above. 
        Taking $C=M+2R$ and $l$ in the item (3) above, 
        we get that there are $s,t\in[0,1]$ satisfying
        \begin{align}
             \pi^u\alpha(t)-\pi^u\alpha(s)>C=M+2R.\label{eq. claim 2.7.1}
        \end{align}
        Without loss of generality, we assume $s<t$
        %We only need to prove for the cases that 
        %$\pi^u\alpha(1)<\pi^u\alpha(t)$ {\color{black}or} %$\pi^u\alpha(0)>\pi^u\alpha(s)$ 
        (see FIGURE \ref{quasi-isometry}).  
        Here we identify $\tildeL^u(0)$ with $\RR$.
        
    \begin{figure}[htbp]
		\centering
		\includegraphics[width=8.6cm]{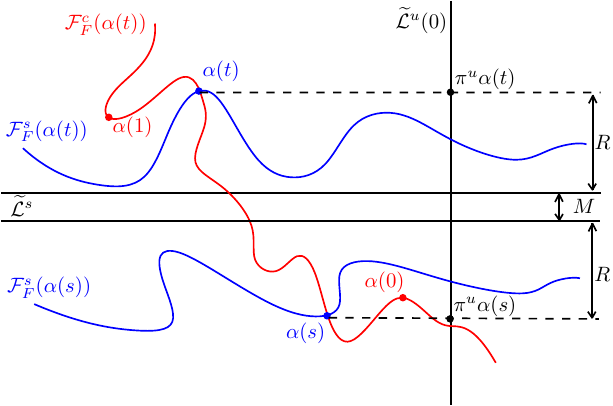}
		\caption{The projections of endpoints of $J^c$ on $\tildeL^u$ have distance bigger than $M$.}	
        \label{quasi-isometry}
	\end{figure}

    By the above item (2), for any $x\in\RR^2$, the complement $\RR^2 \setminus \mcf^s_F(x)$
    has connected components $B_-(x)$ and $B_+(x)$ satisfying:
    \begin{align*}
        H_R^-(x):=\big\{y\in\RR^2\ | \ \pi^u(y)<\pi^u(x)-R \big\} \subset B_-(x),\\
        H_R^+(x):=\big\{y\in\RR^2\ | \ \pi^u(y)>\pi^u(x)+R \big\} \subset B_+(x).
    \end{align*}
 
    Combining the item (2) with the Global Product Structure, 
    we get that the curve $\alpha((t,1])$ is located in the component $B_+(\alpha(t))$ of $\RR^2 \setminus \mcf^s_F(\alpha(t))$, 
    namely, $\alpha((t,1])\subset B_+(\alpha(t))$. 
    Hence, we have $\alpha((t,1])\subset \RR^2\setminus H^{-}_R(\alpha(t))$ which implies $\pi^u\alpha(1)-\pi^u\alpha(t)>-R$. 
    Similarly, we have $\alpha([0,s))\subset \RR^2\setminus H^{+}_R(\alpha(s))$ which implies $\pi^u\alpha(s)-\pi^u\alpha(0)>-R$. 
    Then by \eqref{eq. claim 2.7.1}, we get 
    \begin{align*}
    \pi^u\alpha(1)-\pi^u\alpha(0)
    &=
    \big( \pi^u\alpha(t)-\pi^u\alpha(s) \big)+
    \big( \pi^u\alpha(1)-\pi^u\alpha(t) \big)+
    \big( \pi^u\alpha(s)-\pi^u\alpha(0) \big)\\
    &>
    (M+2R)+(-R)+(-R)=M.    
    \end{align*}

    This ends the proof of Claim \ref{2 claim in 2.6}.
    \end{proof}

    Now we show that Claim \ref{2 claim in 2.6} is sufficient to get the quasi-isometry of $\mcf^c_F$. 
    Indeed, let constants $M$ and $l$ be as in Claim \ref{2 claim in 2.6}, 
    and $J^c$ be a center curve with endpoints $x,y$ and with length $L=kl+r$ 
    where $k\in\NN$ and $0\leqslant r<l$. 
    Then by Claim \ref{2 claim in 2.6}, $|\pi^u(x)-\pi^u(y)|\geqslant kM$. 
    Thus, there is a constant $\beta>0$ determined by the angle between $\tildeL^s$ and $\tildeL^u$ such that $d(x,y)\geqslant k\beta M$. 
    Then for $L\geqslant 2l$, one has $L-r\geqslant\frac{L}{2}$ and 
        \[d(x,y)\geqslant k\beta M
        =\frac{M}{l}\beta kl
        = \frac{M}{l}\beta (L-r)
        \geqslant\frac{M}{2l}\beta L.\]
    Hence, $d_{\mcf^c_F}(x,y)=L\leqslant\frac{2l}{M\beta}d(x,y)$ for $L\geqslant 2l$. 
    In particular, for any $L>0$, one has 
    \[ d_{\mcf^c_F}(x,y)=L\leqslant\frac{2l}{M\beta}d(x,y)+2l.\]
    This concludes the quasi-isometry of $\mcf^c_F$,
    and ends the proof of Proposition \ref{2 prop foliation on R2}.
\end{proof}

    Now we can get more information about the semi-conjugacy $H$ 
    restricted to the leaf of these foliations 
    with the nice properties established above.

\begin{prop}[\cite{HH21,HS21}]\label{2-3 prop foliation on R2}
    Let $H$ be as in Proposition \ref{2 prop semi-conj in R2}. Then 
    \begin{enumerate}
        \item The semi-conjugacy $H$ 
        maps $\mcf^1(x)$ and $\mcf^2(x)$ onto $\tildeL^s(H(x))$ and $\tildeL^u(H(x))$, respectively.
        \item The semi-conjugacy $H$ 
        restricted to a hyperbolic leaf 
        \[  H:\mcf_F^{s/u}(x) \to \tildeL^{s/u}(H(x)) \] 
        is a homeomorphism, respectively.
        \item For any $x\in\RR^2$, the preimage $H^{-1}(x)$ consists of 
        either a single point or a compact segment inside a center leaf. 
        Moreover, there exists a uniform upper bound for the length of $H^{-1}(x)$ for every $x\in\RR^2$.
   \end{enumerate}
\end{prop}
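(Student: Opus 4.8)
The plan is to establish all three items simultaneously for the $sc$-DA and $cu$-DA cases, exploiting that $H\circ F=A\circ H$ also yields $H\circ F^{-1}=A^{-1}\circ H$: in the $sc$-DA case the hyperbolic foliation is $\mcf^1=\mcf^s_F$ and the center foliation is $\mcf^2=\mcf^c_F$, while in the $cu$-DA case $\mcf^1=\mcf^c_F$ and $\mcf^2=\mcf^u_F$, and in both cases the goal is to show that $H$ carries $\mcf^1$-leaves into $\tildeL^s$-leaves and $\mcf^2$-leaves into $\tildeL^u$-leaves. I will use throughout the bound $\|H-{\rm Id}_{\RR^2}\|_{C^0}<C_H$ from Proposition \ref{2 prop semi-conj in R2}, the quasi-isometry of $\mcf^1,\mcf^2$ and their Global Product Structure from Proposition \ref{2 prop foliation on R2}, and — read off from the proof of that proposition — the uniform estimate that every center segment has bounded projection onto the transverse linear leaf, so that there is a single $R>0$ with $\mcf^c_F(x)\subset N_R(\tildeL^u(x))$ in the $sc$-DA case and $\mcf^c_F(x)\subset N_R(\tildeL^s(x))$ in the $cu$-DA case.

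First I would prove the inclusions in item (1). For a hyperbolic leaf this is the usual asymptotic argument: if $y\in\mcf^s_F(x)$ (resp. $y\in\mcf^u_F(x)$), then $d(F^n x,F^n y)\to 0$ as $n\to+\infty$ (resp. $n\to-\infty$), so uniform continuity of $H$ gives $d(A^n H(x),A^n H(y))\to 0$, and hyperbolicity of $A$ forces $H(y)-H(x)\in L^s$ (resp. $L^u$). For a center leaf, take $y\in\mcf^c_F(x)$, write $H(y)-H(x)=v^s+v^u$ in $\RR^2=L^s\oplus L^u$, and in the $sc$-DA case argue that $v^s=0$ by iterating $F^{-n}$: on one side $F^{-n}y\in\mcf^c_F(F^{-n}x)\subset N_R(\tildeL^u(F^{-n}x))$ with $R$ \emph{independent of $n$}, so the $L^s$-components of $F^{-n}x$ and $F^{-n}y$ stay within a bounded distance; on the other side, $\|H-{\rm Id}_{\RR^2}\|_{C^0}<C_H$ together with $H\circ F^{-n}=A^{-n}\circ H$ shows that this same quantity equals $A^{-n}v^s$ up to a bounded error, which blows up when $v^s\neq 0$. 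The $cu$-DA case is the mirror image under $F\leftrightarrow F^{-1}$, $A\leftrightarrow A^{-1}$.

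Next I would deal with surjectivity in (1) and with item (2). Each of $\mcf^1(x),\mcf^2(x)$ is a quasi-isometric line, hence properly embedded in $\RR^2$ with two ends escaping to infinity; by the inclusions just proved and $\|H-{\rm Id}_{\RR^2}\|_{C^0}<C_H$, the leaf lies in a bounded tube around the straight line $\tildeL^{s/u}(H(x))$, so $H$ restricted to the leaf is proper and sends its two ends to the two distinct ends of $\tildeL^{s/u}(H(x))$; its image is therefore a connected, closed, bi-infinite subset of that line, i.e. the whole line. This proves (1). For (2) it remains to verify that $H$ is injective on a hyperbolic leaf; if $y,z\in\mcf^s_F(x)$ with $H(y)=H(z)$, then $H(F^{-n}y)=H(F^{-n}z)$ gives $|F^{-n}y-F^{-n}z|<2C_H$ for all $n$, contradicting $d_{\mcf^s_F}(F^{-n}y,F^{-n}z)\to\infty$ (uniform expansion of $F^{-1}$ along $\mcf^s_F$) via quasi-isometry; the $cu$-DA case uses $F^n$ along $\mcf^u_F$. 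A proper continuous bijection between two lines is a homeomorphism, which gives (2).

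Finally item (3). The fiber $H^{-1}(z)$ is closed and contained in $B(z,C_H)$, hence compact. It lies in a single center leaf: for $y,y'\in H^{-1}(z)$ let $w$ be the unique point of $\mcf^s_F(y)\cap\mcf^c_F(y')$ (Global Product Structure; in the $cu$-DA case use $\mcf^u_F(y)\cap\mcf^c_F(y')$); by item (1), $H(w)\in\tildeL^s(z)\cap\tildeL^u(z)=\{z\}$, and since $H$ is injective on the hyperbolic leaf through $y$ with $H(w)=z=H(y)$ we get $w=y$, so $y\in\mcf^c_F(y')$; hence $H^{-1}(z)\subset\mcf^c_F(y)$, and by quasi-isometry of the center foliation its intrinsic length is at most $2C_1 C_H+C_2$, a bound independent of $z$. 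The remaining point — that $H^{-1}(z)$ is connected, equivalently that $H$ is weakly monotone along each center leaf as a map onto the transverse linear line — is where I expect the real work to be, since it cannot come from the soft estimates above (by Proposition \ref{main-prop}, $H$ does collapse nontrivial center segments in the $sc$-DA case); here I would follow \cite{HH21,HS21} and use that $H$ carries the hyperbolic foliation transverse to a center leaf onto the transverse linear foliation in an order-respecting fashion, which forces the point-preimages along a center leaf to be subintervals, hence compact center segments.
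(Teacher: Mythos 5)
Your treatment of items (1) and (2) and of the compactness, single-leaf containment, and length bound in item (3) is sound and runs along essentially the same lines as the paper (the paper is terser on (1)--(2), citing \cite{HH21}, while your properness/asymptotic arguments are a legitimate way to fill those in). The one genuine gap is exactly the step you flag at the end: the connectedness of $H^{-1}(z)$, i.e.\ the weak monotonicity of $H$ along center leaves. Your proposed mechanism --- that $H$ carries the transverse hyperbolic foliation onto the transverse linear foliation ``in an order-respecting fashion'' --- is circular: once you know $H$ maps $\mcf^c_F(x)$ onto $\tildeL^u(H(x))$, saying the induced map on the leaf space of the hyperbolic foliation respects the order \emph{is} the monotonicity statement, so nothing has been gained. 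A continuous surjection between two lines need not have connected fibers, and no soft estimate ($C^0$-closeness to the identity, properness, GPS) rules this out.

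The paper closes this gap with a concrete three-point argument that you should supply. Suppose $x\prec y\prec z$ on a center leaf with $H(x)=H(z)\neq H(y)$. All three images lie on a single unstable line of $A$, so $|A^nH(x)-A^nH(y)|\to\infty$; since $\|H-{\rm Id}_{\RR^2}\|_{C^0}<C_H$ this forces $|F^n(x)-F^n(y)|\to\infty$, hence $d_{\mcf^c_F}(F^n(x),F^n(y))\to\infty$, and likewise for the pair $(y,z)$. Because $F^n$ preserves the order $F^n(x)\prec F^n(y)\prec F^n(z)$, the intrinsic distance $d_{\mcf^c_F}(F^n(x),F^n(z))$ dominates both and also tends to infinity; quasi-isometry of $\mcf^c_F$ (Proposition \ref{2 prop foliation on R2}) then converts this back into $|F^n(x)-F^n(z)|\to\infty$, whence $|A^nH(x)-A^nH(z)|\to\infty$, contradicting $H(x)=H(z)$. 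Note this is where the quasi-isometry of the \emph{center} foliation (not just of the hyperbolic one) is genuinely used, which is why the paper goes to the trouble of establishing it in Proposition \ref{2 prop foliation on R2}; your proposal only invokes quasi-isometry of the center leaves for the final length bound.
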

\begin{proof}
    %One can follow the discussion of Prop 2.2-2.6 in \cite{HS21}, 
    %with cautions on Prop 2.6 which requires the non-accessible condition.
    It essentially follows the proof dealing with the $cu$-DA case in \cite{HH21}, 
    while the details in \cite{HS21} for DA diffeomorphisms are strongly related as well. 
    Just as in the last proposition, we briefly show the proof for the lifted $sc$-DA map.
    
    By the fact that the semi-conjugacy $H$ is close to the identity and uniformly continuous, 
    one can get $H$ restricted to the hyperbolic leaf 
        \[ H:\mcf_F^{s}(x) \to \tildeL^{s}(H(x)), \] 
    is a homeomorphism.
    Then, according to the Global Product Structure, 
    it follows that $H$ preserves the center leaf, 
    and maps two distinct center leaves to the different lines in $\tildeL^{u}$, 
    namely, $H$ is a bijection on the space of center leaves.
    Thus, $H$ is surjective when restricted to center leaves.
    
    Let us equip ``$\prec$'' the order 
    corresponding to the relative orientation on each leaf of $\mcf^c_F$. 
    Without loss of generality, we assume that $F$ preserves this orientation. 
    Now we prove that the semi-conjugacy $H$ is monotonic along center leaves.
    If not, taking $x\prec y\prec z$ along a center leaf $J^c$ with $H(x)=H(z)\neq H(y)$, 
    the forward iterates of $A$ will expand the distances 
    between $H(y)$ and $H(x)$, $H(z)$, respectively. 
    By the fact that $H$ is a finite distance from the identity, one has 
    \[
        d_{\mcf_F^c}(F^n(x),F^n(y))\to \infty 
        \quand  
        d_{\mcf_F^c}(F^n(y),F^n(z))\to \infty .
    \]
    And the order $F^n(x)\prec F^n(y)\prec F^n(z)$ implies
        \[ d_{\mcf_F^c}(F^n(x),F^n(z))\to \infty.\]
    Using facts that center leaves are quasi-isometrically embedded and $H$ is uniformly continuous, 
        \[ |A^nH(x)-A^nH(z)|\to \infty,\] 
    which contradicts $H(x)=H(z)$. 
    
    Based on the monotonicity of $H$ along center leaves, 
    for any $x\in\RR^2$, the non-empty set $H^{-1}(x)$ 
    which is a compact subset of some center leaf $J^c$ will be connected, 
    hence, it consists of either a single point or a compact segment inside the center leaf $J^c$. 
    Denote the two endpoints of $H^{-1}(x)$ by $x_1$ and $x_2$. 
    Since $\mcf^c_F$ is quasi-isometric (Proposition \ref{2 prop foliation on R2}), 
    one has 
        \[  d_{\mcf^c_F}(x_1,x_2)\leqslant C_1|x_1-x_2|+C_2. \]
    It follows from  $\|H-{\rm Id}_{\RR^2}\|_{C^0}<C_H$ that 
        \[  d_{\mcf^c_F}(x_1,x_2)\leqslant 2C_HC_1+C_2.       \] 
    Then we obtain $C\triangleq2C_HC_1+C_2$, the uniform upper bound of the length of $H^{-1}(x)$.
\end{proof}

    As a direct corollary, we can project the properties from the ones on the universal cover, 
    when assuming the existence of a semi-conjugacy on $\TT^2$.

\begin{cor}\label{2 cor semi-conj. Fc}
    Let $f:\mtt$ be a $C^1$-smooth DA endomorphism 
    and semi-conjugate to an Anosov endomorphism $A:\mtt$ via $h:\mtt$. 
    Then $f$  has the center bundle $E^c_f\subset T\TT^2$ 
    which is uniquely integrable to the center foliation $\mcf^c_f$. 
    Moreover, for any $x\in\TT^2$, the preimage $h^{-1}(x)$ consists of 
    either a single point or a compact segment with a uniformly upper bounded length inside the center leaf.
\end{cor}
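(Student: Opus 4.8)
The plan is to work on the universal cover $\RR^2$ throughout and then push the leaf structure of the lifted semi-conjugacy down to $\TT^2$. The first step is to normalize the lift of $h$. Since $h$ is defined on $\TT^2$ and homotopic to ${\rm Id}_{\TT^2}$, it lifts to a continuous $\widehat H\colon\RR^2\to\RR^2$ that commutes with the deck transformations and satisfies $\|\widehat H-{\rm Id}_{\RR^2}\|_{C^0}<\infty$; comparing the two lifts $\widehat H\circ F$ and $A\circ\widehat H$ of $h\circ f=A\circ h$ produces a constant $k\in\ZZ^2$ with $\widehat H\circ F=A\circ\widehat H+k$, and since $A$ is hyperbolic the matrix $I-A$ is invertible, so $H:=\widehat H-(I-A)^{-1}k$ still commutes with the deck transformations, is still uniformly continuous and a bounded distance from the identity, and now satisfies $H\circ F=A\circ H$. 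By the uniqueness clause of Proposition \ref{2 prop semi-conj in R2}, this $H$ is the semi-conjugacy provided there; thus the hypothesis of the corollary is encoded as $H\circ T_n=T_n\circ H$ for every $n\in\ZZ^2$.

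The core step is to recognize the center foliation on $\RR^2$ as a deck-invariant object. By Proposition \ref{2 prop foliation on R2} the center bundle $E^c_F$ of $F$ — this is $E^1_F$ when $f$ is $cu$-DA and $E^2_F$ when $f$ is $sc$-DA — is uniquely integrable to a foliation $\mcf^c_F$, and by Proposition \ref{2-3 prop foliation on R2} the map $H$ sends each leaf of $\mcf^c_F$ onto a leaf of a linear foliation $\tildeL^\sigma$ of $A$ (with $\sigma=s$ for $cu$-DA and $\sigma=u$ for $sc$-DA) and induces a bijection on the corresponding spaces of leaves. A short diagram chase (using only that distinct leaves of $\tildeL^\sigma$ are disjoint) then shows that for any leaf $\ell$ of $\tildeL^\sigma$ the set $H^{-1}(\ell)$ is exactly the unique leaf of $\mcf^c_F$ sent onto $\ell$ by $H$, so that
\[
\mcf^c_F=\bigl\{\,H^{-1}(\ell)\ \big|\ \ell\ \text{a leaf of}\ \tildeL^\sigma\,\bigr\}.
\]
Since $\tildeL^\sigma$ is translation invariant and $H$ commutes with the deck transformations, $T_n\bigl(H^{-1}(\ell)\bigr)=H^{-1}(T_n\ell)$ is again a leaf of $\mcf^c_F$ for every $n\in\ZZ^2$; hence $\mcf^c_F$ is invariant under all deck transformations and descends to a foliation $\mcf^c_f$ of $\TT^2$. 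Its tangent distribution $E^c_f$ is continuous and $Df$-invariant (because $F$ permutes the leaves of $\mcf^c_F$, being a diffeomorphism preserving $E^c_F$), and it is uniquely integrable since any two integral curves of $E^c_f$ through a point would lift to two integral curves of $E^c_F$ through a point.

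It remains to describe the fibers of $h$. Fix $y\in\TT^2$ and a lift $\tilde y\in\RR^2$; as $H$ commutes with the deck transformations, $H^{-1}(\tilde y+n)=T_n\bigl(H^{-1}(\tilde y)\bigr)$ for all $n\in\ZZ^2$, so $h^{-1}(y)=\pi\bigl(H^{-1}(\tilde y)\bigr)$ is independent of the chosen lift. By Proposition \ref{2-3 prop foliation on R2}(3), $H^{-1}(\tilde y)$ is either a single point or a compact segment of length at most a universal constant $C$ inside the center leaf $\mcf^c_F(\tilde y)$. Moreover $\pi$ is injective on $H^{-1}(\tilde y)$: if $z$ and $z+n$ both lie in $H^{-1}(\tilde y)$ with $n\in\ZZ^2$, then $\tilde y=H(z+n)=H(z)+n=\tilde y+n$, forcing $n=0$. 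Therefore $h^{-1}(y)$ is either a single point or a compact segment of length at most $C$ inside the center leaf $\mcf^c_f(y)$, which is the assertion.

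I expect the real weight to be carried by the identification $\mcf^c_F=\{H^{-1}(\ell)\}$ and the deck-invariance it yields: this is precisely where one uses that the semi-conjugacy exists \emph{on $\TT^2$} and not merely on $\RR^2$, and it rests on the bijectivity of the leaf map induced by $H$, the one substantive input imported from Propositions \ref{2 prop foliation on R2} and \ref{2-3 prop foliation on R2}. The remaining points — normalizing the lift, descending unique integrability, and the injectivity of $\pi$ on each fiber — are routine covering-space bookkeeping.
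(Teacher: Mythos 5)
Your proof is correct and follows essentially the same route as the paper: deck-commutation of the normalized lift $H$, the fact that $H$ maps center leaves bijectively onto leaves of the corresponding linear foliation (via the Global Product Structure), hence $\mcf^c_F(x+n)=\mcf^c_F(x)+n$ so the foliation descends, and Proposition \ref{2-3 prop foliation on R2}(3) for the fibers. The only cosmetic differences are that you spell out the normalization of the lift (which the paper leaves implicit) and run the same argument in the $cu$-DA case, where the paper instead observes that the bundle $E^1_f$ is always $Df$-invariant on $\TT^2$ without invoking $h$.
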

\begin{proof}
    It suffices to prove the existence of the center bundle $E^c_f$. 
    If so, the unique integrability of $E^c_f$ and the property of $h^{-1}$ follow immediately
    from Proposition \ref{2 prop foliation on R2} and Proposition \ref{2-3 prop foliation on R2}.

    When $f$ is $cu$-DA, the existence of the $Df$-invariant center bundle $E^c_f$ is guaranteed 
    by iterating backward the center cone-field along positive orbits. 
    
    Now let $f$ be $sc$-DA. 
    Since $f$ is semi-conjugate to $A$, 
    the lifted semi-conjugacy $H:\RR^2\to\RR^2$ satisfies $H(x+n)=H(x)+n$, for any $x\in\RR^2$ and $n\in\ZZ^2$. 
    Hence, one has
    \begin{align*}
        H\big( \mcf^c_F(x+n)  \big) 
        &= \tildeL^{u}\big(H(x+n)\big)=\tildeL^{u}\big(H(x)+n\big)\\
        &=\tildeL^{u}\big(H(x)\big)+n=H\big(\mcf^c_F(x)\big)+n
        =H\big(\mcf^c_F(x)+n\big).
    \end{align*}
    Since $H$ is homeomorphic along $\mcf^s_F$, 
    it follows from the Global Product Structure
    that \[\mcf^c_F(x+n) =\mcf^c_F(x)+n,\] for any $x\in\RR^2$ and $n\in\ZZ^2$. 
    Note that the center direction of $f$ is continuous with respect to $f$-orbits, 
    the natural projection of the set of $F$-orbits is dense in $\TT^2_f$. 
    Hence, one can get the existence of the $Df$-invariant center bundle $E^c_f$. 
\end{proof}

\subsubsection{The set of injective points}

    Let $H$ be as in Proposition \ref{2 prop semi-conj in R2}, 
    we now focus on preparing some technical properties for the set of injective points 
    associated with the surjection $H$.
    
    Throughout this subsection, 
    we assume that $f:\mtt$ is semi-conjugate to $A:\mtt$.
    Then $H$ commutes with the deck transformations.
    Denote by \[h:=\pi\circ H\circ \pi^{-1}:\mtt\]
    the semi-conjugacy between $f$ and $A$. 
    We define \[\Gamma:=\big\{x\in\RR^2\ |\ H^{-1}\circ H(x)=\{x\} \big\},\] 
    and denote by $\Bar{\Gamma}$ the closure of $\Gamma$ 
    with respect to the topology of $\RR^2$. 
    
    By Corollary \ref{2 cor semi-conj. Fc}, 
    the center foliation $\mcf^c_f$ exists 
    and can be equipped with an order ``$\prec$'' as well. 
    Moreover, for any $x\in\TT^2$, 
    we can define the endpoints of the set $h^{-1}\circ h(x)$ by $x^-$ and $x^+$ 
    such that $x^-\preceq x^+$, that is,  $x^-\prec x^+$ or $x^-=x^+$. 
    Here $x^-=x^+$ if and only if $h^{-1}\circ h(x)=\{x\}$.

    Following the spirit of the work \cite{HS21}, 
    we describe the structure of $\Bar{\Gamma}$ as follows. 

\begin{prop}[\cite{HS21}]\label{2 prop injective point set}
    Let $H$ be as in Proposition \ref{2 prop semi-conj in R2}. 
    Assume $H$ commutes with the deck transformations.
    Then one has the following.
    \begin{enumerate}
        \item $\Bar{\Gamma}$ is $F^{\pm 1}$-invariant.
        
        \item $\Bar{\Gamma}+\ZZ^2=\Bar{\Gamma}$. 
            In particular, the set $\Lambda:=\pi(\bar{\Gamma})$ is $f^{\pm 1}$-invariant 
        and 
        \begin{align}
            \Lambda
            =\overline{\big\{x\in\TT^2~ |~  h^{-1}\circ h(x)=\{x\}\big\}} 
            =\bigcup_{x\in\TT^2}\{x^+, x^-\}.    \label{eq. 2. Lambda}
        \end{align}
        %In particular, the set $\Lambda$ and is $f^{\pm 1}$-invariant.
        %\[\Lambda=\pi(\Bar{\Gamma})=\overline{\big\{x\in\TT^2~ |~  h^{-1}\circ h(x)=\{x\}\big\}}.\]
        \item $\Bar{\Gamma}$ is $\mcf_F^{s/u}$-saturated and $f|_{\Lambda}$ is special.  
            In particular, $\Lambda$ is $\mcf_f^{s}$-saturated when $f$ is $sc$-DA, 
            and $\Lambda$ is $\mcf_f^{u}$-saturated when $f$ is $cu$-DA.
        
        \item Given $x\in \Lambda$, if $x=x^{\sigma}$, 
            then $y=y^{\sigma}$ for any $y\in \mcf_f^{s/u}(x)\subset \Lambda$ and $\sigma=+,-$.
        
        \item $\overline{{\rm Per}(f|_{\Lambda})}=\Lambda$.
    \end{enumerate}
\end{prop}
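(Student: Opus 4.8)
The plan is to obtain (1) and (2) directly from the relation $H\circ F=A\circ H$ together with $H$ commuting with the deck transformations, and to reduce (3)--(5) to a product normal form for $H$ coming from the Global Product Structure. For (1): if $x\in\Gamma$ and $H(y)=H(F(x))=A\circ H(x)$, applying $F^{-1}$ (a diffeomorphism of $\RR^2$) and the inverse of the linear isomorphism $A$ gives $H(F^{-1}(y))=H(x)$, hence $F^{-1}(y)=x$; the reverse inclusion and the case of $F$ are identical, so $F^{\pm1}(\Gamma)=\Gamma$, and $\bar\Gamma$ is $F^{\pm1}$-invariant since $F$ is a homeomorphism. For (2): $H(y)=H(x+n)=H(x)+n$ forces $H(y-n)=H(x)$, so $\Gamma+\ZZ^2=\Gamma$ and $\bar\Gamma+\ZZ^2=\bar\Gamma$; hence $\Lambda=\pi(\bar\Gamma)$ is closed and $f^{\pm1}$-invariant with $\pi^{-1}(\Lambda)=\bar\Gamma$, and since each fiber $H^{-1}(H(x))$ is a connected center segment while $x+\ZZ^2$ is discrete, $\pi(x)$ is $h$-injective precisely when $x\in\Gamma$, so $\overline{\{h\text{-injective points}\}}=\pi(\bar\Gamma)$. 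I would then record the normal form: by Proposition~\ref{2 prop foliation on R2} the transverse foliations $(\mcf^c_F,\mcf^{s/u}_F)$ give global coordinates $(c,h)$ on $\RR^2$ (with $c$ constant along center leaves), and $A$ acquires analogous coordinates from $(\tildeL^{c'},\tildeL^h)$, where $\mcf^h_F=\mcf^{s/u}_F$, $\tildeL^{c'}$ is the $A$-image of the center foliation and $\tildeL^h$ that of $\mcf^h_F$; since $H$ sends each center leaf onto a single $\tildeL^{c'}$-leaf and each hyperbolic leaf onto a single $\tildeL^h$-leaf (Proposition~\ref{2-3 prop foliation on R2}), in these coordinates $H(c,h)=(\Psi(c),\Phi(h))$ is a \emph{product}, with $\Psi$ a homeomorphism of $\RR$ (as $H$ is a bijection on the space of center leaves, cf.\ the proof of Proposition~\ref{2-3 prop foliation on R2}) and $\Phi$ a monotone continuous surjection of $\RR$ (by the monotonicity and surjectivity of $H$ along center leaves).

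\textbf{Items (3), (4) and the identity in (2).} In the normal form, $H^{-1}(H(c_0,h_0))=\{c_0\}\times\Phi^{-1}(\Phi(h_0))$, so $\Gamma=\RR\times S$ with $S:=\{h:\Phi^{-1}\Phi(h)=\{h\}\}$; thus $\Gamma$, hence $\bar\Gamma=\RR\times\bar S$, is a union of hyperbolic leaves, which is (3). That $f|_\Lambda$ is special and $\Lambda$ is $\mcf^{s/u}_f$-saturated then follows by descending $\mcf^{s/u}_F|_{\bar\Gamma}$ to $\TT^2$: $\bar\Gamma$ is $\ZZ^2$-invariant, in the $sc$-case $\mcf^s_F$ is the lift of the stable foliation of $\TT^2$, in the $cu$-case $H$ is injective on the unstable leaves meeting $\Gamma$, and together with the $Df$-invariant center bundle of Corollary~\ref{2 cor semi-conj. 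Fc} this yields an invariant splitting over $\Lambda$. For (4): the $\sigma$-endpoint of the fiber through $(c_0,h_0)$ is $(c_0,h_0^\sigma)$ with $h_0^+=\sup\Phi^{-1}\Phi(h_0)$ and $h_0^-=\inf\Phi^{-1}\Phi(h_0)$, quantities depending only on $h_0$; so $x=x^\sigma$ iff $h_0=h_0^\sigma$, and then every $y=(c_1,h_0)\in\mcf^{s/u}_F(x)$ satisfies $y=y^\sigma$. Finally $\bigcup_x\{x^+,x^-\}=\RR\times\big(\{h^+:h\in\RR\}\cup\{h^-:h\in\RR\}\big)=\RR\times\big(S\cup\{\text{endpoints of the nondegenerate flat intervals of }\Phi\}\big)$; each such endpoint is a one-sided limit of points of $S$ (otherwise $\Phi$ would be constant on an abutting interval, enlarging the flat interval), so this set equals $\bar S$, whence $\bigcup_x\{x^+,x^-\}=\RR\times\bar S=\bar\Gamma$ and, projecting, \eqref{eq. 2. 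Lambda} holds.

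\textbf{Item (5).} This is the heart of the matter. One inclusion is clear since $\Lambda$ is closed and $f$-invariant. For $\Lambda\subseteq\overline{{\rm Per}(f|_\Lambda)}$, recall ${\rm Per}(A)$ is dense in $\TT^2$ (e.g.\ all rationals with denominator coprime to $\det A$ are periodic), so $\pi^{-1}({\rm Per}(A))$ is dense in $\RR^2$. If $\tilde q$ is a lift of an $A$-periodic point with $A^m\tilde q=\tilde q+n$, then $F^m$ maps the compact center segment $H^{-1}(\tilde q)$ \emph{onto} $H^{-1}(A^m\tilde q)=H^{-1}(\tilde q)+n$ (as $F$ and $A$ are invertible on $\RR^2$ and $H$ commutes with $T_n$), so $T_{-n}\circ F^m$ is a homeomorphism of $H^{-1}(\tilde q)$ onto itself; after passing to a power so that it preserves the center orientation it fixes both endpoints of $H^{-1}(\tilde q)$, and projecting shows the two endpoints of $h^{-1}(\pi(\tilde q))$ are periodic points of $f$, lying in $\Lambda$ by \eqref{eq. 2. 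Lambda}. For density, fix $w\in\Lambda$; by \eqref{eq. 2. Lambda} we may write $w=x^\sigma$, say $\sigma=+$, and in the normal form $w$ corresponds to $(c_*,v_*)$ with $v_*=\sup\Phi^{-1}\Phi(v_*)$, so $\sup\Phi^{-1}(\eta)\to v_*$ as $\eta\downarrow\Phi(v_*)$. Since $\pi^{-1}({\rm Per}(A))$ is dense in $\RR^2$ (hence, via the coordinate homeomorphism, in the $(\tilde c,\tilde h)$-plane), choose lifts $\tilde q_k=(\gamma_k,\eta_k)$ of $A$-periodic points with $\gamma_k\to\Psi(c_*)$ and $\eta_k\downarrow\Phi(v_*)$; the upper endpoints $w_k=(\Psi^{-1}\gamma_k,\sup\Phi^{-1}\eta_k)$ of $H^{-1}(\tilde q_k)$ then project to periodic points of $f$ by the previous paragraph, and $w_k\to(c_*,v_*)$, so $\pi(w_k)\in{\rm Per}(f|_\Lambda)$ with $\pi(w_k)\to w$.

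\textbf{Main obstacle.} The only genuine difficulty is item (5): because $H$ collapses center segments, the fibers $H^{-1}(\tilde q_k)$ jump as $\tilde q_k$ moves, and the approximation succeeds only if the target is approached from the correct side. This is available precisely because \eqref{eq. 2. Lambda} forces every point of $\Lambda$ to be an \emph{endpoint} (hence a one-sided limit) of its own fiber, and because ${\rm Per}(A)$ is dense enough that its lifts can be selected on that side; items (1)--(4) are soft once the product normal form is in place, and the argument as a whole follows \cite{HS21}.
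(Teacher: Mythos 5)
Your proof is correct and rests on the same ingredients as the paper's: equivariance of $H$ under $F$, $A$ and the deck transformations for items (1)--(2); the leaf-mapping properties of Propositions~\ref{2 prop foliation on R2} and \ref{2-3 prop foliation on R2} for (3)--(4); and density of ${\rm Per}(A)$ together with the endpoint structure of the fibers $H^{-1}(\cdot)$ for (5). What you do differently is to package Propositions~\ref{2 prop foliation on R2}--\ref{2-3 prop foliation on R2} into an explicit product normal form $H(c,h)=(\Psi(c),\Phi(h))$ in leaf coordinates, which makes (3), (4) and the identity \eqref{eq. 2. Lambda} essentially tautological ($\Gamma=\RR\times S$, $\bar\Gamma=\RR\times\bar S$); the paper argues leaf-by-leaf without naming this structure and defers the one-sided approximation of fiber endpoints by injective points to Lemma~\ref{2 lem Cantor property}, which it then invokes both for \eqref{eq. 2. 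Lambda} and for item (5) (via the openness of $h(U)$). Two places where your write-up is thinner than it should be. First, the claim that every endpoint of a nondegenerate flat interval of $\Phi$ lies in $\bar S$: the parenthetical ``otherwise $\Phi$ would be constant on an abutting interval'' needs either a connectedness argument (an interval disjoint from $S$ is covered by countably many disjoint maximal flat intervals, hence by a single one) or the continuity argument of Lemma~\ref{2 lem Cantor property}(1); this is precisely the content the paper isolates in that lemma, so the gap is fillable but should be acknowledged. Second, in the $cu$-case of item (3), descending $E^u_F|_{\bar\Gamma}$ to $\TT^2$ requires showing $\mcf^u_F(z+n)=\mcf^u_F(z)+n$ for all $z\in\bar\Gamma$ and $n\in\ZZ^2$, which the paper derives from $H\big(\mcf^u_F(x+n)-n\big)=H\big(\mcf^u_F(x)\big)$ for $x\in\Gamma$ plus continuity; ``$H$ is injective on the unstable leaves meeting $\Gamma$'' is not by itself the statement needed. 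Neither point is fatal, and your treatment of item (5) — the return map $T_{-n}\circ F^m$ acting on the compact fiber, and the one-sided choice $\eta_k\downarrow\Phi(v_*)$ so that $\sup\Phi^{-1}(\eta_k)\to v_*$ — is a correct and more explicit version of the paper's three-line argument.
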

\begin{proof}
    For the first item, 
    let $x\in\Gamma$. Then 
        \[  H^{-1}\circ H\big(F^{\pm 1} (x)\big)
        =  H^{-1}\circ A^{\pm 1}\circ  H(x)= F^{\pm 1} \circ H^{-1}\circ H(x)
        = \{F^{\pm 1} (x) \} .\]
    Hence, $F^{\pm 1}(x)\in \Gamma$. 
    It follows from the continuity that $\Bar{\Gamma}$ is $F^{\pm 1}$-invariant.

    For the second item, 
    let $x\in\Gamma$ and $n\in \ZZ^2$. 
    We claim that $H^{-1}\circ H(x+n)=\{x+n\}$. 
    Indeed, let $y\in H^{-1}\circ H(x+n)$, 
    then $H(y)=H(x+n)=H(x)+n$, that is, $H(y-n)=H(x)$. 
    Since $x\in\Gamma$, one has $y=x+n$. 
    Hence, $\Gamma+\ZZ^2=\Gamma$ and $\Bar{\Gamma}+\ZZ^2=\Bar{\Gamma}$. 

    Now the set $\Lambda:=\pi(\bar{\Gamma})$ satisfies  $\pi^{-1}(\Lambda)=\Bar{\Gamma}$  
    and is $f^{\pm 1}$-invariant by the first item. 
    Indeed, let $x\in \Lambda$. Then as a set $f^{\pm 1}(x)=\pi\circ F^{\pm 1} \circ \pi^{-1}(x)$.  
    Since $ \pi^{-1}(x)\subset \Bar{\Gamma}$ and $\Bar{\Gamma}$ is $F^{\pm 1}$-invariant, 
    one has $F^{\pm 1} \circ \pi^{-1}(x)\subset \Bar{\Gamma}$ and $f^{\pm 1}(x)\subset \Lambda$. 

    Note that $\Lambda=\pi(\Bar{\Gamma})=\overline{\pi(\Gamma)}$ 
    since $\Bar{\Gamma}$ is $\ZZ^2$-periodic. 
    It is clear that 
    \begin{align}
       \pi(\Gamma)
       =\big\{x\in\TT^2~ |~  h^{-1}\circ h(x)
       =\{x\}\big\},    \label{eq. 2. Lambda set 1} 
    \end{align}
    since $H$ commutes with the deck transformations. 
    On the other hand, by Proposition \ref{2-3 prop foliation on R2},
    \begin{align}
        \bar{\Gamma}=
        \bigcup_{x\in\RR^2} \big\{ y\in \RR^2\ |\ y\  \text{is an endpoint of} \ H^{-1}\circ H(x)\big\}, 
        \label{eq. 2. Lambda set 2}
    \end{align}
    since for any $x\in \RR^2\setminus \Gamma$, every endpoint of $H^{-1}\circ H(x)$  
    can be approached by points in  $\Gamma$. 
    Combining \eqref{eq. 2. Lambda set 1} and \eqref{eq. 2. Lambda set 2}, one just has \eqref{eq. 2. Lambda}.  

    For the third item, 
    by Proposition \ref{2-3 prop foliation on R2}, 
    $\Bar{\Gamma}$ is $\mcf^{s/u}_F$-saturated. 
    Let us consider $f|_{\Lambda}$ for the cases of $sc$-DA and $cu$-DA, respectively.
\begin{itemize}
    \item In the $sc$-DA case, 
        it follows from Corollary \ref{2 cor semi-conj. Fc} that $f$ is special on the whole $\TT^2$. 
        And the foliation $\mcf^s_f$ exists and is the projection of $\mcf^s_F$. 
        Hence, $\Lambda$ is $\mcf^s_f$-saturated.
    \item In the $cu$-DA case, 
        let $x\in\Gamma$ and $n\in\ZZ^2$. Then 
        \begin{align*}
              H\big( \mcf^u_F(x+n)-n  \big) &=  H\big( \mcf^u_F(x+n) \big)-n \\
              &= \tildeL^u \big( H(x)+n \big)-n
              =  \tildeL^u \big( H(x)\big)=H\big(\mcf^u_F(x)\big). 
        \end{align*}
        Since $x\in\Gamma$, one has $\mcf^u_F(x+n)-n = \mcf^u_F(x)$ for any $n\in\ZZ^2$. 
        By the continuity of $\mcf^u_F$,  
            $\mcf^u_F(z+n) = \mcf^u_F(z)+n$ for any $z\in \bar\Gamma$ and $n\in\ZZ^2$.
        It follows that $E^u_F(z+n)=DT_n(E^u(z))$ for any $z\in \bar\Gamma$ and $n\in\ZZ^2$. 
        Recall that $T_n:\RR^2\to \RR^2$ is the deck transformation.
        Hence, \[T_{\Lambda}=E^c_f\oplus D\pi(E^u_F)\] is an invariant hyperbolic splitting,  
        $f|_{\Lambda}$ is special and $\Lambda$ is $\mcf^u_f$-saturated. 
\end{itemize}

    For the fourth item, 
    we just prove for $\sigma=+$ and $\mcf^u_f$, 
    since other cases are similar. Note that we do not know $f|_{\TT^2\setminus \Lambda}$ is special yet,
    so it will be convenient to deal with this case in $\RR^2$. 
    We equip the same orientation to $\mcf^c_F$. 
    Assume that there exist $x=x^+\in\bar\Gamma$ and $y\in \mcf^u_F(x)$ with $y\neq y^+$.
    Then one has $y\prec y^+$.  
    Let $z = \mcf^u_F(y^+)\cap \mcf^c_F(x^+)$, 
    one has $x^+\prec z$ by 
    the holonomy map along unstable foliation.
    However, \[H(\mcf^u_F(z))=H(\mcf^u_F(y^+))=H(\mcf^u_F(y))=H(\mcf^u_F(x^+)),\] 
    which implies $H(x^+)=H(z)$. It contradicts the choice of $x^+$.

    For the fifth item, 
    let $U$ be an open set with respect to the topology of $\Lambda$. 
    Then $h(U)$ is an open set in $\TT^2$ 
    (see a precise proof in the next independent Lemma \ref{2 lem Cantor property}).
    It follows that there exists $p'\in{\rm Per}(A)\cap h(U)$. 
    Hence, the extreme point(s) of $H^{-1}(p')$ will be periodic in $U$.
    %If $\#h^{-1}(p')=1$, then $h^{-1}(p')\in {\rm Per}(f) \cap U$.  
    %Otherwise, $\#h^{-1}(p')\neq 1$, let  $p^+,p^-$ be two endpoints of $h^{-1}(p')$, 
    %then $p^+,p^-\in {\rm Per}(f) \cap U$. 
\end{proof}

    To end this section, we give a further property of the semi-conjugacy $h$ along center leaves, 
    which can be regarded as a Cantor-like property.
    
\begin{lem}\label{2 lem Cantor property}
    Let $\Lambda$ be given in Proposition \ref{2 prop injective point set} 
    and $h$ be the semi-conjugacy between $f$ and $A$. 
    Then one has the following:
    \begin{enumerate}
        \item For any $x\in\TT^2$ and $y\in \mcf^c_f(x)$ with $x^+\prec y$ (or $y \prec x^-$), 
            there exists $z\in\mcf^c_f(x)$ such that 
            $h^{-1}\circ h(z)=\{z\}$ and $x^+\prec z\prec y$ (or $y\prec z\prec x^-$).
        \item Let $x\in\TT^2$ and $y_n\in \mcf^c_f(x)$. 
            If $y_n\to x^{\sigma}$ as $n\to+\infty$, 
            then $y_n^\sigma\to x^\sigma$ where $\sigma=+$ or $-$. 
            In particular, if $y_n\prec x^-$ or $x^+\prec y_n$, 
            then $y_n^{-\sigma}\to x^\sigma$ as $n\to+\infty$.
        \item For any $x\in \Lambda$ and $\e>0$, 
            the $\e$-ball of $x$, $B_\e(x)\subset \TT^2$ satisfies that 
            the set $h\big(B_\e(x)\cap \Lambda \big)$ has non-empty interior on $\TT^2$.
    \end{enumerate}
\end{lem}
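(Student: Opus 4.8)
The plan is to reduce all three statements to facts on the universal cover $\RR^2$, using two structural properties already established: along any center leaf $\ell=\mcf^c_F(\cdot)\cong\RR$ the map $H|_\ell$ is a monotone continuous surjection onto a leaf of $\tildeL^{s}$ or $\tildeL^{u}$, and the $H$-preimage of any point is a single point or a compact subsegment of one center leaf of uniformly bounded length (Proposition \ref{2-3 prop foliation on R2} and Corollary \ref{2 cor semi-conj. Fc}). For item (1) I would run a pure cardinality argument. Lift $x$ to $\tilde x$ and $y$ to $\tilde y\in\ell:=\mcf^c_F(\tilde x)$ with $\tilde x^{+}\prec\tilde y$. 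Since $\tilde y$ lies outside $H^{-1}H(\tilde x)=[\tilde x^{-},\tilde x^{+}]$, monotonicity gives $H(\tilde x)\neq H(\tilde y)$, so the open subarc of $H(\ell)$ between them is uncountable; the points of $H(\ell)$ whose $H|_\ell$-preimage is a nondegenerate segment are indexed by pairwise disjoint subsegments of $\ell$, hence countably many, so I can pick $w_{0}$ in that subarc avoiding them. Then $z:=H|_\ell^{-1}(w_{0})$ is a single point; since $H^{-1}(w_{0})$ lies inside one center leaf, necessarily $\ell$, one has $H^{-1}(w_{0})=\{z\}$, while monotonicity forces $\tilde x^{+}\prec z\prec\tilde y$. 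Projecting by $\pi$ (legitimate since $H$ commutes with the deck transformations) gives the required $h$-injective point in $\mcf^c_f(x)$ strictly between $x^{+}$ and $y$; the case $y\prec x^{-}$ is symmetric.

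For item (3), which uses neither (1) nor (2), I would invoke properness of $h$. Since $x\in\Lambda=\overline{\{h\text{-injective points}\}}$, choose an $h$-injective $x_{0}$ with $d(x_{0},x)<\e/2$, so $h^{-1}(h(x_{0}))=\{x_{0}\}$; compactness of $\TT^2$ makes $h\big(\TT^2\setminus B_{\e/2}(x_{0})\big)$ a closed set avoiding $h(x_{0})$, so there is $\delta>0$ with $h^{-1}\big(B_{\delta}(h(x_{0}))\big)\subset B_{\e/2}(x_{0})\subset B_{\e}(x)$. On the other hand, every $q\in B_{\delta}(h(x_{0}))$ has nonempty fiber $h^{-1}(q)$, whose endpoints lie in $\Lambda$ by \eqref{eq. 2. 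Lambda}; hence $q\in h\big(h^{-1}(B_{\delta}(h(x_{0})))\cap\Lambda\big)\subset h\big(B_{\e}(x)\cap\Lambda\big)$. Thus $h\big(B_{\e}(x)\cap\Lambda\big)\supset B_{\delta}(h(x_{0}))$ has nonempty interior.

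Item (2) is the one I expect to be the main obstacle, and I would attack it with a compactness (limiting-segment) argument. Taking $\sigma=+$ by symmetry and lifting to $\ell=\mcf^c_F(\tilde x)$ with lifts $\tilde y_n\to\tilde x^{+}$, I would split into three cases. If $y_n\in[x^{-},x^{+}]$ then $h^{-1}h(y_n)=h^{-1}h(x)$ and $y_n^{+}=x^{+}$; if $y_n\prec x^{-}$ then $y_n\to x^{+}$ forces $x^{-}=x^{+}$ and a squeeze (using $y_n\preceq y_n^{+}\preceq x^{+}$ from monotonicity) gives $y_n^{+}\to x^{+}$. The essential case is $x^{+}\prec y_n$: here monotonicity yields $\tilde x^{+}\prec\tilde y_n^{-}\preceq\tilde y_n\to\tilde x^{+}$, so $\tilde y_n^{-}\to\tilde x^{+}$ — which is already the ``in particular'' assertion $y_n^{-}\to x^{+}$ (the $y_n\prec x^{-}$ version of that assertion follows from the squeeze above). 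If $\tilde y_n^{+}$ failed to converge to $\tilde x^{+}$, then along a subsequence the collapsed segments $J_n:=H^{-1}H(\tilde y_n)=[\tilde y_n^{-},\tilde y_n^{+}]$ would have length bounded above (Corollary \ref{2 cor semi-conj. Fc}) and, since their left ends tend to $\tilde x^{+}$ while the right ends stay away, bounded below; passing to a further subsequence one gets $J_n\to[\tilde x^{+},b]$ with $b\succ\tilde x^{+}$, and since $H\equiv H(\tilde y_n)\to H(\tilde x^{+})=H(\tilde x)$ on $J_n$, continuity forces $H\equiv H(\tilde x)$ on $[\tilde x^{+},b]\subset H^{-1}H(\tilde x)=[\tilde x^{-},\tilde x^{+}]$, contradicting $b\succ\tilde x^{+}$. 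For an arbitrary sequence $y_n\to x^{+}$, if $y_n^{+}\not\to x^{+}$ a subsequence bounded away from $x^{+}$ has infinitely many terms of one of the three types, each giving a contradiction; hence $y_n^{+}\to x^{+}$.

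The delicate point running through items (1) and (2) is the passage between $\TT^2$ and $\RR^2$: one must choose lifts of $y$, respectively of the $y_n$, lying on a common center leaf and (for item (2)) converging to a fixed lift of $x^{\pm}$, and then project the resulting order statements back down. This is handled by local triviality of $\pi$ together with the local product structure and the quasi-isometry of $\mcf^c_F$ from Proposition \ref{2 prop foliation on R2}; I expect the limiting-segment step in item (2) to carry the real analytic content, while items (1) and (3) are essentially soft (cardinality and properness).
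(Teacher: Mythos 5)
Your proof is correct, and all three items are handled by genuinely different mechanisms from the paper's. For item (1) the paper argues by contradiction: if no injective point existed strictly between $x^+$ and $y$, the nondegenerate fibers $h^{-1}\circ h(z)$ for $z$ in that arc would overlap and union into a single interval on which $h$ is constant, forcing $h(x^+)=h(z_0)$ and contradicting the definition of $x^+$; your countability argument (only countably many points of the image leaf can have nondegenerate $H|_\ell$-fibers, since these fibers are pairwise disjoint nondegenerate subsegments of $\ell\cong\RR$, so an uncountable open subarc of the image contains an injective value) reaches the same conclusion and is if anything cleaner. For item (2) the paper reuses item (1) to interleave injective points $z_n$ between the fibers of successive $y_n$'s and squeezes; your limiting-segment compactness argument bypasses item (1) and is self-contained — note that both your argument and the paper's rely on reading ``$y_n\to x^\sigma$'' as convergence along the lifted center leaf (otherwise, e.g., ``$y_n\prec x^-$ and $y_n\to x^+$ forces $x^-=x^+$'' would fail if the leaf merely accumulated on itself in $\TT^2$), a point you correctly flag and resolve via the quasi-isometry and local product structure, exactly as the paper implicitly does. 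For item (3) the paper reduces to a one-dimensional statement on the center leaf (using item (2) to find a nearby injective point $y_0$ with $x\prec y_0$ and observing $h([x,y_0]\cap\Lambda)=h([x,y_0])$ is an interval) and then upgrades to $\TT^2$ via the product structure and the fact that $H$ is a homeomorphism along hyperbolic leaves; your properness argument — the fiber of an injective point $x_0$ is a singleton, so compactness gives $\delta>0$ with $h^{-1}(B_\delta(h(x_0)))\subset B_\e(x)$, and every fiber has its endpoints in $\Lambda$ by \eqref{eq. 2. Lambda} — is more elementary, independent of items (1)–(2), and directly exhibits an open ball $B_\delta(h(x_0))$ inside $h(B_\e(x)\cap\Lambda)$.
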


\begin{proof}
    The existence of center leaf $\mcf^c_f(x)$ for every $x\in \TT^2$ is guaranteed 
    by Corollary \ref{2 cor semi-conj. Fc}. 

    For the first item, 
    let $x=x^+$ and $y\in\mcf^c_f(x)$ such that $x^+\prec y$. 
    By contradiction, we assume that for every $z\in\mcf^c_f(x)$ with $x^+\prec z\prec y$, 
    the set $h^{-1}\circ h(z)$ is a closed interval.
    From this assumption, such two adjacent intervals intersect with each other, hence, the union of these intervals is an interval which we denote by $I$. In particular, for every point $z\in I\subset \mcf^c_f(x)$, one has $h(z)=h(z_0)$, where $z_0\in \mcf^c_f(x)$ is a given point such that $x^+\prec z_0\prec y$.
    Then
    one can take $z_n\in I$ such that 
    $z_n\to x^+$ with $x^+\prec z_n$ and $h(z_n)=h(z_0)$.
    Since $h$ is continuous, 
    \[h(x^+)=\lim_{n\to +\infty}h(z_n)=\lim_{n\to +\infty}h(z_0)=h(z_0).\]
    It contradicts the fact that $x^+\prec z$.

    For the second item, 
    we just prove the case of $\sigma=+$, and the other case is similar. 
    Let $x\in\TT^2$ and $y_n\in \mcf^c_f(x)$ with $y_n\to x^+$.    
    Taking a subsequence, one has $x^+\prec y_n$ or $y_n\prec x^+$ for all $n\in\NN$.  
    
    If $x^+\prec y_n$ for all $n\in\NN$, by the first item, there exists $z_n\in\mcf^c_f(x)$ such that 
    $x^+\prec z_n\prec y_n$ and $h^{-1}\circ h(z_n)=\{z_n\}$. 
    Here, one can assume that 
        \[x^+\prec  y_{n+1}^- \preceq y_{n+1}^+\prec z_n\prec  y_n^-.\] 
    Note that $y_n \to x^+$, one has $z_n\to x^+$.  
    Hence, $y_{n+1}^+\to x^+$ and $y_{n+1}^-\to x^+$ as $n\to +\infty$.   

    Let $y_n\prec x^+$ for all $n\in\NN$. 
    If $h^{-1}\circ h(x)$ is an interval, 
    then for large $n$, we have $y_n\in h^{-1}\circ h(x)$, hence, $y_n^+=x^+$. 
    If $h^{-1}\circ h(x)=\{x\}$, by the first item again, 
    there exists $z_n\in\mcf^c_f(x)$ such that 
    \[y_n\prec z_n\prec x^- \quand h^{-1}\circ h(z_n)=\{z_n\}.\] 
    Hence, one can assume that 
        \[y_n\preceq y_n^+\prec z_n\prec y_{n+1}^- \preceq y_{n+1}^+\prec   x^-=x^+.\] 
    It follows from $y_n\to x=x^+=x^-$ that $y^-_{n+1}\to x^+$ and $y^+_{n+1}\to x^+$ as $n\to +\infty$.

    Finally, we prove the third item. 
    Denote by $\mcl^c$ the foliation corresponding to $H(\mcf^c_F)$ on $\RR^2$. 
    Note that $H$ is homeomorphic along each leaf of the hyperbolic foliation $\mcf^{s/u}_F$, 
    and the center foliation $\mcf^c_F$ and hyperbolic one $\mcf^{s/u}_F$ admit the product structure. 
    Hence, the third item follows from that for every $x\in \Lambda$ and $\e>0$, 
    the $\e$-ball $B_\e^c(x)$ on center leaf $\mcf^c_f(x)$
    satisfies that $h\big(B_\e^c(x)\cap \Lambda \big)$ has non-empty interior on $\mcl^c(h(x))$.
  
    Indeed, let $\e>0$ and $x=x^+$ (or symmetrically $x=x^-$). 
    By the second item, 
    there exists $y_n\in \mcf^c_f(x) \cap\Lambda$ such that $y_n=y_n^+\to x$ as $n\to \infty$. 
    In particular, there exists $y_0=y_0^+\in B_\e^c(x)\cap \Lambda$ and $x\prec y_0$. 
    Since $h$ is continuous and monotonic along the center leaf,  
    $h\big( [x,y_0] \cap \Lambda\big)=h\big( [x,y_0])$ 
    is an interval contained in $h\big(B_\e^c(x)\cap \Lambda \big)$.
\end{proof}

\section{The special property implies semi-conjugacy}\label{sec-special-to-semi}

    In this section, we will prove Theorem \ref{main-thm-scu}. 
    Let $f:\mtt$ be a $C^1$-smooth special DA endomorphism with the hyperbolic linearization $A:\mtt$. We will show that   
    the existence of a $Df$-invariant dominated splitting into direct sum of subbundles implies the existence of a semi-conjugacy.
    The property holds for both $sc$-DA and $cu$-DA cases, 
    but their proofs have some differences. 
    We would like to present some common lemmas and prove for these two cases independently. 
    Specifically, Theorem \ref{main-thm-scu} will follow from 
    Corollary \ref{sc-H-semi-h-2} and Corollary \ref{cu-H-semi-h-2}.

    Since $f$ is special, the foliations $\mcf^1$ and $\mcf^2$ 
    given in Proposition \ref{2 prop foliation on R2} are now $\ZZ^2$-periodic, 
    that is, $\mcf^i(x+n)=\mcf^i(x)+n$, for any $x\in\RR^2$, $n\in \ZZ^2$ and $i=1,2$.  
    Let $H:\mrr$ be the semi-conjugacy between the lifted maps $F:\mrr$ and $A:\mrr$.

    Fix $x_0\in \RR^2$, the map 
        \[ H_n:\mcf^2(x_0+n) \to \tildeL^u\big(H(x_0)+n\big)\] 
    given by $H_n(x)=H(x-n)+n$ is well-defined. 
    In particular, let $x_0\in \RR^2$ be a fixed point of $F$. 
    Note that Fix$(F)\neq \emptyset$. 
    Indeed, let $0\in\RR^2$ be the unique fixed point of $A$. 
    By Proposition \ref{2-3 prop foliation on R2}, $H^{-1}(0)$ is 
    either a single point (hence, a fixed point of $F$), 
    or a compact local center leaf which admits at least one fixed point.

    Moreover, we consider the following map 
    \[ 
     \overline{H}:\bigcup_{n\in\ZZ^2}\mcf^2(x_0+n)\to \bigcup_{n\in\ZZ^2}\tildeL^u\big(H(x_0)+n\big) 
    \] 
    given by $\overline{H}(x)=H_{n_x}(x)$, for any $x\in \mcf^2(x_0+n_x)$.
    
\begin{lem} \label{3 lem Hnm}
    If $\mcf^2(x_0+n)=\mcf^2(x_0+m)$ for some $n\neq m\in\ZZ^2$, then $H_n=H_m$. 
\end{lem}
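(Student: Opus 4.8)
The plan is to derive the lemma from the stronger and cleaner fact that \emph{its hypothesis is never satisfied when $n\neq m$}: no leaf of $\mcf^2$ can be fixed by a nontrivial deck translation, so distinct lattice vectors $n\neq m$ in $\ZZ^2$ always produce distinct leaves $\mcf^2(x_0+n)\neq\mcf^2(x_0+m)$. Once this is established the stated implication holds vacuously (and it is trivial when $n=m$). The point of having the lemma at all is then that the maps $H_n$, $n\in\ZZ^2$, live on pairwise disjoint leaves $\mcf^2(x_0+n)$, with no overlaps to reconcile in the subsequent gluing argument.

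First I would rewrite the hypothesis using that $\mcf^2$ is $\ZZ^2$-periodic (this is exactly where the standing assumption that $f$ is special enters): $\mcf^2(x_0+n)=\mcf^2(x_0)+n$ and $\mcf^2(x_0+m)=\mcf^2(x_0)+m$, so $\mcf^2(x_0+n)=\mcf^2(x_0+m)$ is equivalent to $\mcf^2(x_0)+p=\mcf^2(x_0)$ with $p:=n-m$. Iterating this identity gives $\mcf^2(x_0)+kp=\mcf^2(x_0)$ for every $k\in\ZZ$, and since $x_0\in\mcf^2(x_0)$ we get $x_0+kp\in\mcf^2(x_0)$ for all $k\in\ZZ$. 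Now push forward by $H$: by Proposition \ref{2-3 prop foliation on R2}(1) the map $H$ sends the whole leaf $\mcf^2(x_0)$ onto the single affine line $\tildeL^u(H(x_0))$, whose direction is the unstable eigendirection of the hyperbolic matrix $A$ and hence has irrational slope. Thus $H(x_0+kp)\in\tildeL^u(H(x_0))$ for all $k\in\ZZ$, while Proposition \ref{2 prop semi-conj in R2} gives $\|H-{\rm Id}_{\RR^2}\|_{C^0}<C_H$; combining the two, the point $x_0+kp$ stays within distance $C_H$ of the line $\tildeL^u(H(x_0))$ for every $k\in\ZZ$. If $n\neq m$ then $p\in\ZZ^2\setminus\{0\}$ is not parallel to that line, so the distance from $x_0+kp$ to it grows linearly in $|k|$ --- a contradiction. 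Hence $n=m$, and $H_n=H_m$ trivially.

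There is no serious analytic difficulty; the one point that requires a moment's thought is to recognize that the statement must be proved by showing the hypothesis is vacuous, rather than by a direct verification of $H_n=H_m$. Indeed, were the hypothesis to hold, Propositions \ref{2 prop approach property of H}(1) and \ref{2-3 prop foliation on R2}(1) together would show that $H_n=H_m$ on the common leaf holds if and only if $p=n-m$ is parallel to $\tildeL^u$ --- impossible for $p\in\ZZ^2\setminus\{0\}$ --- so a direct attempt would fail precisely in the case where it is needed. Note also that Proposition \ref{2-3 prop foliation on R2}(1) is invoked with no assumption that $H$ commutes with the deck transformations: it does not, a priori, since that would already yield Theorem \ref{3 thm special implies semi-conj}. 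The underlying mechanism is the same slope rigidity that prevents the hyperbolic foliations of DA systems on $\TT^2$ from having compact leaves.
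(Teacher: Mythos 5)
Your proof of the lemma is correct, but it takes a genuinely different route from the paper's. You establish the statement vacuously: assuming $\mcf^2(x_0)+p=\mcf^2(x_0)$ for $p=n-m\neq 0$, you trap the points $x_0+kp$ within distance $C_H$ of the line $\tildeL^u(H(x_0))$ (using Proposition \ref{2-3 prop foliation on R2}(1) and $\|H-{\rm Id}_{\RR^2}\|_{C^0}<C_H$), and since no nonzero vector of $\ZZ^2$ is parallel to the unstable eigendirection of $A$ (both eigenvalues of a non-expanding hyperbolic integer matrix on $\TT^2$ are irrational), the distance grows linearly in $|k|$ --- a contradiction. The paper instead gives a direct verification: Proposition \ref{2 prop approach property of H}(1) puts $H_n(x)$ and $H_m(x)$ on the common stable leaf $\tildeL^s(H(x))$, and the very same trapping argument you use shows that, \emph{under the hypothesis}, $H(x_0)+(m-n)\in\tildeL^u(H(x_0))$ (this is \eqref{eq. 3. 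Hmn1}); both points then lie on the common unstable leaf $\tildeL^u(H(x_0)+m)$, and the Global Product Structure forces $H_n(x)=H_m(x)$. The remark following Lemma \ref{3 lem Hnm} concedes exactly your observation --- on $\TT^2$ the hypothesis never holds --- but the direct proof is kept because it generalizes verbatim to higher-dimensional $\mcf^2$, where an invariant unstable subspace can contain lattice vectors and the hypothesis is no longer vacuous. Your argument, resting on the irrationality of the slope of a line in $\RR^2$, is confined to dimension two; what it buys is a cleaner picture (the domains of the $H_n$ are pairwise disjoint, so there is nothing to reconcile), at the cost of generality.

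One assertion in your closing paragraph is wrong: a direct verification does not ``fail precisely in the case where it is needed.'' Under the hypothesis, $p$ is \emph{forced} to be parallel to $\tildeL^u$ by the boundedness argument, so within the hypothesis there is no obstruction and the direct computation of $H_n=H_m$ goes through; on $\TT^2$ this merely reconfirms that the hypothesis is empty, while in higher dimensions it is the substance of the proof. This does not affect the validity of your proof of the lemma as stated.
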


\begin{rmk}
    We can see $x_0+n\notin\mcf^2(x_0)$ for any $n\in \ZZ^2$, so we can actually define $\overline{H}$ without Lemma \ref{3 lem Hnm}.
    Indeed, if $f$ is $cu$-DA, then $\mcf^2(x_0)$ is a one-dimensional unstable manifold, it cannot be a circle. 
    When $f$ is $sc$-DA, if there is a $n_0\in \ZZ^2$ such that $x_0+n_0\in\mcf^2(x_0)$, 
    then the one-dimensional leaf $\mcf^2(x_0)$ module $\ZZ^2$ cannot be dense in a fundamental domain 
    which contradicts Proposition \ref{sc-H-semi-h-1}. 
    However, Lemma \ref{3 lem Hnm} can be of independent interest as it can be generalized for higher-dimensional $\mcf^2$ 
    (based on the argument of the proof without restriction on the dimension).  
\end{rmk}

\begin{proof}[Proof of Lemma \ref{3 lem Hnm}]
    Let $x\in \mcf^2(x_0+n)=\mcf^2(x_0+m)$. 
    By Proposition \ref{2 prop approach property of H}, 
    \begin{align}
        H_n(x),\  H_m(x)\in \tildeL^s\big(H(x)\big).\label{eq. 3. Hmns} 
    \end{align}
    On the other hand, $x_0+m-n\in\mcf^2(x_0)$ 
    since $f$ is special and $x_0+m\in \mcf^2(x_0+m)=\mcf^2(x_0+n)$. 
    We claim that 
    \begin{align}
        H(x_0)+m-n\in \tildeL^u(H(x_0)). \label{eq. 3. Hmn1} 
    \end{align}
    Otherwise, if $H(x_0)+m-n\notin \tildeL^u(H(x_0))$, 
    one will get two contradictory facts:
    \begin{itemize}
        \item $d\big(H(x_0)+k(m-n),\     
            \tildeL^u (H(x_0))     
            \big)\to +\infty$, as $k\to +\infty$;
        \item Since $f$ is special, for every $k\in\NN$,
            one has $x_0+k(m-n)\in \mcf^2(x_0)$.
            Moreover, by $\|H-{\rm Id}_{\RR^2}\|_{C^0}<C_H$ 
            (Proposition \ref{2 prop semi-conj in R2}), one can get
            \begin{align*}
                H(x_0)+k(m-n) &\in B_{2C_H}\Big( H\big(x_0+k(m-n)\big) \Big)\\ 
                &\subset  B_{2C_H}\big(H(\mcf^2(x_0))\big) = B_{2C_H}\big( \tildeL^u(H(x_0))\big).
            \end{align*}   
    \end{itemize}
    By \eqref{eq. 3. Hmn1}, it follows
    \begin{align*}
        H_n(x)=H(x-n)+n&\in H\big(\mcf^2(x_0+m)-n \big)+n\\
        &= H\big(\mcf^2(x_0+m-n)\big)+n= H\big(\mcf^2(x_0)\big)+n\\
        &= \tildeL^u(H(x_0))+n=\tildeL^u\big( H(x_0)+m-n\big)+n\\
        &=\tildeL^u(H(x_0)+m).
    \end{align*}
    This implies $\tildeL^u(H(x_0)+m)=\tildeL^u(H(x_0)+n)$.  
    Thus, combining with \eqref{eq. 3. Hmns} and the Global Product Structure, one has $H_n(x)=H_m(x)$. 
\end{proof}

    %By Lemma \ref{3 lem Hnm}, the map 
    %\[ 
    % \overline{H}:\bigcup_{n\in\ZZ^2}\mcf^2(x_0+n)\to \bigcup_{n\in\ZZ^2}\tildeL^u\big(H(x_0)+n\big) 
    %\] 
    %given by $\overline{H}(x)=H_{n_x}(x)$, for any $x\in \mcf^2(x_0+n_x)$, is well-defined.
    For short, we denote the domain of the map $\overline{H}$ by 
    \[\mathbf{F}:=\bigcup_{n\in\ZZ^2}\mcf^2(x_0+n).\]
    We are going to prove that the map $\overline{H}$ is equal to $H$ on a large set, 
    see also \cite{MT19} for this method to deal with special Anosov endomorphisms on nilmanifolds. 
    We mention in advance that 
    when $f$ is $sc$-DA, $\Bar{\mathbf{F}}=\RR^2$ and $\overline{H}$ can be extended to $\RR^2$. 
    However, when $f$ is $cu$-DA, 
    we have no \emph{a priori} $\Bar{\mathbf{F}}=\RR^2$ and this will be an obstacle.

\iffalse    
\begin{rmk}
    We will see that in fact $x_0+n\notin\mcf^2(x_0)$ for any $n\in \ZZ^2$, hence, we can define $\overline{H}$ without Lemma \ref{3 lem Hnm}. 
    Indeed, if $f$ is $cu$-DA, then $\mcf^2(x_0)$ is a one-dimensional unstable manifold, it cannot be a circle. 
    When $f$ is $sc$-DA, if there is a $n_0\in \ZZ^2$ such that $x_0+n_0\in\mcf^2(x_0)$, 
    then the one-dimensional leaf $\mcf^2(x_0)$ module $\ZZ^2$ cannot be dense in a fundamental domain 
    which contradicts Proposition \ref{sc-H-semi-h-1}. 
    However, Lemma \ref{3 lem Hnm} can be of independent interest as it can be generalized for higher-dimensional $\mcf^2$ 
    (based on the argument of the proof without restriction on the dimension). 
\end{rmk}
\fi

\begin{lem}\label{3 lem barH general property}
    The map $\overline{H}$ (defined on $\mathbf{F}$) satisfies:
    \begin{enumerate}
        \item $\overline{H}\circ F= A\circ \overline{H}$;
        \item There exists $C_{\overline{H}}>0$ 
            such that $\| \overline{H}-{\rm Id}_{\mathbf{F}} \|_{C^0}<C_{\overline{H}}$;
        \item $\overline{H}$ is uniformly continuous;
        \item $\overline{H}(x+n)= \overline{H}(x)+n$, for any $x\in \mathbf{F}$ and $n\in\ZZ^2$.
    \end{enumerate}
\end{lem}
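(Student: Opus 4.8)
The plan is to verify the four properties by reducing each of them to the corresponding property of $H$, using the definition $\overline{H}(x) = H(x-n_x)+n_x$ on each leaf $\mcf^2(x_0+n_x)$ together with the fact that $H$ commutes with the deck transformations (which holds here because $f$ is special, so we are in the setting where $\overline{H}$ is being constructed — note $H$ itself commutes with deck transformations exactly when a semi-conjugacy on $\TT^2$ exists, but here the point is that $\overline{H}$, not $H$, will descend). Let me be careful: in this section $H$ is the semi-conjugacy on $\RR^2$ from Proposition \ref{2 prop semi-conj in R2}, which a priori does \emph{not} commute with deck transformations; the patched map $\overline{H}$ is designed to fix that.

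First I would check item (1). Fix $x \in \mcf^2(x_0+n)$, so $F(x) \in \mcf^2(F(x_0)+An)$; since $x_0$ is a fixed point of $F$, $F(x) \in \mcf^2(x_0+An)$ and $An \in \ZZ^2$. Then
\begin{align*}
\overline{H}(F(x)) &= H(F(x)-An)+An = H(F(x-n))+An \\
&= A\circ H(x-n) + An = A\big(H(x-n)+n\big) = A\circ \overline{H}(x),
\end{align*}
using $H\circ F = A\circ H$ and linearity of $A$. For item (2), $\|\overline{H}(x)-x\| = \|H(x-n)+n-x\| = \|H(x-n)-(x-n)\| < C_H$ by Proposition \ref{2 prop semi-conj in R2}, so one may take $C_{\overline{H}} = C_H$. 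For item (4), given $x\in\mcf^2(x_0+n_x)$ and $m\in\ZZ^2$, one has $x+m \in \mcf^2(x_0+n_x+m)$ because $f$ is special (so the foliation $\mcf^2$ is $\ZZ^2$-periodic), hence $n_{x+m}=n_x+m$ and $\overline{H}(x+m) = H(x+m-n_x-m)+n_x+m = H(x-n_x)+n_x+m = \overline{H}(x)+m$; here Lemma \ref{3 lem Hnm} guarantees the value is independent of which representative leaf index we use.

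The genuinely delicate item is (3), the uniform continuity of $\overline{H}$, because $\overline{H}$ is defined piecewise over the (possibly dense in $\mathbf{F}$, and certainly numerous) collection of leaves $\mcf^2(x_0+n)$, and two nearby points of $\mathbf{F}$ may lie on different leaves with large index difference $n-m$. The strategy I would use: given $x,y \in \mathbf{F}$ close in $\RR^2$, with $x\in\mcf^2(x_0+n)$, $y\in\mcf^2(x_0+m)$, write $\overline{H}(x)-\overline{H}(y) = \big(H(x-n)-H(y-m)\big) + (n-m)$. By item (2) this difference has norm at most $|x-y| + 2C_H$, which is bounded but not small — so a naive estimate fails, and this is exactly why uniform continuity is not automatic. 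To get smallness, I would use the structure from Proposition \ref{2-3 prop foliation on R2}: $\overline{H}$ maps each leaf $\mcf^2(x_0+n)$ homeomorphically onto a translate $\tildeL^u(H(x_0)+n)$ of an unstable leaf of $A$, and by item (4) together with the Global Product Structure and quasi-isometry (Proposition \ref{2 prop foliation on R2}), the leaves $\mcf^2(x_0+n)$ fit together respecting the product structure with $\mcf^1$. Concretely, project $x$ and $y$ along $\mcf^1$ onto a common leaf $\mcf^2(x_0+n)$; the $\mcf^1$-holonomy moves points a controlled amount since $\mcf^1$ is quasi-isometric and its leaves are $\ZZ^2$-periodic, and $\overline{H}$ restricted to a single leaf is uniformly continuous (being locally a restriction of the uniformly continuous $H$ composed with a translation). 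Combining the holonomy estimate with the single-leaf modulus of continuity yields a uniform modulus for $\overline{H}$. I expect this leaf-to-leaf comparison — controlling how $\overline{H}$ behaves transverse to $\mcf^2$ using only the $\ZZ^2$-equivariance and the product/quasi-isometry structure, without yet knowing $\overline{H}$ extends continuously to $\overline{\mathbf{F}}$ — to be the main obstacle, and it is presumably where the argument of \cite{MT19} for special Anosov endomorphisms on nilmanifolds is adapted.
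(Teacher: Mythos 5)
Items (1), (2) and (4) of your proposal are fine. For (1) and (2) you do exactly what the paper does. For (4) your route is actually more direct than the paper's: you use the $\ZZ^2$-periodicity of $\mcf^2$ plus the well-definedness from Lemma \ref{3 lem Hnm} to compute $\overline{H}(x+m)=H(x-n_x)+n_x+m$ outright, whereas the paper instead shows $\overline{H}(x+n)$ lies in both $\tildeL^u(\overline{H}(x))+n$ and (via Proposition \ref{2 prop approach property of H}) $\tildeL^s(\overline{H}(x))+n$ and intersects. Your computation is legitimate and arguably cleaner.

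The genuine gap is in item (3). You correctly isolate the problem — comparing $\overline{H}(y)$ and $\overline{H}(y')$ when $y,y'$ are close but lie on leaves $\mcf^2(x_0+m)$ and $\mcf^2(x_0+n)$ with different indices — but the mechanism you propose (project along $\mcf^1$ to a common leaf, control the holonomy displacement, use single-leaf continuity) does not touch it. After projecting, you still must compare the values of $\overline{H}$ at two nearby points carrying different translation corrections $n$ and $m$, i.e.\ you must show that if the nonlinear leaves $\mcf^2(x_0+n)$ and $\mcf^2(x_0+m)$ pass close to each other then the parallel lines $\tildeL^u(H(x_0)+n)$ and $\tildeL^u(H(x_0)+m)$ are also close. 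That is exactly the statement in question, and holonomy/quasi-isometry estimates alone do not yield it. The paper's resolution is an amplification argument you are missing: if $d\big(\mcf^2(x_0)+ (n-m),\,\mcf^2(x_0)\big)<\delta$ while the corresponding linear leaves are $\eta_0$-separated, then translating $l$ times by $n-m$ makes the nonlinear leaves at most $l\delta$ apart (subadditivity under translation) while the parallel lines become at least $l\eta_0$ apart (linearity); since $\|H-{\rm Id}\|_{C^0}<C_H$ forces these two distances to differ by at most $2C_H$, choosing $l$ with $l\eta_0>3C_H$ and $\delta$ with $l\delta<C_H$ gives a contradiction. Without this (or an equivalent substitute), your proof of uniform continuity is incomplete.
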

\begin{proof}
    For the first item,  let $x\in\mathbf{F}$.  
    There exists $n_x\in\ZZ^2$ with $x\in \mcf^2(x_0+n_x)$. 
    Since $x_0\in {\rm Fix}(F)$,
        \[F(x)\in \mcf^2(F(x_0+n_x))=\mcf^2(F(x_0)+An_x))=\mcf^2(x_0+An_x),\] 
    and 
    \begin{align*}
        \overline{H}\circ F(x)&=H(F(x)-An_x)+An_x
        =H\circ F(x-n_x)+An_x\\&=A\circ H(x-n_x)+An_x
        =A\circ (H(x-n_x)+n_x)=A\circ\overline{H}(x).
    \end{align*}

    The second item follows from the corresponding facts of the map $H$ by taking $C_{\overline{H}}=C_H$. 
    Now we continue to prove the third item. 
    
    By the definition and the uniform continuity of $H$ on $\RR^2$, 
    $\overline{H}$ is uniformly continuous along each leaf of $\mcf^2$, 
    namely, for given $\e>0$, 
    there exists $\delta>0$ such that for any $n\in\ZZ^2$ 
    and any $x,y\in \mcf^2(x_0+n)$ with $d(x,y)<\delta$, 
    then $d(\overline{H}(x),\overline{H}(y))<\e$. 
    From the Global Product Structure and the uniform angle between bundles $E^1$ and $E^2$, 
    the proof remains to show the uniform continuity along $\mcf^1$, the transverse foliation of $\mcf^2$.
    
    By contradiction, we assume that 
    \textcolor{black}{there exists $\e_0$ such that for any $\delta_k> 0$ there are} 
    two points 
    \[ x_k\in \mcf^2(x_0+n_k) 
    \quand 
    y_k\in \mcf^2(x_0+m_k) \cap \mcf^1(x_k)\] 
    for some $n_k, m_k\in\ZZ^2$, such that
    $d(x_k,y_k)<\delta_k$ and $d\big(\overline{H}(x_k),\overline{H}(y_k)\big)\geqslant \e_0$.
    
    Note that $\overline{H}(y_k)\in \tildeL^s\big(\overline{H}(x_k)\big)$. 
    Indeed, by \textcolor{black}{Proposition \ref{2-3 prop foliation on R2}}, one has 
    \begin{align*}
    \tildeL^s\big(\overline{H}(y_k)\big)
        =\tildeL^s\big(H(y_k)\big)
        &=H\big(\mcf^1(y_k)\big)\\
        &=H\big(\mcf^1(x_k)\big)
        =\tildeL^s\big(H(x_k)\big)=\tildeL^s\big(\overline{H}(x_k)\big).
    \end{align*}
    Since $d\big(\overline{H}(x_k),\overline{H}(y_k)\big)\geqslant \e_0$ with
    $$
    \overline{H}(x_k)\in\tildeL^u(H(x_0)+n_k)
    \quand
    \overline{H}(y_k)\in \tildeL^u(H(x_0)+m_k)\cap \tildeL^s\big(\overline{H}(x_k)\big),
    $$ 
    by the Global Product Structure of $\tildeL^s$ and $\tildeL^u$,
    these two parallel lines in $\RR^2$
    $$\tildeL^u(H(x_0)+n_k)\quand\tildeL^u(H(x_0)+m_k)$$
    satisfy that
    there is $\eta_0>0$ relying only on $\e_0$ and $A$ such that 
        \[ 
        d\big( \tildeL^u(H(x_0)) +n_k-m_k ,\ \tildeL^u(H(x_0))  \big) =
        d\big( \tildeL^u(H(x_0)+n_k) ,\ \tildeL^u(H(x_0)+m_k)      \big)
        \geqslant \eta_0 . 
        \]
    Moreover, for any $l\in\NN$, 
    \begin{align}
          d\big(\tildeL^u(H(x_0)) +l(n_k-m_k) ,\ \tildeL^u(H(x_0))     \big) \geqslant l\eta_0 . 
          \label{eq. 3. to infinity}
    \end{align}
    
    On the other hand, since $d(x_k,y_k)<\delta_k$, one has 
        \[   d\big( \mcf^2(x_0+n_k) ,\ \mcf^2(x_0+m_k)   \big) 
        =   d\big( \mcf^2(x_k) ,\ \mcf^2(y_k)  \big) <\delta_k,\]
    equivalently,
        \[d\big( \mcf^2(x_0)+n_k-m_k ,\ \mcf^2(x_0)  \big)  <\delta_k.\]
    In particular, for any $l\in \NN$, 
        \[d\big( \mcf^2(x_0)+l(n_k-m_k) ,\ \mcf^2(x_0)+(l-1)(n_k-m_k)   \big)  <\delta_k, \]
    which implies 
    \begin{align}
        d\big( \mcf^2(x_0)+l(n_k-m_k) ,\ \mcf^2(x_0)  \big)  <l\delta_k. 
        \label{eq. 3. to small 1}
    \end{align}
    Since $\|H-{\rm Id}_{\RR^2}\|_{C^0}<C_H$, one has
       $d\big( \mcf^2(x_0) ,\  \tildeL^u(H(x_0)) \big) <C_H$.
    Combining with \eqref{eq. 3. to small 1}, one has
    \begin{align}
        d\big( \tildeL^u(H(x_0))+l(n_k-m_k) ,\ \tildeL^u(H(x_0))  \big)  <l\delta_k+2C_H.
        \label{eq. 3. to small 2}
    \end{align}
    
    For a given $\eta_0$, one can choose $l\in\NN$ such that $l\eta_0>3C_H$. 
    Moreover, let $\delta_k$ be small enough such that $l\delta_k<C_H$. 
    Then \eqref{eq. 3. to small 2} contradicts \eqref{eq. 3. to infinity}.

    Finally, we prove the fourth item. 
    Since $\mcf^2$ is $\ZZ^2$-periodic, for any $n\in \ZZ^2$, one has
        \[x+n\in \mcf^2(x_0+n_x)+n=\mcf^2(x_0+n_x+n).\]
    On the one hand, by the definition of $\overline{H}$, one has 
    \begin{align*}
        \overline{H}(x+n)\in \tildeL^u\big(H(x_0)+n_x+n\big)
        =\tildeL^u\big(H(x_0)+n_x\big)+n
        =\tildeL^u\big(\overline{H}(x)\big)+n,
    \end{align*}
    where the last equality comes from 
    $\overline{H}(x)\in \tildeL^u\big( H(x_0)+n_x\big)$.
    On the other hand, 
    according to Proposition \ref{2 prop approach property of H} 
    (without the assumption that $f$ is special and $x\in\mathbf{F}$) 
    and $x+n\in \mcf^2(x_0+n_x+n)$, 
    \begin{align*}
        \overline{H}(x+n) = H\big(x+n-(n_x+n)\big)+(n_x+n) 
        \in  \tildeL^s \big(H(x)\big)+n=\tildeL^s \big(\overline{H}(x)\big)+n,
    \end{align*}
    where the first inclusion and the last equality are 
    exactly based on Proposition \ref{2 prop approach property of H}.
    Thus, 
    \[ \overline{H}(x+n)\in 
    \tildeL^u\big(\overline{H}(x)+n\big) \cap \tildeL^s \big(\overline{H}(x)+n\big) 
    =\big\{ \overline{H}(x)+n\big\},\]
    which implies $\overline{H}(x+n)= \overline{H}(x)+n$.
\end{proof}

    Now we split the proof of Theorem \ref{main-thm-scu} as the following two cases.
\begin{prop}\label{sc-H-semi-h-1}
    Under the assumption of Theorem \ref{main-thm-scu}, 
    if $f$ is $sc$-DA, then
    \[ \overline{\bigcup_{n\in\ZZ^2}\mcf^2(x_0+n)}
    =\overline{\bigcup_{n\in\ZZ^2}\mcf_F^c(x_0+n)}=\RR^2. \]
\end{prop}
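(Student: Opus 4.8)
Since $f$ is $sc$-DA, the dominated bundle $E^2$ is the center bundle, so $\mcf^2=\mcf^c_F$ and the two unions in the statement literally coincide; moreover $\mcf^2$ is $\ZZ^2$-periodic (as $f$ is special), whence $\mathbf{F}=\bigcup_{n\in\ZZ^2}\mcf^2(x_0+n)$ and its closure $\overline{\mathbf{F}}$ are $\ZZ^2$-periodic, $F$-forward-invariant, and $\overline{\mathbf{F}}$ is automatically $\mcf^2$-saturated (a locally uniform limit of center leaves is a center leaf). By the Global Product Structure of Proposition \ref{2 prop foliation on R2}, to get $\overline{\mathbf{F}}=\RR^2$ it suffices to show that $\overline{\mathbf{F}}$ also contains a full leaf of $\mcf^1=\mcf^s_F$; the plan is to prove $\mcf^s_F(x_0)\subseteq\overline{\mathbf{F}}$. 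Since $x_0$ is fixed by $F$, the relation $H\circ F=A\circ H$ forces $H(x_0)=0$, and then Lemma \ref{3 lem barH general property}(4) gives $\overline{H}(x_0+n)=n$ and $\overline{H}\bigl(\mcf^2(x_0+n)\bigr)=\tildeL^u(n)$ for every $n\in\ZZ^2$.

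First I would extend $\overline{H}$ continuously to $\overline{\mathbf{F}}$ using its uniform continuity (Lemma \ref{3 lem barH general property}(3)); the extension still intertwines $F$ and $A$, commutes with the deck transformations, stays within $C_{\overline{H}}$ of the identity, and—by continuity together with properness (an intermediate-value argument using quasi-isometry of center leaves)—maps each center leaf contained in $\overline{\mathbf{F}}$ \emph{onto} a single leaf of $\tildeL^u$. By the Global Product Structure and quasi-isometry of Proposition \ref{2 prop foliation on R2}, the leaf space of $\mcf^2$ is homeomorphic to $\RR$, with the transversal $\mcf^s_F(x_0)$ as a global section, and likewise the leaf space of $\tildeL^u$ is $\RR$ with section $\tildeL^s(0)$. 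Thus $\overline{H}$ induces a continuous map $\bar\varphi\colon K\to\RR$, where $K\subseteq\RR$ is the closed set of center leaves lying in $\overline{\mathbf{F}}$ (so $K\cong\mcf^s_F(x_0)\cap\overline{\mathbf{F}}$), and $\bar\varphi$ sends the leaf through $p_n:=\mcf^2(x_0+n)\cap\mcf^s_F(x_0)$ to the point $\tildeL^u(n)\cap\tildeL^s(0)$.

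Next I would show $\bar\varphi$ is a continuous bijection of $K$ onto $\RR$. \emph{Surjectivity}: the image contains $\{\tildeL^u(n)\cap\tildeL^s(0) : n\in\ZZ^2\}=\tildeL^s(0)\cap\bigl(\tildeL^u(0)+\ZZ^2\bigr)$, which is dense in $\tildeL^s(0)$ by minimality of $\mcl^u$ (Proposition \ref{2 prop minimal foliation} with $k=0$), and the image is closed since $\bar\varphi$ is proper ($\overline{H}$ being a bounded distance from the identity). \emph{Injectivity}: if two distinct center leaves $L_1\neq L_2$ in $\overline{\mathbf{F}}$ had $\overline{H}(L_1)$ and $\overline{H}(L_2)$ in a common leaf $U$ of $\tildeL^u$, pick $w\in U$ and $a_i\in L_i$ with $\overline{H}(a_i)=w$, and replace $a_2$ by $\mcf^s_F(a_1)\cap L_2$; iterating $F$ forward contracts the common stable leaf while $A$ expands $U$, which—since $\overline{H}$ is a bounded distance from the identity—forces $\overline{H}(a_1)=\overline{H}(a_2)$ with $a_1\neq a_2$ on a common $F$-stable leaf but on distinct center leaves, contradicting the monotonicity of $\overline{H}$ along the stable transversal (inherited, through the product structure, from the monotonicity of $H$ along both $\mcf^1$- and $\mcf^2$-leaves in Proposition \ref{2-3 prop foliation on R2}). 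Finally, a continuous bijection from a closed $K\subseteq\RR$ onto $\RR$ can only exist for $K=\RR$: otherwise $K$ omits an open interval whose finite endpoints would split the image of $\bar\varphi$ into two disjoint nonempty closed subsets covering $\RR$, and whose infinite endpoint would make the (connected, hence interval) image a proper half-line. Hence $\overline{\mathbf{F}}$ meets every center leaf, and being $\mcf^2$-saturated it equals $\RR^2$.

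I expect the injectivity of $\bar\varphi$ to be the genuine obstacle. The a priori bound $\|\overline{H}-\mathrm{Id}_{\RR^2}\|_{C^0}<C_{\overline{H}}$ gives for free only that $\overline{\mathbf{F}}$ is $C_{\overline{H}}$-dense in $\RR^2$; promoting this to actual density is precisely what the injectivity/monotonicity step achieves, and it must be done by exploiting the one-dimensional order structure—that $\overline{H}$ carries the global stable transversal $\mcf^s_F(x_0)$ into the single line $\tildeL^s(0)$ in an order-respecting, surjective fashion—rather than by any purely metric estimate.
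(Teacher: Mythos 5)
Your reduction to showing $\mcf^s_F(x_0)\subseteq\overline{\mathbf{F}}$ is sound, and the surjectivity and properness steps are fine, but the injectivity of $\bar\varphi$ on all of $K$ --- which you yourself flag as the crux --- is not actually established, and this is a genuine gap. On the leaves of $\mathbf{F}$ itself injectivity is automatic: $\overline{H}\bigl(\mcf^2(x_0+n)\bigr)=\tildeL^u(n)$, and $\tildeL^u(n)=\tildeL^u(m)$ forces $n-m\in\tildeL^u(0)\cap\ZZ^2=\{0\}$ (the unstable direction is irrational, as is implicit in Proposition \ref{2 prop minimal foliation}). The problem is the limit leaves: two sequences $\tildeL^u(n_j)$ and $\tildeL^u(m_j)$ can converge to the same unstable leaf $U$ while $\mcf^2(x_0+n_j)$ and $\mcf^2(x_0+m_j)$ converge to distinct center leaves $L_1\neq L_2$. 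Your forward-iteration argument does produce $a_1\in L_1$ and $a_2=\mcf^s_F(a_1)\cap L_2$ with $\overline{H}(a_1)=\overline{H}(a_2)$, but this is only a contradiction if $\overline{H}$ is injective on $\mcf^s_F(a_1)\cap\overline{\mathbf{F}}$, and that is precisely what is missing. The injectivity of $H$ along stable leaves (Proposition \ref{2-3 prop foliation on R2}) does not transfer: along one stable leaf $\overline{H}$ is pieced together from the different translates $T_n\circ H\circ T_{-n}$, and in the $sc$-DA case one cannot conclude $\overline{H}=H$ on $\overline{\mathbf{F}}$ before knowing density, because $\mathbf{F}$ is only forward $F$-invariant; the uniqueness trick of Proposition \ref{cu-H-semi-h-1} then only kills the unstable component of $H-\overline{H}$, leaving $\overline{H}(x)\in\tildeL^s(H(x))$, which is perfectly consistent with $\overline{H}(a_1)=\overline{H}(a_2)$ for $a_1\ne a_2$ on a common stable leaf. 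So the ``inherited monotonicity'' you invoke is essentially equivalent to the statement being proved.

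The paper's proof sidesteps $\overline{H}$ entirely here and is much shorter: it works with $H$ itself, for which $H(\mathbf{F})=\bigcup_{n}\tildeL^u\bigl(H(x_0+n)\bigr)$ exactly. This union of full unstable leaves is dense in $\RR^2$, by combining Proposition \ref{2 prop minimal foliation} with the estimate $|H(x_0+n^*)-H(x_0)-n^*|<2C_H\cdot\|A|_{L^s}\|^k$ for $n^*\in A^k\ZZ^2$ from Proposition \ref{2 prop approach property of H}; density is then pulled back through $H$, which is a leafwise homeomorphism on $\mcf^s_F$ (Proposition \ref{2-3 prop foliation on R2}), using the Global Product Structure. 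The essential difference is that you locate $\mcf^2(x_0+n)$ by the leaf $\tildeL^u(n)$, which is accurate only up to the bounded error $C_{\overline{H}}$ (hence your need for the order-theoretic promotion from coarse density to density), whereas the paper locates it by $\tildeL^u\bigl(H(x_0+n)\bigr)$, which is leafwise exact and approximates the lattice of leaves $\tildeL^u(n^*)$ with error tending to zero along $A^k\ZZ^2$. To repair your argument you would need an independent proof that $\overline{H}$ is injective (equivalently, order-preserving) on stable transversals of $\overline{\mathbf{F}}$; the natural way to get that is exactly the paper's density statement.
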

\begin{proof}
    Here $f$ is  $sc$-DA, $\mcf^2(x_0+n)=\mcf^c_F(x_0+n)$ and
        \[ H(\bigcup_{n\in\ZZ^2}\mcf_F^c(x_0+n))=
        \bigcup_{n\in\ZZ^2}\tildeL^u\big(H(x_0+n)\big).\]
    By Proposition \ref{2 prop minimal foliation}, one has
        \[  \bigcup_{n\in\ZZ^2}\tildeL^u\big(H(x_0)+n\big)   \]
    is dense in $\RR^2$. 
    And according to Proposition \ref{2 prop approach property of H}, 
    one has $n^*\in A^k\ZZ^2$ such that    
    $\tildeL^u\big(H(x_0)+n^*\big)$ and $\tildeL^u\big(H(x_0+n^*)\big)$ are close, 
    so one can get
        \[\overline{\bigcup_{n\in\ZZ^2}\tildeL^u\big(H(x_0+n)\big)}= \overline{\bigcup_{n\in\ZZ^2}\tildeL^u\big(H(x_0)+n\big)}=\RR^2.\]
    Thus, by Proposition \ref{2-3 prop foliation on R2}, 
    $H$ is homeomorphism restricted to $\mcf_F^s$,
    and the conclusion follows.
\end{proof}

    There is a direct corollary of Proposition \ref{sc-H-semi-h-1} 
    and the fourth item of Lemma \ref{3 lem barH general property}.

\begin{cor}\label{sc-H-semi-h-2}
    Under the assumption of Theorem \ref{main-thm-scu}, 
    if $f$ is $sc$-DA,
    then $\overline{H}$ can uniquely continuously extend to be $H$. 
    Moreover, $H(x+n)=H(x)+n$, for any $x\in\RR^2$ and $n\in\ZZ^2$. 
    In particular, $h:=\pi\circ H\circ \pi^{-1}$ is well-defined on $\TT^2$
    and a semi-conjugacy between $f$ and $A$.
\end{cor}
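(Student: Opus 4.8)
The plan is to use that, in the $sc$-DA case, the domain $\mathbf{F}=\bigcup_{n\in\ZZ^2}\mcf^2(x_0+n)$ of $\overline{H}$ is already dense in $\RR^2$, so that $\overline{H}$ extends by uniform continuity and is then forced to coincide with $H$.

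First I would invoke Proposition \ref{sc-H-semi-h-1} to get $\overline{\mathbf{F}}=\RR^2$, and Lemma \ref{3 lem barH general property}(3) to get that $\overline{H}$ is uniformly continuous on $\mathbf{F}$. A uniformly continuous map on a dense subset of $\RR^2$ has a unique continuous (indeed uniformly continuous) extension to $\RR^2$; I keep the name $\overline{H}$ for it. Then I would check that this extension inherits the remaining properties of Lemma \ref{3 lem barH general property}: letting $x\in\RR^2$ and picking $x_k\in\mathbf{F}$ with $x_k\to x$, the identity $\overline{H}(F(x_k))=A(\overline{H}(x_k))$ passes to the limit by continuity of $F$ and affinity of $A$, giving $\overline{H}\circ F=A\circ\overline{H}$ on $\RR^2$; likewise $\|\overline{H}-{\rm Id}_{\RR^2}\|_{C^0}\leqslant C_{\overline{H}}$ persists, and for each fixed $n\in\ZZ^2$ the two continuous maps $x\mapsto\overline{H}(x+n)$ and $x\mapsto\overline{H}(x)+n$ agree on $\mathbf{F}$, hence everywhere, so $\overline{H}(x+n)=\overline{H}(x)+n$ for all $x$.

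Next I would identify $\overline{H}$ with the canonical semi-conjugacy $H$ of Proposition \ref{2 prop semi-conj in R2}. Since $\overline{H}$ now commutes with the deck transformations, it descends to a continuous self-map of $\TT^2$; being a bounded $C^0$-distance from the identity on $\RR^2$, it is homotopic to ${\rm Id}_{\TT^2}$ (straight-line homotopy, which commutes with $\ZZ^2$), hence of degree one and therefore surjective, so the image $\overline{H}(\RR^2)$ is closed, $\ZZ^2$-invariant and projects onto $\TT^2$, i.e.\ equals $\RR^2$. Thus $\overline{H}$ is a surjection satisfying the three defining properties of $H$ in Proposition \ref{2 prop semi-conj in R2}, and the uniqueness asserted there gives $\overline{H}=H$. (Alternatively, one can avoid the surjectivity check: $D:=\overline{H}-H$ satisfies $D\circ F=A\circ D$ because $A$ is affine, hence $D\circ F^{n}=A^{n}\circ D$ for all $n\in\ZZ$ since the lift $F$ is a homeomorphism of $\RR^2$; as $A$ acts on $\RR^2$ as a hyperbolic linear automorphism and $\|D\|_{C^0}<\infty$, splitting $D(x)$ along the stable and unstable eigenlines of $A$ and letting $n\to\pm\infty$ forces $D\equiv0$.)

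Finally, once $H=\overline{H}$ is established, the relation $H(x+n)=H(x)+n$ holds for all $x\in\RR^2$, $n\in\ZZ^2$, so $h:=\pi\circ H\circ\pi^{-1}:\mtt$ is well-defined and continuous, and projecting $H\circ F=A\circ H$ through $\pi$ yields $h\circ f=A\circ h$; surjectivity of $h$ then makes $h$ a semi-conjugacy between $f$ and $A$, which also completes Theorem \ref{3 thm special implies semi-conj} in the $sc$-DA case. I do not expect a genuine obstacle here: the statement is essentially bookkeeping once Proposition \ref{sc-H-semi-h-1} and Lemma \ref{3 lem barH general property} are in hand, and the only slightly non-formal points are verifying that the uniformly continuous extension still satisfies the functional equation and the $\ZZ^2$-equivariance, and pinning $\overline{H}$ down to $H$ (via surjectivity together with the uniqueness in Proposition \ref{2 prop semi-conj in R2}, or equivalently via the short hyperbolic-contraction argument on $\overline{H}-H$).
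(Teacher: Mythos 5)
Your proposal is correct and follows essentially the same route as the paper, which states this corollary as a direct consequence of Proposition \ref{sc-H-semi-h-1} (density of $\mathbf{F}$) and Lemma \ref{3 lem barH general property} (uniform continuity, the functional equation, and $\ZZ^2$-equivariance of $\overline{H}$); you have simply written out the extension-by-density bookkeeping and the identification $\overline{H}=H$ via the uniqueness in Proposition \ref{2 prop semi-conj in R2}. Both of your ways of pinning down $\overline{H}=H$ (surjectivity plus uniqueness, or the hyperbolic-contraction argument on $\overline{H}-H$) are sound.
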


    Now we deal with the case of $cu$-DA. 
    Recall that  $\Bar{\Gamma}$, the closure of the $H$-injective set
    \[\Gamma:=\big\{x\in\RR^2\ |\ H^{-1}\circ H(x)=\{x\} \big\},\] 
    is an $F$-invariant set.
   
\begin{prop}\label{cu-H-semi-h-1}
    Under the assumption of Theorem \ref{main-thm-scu}, 
    if $f$ is $cu$-DA, then 
    \begin{enumerate}
        \item $\Bar{\Gamma} \subset \bar{\mathbf{F}}$, the closure of set $\mathbf{F}$;
        \item $\overline{H}$  uniquely continuously extends to $\Bar{\Gamma}$ 
            such that  $\overline{H}|_{\Bar{\Gamma}}=H|_{\Bar{\Gamma}}$ and $H(\Bar{\Gamma})=\RR^2$;
        \item $H(x+n)=H(x)+n$, for any $x\in \Bar{\Gamma}$ and $ n\in\ZZ^2$;
        \item $\Bar{\Gamma}+\ZZ^2=\Bar{\Gamma}$.
    \end{enumerate}
\end{prop}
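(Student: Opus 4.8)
The plan is to push the density and surjectivity of the linear unstable foliation through the semi-conjugacy, and then to rigidify $\overline H$ into $H$ on $\bar\Gamma$ using the hyperbolicity of $A$. First I would prove that $H(\mathbf F)$ is dense in $\RR^2$. Since $f$ is special, $\mcf^u_F=\mcf^2$ is $\ZZ^2$-periodic, so $\mathbf F=\mcf^u_F(x_0)+\ZZ^2$ and, by Proposition \ref{2-3 prop foliation on R2}(1), $H(\mathbf F)=\bigcup_{n\in\ZZ^2}\tildeL^u\big(H(x_0+n)\big)$. For $n^*\in A^k\ZZ^2$ Proposition \ref{2 prop approach property of H}(2) gives $|H(x_0+n^*)-H(x_0)-n^*|<2C_H\|A|_{L^s}\|^k$, so the straight line $\tildeL^u(H(x_0+n^*))$ lies within $2C_H\|A|_{L^s}\|^k$ of the parallel line $\tildeL^u(H(x_0)+n^*)$. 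As $\bigcup_{n^*\in A^k\ZZ^2}\tildeL^u(H(x_0)+n^*)$ is dense by Proposition \ref{2 prop minimal foliation} and $A^k\ZZ^2\subseteq\ZZ^2$, the set $H(\mathbf F)$ is $2C_H\|A|_{L^s}\|^k$-dense for every $k$, hence dense; and since $\|H-{\rm Id}_{\RR^2}\|_{C^0}<C_H$, it follows that $H(\bar{\mathbf F})=\RR^2$. Item (1) is then immediate: given $x\in\Gamma$, pick $y_j\in\mathbf F$ with $H(y_j)\to H(x)$; the sequence $(y_j)$ is bounded, a subsequential limit $y_*\in\bar{\mathbf F}$ satisfies $H(y_*)=H(x)$, and injectivity at $x$ forces $y_*=x$, so $x\in\bar{\mathbf F}$ and hence $\bar\Gamma\subseteq\bar{\mathbf F}$.

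For item (2): $\overline H$ is uniformly continuous on $\mathbf F$ by Lemma \ref{3 lem barH general property}(3), so it extends uniquely to a continuous map on $\bar{\mathbf F}\supseteq\bar\Gamma$, still denoted $\overline H$, retaining all four properties of Lemma \ref{3 lem barH general property} by continuity. To see $\overline H=H$ on $\bar\Gamma$, fix $x\in\bar\Gamma$; since $\bar\Gamma$ is $F^{\pm1}$-invariant and both maps intertwine $F$ with $A$, the vector $v:=\overline H(x)-H(x)$ satisfies $|A^kv|=|\overline H(F^kx)-H(F^kx)|<2C_H$ for all $k\in\ZZ$, so $v=0$ by hyperbolicity of $A$. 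For $H(\bar\Gamma)=\RR^2$: each fiber $H^{-1}(z)$ is a compact center segment (Proposition \ref{2-3 prop foliation on R2}(3)), and its endpoints lie in $\bar\Gamma$ by the Cantor-like argument of Lemma \ref{2 lem Cantor property}(1), whose proof applies verbatim to $H$ on $\RR^2$, since a non-degenerate interval cannot be exhausted by a countable disjoint family of non-degenerate closed fibers, forcing points of $\Gamma$ to accumulate on every fiber endpoint.

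It remains to establish items (4) and (3). I would first show that $\Gamma$, hence $\bar\Gamma$, is $\mcf^u_F$-saturated: if $x\in\Gamma$, $y\in\mcf^u_F(x)$ and $y'$ is an endpoint of $H^{-1}(H(y))$ distinct from $y$, then $z:=\mcf^u_F(y')\cap\mcf^c_F(x)$ has $H(z)\in\tildeL^u(H(y'))\cap\tildeL^s(H(x))=\tildeL^u(H(x))\cap\tildeL^s(H(x))=\{H(x)\}$, so $z=x$, whence $x\in\mcf^u_F(y')$ and the Global Product Structure of Proposition \ref{2 prop foliation on R2} forces $y'=y$; this is the argument of Proposition \ref{2 prop injective point set}(3)--(4) run directly in $\RR^2$. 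Given this, for $x\in\Gamma$ and $n\in\ZZ^2$ it suffices to show $\mcf^u_F(x+n)$ meets $\bar\Gamma$ (then $\mcf^u_F(x+n)\subseteq\bar\Gamma$, so $x+n\in\bar\Gamma$, giving $\Gamma+\ZZ^2\subseteq\bar\Gamma$ and thus $\bar\Gamma+\ZZ^2=\bar\Gamma$). Since an unstable leaf meets $\bar\Gamma$ iff its $F^k$-image does, one may replace $x,n$ by $F^kx\in\Gamma$ and $A^kn\in A^k\ZZ^2$, and then combine $H(\bar\Gamma)=\RR^2$ with the near-equivariance of Proposition \ref{2 prop approach property of H}(2) (whose error $2C_H\|A|_{L^s}\|^k\to0$) to locate a point of $\bar\Gamma$ on the relevant unstable leaf. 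Finally item (3) follows: for $x\in\bar\Gamma$ and $n\in\ZZ^2$, item (4) gives $x+n\in\bar\Gamma$, so $H(x+n)=\overline H(x+n)=\overline H(x)+n=H(x)+n$ by Lemma \ref{3 lem barH general property}(4) and the identity $\overline H=H$ on $\bar\Gamma$.

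The main obstacle is item (4). The semi-conjugacy $H$ is only \emph{approximately} $\ZZ^2$-equivariant — exact equivariance under $A^k\ZZ^2$ holds only in the limit $k\to\infty$ — so the $\ZZ^2$-periodicity of $\bar\Gamma$ must be squeezed out of the interaction between the $\mcf^u_F$-saturation of $\bar\Gamma$, the surjectivity $H(\bar\Gamma)=\RR^2$, and the contraction of the equivariance error along backward orbits; making the leaf-location step above rigorous (in particular, determining which unstable leaf a point of $\bar\Gamma$ lies on, given only that its $H$-image lies on a prescribed linear unstable leaf, where $H$ need not be injective across unstable leaves) is where the real work lies. A lesser subtlety is the Cantor-like accumulation argument used for $H(\bar\Gamma)=\RR^2$ in item (2).
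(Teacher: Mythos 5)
Your items (1) and (2) are essentially sound. For (1) you go through $H(\bar{\mathbf F})=\RR^2$ and then use boundedness of $H$-preimages together with injectivity of $H$ at points of $\Gamma$; this is a valid (arguably cleaner) variant of the paper's argument, which instead shows directly that every short center segment around a point of $\bar\Gamma$ contains a whole fiber $H^{-1}(z)$ with $z\in H(\mathbf F)$ and hence meets some leaf $\mcf^2(x_0+n^*)$. Your identification $\overline{H}=H$ on $\bar\Gamma$ via $|A^k v|\leqslant 2C_H$ for all $k\in\ZZ$, and $H(\bar\Gamma)=\RR^2$ via fiber endpoints lying in $\bar\Gamma$, coincide with the paper's proofs. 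Your observation that $\Gamma$ is $\mcf^u_F$-saturated is also correct and matches Proposition \ref{2 prop injective point set}.

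The genuine gap is item (4), exactly where you flag it, and since you deduce (3) from (4), both of the last two items remain unproven. The obstruction is real: $H$ collapses center segments, so it can identify distinct unstable leaves. Producing $w\in\bar\Gamma$ with $H(w)\in\tildeL^u\big(H(x+n)\big)$ only shows that $\mcf^u_F(w)$ and $\mcf^u_F(x+n)$ have the same $H$-image, not that they coincide; the Global Product Structure would force coincidence if $w$ could be taken in $\Gamma$ (trivial fiber), but $H(\Gamma)$ need not meet the prescribed linear leaf, and the shrinking error $2C_H\|A|_{L^s}\|^k$ of Proposition \ref{2 prop approach property of H} does not by itself pin the translated point onto a leaf of $\bar\Gamma$. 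I do not see how to close this along your route.

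The paper closes it by reversing the logical order: item (3) is proved \emph{first}, without knowing $x+n\in\bar\Gamma$. The missing ingredient in your write-up is the $F$-invariance of $\bar{\mathbf F}$. Writing $\ZZ^2=\bigcup_i(n_i+A\ZZ^2)$, one shows (by the same density argument as in item (1), applied to each coset) that $\bar\Gamma\subset\overline{\mcf^u_F(x_0+n_i)+A\ZZ^2}$ for every $i$, whence all these closures coincide with $\overline{\mcf^u_F(x_0)+A\ZZ^2}=F(\bar{\mathbf F})$, giving $F(\bar{\mathbf F})=\bar{\mathbf F}$. The rigidity argument of item (2) then runs on all of $\bar{\mathbf F}$ (not just on $\bar\Gamma$), so $\overline{H}=H$ on $\bar{\mathbf F}\supset\bar\Gamma+\ZZ^2$, and Lemma \ref{3 lem barH general property}(4) yields $H(x+n)=\overline{H}(x+n)=\overline{H}(x)+n=H(x)+n$ for every $x\in\bar\Gamma$ — that is item (3). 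Item (4) then falls out: if $z_0\in\Gamma$ but $z_0+n_0\notin\bar\Gamma$, the fiber of $z_0+n_0$ has an endpoint $y\in\bar\Gamma$ with $y\neq z_0+n_0$ and $H(y)=H(z_0+n_0)$, and item (3) gives $H(y-n_0)=H(y)-n_0=H(z_0)$, forcing $y-n_0=z_0$ by injectivity at $z_0$, a contradiction; passing to closures finishes the proof. You should either supply the $F$-invariance of $\bar{\mathbf F}$ and follow this order, or find a genuinely new argument for (4).
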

\begin{proof}
    To prove the first item,  
    applying the same method in proof of Proposition \ref{sc-H-semi-h-1}, one has
        \[H(\mathbf{{F}})=
        H(\bigcup_{n\in\ZZ^2}\mcf_F^u(x_0+n))=
        \bigcup_{n\in\ZZ^2}\tildeL^u\big(H(x_0+n)\big)\]
    is dense in $\RR^2$, 
    by Proposition \ref{2 prop minimal foliation} and Proposition \ref{2 prop approach property of H}. 
    Note that it does not imply $\Bar{\mathbf{F}}=\RR^2$ 
    since $H$ may not be homeomorphic along $\mcf^1$ in this case. 
    However, $\mathbf{F}$ is dense in $\Bar{\Gamma}$. 
    Indeed, let $x\in \Bar{\Gamma}$. 
    Since $H|_{\mcf_F^c(x)}$ is not locally constant at $x$, 
    for every short open segment $J^c$ containing $x$ on $\mcf^c_F(x)$, 
    there exists a non-trivial closed interval $I^c\subset J^c$ 
    such that $H(I^c)$ has positive length and $H^{-1}\circ H(I^c)=I^c$.
    %by a Cantor's like （待定） property of $H$. 
    Since $H(\mathbf{{F}})$ is dense in $\RR^2$, there exists $n^*\in\NN$ such that the leaf
        \[\tildeL^u\big(H(x_0+n^*)\big)={H}\big(\mcf_F^u(x_0+n^*)\big)\]
    intersects with $H(I^c)$ at a point $z$. 
    Thus, the set $H^{-1}(z)\subset I^c\subset  J^c$ and the leaf $\mcf_F^u(x_0+n^*)$ intersects $J^c$. 
    This implies $\Bar{\Gamma} \subset \bar{\mathbf{F}}$.

    Recall that by Proposition \ref{2-3 prop foliation on R2},
        $\bar{\Gamma}= \bigcup_{x\in\RR^2} 
        \{ y\in \RR^2\ |\ y\  \text{is an endpoint of} \ H^{-1}\circ H(x)\}$.
    Thus, $H(\Bar{\Gamma})=\RR^2$.
    The second item follows from the uniqueness of $H$ in Proposition \ref{2 prop semi-conj in R2} 
    and the fact that $\bar\Gamma$ is $F^{\pm 1}$-invariant. 
    Indeed, for every $x\in\bar\Gamma$ and $k\in\ZZ$,
    \begin{align}
        d\big(A^k\circ H(x),A^k\circ\overline{H}(x)\big)= d\big(H\circ F^k(x),
        \overline{H}\circ F^k(x)\big) \leqslant 2C_H,
        \label{eq. 3. barH 1}
    \end{align}
    where $\|H-{\rm Id}_{\RR^2}\|_{C^0}\leqslant C_H$ 
    and $\|\overline{H}-{\rm Id}_{\bar\Gamma}\|_{C^0}\leqslant C_{\overline{H}}=C_H$. 
    By \eqref{eq. 3. barH 1}, $H(x)=\overline{H}(x)$ for all $x\in\bar\Gamma$.
   
    The third item is a corollary of the first two items. 
    By the fourth item of Lemma \ref{3 lem barH general property}, 
    for any $x\in \Bar{\Gamma}$ and $n\in\ZZ^2$, one has that $x+n\in \Bar{\Gamma}+\ZZ^2\subset \Bar{\mathbf{F}}$, hence, $\overline{H}(x+n)= \overline{H}(x)+n=H(x)+n$. 
    However, we do not know a priori that $x+n\in\Bar{\Gamma}$ nor $\overline{H}(x+n)=H(x+n)$. To prove this,  it suffices to show that $\Bar{\mathbf{F}}$ is an $F$-invariant set. Then by the same argument of the second item, one has that $\overline{H}|_{\Bar{\mathbf{F}}}=H|_{\Bar{\mathbf{F}}}$, hence, $\overline{H}(x+n)=H(x+n)$. 
    
    Now we prove the $F$-invariance of $\Bar{\mathbf{F}}$, which is in fact a corollary of the first item.
	Note that there exist $n_i\in\ZZ^2 \ (1\leqslant i\leqslant l={\rm deg}(A))$ such that $\ZZ^2= \bigcup_{1\leqslant i\leqslant l}(n_i+A\ZZ^2)$ and 
     \begin{align}
         	\mathbf{F}=\bigcup_{1\leqslant i\leqslant l} \big(\mcf^u_F(x_0+n_i)+A\ZZ^2\big). \label{eq. mathbfF}
     \end{align}
    By the proof of the first item, $\Bar{\Gamma}\subset \overline{\mcf^u_F(x_0+n_i)+A\ZZ^2}$ for every $1\leqslant i\leqslant l$. 
    Hence, we can take \[x\in \bar{\Gamma}\subset \overline{\mcf^u_F(x_0+n_i)+A\ZZ^2}\cap \overline{\mcf^u_F(x_0)+A\ZZ^2}.\]
    It follows that there are $m^1_j\in A\ZZ^2$ and $m^2_j\in A\ZZ^2$ such that when $j\to+\infty$,
   \[ \mcf^u_F(x_0)+m^1_j\to \mcf^u_F(x) \quad {\rm and} \quad \mcf^u_F(x_0+n_i)+m^2_j\to \mcf^u_F(x).\]
    Therefore, $\mcf^u_F(x_0)+m^1_j-m^2_j \to  \mcf^u_F(x_0+n_i)$ as $j\to+\infty$. 
    Note that $(m^1_j-m^2_j)\in A\ZZ^2$, so
    \begin{align}
    \overline{\mcf^u_F(x_0+n_i)+A\ZZ^2}\subset\overline{\mcf^u_F(x_0)+A\ZZ^2}, \quad \forall\ 1\leqslant i\leqslant l. \label{eq. foliation dense}
    \end{align}
    It follows from \eqref{eq. mathbfF} and \eqref{eq. foliation dense} that $F(\Bar{\mathbf{F}})=\overline{\mcf^u_F(x_0)+A\ZZ^2}=\Bar{\mathbf{F}}$.

    For the fourth item, we first claim that for any $z\in\Gamma$ and $n\in \ZZ^2$, 
    we have $z+n\in \bar{\Gamma}$. 
    Indeed, if there exist $z_0\in \Gamma$ and $n_0\in\ZZ^2$ 
    such that $z_0+n_0\notin \bar{\Gamma}$, 
    we can take $y\in \partial\Bar{\Gamma}\subset \Bar{\Gamma}$ 
    with $y\neq z_0+n_0$ such that $H(y)=H(z_0+n_0)$. 
    By the third item, \[H(y-n_0)=H(y)-n_0=H(z_0+n_0)-n_0=H(z_0)+n_0-n_0=H(z_0).\]
    By the injection of $H$ on $\Gamma$, 
    it follows $y-n_0=z_0$, which is an obvious contradiction.
    Then, for any $x\in \bar{\Gamma}$ and $n\in\ZZ^2$, 
    there exists $z_k\in \Gamma$ such that $z_k\to x$. 
    Note that $z_k+n \in \Bar{\Gamma}$, one has $z_k+n\to  x+n\in \Bar{\Gamma}$.     
\end{proof}

\begin{cor}\label{cu-H-semi-h-2}
    Under the assumption of Theorem \ref{main-thm-scu},
    if $f$ is $cu$-DA,
    then $H(x+n)=H(x)+n$, for any $x\in \RR^2$ and $ n\in\ZZ^2$. 
    In particular, $h:=\pi\circ H\circ \pi^{-1}$ is  well-defined on $\TT^2$
    and a semi-conjugacy between $f$ and $A$.
\end{cor}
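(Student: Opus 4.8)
The plan is to upgrade the commuting relation $H\circ T_n=T_n\circ H$ from the set $\bar\Gamma$, where Proposition~\ref{cu-H-semi-h-1} has already established it, to all of $\RR^2$. Two ingredients will do the job: the monotonicity of $H$ along center leaves (this is contained in the proof of Proposition~\ref{2-3 prop foliation on R2} and does not use any commutation with the deck transformations), and the $\ZZ^2$-periodicity of the center foliation $\mcf^c_F=\mcf^1$, which is available here precisely because $f$ is special. Since in the $cu$-DA case $\bar{\mathbf F}$ need not be all of $\RR^2$, it will be essential that $\bar\Gamma$ is exactly the set of endpoints of the fibres $H^{-1}\circ H(x)$, $x\in\RR^2$ (as recalled in the proof of Proposition~\ref{cu-H-semi-h-1}); this is what lets one control $H$ at a completely arbitrary point by reaching into $\bar\Gamma$.

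I would fix $x\in\RR^2$ and $n\in\ZZ^2$. By Proposition~\ref{2-3 prop foliation on R2}(3) the fibre $H^{-1}\big(H(x)\big)$ is a (possibly degenerate) compact segment lying in the center leaf $\mcf^c_F(x)$; write $x_1\preceq x\preceq x_2$ for its endpoints in the leaf order. These endpoints belong to $\bar\Gamma$, so by Proposition~\ref{cu-H-semi-h-1} one has $x_1+n,\,x_2+n\in\bar\Gamma$ and $H(x_i+n)=H(x_i)+n=H(x)+n$ for $i=1,2$. Since $f$ is special, $\mcf^c_F(x)+n=\mcf^c_F(x+n)$, so the translation $T_n$ restricts to a homeomorphism of this center line; a homeomorphism of a line preserves betweenness, hence $x+n$ lies in the closed sub-arc of $\mcf^c_F(x+n)$ bounded by $x_1+n$ and $x_2+n$. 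Now $H$ is monotonic along that leaf and takes the common value $H(x)+n$ at the two endpoints of the sub-arc, so it is constant on the sub-arc; in particular $H(x+n)=H(x)+n$. Since $x$ and $n$ were arbitrary, the first assertion of the corollary follows.

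Finally I would descend to the torus. If $\pi(x)=\pi(x')$ then $x'=x+n$ for some $n\in\ZZ^2$, whence $\pi\circ H(x')=\pi\big(H(x)+n\big)=\pi\circ H(x)$; thus $h:=\pi\circ H\circ\pi^{-1}$ is a well-defined map on $\TT^2$, and it is continuous, surjective, and homotopic to ${\rm Id}_{\TT^2}$ because $H$ is continuous, satisfies $H(\bar\Gamma)=\RR^2$, and lies within bounded $C^0$-distance of ${\rm Id}_{\RR^2}$ (Proposition~\ref{2 prop semi-conj in R2}); the identity $h\circ f=A\circ h$ is then the projection of $H\circ F=A\circ H$. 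I do not expect this step to be the difficult one: the real work — namely $\bar\Gamma\subset\bar{\mathbf F}$ and the commutation of $H$ with the deck transformations \emph{on} $\bar\Gamma$ — was carried out in Proposition~\ref{cu-H-semi-h-1}; the only delicate point remaining is that the sandwiching of $x+n$ between $x_1+n$ and $x_2+n$ genuinely relies on the $\ZZ^2$-periodicity of the center foliation, which is exactly where the special hypothesis is indispensable.
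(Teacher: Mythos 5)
Your proposal is correct and follows essentially the same route as the paper: reduce to points outside $\bar\Gamma$, use the fiber structure of $H^{-1}\circ H(x)$ together with items (3)--(4) of Proposition \ref{cu-H-semi-h-1} to control the translated endpoints, and conclude via the $\ZZ^2$-periodicity of $\mcf^c_F$ and the monotonicity of $H$ along center leaves. The paper phrases the last step through the translated half-open segment $[x+n,y+n)$ lying in $\bar\Gamma^c$ rather than through monotonicity between the two translated endpoints, but the argument is the same in substance.
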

\begin{proof}
    By the third item of Proposition \ref{cu-H-semi-h-1}, 
    we need only show $H(x+n)=H(x)+n$ for $x\in \Bar{\Gamma}^c$, the complement of $\Bar{\Gamma}$. 
    By Proposition \ref{2-3 prop foliation on R2}, 
    there exists $y\in \Bar{\Gamma}$  such that 
    the curve $[x,y)\subset \mcf_F^c(x)\cap \Bar{\Gamma}^c$ and $H(x)=H(y)$. 
    Then by the fourth item of Proposition \ref{cu-H-semi-h-1},
    one has that $y+n\in \Bar{\Gamma}$ and 
    \[ [x+n,y+n)\in  \mcf_F^c(y+n)\cap \Bar{\Gamma}^c,\] for any $n\in\ZZ^2$. 
    Hence, $H(x+n)=H(y+n)=H(y)+n=H(x)+n$.
\end{proof}

\section{Rigidity of Lyapunov exponents}\label{sec-small-LyaExp}

    Let $f:\mtt$ be a $C^1$-smooth DA endomorphism and $A:\mtt$ be its linearization. 
    Let $f$ be semi-conjugate to $A:\mtt$
    and $\Lambda$ be given in Proposition \ref{2 prop injective point set}. 
    Note that $\Lambda$ is $\mcf^{s/u}_f$-saturated and $f|_{\Lambda}$ is special. 
    Also, $f$ has uniquely integrable center bundle 
    (Corollary \ref{2 cor semi-conj. Fc}).

    For $\{x_n\}\subset \Lambda$, $x\in\Lambda$ and $\sigma=+\ {\rm or}\ -$, 
    we define $x_n\to_\sigma x$ (see FIGURE \ref{Defto}) as $x_n$ 
    converging to $x$ along the $\sigma$-direction of the center foliation $\mcf^c_f$. 
    Precisely, by the local product structure, 
    let $x'_n$ be the unique intersection of local leaves $\mcf^c_f(x)$ and $\mcf^{s/u}_f(x_n)$. 
    Since $\Lambda$ is $\mcf^{s/u}_f$-saturated, we have $x'_n\in\Lambda$.
    Then we can define as follows:
    \begin{itemize}
        \item 
        We say $x_n\to_+ x$ if $x\prec x'_n$ and $d(x_n,x)\to 0$ as $n\to +\infty$;
        \item % Symmetrically,  
        We say $x_n\to_- x$ if $x'_n\prec x$ and $d(x_n,x)\to 0$ as $n\to +\infty$.
    \end{itemize}  
   \begin{figure}[htbp]
		\centering
		\includegraphics[width=11cm]{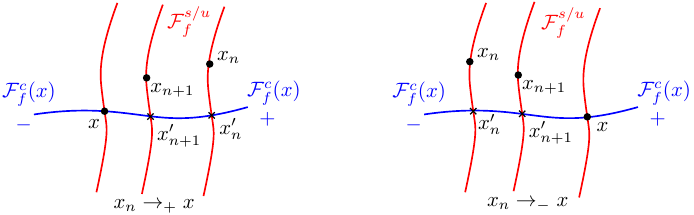}
		\caption{The definition of $x_n\to_{\pm}x$.}	
        \label{Defto}
	\end{figure}
    Similarly, we can define \[x_n\to_\sigma^A x,\] as $x_n$ 
    converging to $x$ along the $\sigma$-direction of the corresponding foliation $\mcl^c:=h(\mcf^c_f)$.
    And we assume that $h$ preserves the orientation on each leaf.

    Let $F:\mrr$ be a lift of $f$ via $\pi:\mrt$ and admit the invariant partially hyperbolic splitting given as \eqref{eq. Fsplitting}
        \[T\RR^2=E^1\oplus_< E^2.\] 
    Denote by $\lambda^1(p)$ the Lyapunov exponent of point $p\in\TT^2$ with respect to the bundle  
        \[E_1:=D\pi(E^1)\subset T\TT^2.\] 
    Recall that $E^1$ is uniquely integrable, so is $E_1$ (Proposition \ref{2 prop foliation on R2}). 
    We denote the integral foliation by $\mcf_1$,
    and denote the infimum and supremum of 
    the set \[\big\{\lambda^1(p) \ |\ p\in {\rm Per}(f|_{\Lambda})\big\},\] 
    by $\lambda_-$ and $\lambda_+$, respectively. 
    
    We mainly obtain the following rigidity result for DA endomorphisms on $\TT^2$.

\begin{prop}\label{4 prop s-rigidity}
    Let $f:\mtt$ be a  $C^1$-smooth %non-invertible  
    DA endomorphism semi-conjugate to its hyperbolic linearization $A:\mtt$. 
    Let $\Lambda$ be given in Proposition \ref{2 prop injective point set}.  
    If $\lambda_-<0$, then $\lambda^1(p)=\lambda^1(q)$ for any $p,q\in {\rm Per}(f|_{\Lambda})$. 
\end{prop}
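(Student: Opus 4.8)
The plan is to reduce to fixed points and then compare the contraction rate of $Df$ along the one-dimensional foliation $\mcf^1$ at two periodic points, using the geometry of the semi-conjugacy along leaves (Propositions \ref{2 prop foliation on R2}, \ref{2-3 prop foliation on R2}) together with the exponential density of $A$-preimages (Lemma \ref{2 lem preimage dense linear}). Given $p,q\in{\rm Per}(f|_\Lambda)$, replace $f$ by $f^{2N}$ and $A$ by $A^{2N}$, where $N$ is a common multiple of the periods: the invariant splitting \eqref{eq. Fsplitting}, the semi-conjugacy $h$ and the set $\Lambda$ are unchanged, $\lambda^1(\cdot,f^{2N})=2N\lambda^1(\cdot,f)$, $\lambda_-(f^{2N})=2N\lambda_-(f)<0$, and $F^{2N}$ preserves the orientations along the leaves of $\mcf^1$ and $\mcf^2$. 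So it suffices to prove the claim $(\star)$: \emph{if $p,q$ are fixed points of $f$ in $\Lambda$ and $\lambda^1(p)<0$, then $\lambda^1(q)\le\lambda^1(p)$.} Granting $(\star)$, for every $q\in{\rm Per}(f|_\Lambda)$ we get $\lambda^1(q)\le\lambda^1(p)<0$, hence in particular $\lambda^1(q)<0$; interchanging the roles of $p$ and $q$ in $(\star)$ then gives $\lambda^1(p)\le\lambda^1(q)$, so equality. Since $\lambda_-<0$ supplies a fixed point $p$ with $\lambda^1(p)<0$, this proves $\lambda^1$ is constant on ${\rm Per}(f|_\Lambda)$.

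\textbf{The conjugated picture on $\mcf^1(p)$.} Lift $p$ to a fixed point $\tilde p$ of $F$ and set $W:=\mcf^1(\tilde p)$. Since $F$ is a diffeomorphism of $\RR^2$ preserving the properly embedded line $W$, the restriction $F|_W$ is an orientation-preserving diffeomorphism of $W$ fixing $\tilde p$ with derivative $e^{\lambda^1(p)}\in(0,1)$; thus near $\tilde p$ it contracts $W$ towards $\tilde p$ at rate $e^{\lambda^1(p)}$ and, by $C^1$-distortion, the length of the $n$-th forward image of a short sub-segment near $\tilde p$ decays like $e^{n\lambda^1(p)}$. On the other hand, by Propositions \ref{2 prop foliation on R2}--\ref{2-3 prop foliation on R2} together with $\|H-{\rm Id}_{\RR^2}\|_{C^0}<C_H$, the map $H$ carries $W$ onto the Euclidean line $\tildeL^s(H(\tilde p))$ within $C^0$-distance $C_H$ of the identity, intertwining $F|_W$ with the \emph{linear} contraction $A|_{\tildeL^s}$; as $\mcf^1$ is quasi-isometric, this controls the large-scale geometry of $W$ and of its forward images. (In the $sc$-DA case $H|_W$ is moreover a homeomorphism; in the $cu$-DA case it is only monotone, with point-fibres of uniformly bounded length.)

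\textbf{The comparison and the main obstacle.} Fix $q$. Using Lemma \ref{2 lem preimage dense linear}, $h\circ f=A\circ h$ and $\|h-{\rm Id}\|<C_H$, choose for each large $k$ a point $z_k$ with $f^k(z_k)=p$ as close to $q$ as these ingredients allow, and transport a fixed small $\mcf_1$-segment centred at $q$ to a segment $J_k\subset\mcf_1(z_k)$ centred at $z_k$, so $J_k$ converges to a segment through $q$. Following $J_k$ under $f^k$: it lands in $W=\mcf^1(p)$ as a segment through $p$ whose length is governed by the rate $e^{\lambda^1(p)}$ via the previous step, while its first iterates — taking place in a neighbourhood of $q$ on which $\log\|Df|_{E_1}\|$ is $C^0$-close to $\lambda^1(q)$ — contribute a factor $\approx e^{(\text{a definite fraction of }k)\,\lambda^1(q)}$. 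Matching the two descriptions of the length of this common orbit piece and letting $k\to\infty$ yields $(\star)$. The difficulty is concentrated here and is structural: because $h$ may collapse center segments it is only a bounded distance from the identity and is \emph{not} H\"older, so — unlike the conjugacy case of \cite{AGGS23} — the $f$-preimages of $p$ are a priori only $C_H$-dense, not $\gamma^k$-dense. One must therefore extract the genuine exponential density of preimages along the single leaf foliation on which $H$ restricts to a homeomorphism (the hyperbolic foliation), translate it into a statement about $\mcf_1$-segments, and carefully control the $C^1$-distortion of $Df^k|_{E_1}$ over orbit pieces joining a neighbourhood of $q$ to $p$ that are \emph{not} contained in a small neighbourhood of either fixed point — in particular showing that the ``transition'' part of such an orbit is asymptotically negligible in the exponent. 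The hypothesis $\lambda^1(p)<0$ is used precisely to ensure that the relevant $\mcf_1$-segments genuinely contract under forward iteration, so that the picture of the second step is available and the comparison is not vacuous.
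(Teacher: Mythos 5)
Your overall strategy is the same as the paper's: compare the lengths of two $\mcf_1$-segments, one anchored at $p$ and one near $q$, that become identified after $k$ iterates via the exponential density of $A$-preimages (Lemma \ref{2 lem preimage dense linear}) transported through $h$. The reduction to fixed points and the two-sided application of your claim $(\star)$ are fine. But the proposal stops exactly where the proof becomes hard, and the steps you flag as ``the main obstacle'' are not side issues — they are the content of the proposition. Concretely: (i) the assertion that the first iterates near $q$ account for ``a definite fraction of $k$'' is unproved; this is the paper's Claim \ref{4 claim time rate}, which requires playing the preimage-density rate $\gamma=|{\rm det}(A)|^{-1/2}$ against the unstable expansion rate $\nu=\|A|_{L^u}\|$ to get the uniform constant $\alpha=\frac14\log_\nu|{\rm det}(A)|\in(0,1)$, and it is the reason the constant $\delta$ must be fixed in terms of $\alpha$ \emph{before} choosing $p,q$. (ii) Controlling the transition part of the orbit is not a distortion estimate: since $f$ is only $C^1$, the paper instead builds an adapted metric on $\Lambda$ with $\log\|Df|_{E_1}\|\in(\lambda_--\delta,\lambda_++\delta)$ everywhere on $\Lambda$ (Proposition \ref{4 prop adapted metric}), and this in turn needs the approximation of every ergodic measure on $\Lambda$ by periodic measures (Lemma \ref{4 lem measure shadowing}), proved via a topological-contraction and shadowing argument. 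None of this is in your sketch. (iii) You also need a two-sided bound on $|J_k|/|I|$ (the paper's Claim \ref{4 claim start rate}); since $h$ is not H\"older and collapses center segments, it is not automatic that the segment you ``transport'' to $z_k$ has length bounded away from $0$ and $\infty$.

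Two further inaccuracies. In the $cu$-DA case the transported segment $J_\e$ need \emph{not} lie in $\Lambda$, so even the adapted metric does not control its whole orbit; the paper handles this with the marking time $K_0$ and a three-case analysis, and it is precisely in the case $K_0\in[N_\e,K_\e)$ that the hypothesis $\lambda_-<0$ is used (to make $-\alpha\lambda_-$ a positive exponent beating the allowed $\delta$) — not, as you say, to ensure that the segments contract. Finally, your forward-push framing leaves the length of $f^k(J_k)$ undetermined: a segment through the fixed point $p$ in $\mcf^1(p)$ has no length forced on it by $\lambda^1(p)$ unless its entire intermediate orbit is confined near $p$, which is exactly what you cannot assume. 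The paper's arrangement $f^{K_\e}(J_\e)=f^{K_\e}(I)$ with $I$ a fixed segment whose forward orbit provably stays in $B_{\eta_0}(p)$ (Claim \ref{4 claim I J in nbhd}) is what makes the two length computations meet.
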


    Before giving the proof of Proposition \ref{4 prop s-rigidity}, 
    we first give an adapted metric with respect to the periodic data on the set $\Lambda$. 
    Recall that $\Lambda$ is exactly the collection of the endpoints of $h^{-1}\circ h(x)$ for all $x\in \TT^2$.

\begin{prop}\label{4 prop adapted metric}
    Let $f:\mtt$ be a  $C^1$-smooth %non-invertible  
    DA endomorphism semi-conjugate to its hyperbolic linearization $A:\mtt$. 
    Let $\Lambda$ be given in Proposition \ref{2 prop injective point set}. 
    Then for any $\delta>0$ there exists a Riemannian metric $\|\cdot\|$ such that for every $x\in\Lambda$,
        \[ \lambda_--\delta<{\rm log}\|Df|_{E_1(x)}\|<\lambda_++\delta. \] 
\end{prop}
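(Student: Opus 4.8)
The plan is to build the adapted metric by a standard averaging-over-orbit-segments construction, but carried out with uniform control coming from the periodic data via a Livšic-type closing argument on the invariant set $\Lambda$. First I would recall that $E_1$ is a $Df$-invariant, one-dimensional continuous bundle over $\Lambda$ (in the $sc$-DA case $E_1=D\pi(E^1)$ is the stable bundle of the whole system; in the $cu$-DA case $E_1$ is the dominating bundle obtained by iterating the center cone backward, which over $\Lambda$ is $Df$-invariant by Proposition~\ref{2 prop injective point set}(3)). Fix the continuous function $\phi:\Lambda\to\RR$, $\phi(x)=\log\|Df|_{E_1(x)}\|$, where $\|\cdot\|$ is the original Riemannian metric. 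Replacing the metric by a conformal multiple $\|\cdot\|_x'=e^{u(x)}\|\cdot\|_x$ on $E_1$ changes $\phi$ to the cohomologous function $\phi+u\circ f-u$; so the proposition amounts to: for every $\delta>0$ there is a continuous $u:\Lambda\to\RR$ with $\lambda_--\delta<\phi+u\circ f-u<\lambda_++\delta$ everywhere on $\Lambda$.

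The heart of the argument is the claim that every ergodic $f$-invariant measure $\mu$ supported on $\Lambda$ satisfies $\lambda_-\le \int\phi\,d\mu\le\lambda_+$, i.e.\ $\lambda^1$ averaged against any invariant measure lies between the infimum and supremum of the periodic values. Granting this, one produces $u$ as follows: by the ergodic decomposition and a classical result (this is exactly the abstract lemma behind adapted metrics — see the argument of Kats--Hasselblatt or the uniform version due to Sinai/Bowen), the condition ``$\sup_x \limsup_n \tfrac1n S_n\phi(x) < \lambda_++\delta$ and symmetrically from below'' together with compactness of $\Lambda$ gives, for a sufficiently large $N$, the bound $\tfrac1N S_N\phi(x)\in(\lambda_--\delta/2,\lambda_++\delta/2)$ for all $x$; then set $u(x)=\sum_{j=0}^{N-1}\bigl(1-\tfrac{j}{N}\bigr)\phi(f^j x)$ (the Cesàro-type telescoping sum), which is continuous on $\Lambda$ since $\phi$ and $f$ are continuous, and a direct computation shows $\phi+u\circ f-u=\tfrac1N S_N\phi$, which is already within $\delta$ of $[\lambda_-,\lambda_+]$. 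Finally I would extend $u$ continuously from $\Lambda$ to a neighborhood and then to all of $\TT^2$ (Tietze), and define the global metric by $e^{u}$-rescaling along $\mcf_1$ and keeping it arbitrary transversally; the resulting Riemannian metric satisfies the desired inequality on $\Lambda$.

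To prove the claim $\lambda_-\le\int\phi\,d\mu\le\lambda_+$, I would approximate $\mu$ in the weak-$*$ topology by periodic orbit measures supported in $\Lambda$. Here the key inputs are $\overline{{\rm Per}(f|_\Lambda)}=\Lambda$ (Proposition~\ref{2 prop injective point set}(5)) and a specification/shadowing property for $f|_\Lambda$: since $f|_\Lambda$ is special and $\Lambda$ is $\mcf^{s/u}_f$-saturated with the linear-like hyperbolic behavior along $E_1$ and its transverse foliation, the closing lemma applies and orbit segments of $f|_\Lambda$ can be closed to genuine periodic orbits in $\Lambda$. For a periodic orbit $p$ of period $\pi(p)$ one has $\tfrac1{\pi(p)}S_{\pi(p)}\phi(p)=\lambda^1(p)\in[\lambda_-,\lambda_+]$ by definition of $\lambda_\pm$; passing to the limit along periodic measures $\mu_{p_n}\rightharpoonup\mu$ and using continuity of $\phi$ gives $\int\phi\,d\mu\in[\lambda_-,\lambda_+]$. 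The main obstacle I anticipate is precisely justifying this periodic approximation: one must verify that $f|_\Lambda$ has enough hyperbolicity (in the transverse, expanding or contracting, direction along $\Lambda$) for a closing argument, which is why the hypothesis $\lambda_-<0$ in the subsequent Proposition~\ref{4 prop s-rigidity} does not appear here — at the level of merely constructing the metric one only needs the topological facts about $\Lambda$ plus the uniform continuity of $Df|_{E_1}$, and the closing of orbits inside $\Lambda$ should follow from the special structure of $f|_\Lambda$ together with density of periodic points. If a clean closing lemma on $\Lambda$ is not directly available, the fallback is to prove the two one-sided inequalities separately by a direct subadditive/superadditive cocycle argument (applying the subadditive ergodic theorem to $S_n\phi$ and bounding $\limsup_n\frac1n\sup_x S_n\phi(x)$ by $\lambda_+$, using that any point whose forward averages exceed $\lambda_++\delta$ infinitely often would, by shadowing, force a periodic point with $\lambda^1>\lambda_+$), which again routes through the same closing mechanism but packages it more robustly.
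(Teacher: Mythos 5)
Your overall architecture matches the paper's: the proposition is reduced to the claim that every ergodic $f$-invariant measure $\mu$ supported on $\Lambda$ satisfies $\lambda_-\leqslant\int\log\|Df|_{E_1}\|\,d\mu\leqslant\lambda_+$, and from that claim the adapted metric is produced by a standard averaging construction (the paper uses finitely many local Birkhoff times $n_i$ glued by a partition of unity rather than your single-$N$ Ces\`aro sum, but this difference is cosmetic). The problem is the justification of the claim itself, which you correctly identify as the main obstacle but do not actually resolve. You invoke ``a specification/shadowing property for $f|_\Lambda$'' and ``the closing lemma'' on the grounds that $f|_\Lambda$ is special with ``linear-like hyperbolic behavior along $E_1$ and its transverse foliation.'' This is not available: $f|_\Lambda$ is only partially hyperbolic, and at this stage of the argument nothing is known about the sign or uniformity of the exponents in the transverse (center) direction --- indeed in the $sc$-DA case $\Lambda$ need not carry any uniform hyperbolicity in the center (Proposition \ref{5 prop counter-example} shows $\Lambda$ can be a proper repeller, and a priori the center behavior on $\Lambda$ is unknown). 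The Anosov closing lemma therefore does not apply to $f|_\Lambda$, and your fallback via the subadditive ergodic theorem ``routes through the same closing mechanism,'' as you admit, so it inherits the same gap.

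The paper's actual mechanism is different and is the real content of the proof: one takes a recurrent $\mu$-typical point $x\in\Lambda$, uses the exponential density of $A$-preimages (Lemma \ref{2 lem preimage dense linear}) to produce approximate returns, applies the shadowing property of the \emph{linear} Anosov endomorphism $A$ to the $h$-image of the resulting pseudo-orbit to get a genuine periodic point $p'$ of $A$, and then pulls $p'$ back to a periodic point $p\in\Lambda$ of $f$ as an endpoint of $h^{-1}(p')$, using the Cantor-like structure of $\Lambda$ (Lemma \ref{2 lem Cantor property}) and the fact that $\Lambda$ is exactly the set of endpoints of $h$-fibers. To guarantee that the whole $f$-orbit of $p$ stays close to that of $x$ (so that the periodic measures converge to $\mu$), one needs a separate ``topological contraction'' property of local center leaves along negative orbits of points of $\Lambda$ (Claim \ref{4 claim topological contracting leaf}), proved by contradiction using that $h$ maps nontrivial center arcs through points of $\Lambda$ to nontrivial arcs that $A$ expands. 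None of this is a consequence of a closing lemma for $f$; it is shadowing in the factor $A$ transported back through the semi-conjugacy. Without this (or an equivalent substitute) your key claim, and hence the proposition, is not established.
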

\begin{proof}
    The existence of an adapted metric we desired 
    follows from the Lemma \ref{4 lem measure shadowing}
    which can be viewed as a non-uniformly shadowing property.

\begin{lem}\label{4 lem measure shadowing}
    Let $\mu$ be an ergodic measure with ${\rm supp}(\mu)\subset \Lambda$.
    Then $\mu$ can be approached by periodic measures 
    supported on $\Lambda$ with respect to the weak-star topology. 
    In particular, there exists a sequence of periodic points $p_n\in\Lambda$ 
    such that \[\lim_{n\to +\infty} \lambda^1(p_n)=\lambda^1(\mu).\]
    In particular, $\lambda_-\leqslant \lambda^1(\mu)\leqslant \lambda_+$.
\end{lem}
\begin{proof}[Proof of Lemma \ref{4 lem measure shadowing}]
    We just prove in the case of $sc$-DA, as the other case is quite similar.

    For a given ergodic measure $\mu$ supported on $\Lambda$,
    by the Poincar\'e recurrence theorem, 
    we can take a recurrent $\mu$-typical point $x$. Since $\mu$ is supported on $\Lambda$,  $x\in \Lambda$. 
    Then we can assume that $x=x^+$. 
    And the case of $x=x^-$ follows the similar argument.
    For convenience, we assume that $f$ preserves the orientation on center leaves.

    For a given negative $f$-orbit $x_{-n},\dots,x_{-1},x$ of $x$, 
    we have $x_{-i}=x^+_{-i}$ for any $1\leqslant i\leqslant n$. 
    To simplify, we still denote by $\mcf^c_f(x,\e)$ 
    the positive-orientation component of the local center leaf of $x^+$, 
    namely, $y\in \mcf^c_f(x,\e)$  
    implies $y\in\mcf^c_f(x)$ with $d_{\mcf^c_f}(x,y)\leqslant \e$ and $x\prec y$.
    And we denote the component of $f^{-i}(\mcf^c_f(x,\e))$ containing $x_{-i}$ 
    by $f_0^{-i}(\mcf^c_f(x,\e))$ for any $0\leqslant i\leqslant n$.
  
    We first claim that the local center leaf of $x$ has
    \emph{topological contraction} (see also \cite{PS00}) property 
    along the negative orbit.

\begin{claim}\label{4 claim topological contracting leaf}
    Given $x=x^+\in\Lambda$. 
    For any given $\e>0$ there exists $\eta>0$ such that 
    for every negative $f$-orbit $x_{-n},\dots,x_{-1},x$ of $x$, 
    one has 
        \[ f_0^{-i}\big(\mcf^c_f(x,\eta)\big) \subset \mcf^c_f(x_{-i},\e), 
        \quad \forall\ 0\leqslant i\leqslant n.\]
\end{claim}
\begin{proof}[Proof of Claim \ref{4 claim topological contracting leaf}]
    By contradiction, 
    we assume that there exists $\e_0>0$ such that 
    for any $\eta_m>0$ there exists an $f$-orbit $x_{-n_m},\dots,x_{-1},x$ such that 
    the lengths of local leaves 
    \begin{align}
           |f^{-i}_0\big(  \mcf^c_f(x,\eta_m) \big)|\leqslant \e_0,
        \quad \forall\ 0\leqslant i\leqslant n_m-1,\label{eq. 4. claim4.4}
    \end{align}
    and 
        \[\e_0\leqslant |f^{-n_m}_0\big(  \mcf^c_f(x,\eta_m) \big)|\leqslant C\e_0,\] 
    where $C$ only depends on $f$. 
    Note that we can take $\eta_m\to 0$, hence, $n_m\to +\infty$.
 
    Let $I_m:=f^{-n_m}_0\big(  \mcf^c_f(x,\eta_m) \big)$. 
    By the compactness of $\Lambda$ and the continuity of $\mcf_f^c$, 
    we can assume that $x_{-n_m}\to z\in\Lambda$ and $I_m\to I\subset \mcf^c_f(z)$ 
    with length $\e_0\leqslant |I| \leqslant C\e_0$.
    Moreover, $|f^n(I)|\leqslant C\e_0$ for any $n\in\NN$. 
    Indeed, for given $n\geqslant 1$, 
    by \eqref{eq. 4. claim4.4},
    when $m$ is big enough, $|f^n(I_m)|\leqslant \e_0$ and $f^n(I_m)\to f^n(I)$ as $m\to +\infty$.  
    Hence, by the uniform continuity of $h$,
    there exists a constant $L_0>0$ such that  
    \begin{align}\label{eq. 4 topo contracting}
        |h\circ f^n(I)|< L_0, \quad \forall\ n\in\NN. 
    \end{align}
    However, note that $x_{-n_m}=x^+_{-n_m}$,
    we get $z=z^+$ and every $y\in I$ satisfies $z\prec y$, 
    then we have $|h(I)|>0$. 
    Thus, by $h(I)\subset \mcl^u(h(z))$,
    we further have 
        \[ |h\circ f^n(I)|=|A^n\circ h(I)|\to +\infty \quad{\rm as}\quad n\to +\infty.\] 
    This contradicts \eqref{eq. 4 topo contracting}, 
    and ends the proof of Claim \ref{4 claim topological contracting leaf}.
\end{proof}
  
    From the topological contraction property, 
    we prove Lemma \ref{4 lem measure shadowing} as follows.
 
    Note that for $x=x^+$, we can assume there exists a preimage sequence $x_{-n_k}\to_+ x$. 
    Indeed, let $\bar{x}=h(x)$. 
    Applying Lemma \ref{2 lem preimage dense linear} for choosing $A$-preimages of $\bar{x}$ in small balls which are close to $\bar{x}$ in its positive direction (with respect to the orientation of $h(\mcf^c_f)$), one has 
    $$\bar{x}_{-n_k}\to^A_+\bar{x}\quad{\rm as}\quad k\to +\infty,$$ 
    where $A^{n_k}(\bar{x}_{-n_k})=\bar{x}$. 
    Let $x_{-n_k}=x_{-n_k}^+$ be the positive boundary of $h^{-1}(\bar{x}_{-n_k})$.  
    Then one has $f^{n_k}(x_{-n_k})=x$, and by Lemma \ref{2 lem Cantor property}, one can get 
    $$x_{-n_k}\to_+ x\quad{\rm as}\quad k\to +\infty.$$
    For a recurrence point $x_{-n_k}$ 
    and its related pseudo periodic $f$-orbit 
        \[x_{-n_k},\ x_{-n_k+1},\ \dots,\ x_{-1},\ x,\] 
    the corresponding sequence
        \[ h(x_{-n_k}),\ h(x_{-n_k+1}),\ \dots,\ h(x_{-1}),\ h(x) \]
    gives a pseudo periodic $A$-orbit. 
    By the shadowing property of Anosov endomorphisms (see \cite{AH94}), 
    for any $\e>0$, there exist $x_{-n_k}$ 
    and a periodic point $p'_k$ of $A$ with $A^{n_k}(p_k')=p_k'$ 
    such that \[d\big(A^i(p_k'),h(x_{-i})\big)<\e, \quad  \forall \ 0\leqslant i\leqslant n_k.\] 
    Moreover, when $x_{-n_k}\to_+ x$ one has $h(x_{-n_k})\to^A_+ h(x)$ and  $p_k'\to^A_+ h(x)$.

    Let $p_k=p^-_k\in\Lambda$ be the negative endpoint of $h^{-1}(p_k')$. 
    Then one has $f^{n_k}(p_k)=p_k.$
    By Lemma \ref{2 lem Cantor property}, for any $\e>0$, when $x_{-n_k}$ is sufficiently close to $x$, 
    one can get  $p_k\to_+ x$ with $d(p_k,x)<\e$.
    Let $z_k$ be the unique intersection point of local leaves $\mcf^s_f(p_k)$ and $\mcf^c_f(x)$ 
    and $w_k$ be the unique intersection point of local leaves $\mcf^c_f(p_k)$ and $\mcf^s_f(x)$. 
    For $1\leqslant i\leqslant n_k$, we denote the components of $f^{-i}(z_k)$ and $f^{-i}(w_k)$ 
    on local leaves of $x_{-i}$ by $z_k^{-i}$ and $w_k^{-i}$, respectively. 
    Similarly we denote by $p_k^{-i}$ the component of $f^{-i}(p_k)$ on local leaf $\mcf^s_f(z_k^{-i})$. See FIGURE \ref{shadow}.

     \begin{figure}[htbp]
		\centering
		\includegraphics[width=14.5cm]{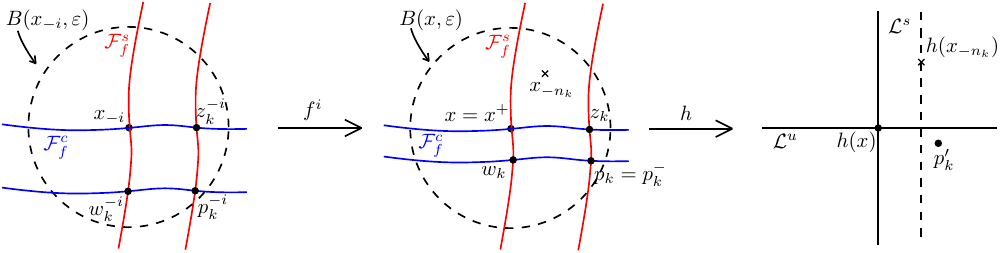}
		\caption{The shadowing property on the set $\Lambda$.}	
        \label{shadow}
	\end{figure}
 
    Note that $h$ restricted to stable leaves is homeomorphic. 
    Hence, for given $\e>0$ there exist $x_{-n_k}$ and  $p_k'$ satisfying 
        \[d_{\mcf^s_f}(w_k^{-i},x_{-i})<\e, \ \  \forall \ 0\leqslant i\leqslant n_k.\] 
    On the other hand, for given $\e>0$, 
    let $\eta>0$ be given in Claim \ref{4 claim topological contracting leaf}. 
    When $x_{-n_k}$ is sufficiently close to $x$,
    one has $z_k\in \mcf^c_f(x,\eta)$, hence, 
        \[d_{\mcf^c_f}(z_k^{-i},x_{-i})<\e,\ \ \forall \ 0\leqslant i\leqslant n_k.\] 
    As a result, by choosing $x_{-n_k}$ appropriately, one can get
    $d(p^{-i}_k,x_{-i})<\e$ for all $0\leqslant i\leqslant n_k$. 
    
    Moreover, note that $x$ is $\mu$-typical, the ergodic measure $\mu$ has the following form
        \[\mu=\lim_{n\to+\infty}\frac{1}{n}\sum_{i=0}^{n-1}\delta_{f^i(x)},\] 
    thus, it can be approached by periodic measures 
    \[\mu_k:=\frac{1}{n_k}\sum_{i=0}^{n_k-1}\delta_{f^i(p_k)},\]  
    where $\delta_y$ is Dirac measure of the point $y$. 
    Hence, the sequence of periodic points $p_k\in\Lambda$ satisfies 
        \[\lim_{k\to +\infty} 
        \lambda^1(p_k)=\lim_{k\to +\infty}\int_{y\in \Lambda}
        {\rm log}|Df |_{E_1(x)}|d\mu_k(y)=\int_{y\in \Lambda}{\rm log}|Df |_{E_1(x)}|d\mu(y)
        =\lambda^1(\mu).\]
    
    This ends the proof of Lemma \ref{4 lem measure shadowing}.    
\end{proof}

    Now we prove the existence of the adapted metric $\|~\|$. 
    Let $\delta>0$ and $|\cdot|$ be a Riemannian metric given in advance.
    Note that for every point $x\in\Lambda$, 
    there exists a subsequence of the measures $\{\mu_{x,n}\}_{n\in\NN}$ 
    converging to an invariant measure $\mu$ with ${\rm supp}(\mu)\subset \Lambda$, 
    where 
        \[\mu_{x,n}=\frac{1}{n}\sum_{i=0}^{n-1}\delta_{f^i(x)}.\]
    %and $\delta_x$ is Dirac measure of the point $x$. 
    For short, let $\mu_{x,n}\to \mu$. 
    It follows from Lemma \ref{4 lem measure shadowing} and the ergodic decomposition that 
    \begin{align*}
        {\rm log}|Df^n|_{E_1(x)}|^{\frac{1}{n}}=\int_{\Lambda}{\rm log}|Df|_{E_1(y)}|d \mu_{x,n}(y)
        \to \int_{\Lambda}{\rm log}|Df|_{E_1(y)}|d \mu(y)\in [\lambda_-,\lambda_+].
    \end{align*}
    Thus, there exists $N=N(x,\delta)\in\NN$ such that for every $n\geqslant N$,
        \[ \lambda_--\frac{\delta}{2}<{\rm log}|Df^n|_{E_1(x)}|^{\frac{1}{n}}<\lambda_++\frac{\delta}{2}.  \]
    Note that $f$ is $C^1$ and the bundle $E_1$ is continuous, by fixing $n\geqslant N$, 
    there exists an open neighborhood $U=U(x,n)\subset \Lambda$ of $x$ 
    such that for every $y\in U$,
        \[ \lambda_--\delta<{\rm log}|Df^n|_{E_1(y)}|^{\frac{1}{n}}<\lambda_++\delta.  \]

    By the compactness of $\Lambda$, 
    there exist finite points $x_1,\dots, x_m\in \Lambda$, 
    finite times $n_i\in\NN\ (1\leqslant i\leqslant m)$ 
    and  finite neighborhoods $U_i=U(x_i,n_i)\subset \Lambda\ (1\leqslant i\leqslant m)$ 
    such that $\cup_{i=1}^m U_i=\Lambda$ and 
        \[ \lambda_--\delta<{\rm log}|Df^{n_i}|_{E_1(y)}|^{\frac{1}{n_i}}<\lambda_++\delta,  \]   
    for every $y\in U_i$. 
    Let 
        \[ \|v\|_i :=\prod_{n=0}^{n_i-1}|Df^n(x)v|^{\frac{1}{n_i}}, \]
    for every $v\in E_1(x)$ and every $x\in \Lambda$. 
    Note that the norm $\|\cdot\|_i$ of the one-dimensional bundle $E_1$ actually induces a norm on $T \Lambda$ (see \cite[formula (3.6)]{BKR2022})  and  
         \[ \frac{\|Df(x)\|_i}{\|v\|_i} 
         = \frac{|Df^{n_i}(x)v|^{\frac{1}{n_i}}}{|v|^{\frac{1}{n_i}}}, 
         \quad \forall \ x\in \Lambda \ {\rm and}\  v\in E_1(x). \]
    Hence, for every $x\in U_i$ one has 
        \[ \lambda_--\delta<{\rm log}\|Df|_{E_1(x)}\|_i<\lambda_++\delta. \]
    %We remark here that we cannot use the maximal number of $\{n_i\}_{i=1}^m $ to define the norm. For example let $\|\cdot\|_j$ with $ n_j=\max \{n_i\}_{i=1}^m $, since $U(x_i,n_j)\ (1\leqslant i\leqslant j)$ could be smaller that $U_i$, the open sets  $U(x_i,n_j)$ may not cover $\Lambda$.  

    Let $\{ \phi_i\}_{i=1}^m$ be a unit decomposition subordinated to $\{ U_i\}_{i=1}^m$, 
    namely, $\phi_i$ is $C^{\infty}$ and supported on $U_i$ and $\sum_{i=1}^m\phi_i=1$. 
    For every $x\in\Lambda$ and $v\in E_1(x)$, let 
        \[ \|v\|= \big(    \prod_{i=1}^m e^{\phi_i(x)} \|v\|_i   \big)^{\frac{1}{m}}. \]
    Then $\|\cdot\|$ induces the metric we desired. This ends the proof of Proposition \ref{4 prop adapted metric}.
\end{proof}

    With the help of the adapted metric $\|\cdot\|$, 
    we prove the rigidity of DA endomorphisms now.

\begin{proof}[Proof of Proposition \ref{4 prop s-rigidity}]

    By contradiction, 
    we assume that there exist $p_0,q_0\in{\rm Per}(f|_{\Lambda})$ 
    such that $\lambda^1(p_0)<\lambda^1(q_0)$.
    Then we have 
        \[ -\infty<\inf_{x\in\Lambda}   
        {\rm log}\|Df|_{E_1(x)}\| \leqslant \lambda_-<\lambda_+\leqslant \sup_{x\in\Lambda}
        {\rm log}\|Df|_{E_1(x)}\|<\infty.   \] 
    
    Given $\delta>0$ arbitrarily small, 
    we can choose two periodic points $p,q\in {\rm Per}(f|_{\Lambda})$ such that
        \[   \lambda^1(p)\leqslant \lambda_-+\delta
        \quand  
        \lambda^1(q)\geqslant \lambda_+-\delta. \]
    Moreover, by Proposition \ref{4 prop adapted metric}, 
    we can assume that  $\|\cdot\|$ is the norm with  
        \[ \lambda_--\delta<{\rm log}\|Df|_{E_1(x)}\|<\lambda_++\delta, 
        \quad {\rm for}\ x\in\Lambda, \]
    since Lyapunov exponents of a regular point are independent of the choice of equivalent metrics. 
    Furthermore, by the uniform continuity, 
    we can also take $\eta_0>0$ such that 
    for any $x,y\in\TT^2$ with $d(x,y)\leqslant \eta_0$, it follows
        \[ -\delta<{\rm log}\|Df|_{E_1(x)}\|-{\rm log}\|Df|_{E_1(y)}\|<\delta.\]

    Here we fix the positive constant $\delta$ with
        \[\delta<
        \min\{\frac{-\alpha\lambda_-}{4-2\alpha}, \frac{\alpha(\lambda_+-\lambda_-)}{4}\},\]
    where 
        \[\alpha=\frac{1}{4}{\rm log}_{\|A|_{L^u}\|} |{\rm det}(A)| \in(0,1)\] 
    as in Claim \ref{4 claim time rate}.

    For convenience, we can assume that both $p$ and $q$ are fixed points. 
    Otherwise, we can apply the following proof to $f^{n_0}$, 
    with $n_0$ being a common period of $p$ and $q$. 
    We further assume that $p=p^+$ and consider two different cases: $q=q^+$ and $q=q^-$. 
    %The proof of the case of $p=p^-$ and $q=q^\pm$ is similar. 
    Here we mention in advance that the $sc$-DA case and $cu$-DA case have some differences and similarities. 
    In particular,  the following claims 
    (Claim \ref{4 claim preimage dense}, Claim \ref{4 claim start rate}, Claim \ref{4 claim I J in nbhd} 
    and Claim \ref{4 claim time rate}) 
    exhibit the same dynamical properties for both $sc$-DA case and $cu$-DA case, 
    but their proofs are somewhat different.

    Let $H:\RR^2\to \RR^2$ be the semi-conjugacy between $F$ and $A$ 
    given in Proposition \ref{2 prop semi-conj in R2}. 
    Then we denote by $h=\pi\circ H\circ \pi^{-1}:\TT^2\to\TT^2$ the semi-conjugacy between $f$ and $A$.  
    For  $x\in\Lambda$, we can assume that 
    $h$ restricted to leaves $\mcf_1(x)$ and $\mcf_2(x)$ preserves the orientation, respectively.

\begin{claim}\label{4 claim preimage dense}
    Given two points $x=x^\sigma, y=y^\sigma\in \Lambda\ (\sigma=+ \ {\rm or}\  -)$, 
    then for any $\e>0$, 
    there exist $k_\e=k(\e,x,y) \in\NN$ and $x_\e=x_\e^\sigma\in\Lambda$ 
    such that $f^{k_\e}(x_\e)=f^{k_\e}(x)$ and  $d(x_\e,y)<\e$. 
    In particular, $x_\e\to_\sigma y$ as $\e\to 0$.
\end{claim}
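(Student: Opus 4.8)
The plan is to transport the problem to the linearization $A$, where preimage sets become dense by Lemma~\ref{2 lem preimage dense linear}, and to transport back through $h$ by taking endpoints of fibres. Write $e_\sigma(w)\in\Lambda$ for the $\sigma$-endpoint of the compact center segment $h^{-1}(w)$; this is well defined by Corollary~\ref{2 cor semi-conj. Fc}, the surjectivity of $h$, and the orientation conventions of $\mcf^c_f$ fixed above. The first step is to record that $f$ carries $\sigma$-endpoints of $h$-fibres to $\sigma$-endpoints of $h$-fibres; more precisely,
\[
f^k\big(e_\sigma(w)\big)=e_\sigma\big(A^kw\big),\qquad w\in\TT^2,\ k\in\NN.
\]
Indeed, lifting to $\RR^2$, where $F$ is a diffeomorphism, $H$ commutes with the deck transformations, and $F\big(H^{-1}(\tilde w)\big)=H^{-1}(A\tilde w)$ (from $H\circ F=A\circ H$ together with invertibility of $F$ and $A$), the map $F$ sends the segment $H^{-1}(\tilde w)$ homeomorphically onto $H^{-1}(A\tilde w)$ and, since both $F$ and $H$ respect the center orientation, sends $\sigma$-endpoint to $\sigma$-endpoint; the identity follows after projection. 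In particular, since $x=x^\sigma$ means $x=e_\sigma(h(x))$, one gets $f^k(x)=e_\sigma(A^kh(x))=e_\sigma(h(f^k(x)))$.

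The consequence to exploit is this: whenever $w'\in\TT^2$ satisfies $A^kw'=A^kh(x)$ --- i.e.\ $w'$ lies in the $k$-th $A$-preimage set of $A^kh(x)$ --- the point $x':=e_\sigma(w')$ lies in $\Lambda$, is the $\sigma$-endpoint of its own $h$-fibre, and $f^k(x')=e_\sigma(A^kw')=e_\sigma(A^kh(x))=f^k(x)$. It then remains to choose such a $w'$ with $e_\sigma(w')$ as close to $y$ as we like and on the $\sigma$-side of $y$. The difficulty is that $\|H-\mathrm{Id}_{\RR^2}\|_{C^0}<C_H$ controls $h$ only up to an additive constant, so closeness of $h$-images need not give closeness in $\TT^2$. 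I would overcome this by routing the construction through an injective point of $h$: at such a point the fibre is a singleton, so by upper semicontinuity of the fibres of the continuous map $h$ the fibres over all nearby points lie in a small ball.

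Concretely, fix $\e>0$. By Lemma~\ref{2 lem Cantor property}(1) pick an injective point $v\in\mcf^c_f(y)$ lying strictly on the $\sigma$-side of $y$ with $d(v,y)<\e/2$ (so $h^{-1}(h(v))=\{v\}$), and then, by continuity of $h$ and compactness, a radius $\rho>0$ with $h^{-1}\big(B_\rho(h(v))\big)\subseteq B_{\e/2}(v)$. Choose $\bar v\in\mcf^c_f(v)$ strictly on the $\sigma$-side of $v$, so close to $v$ that $d(h(\bar v),h(v))<\rho/2$; since $h$ is monotone along $\mcf^c_f(v)$ and $h^{-1}(h(v))=\{v\}$, the point $h(\bar v)$ lies on the leaf $\mcl^c(h(v))=h\big(\mcf^c_f(v)\big)$ strictly on the $\sigma$-side of $h(v)$, at some positive $\mcl^c$-distance. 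By Lemma~\ref{2 lem preimage dense linear} there are arbitrarily large $k$ for which the $k$-th $A$-preimage set of $A^kh(x)$ is $(C_A\gamma^k)$-dense; take $k$ so large that $C_A\gamma^k<\rho/2$ and that, by the local product structure of $\mcl^c$ with its complementary linear foliation near $h(v)$, the projection onto $\mcl^c(h(v))$ of any point within $C_A\gamma^k$ of $h(\bar v)$ still lies on the $\sigma$-side of $h(v)$. Pick $w'$ in that preimage set with $d(w',h(\bar v))<C_A\gamma^k$ and set $x_\e:=e_\sigma(w')$, $k_\e:=k$. Then $d(w',h(v))<\rho$ forces $x_\e\in h^{-1}(w')\subseteq B_{\e/2}(v)$, so $d(x_\e,y)<\e$; the displayed identity gives $f^{k_\e}(x_\e)=f^{k_\e}(x)$; and $x_\e=x_\e^\sigma$ since it is the $\sigma$-endpoint of its own fibre. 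For the final assertion, $h$ maps $\mcf^c_f(v)$ monotonically onto $\mcl^c(h(v))$ and maps $\mcf^{s/u}_f$ onto the corresponding $A$-foliation (Propositions~\ref{2-3 prop foliation on R2} and~\ref{2 prop injective point set}), so the holonomy projection of $x_\e$ onto $\mcf^c_f(v)$ is carried by $h$ to the projection of $w'$ onto $\mcl^c(h(v))$, which lies on the $\sigma$-side of $h(v)$; as $h^{-1}(h(v))=\{v\}$ this projection is strictly on the $\sigma$-side of $v$, hence of $y$. Together with $d(x_\e,y)\to0$ this is exactly $x_\e\to_\sigma y$ as $\e\to0$.

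The crux is the step just carried out: converting the $h$-side density of Lemma~\ref{2 lem preimage dense linear} into genuine closeness in $\TT^2$ with the correct orientation, which is why the auxiliary injective point $v$, the collapsing of nearby fibres there, and the displaced target $\bar v$ on $\mcf^c_f(v)$ (rather than $h(v)$ itself) are all needed. The $sc$-DA and $cu$-DA cases run in parallel; the only difference is which of $\mcf^s_f,\mcf^u_f$ serves as the complementary foliation, and that for $cu$-DA this foliation (and the product structure used above) is available only on $\Lambda$ --- which causes no trouble, since every point constructed lies in $\Lambda$.
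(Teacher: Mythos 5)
Your proof is correct and follows essentially the same route as the paper's: apply Lemma \ref{2 lem preimage dense linear} on the $A$-side to find preimages of $A^{k}h(x)$ accumulating on $h(y)$ from the $\sigma$-side, then pull back by taking the $\sigma$-endpoints of the corresponding $h$-fibres and use the Cantor-like structure of the injective set (Lemma \ref{2 lem Cantor property}) to get convergence to $y$ in $\TT^2$ with the right orientation. The paper's version is terser — it cites Lemma \ref{2 lem Cantor property} for the convergence of fibre endpoints where you spell this out via an auxiliary injective point and upper semicontinuity of fibres — but the substance is the same.
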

\begin{proof}[Proof of Claim \ref{4 claim preimage dense}]
    This claim is just a corollary of 
    Lemma \ref{2 lem preimage dense linear} and Lemma \ref{2 lem Cantor property}. 
    Without loss of generality, 
    we prove the claim in the case of $\sigma=+$.  
    
    Applying Lemma \ref{2 lem preimage dense linear} to points $\bar{x}:=h(x)$ and $\bar{y}:=h(y)$, 
    there exist $\bar{x}_n\in\TT^2$ and $\bar{k}_n\in\NN$ with 
    $A^{\bar{k}_n}(\bar{x}_n)=A^{\bar{k}_n}(\bar{x})$ and $\bar{x}_n\to_+ \bar{y}$ as $n\to +\infty$. 
    Then by Lemma \ref{2 lem Cantor property}, up to taking a subsequence, 
    one can obtain the point $x_n^+$, the positive boundary of $h^{-1}(\bar{x}_n)$, satisfying 
    $f^{\bar{k}_n}(x_n^+)=f^{\bar{k}_n}(x)$ and $x_n^+\to_+ y$ as $n\to +\infty$. 
    This completes the proof of this claim.
\end{proof}

    Let $I\subset \mcf_1(p)$ be a closed curve with endpoints $p$ and $w$. 
    Then by Claim \ref{4 claim preimage dense}, 
    we will find a curve $J_\e$ and $k_\e\in\NN$ such that $f^{k_\e}(J_\e)=f^{k_\e}(I)$ and 
    one of the endpoints of $J_\e$ is $\e$-closed to $q$. 
    Precisely, we need to divide the choices of $J_\e$ into two cases: $cu$-DA and $sc$-DA.

    \begin{figure}[htbp]
		\centering
		\includegraphics[width=12cm]{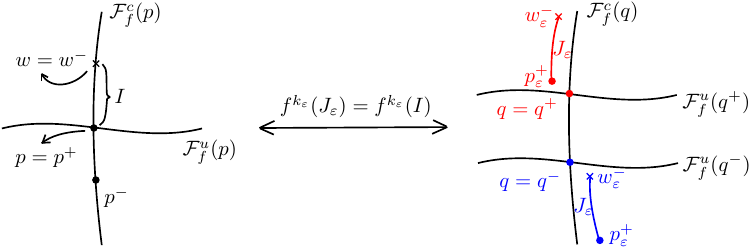}
		\caption{The curve $J_\e$ in the $cu$-DA case.}	
        \label{Jcu}
	\end{figure}

\begin{enumerate}

    \item If $f$ is $cu$-DA, see FIGURE \ref{Jcu}.  
        For small $\delta_p>0$, 
        we can take $I\subset \mcf^c_f(p)$ such that
        \begin{itemize}
            \item $w=w^-$ and  $p\preceq x$ for any $x\in I$;
            \item The curve $I$ is contained in $B_{\eta_0}(p)$, the $\eta_0$-ball of $p$; 
            \item The length of $I$ satisfies  $|I|\geqslant\delta_p$. 
        \end{itemize}
         Given $\e>0$, then
        \begin{enumerate}
            \item If $q=q^+$, applying Claim \ref{4 claim preimage dense} to $p^+$ and $q^+$, 
                there exist 
                $k_\e\in\NN$, $p_\e=p_\e^+\in\TT^2$ and $J_\e\subset \mcf_1(p_\e)=\mcf^c_f(p_\e)$ 
                such that 
                \begin{itemize}
                    \item $f^{k_\e}(p_\e)=f^{k_\e}(p)$ with $d(p_\e,q)<\e$ and $f^{k_\e}(J_\e)=f^{k_\e}(I)$ ;
                    \item For any $z\in J_\e$, 
                        the unique intersection point $z'$ 
                        of local leaves $\mcf^c_f(q)$ and $\mcf^{u}_f(z)$ satisfies $q\prec z'$.
                \end{itemize}
                Here we mention that $J_\e$ may not be contained in $\Lambda$ as the curve $I$ may not do.
            \item If $q=q^-$, applying Claim \ref{4 claim preimage dense} to $w^-$ and $q^-$ 
                we obtain 
                $k_\e\in\NN$, $w_\e\in\TT^2$ and $J_\e\subset \mcf_1(w_\e)=\mcf^c_f(w_\e)$ 
                such that 
                \begin{itemize}
                    \item $f^{k_\e}(w_\e)=f^{k_\e}(w)$ with $d(w_\e,q)<\e$ and $f^{k_\e}(J_\e)=f^{k_\e}(I)$;
                    \item For any $z\in J_\e$, 
                        the unique intersection point $z'$ 
                        of local leaves $\mcf^c_f(q)$ and $\mcf^{u}_f(z)$ satisfies $z'\prec q$.
                \end{itemize}
        \end{enumerate}

    \begin{figure}[htbp]
		\centering
		\includegraphics[width=12cm]{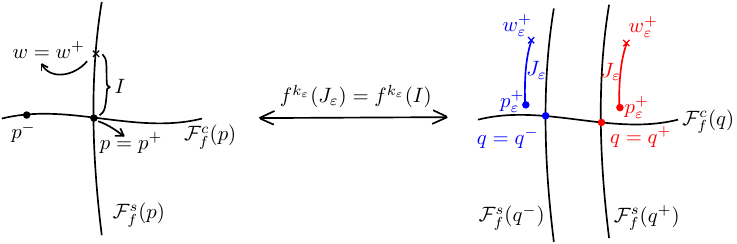}
		\caption{The curve $J_\e$ in the $sc$-DA case.}	
        \label{Jsc}
	\end{figure}

    \item  If $f$ is $sc$-DA, see FIGURE \ref{Jsc}. 
        For small $\delta_p>0$, we can take $I\subset \mcf_1(p)=\mcf^s_f(p)$ such that
        \begin{itemize}
            \item $w=w^+$;
            \item The curve $I$ is contained in $B_{\eta_0}(p)$, the $\eta_0$-ball of $p$; 
            \item The length of $I$ satisfies  $|I|>\delta_p$.
        \end{itemize}
        Given $\e>0$, just like the $cu$-case, then 
        \begin{enumerate}
            \item If $q=q^+$, applying Claim \ref{4 claim preimage dense} to $p^+$ and $q^+$, 
                there exist 
                $k_\e\in\NN$, $p_\e=p_\e^+\in\TT^2$ and $J_\e\subset \mcf_1(p_\e)=\mcf^s_f(p_\e)$ 
                such that 
                \begin{itemize}
                    \item $f^{k_\e}(p_\e)=f^{k_\e}(p)$ with $d(p_\e,q)<\e$ and $f^{k_\e}(J_\e)=f^{k_\e}(I)$;
                    \item The local stable leaf of $J_\e$ intersects with 
                        the local center leaf $\mcf^c_f(q)$ at the unique point $p_\e'$ 
                        satisfying $q^+=q\prec p'_\e$.
                \end{itemize}
                Here $J_\e$ must be contained in $\Lambda$ as the curve $I$ does.
            
            \item If $q=q^-$, applying Claim \ref{4 claim preimage dense} to $p^-$ and $q^-$,   
                there exist $k_\e\in\NN$ and $p_\e^-\in\TT^2$ 
                such that $f^{k_\e}(p^-_\e)=f^{k_\e}(p^-)$ and $p_\e^-\to_- q^-$. 
                Then by the second item of Lemma \ref{2 lem Cantor property} 
                and the forth item of Proposition \ref{2 prop injective point set}, 
                we have $p_\e^+\to_- q^-$. 
                And it is clear that $f^{k_\e}(p^+_\e)=f^{k_\e}(p^+)$. 
                Hence, we obtain 
                $k_\e\in\NN$, $p_\e^+=p_\e\in\TT^2$ and $J_\e\subset \mcf_1(p_\e)=\mcf^c_f(p_\e)$ 
                such that 
                \begin{itemize}
                    \item $f^{k_\e}(p_\e)=f^{k_\e}(p)$ with $d(p_\e,q)<\e$ and $f^{k_\e}(J_\e)=f^{k_\e}(I)$;
                    \item The local stable leaf of $J_\e$ intersects with 
                    the local center leaf $\mcf^c_f(q)$ at the unique point $p_\e'$ 
                    satisfying $p'_\e\prec q=q^-$.
                \end{itemize}
        \end{enumerate}

\end{enumerate}

\begin{claim}\label{4 claim start rate}
    Let $I\subset \mcf_1(p)$ be given as above with length $|I|=\delta_p$. 
    Then there exist constants $\e_0>0$ and $C>1$ such that for every $\e\in(0,\e_0)$,
        \[\frac{|I|}{|J_\e|}\in[C^{-1},C],\]
    where  $J_\e$ is given as above.
\end{claim}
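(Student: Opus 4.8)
The plan is to push the identity $f^{k_\e}(J_\e)=f^{k_\e}(I)$ forward through the semi-conjugacy on the universal cover, where $A$ acts on its stable foliation by a rigid linear contraction, and then read the length information back.

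First I would fix a lift $\tilde p$ of $p$, let $\tilde I\subset\mcf^1(\tilde p)$ be the lift of $I$, and let $\tilde J_\e$ be the component of $(F^{k_\e})^{-1}\big(F^{k_\e}(\tilde I)\big)$ projecting onto $J_\e$, so that $F^{k_\e}(\tilde J_\e)=F^{k_\e}(\tilde I)+n$ for some $n\in\ZZ^2$. Applying the lifted semi-conjugacy $H$, using $H\circ F^{k_\e}=A^{k_\e}\circ H$, and using that $H$ commutes with the deck transformations (which holds in this section since $f$ is semi-conjugate to $A$ on $\TT^2$), one obtains
\[
A^{k_\e}\big(H(\tilde J_\e)\big)=A^{k_\e}\big(H(\tilde I)\big)+n.
\]
By Proposition \ref{2-3 prop foliation on R2}, $H$ sends each $\mcf^1$-leaf monotonically into a single stable line of $A$; hence $H(\tilde I)$ and $H(\tilde J_\e)$ are (possibly degenerate) subintervals of stable lines, with endpoints the $H$-images of the endpoints of $\tilde I,\tilde J_\e$. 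Since $A$ rescales arc length on each stable line by a factor depending only on the number of iterates, and $T_n$ is an isometry, cancelling that factor yields
\[
|H(\tilde J_\e)|=|H(\tilde I)|=:c_0,
\]
a constant independent of $\e$. I would then observe $c_0>0$: in the $sc$-DA case $H$ is a homeomorphism on $\mcf^1$-leaves, so it cannot be constant on $\tilde I$; in the $cu$-DA case the choice $p=p^+$, together with the fact that $h^{-1}(h(p))$ is a center segment with $p$ as its $\prec$-maximal point (Proposition \ref{2-3 prop foliation on R2}), forces $h(w)\ne h(p)$ for the other endpoint $w\succ p$ of $I$, so again $H$ is non-constant on $\tilde I$.

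It remains to turn $|H(\tilde J_\e)|=c_0$ into two-sided bounds for $|J_\e|=|\tilde J_\e|$ with $\e$-independent constants. Writing $a,b$ for the endpoints of $\tilde J_\e$: for the upper bound, $|H(a)-H(b)|\le c_0$ (these points bound the stable segment $H(\tilde J_\e)$), so $|a-b|\le c_0+2C_H$ by $\|H-{\rm Id}_{\RR^2}\|_{C^0}<C_H$ (Proposition \ref{2 prop semi-conj in R2}), and then $|\tilde J_\e|=d_{\mcf^1}(a,b)\le C_1(c_0+2C_H)+C_2=:M$ by the quasi-isometry of $\mcf^1$ (Proposition \ref{2 prop foliation on R2}). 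For the lower bound, if $|\tilde J_\e|$ were below a threshold then ${\rm diam}(\tilde J_\e)$ would be small, hence by uniform continuity of $H$ the diameter of $H(\tilde J_\e)$ would be small; but on a straight stable line length and diameter are comparable, which would contradict $|H(\tilde J_\e)|=c_0>0$, so $|J_\e|\ge\delta_0$ for some $\delta_0=\delta_0(c_0)>0$. Since $|I|=\delta_p$ is fixed, $|I|/|J_\e|\in[\delta_p/M,\ \delta_p/\delta_0]$, and taking $C:=\max\{\delta_p/\delta_0,\ M/\delta_p,\ 2\}$ proves the claim, with $\e_0$ any value for which the construction of $J_\e$ applies.

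The routine parts are the bookkeeping with lifts and the comparison of arc length with Euclidean distance along the linear stable foliation of $A$. The step requiring the most care — and the only place where the particular construction of $I$ before the claim enters — is the strict positivity $c_0>0$ in the $cu$-DA case, where $H$ genuinely collapses center subintervals; once $c_0>0$ is established, both bounds are uniform and the $sc$-DA and $cu$-DA cases run in parallel.
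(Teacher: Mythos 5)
Your argument is correct, and it reaches the conclusion by a genuinely different (and more unified) route than the paper's. Both proofs share the same starting point: from $f^{k_\e}(J_\e)=f^{k_\e}(I)$ and the linearity of $A$ one gets $|h(J_\e)|=|h(I)|=c_0$, and the choice of $I$ (with $p=p^+$ and the opposite endpoint strictly beyond $h^{-1}\circ h(p)$ in the center order) guarantees $c_0>0$ even in the $cu$-DA case where $h$ collapses center subintervals. Where you diverge is in converting $|H(\tilde J_\e)|=c_0$ back into two-sided bounds on $|J_\e|$: you do everything on the universal cover, getting the upper bound from $\|H-{\rm Id}_{\RR^2}\|_{C^0}<C_H$ (Proposition \ref{2 prop semi-conj in R2}) combined with the quasi-isometry of $\mcf^1$ (Proposition \ref{2 prop foliation on R2}), and the lower bound from uniform continuity of $H$ together with the fact that length equals diameter on a straight stable line. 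The paper instead splits into cases: for $sc$-DA it argues essentially as you do (continuity of $h$ for the lower bound, the $C^0$-distance to the identity for the upper bound), but for $cu$-DA it builds a local product box around $q$ using three auxiliary $h$-injective points $q_1\prec q_2\prec q_3$ (via Lemma \ref{2 lem Cantor property}) and transfers bounds through the continuity of the unstable holonomy. Your version buys a single argument covering both cases and avoids the holonomy construction entirely; its only extra inputs — monotonicity of $H$ along $\mcf^1$-leaves and the quasi-isometry of the center foliation — are already established in Propositions \ref{2 prop foliation on R2} and \ref{2-3 prop foliation on R2}, so nothing is missing. Two cosmetic points: $\tilde J_\e$ should simply be introduced as a lift of $J_\e$ (then $F^{k_\e}(\tilde J_\e)$ and $F^{k_\e}(\tilde I)$ are two connected lifts of the same arc, hence integer translates), rather than as a ``component of $(F^{k_\e})^{-1}(F^{k_\e}(\tilde I))$'', since $F^{k_\e}$ is a bijection of $\RR^2$; and by monotonicity you in fact have $|H(a)-H(b)|=c_0$, not merely $\leqslant c_0$, though the inequality is all you need.
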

\begin{proof}[Proof of Claim \ref{4 claim start rate}]
    Let $\bar{I}=h(I)$, $\bar{J}_\e=h(J_\e)$, $\bar{p}=h(p)$, 
    $\bar{w}=h(w)$, $\bar{p}_\e=h(p_\e)$, $\bar{w}_\e=h(w_\e)$ and  $\bar{q}=h(q)$. 
    Denote the length of $\bar{I}$ by $\bar{\delta}_p$. 
    Note that \[A^{k_\e}(\Bar{I})=A^{k_\e}(\Bar{J}_\e).\] 
    This implies that 
    $\Bar{J}_\e$ is a rigid translation of $\Bar{I}$, since $A$ is linear.
    Hence, we have $|\Bar{J}_\e|=\Bar{\delta}_p=|\bar{I}|$.
     
    We will prove this claim in $cu$-DA case and $sc$-DA case, respectively.
     
    When $f$ is $cu$-DA, we assume that $q=q^+$, as the case $q=q^-$ is similar. 
    We consider three points $q\prec q_1\prec q_2\prec q_3 \in \mcf^c_f(q)$ such that 
    \begin{itemize}
        \item $h^{-1}\circ h(q_i)=\{q_i\}$ for $i=1,2,3$.
        \item The point $\bar{q}_i=h(q_i)$ satisfies 
            \[\frac{\bar{\delta}_p}{4}
            \leqslant d(\bar{q},\bar{q}_1)
            <\frac{\bar{\delta}_p}{3},\ \ \frac{\bar{\delta}_p}{2}
            \leqslant d(\bar{q},\bar{q}_2)
            <\bar{\delta}_p,\ \  2\bar{\delta}_p\leqslant d(\bar{q},\bar{q}_3)
            <3\bar{\delta}_p. \]
    \end{itemize}
    This can be done by Lemma \ref{2 lem Cantor property}. 
    Without loss of generality, we assume that 
        \[ d(\bar{q},\bar{q}_1)=\frac{\bar{\delta}_p}{4},\ \ d(\bar{q},\bar{q}_2)
        =\frac{\bar{\delta}_p}{2},\ \  d(\bar{q},\bar{q}_3)=2\bar{\delta}_p. \]
    Denote by $\bar{p}_\e'$ and $\bar{w}_\e'$ the unique intersection points of local leaf $\mcl^s(\bar{q})$ 
    with local leaves $\mcl^u(\bar{p}_\e)$ and $\mcl^u(\bar{w}_\e)$, respectively. 
    When $\e_0$ is small enough, one has that for $\e<\e_0$,
        \[ \Bar{q}\prec \bar{p}_\e'\prec  \bar{q}_1 \ 
            \ {\rm and}\ \ 
           \Bar{q}_2\prec \bar{w}_\e'\prec  \bar{q}_3.  \] 
    See FIGURE \ref{q123}. It follows that
        \[ d_{\mcf^c_f}(q_1,q_2) <d_{\mcf^c_f}(p_\e',w_\e')< d_{\mcf^c_f}(q,q_3),  \]
    where $p'_\e$ and $w_\e'$ are the unique intersection points of local leaf $\mcf^c_f(q)$ 
    with local leaves $\mcf_f^u(p_\e)$ and $\mcf_f^u(w_\e)$, respectively.

    \begin{figure}[htbp]
		\centering
		\includegraphics[width=12cm]{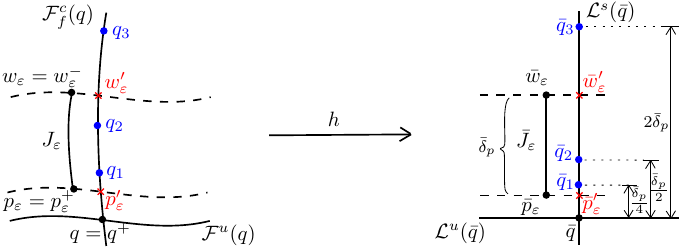}
		\caption{The choice of points $q_1,q_2,q_3$.}	
        \label{q123}
	\end{figure}

    Let $q',q_i'\ (i=1,2,3)$ be the intersections of local leaf $\mcf^c_f(p_\e)$ 
    with local leaves $\mcf_f^u(q), \mcf_f^u(q_i)$, respectively.  
    Then, by the continuity of holonomy map along the unstable leaf $\mcf^u_f(x)$ for $x\in\Lambda$, 
    given a size $\eta>0$ of the local unstable foliation, 
    one can get that $d_{\mcf^u_f}(q',q_3')$ has an upper bound $C_2<+\infty$ 
    and  $d_{\mcf^u_f}(q_1',q_2')$ has a lower bound $C_1>0$ 
    where $C_1$ and $C_2$  only  depend on $\eta$. 
    This implies that the length $|J_\e|$ satisfies, for $\e<\e_0$,
        \[0<C_1\leqslant d_{\mcf^u_f}(q_1',q_2') <|J_\e|<d_{\mcf^u_f}(q',q_3')\leqslant C_2<+\infty.\]  

    Now, let $f$ be $sc$-DA.  
    Note that the semi-conjugacy $h$ restricted to each stable leaf is in fact a homeomorphism.  
    It is clear that there exists $\delta>0$ such that $|J_\e|\geqslant \delta$ for every $\e>0$. 
    Otherwise, there exist $w_\e$ and $q_\e$ such that 
    \[
    d(w_\e,q_\e)\to 0 
    \quand 
    d(\bar{w}_\e,\bar{q}_\e)=|\bar{J}_\e|=\Bar{\delta}_p>0. 
    \]
    It contradicts the continuity of $h$. 
    
    On the other hand, the length of $J_\e$ has an upper bound 
    which is given by the lift on the universal cover. 
    Indeed, let $H$ be the lift of $h$. 
    Then there exists $C_0$ such that $\|H-{\rm Id}_{\RR^2}\|_{C^0}<C_0$. 
    This implies that the lifts $\Tilde{w}_\e,\Tilde{p}_\e\in\RR^2$ of $w_\e, q_\e$ 
    satisfy $d(\Tilde{w}_\e,\Tilde{p}_\e)<2C_0+\Bar{\delta}_p$. 
    Otherwise, 
        \[d(H(\Tilde{w}_\e), H(\Tilde{p}_\e))
        \geqslant d(\Tilde{w}_\e,\Tilde{p}_\e)
            -d(H(\Tilde{w}_\e),\Tilde{w}_\e)-d(H(\Tilde{p}_\e),\Tilde{p}_\e)
        >\Bar{\delta}_p.\] 
    It contradicts the fact that $|\Bar{J}_\e|=\Bar{\delta}_p$.
    Hence, the length $|J_\e|$ is uniformly bounded away from $0$. 
     This ends the proof of Claim \ref{4 claim start rate}.
\end{proof}

    Recall that for given $\delta$, 
    we fix $\eta_0>0$ such that for any $x,y\in\TT^2$ with $d(x,y)\leqslant \eta_0$, 
    one has 
        \[ -\delta<{\rm log}\|Df|_{E_1(x)}\|-{\rm log}\|Df|_{E_1(y)}\|<\delta.\]
    Let $B_{\eta_0}(p)$ and $B_{\eta_0}(q)$ be the  $\eta_0$-balls of $p$ and $q$, respectively. 
    Here we fix a $\mcf_1$-foliation box $D^+$ of $q=q^+$ such that 
\begin{itemize}
    \item $D^+\subset B_{\eta_0}(q) $;
    \item The  boundaries of $D^+$ intersects with local leaf $\mcf^c_f(q)$ 
        at two points $q$ and  $q_D\in\Lambda$ 
        such that $h^{-1}\circ h(q_D)=\{q_D\}$ and $q\prec q_D$;
    \item The boundaries of $D^+$ are leaves of foliation $\mcf_1$ or $\mcf_2$.
\end{itemize}
    This can be done by Lemma \ref{2 lem Cantor property}. 
    Similarly, when $q=q^-$ 
    we can define foliation box $D^-$ as same as above three items but $q_D\prec q$. 

    In the rest of the proof, 
    we will consider the curves $I\subset B_{\eta_0}(p)$ and $J_\e \in D^{\pm}$. 
    Indeed, we have the following claim.

\begin{claim}\label{4 claim I J in nbhd}
    Let $q=q^\sigma \ (\sigma=+ \ {\rm or}\ -)$. 
    There exist $\delta_p>0$ and $\e_1>0$ such that for $I$ with $|I|=\delta_p$ and any $\e\in(0,\e_1)$, 
    one has $f^k(I)\subset B_{\eta_0}(p)$ for every $k\geqslant 0$, and $J_\e \subset D^{\sigma}$.
\end{claim}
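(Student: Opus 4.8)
The claim is really two unrelated assertions, and I would establish each on its own.

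\emph{The inclusion $f^k(I)\subset B_{\eta_0}(p)$ for all $k\geqslant 0$.} The mechanism is that $\mcf_1$ is a $Df$-invariant contracting line field at the fixed point $p$. Indeed $\lambda^1(p)<0$ in both cases: in the $sc$-DA case because $E^1$ is uniformly contracted, and in the $cu$-DA case because $p$ was chosen with $\lambda^1(p)\leqslant\lambda_-+\delta$, while our choice of $\delta$ forces $\delta<\tfrac12|\lambda_-|$ (as $\delta<\tfrac{-\alpha\lambda_-}{4-2\alpha}$ and $\alpha\in(0,1)$), hence $\lambda_-+\delta<0$. Since $p$ is fixed and $E_1$ is one-dimensional, $\log\|Df|_{E_1(p)}\|=\lambda^1(p)<0$ for any metric, so by continuity of $Df$ and $E_1$ there is a neighbourhood $V\ni p$ with $V\subset B_{\eta_0}(p)$ on which $\|Df|_{E_1(\cdot)}\|<1$. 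As $E_1$ is $Df$-invariant, $f$ carries $\mcf_1$-arcs to $\mcf_1$-arcs, and as $f(p)=p$ it carries $\mcf_1(p)$ into itself. I would then run the obvious induction: an $\mcf_1$-arc of $\mcf_1(p)$ with endpoint $p$ and length $\leqslant\delta_p$ lies in $B_{\delta_p}(p)$, so if $\delta_p$ is small enough that $B_{\delta_p}(p)\subset V$, then inductively each $f^{j}(I)$ lies in $V$ and $|f^{j+1}(I)|=\int_{f^{j}(I)}\|Df|_{E_1}\|<|f^{j}(I)|\leqslant\delta_p$, whence $f^{k}(I)\subset V\subset B_{\eta_0}(p)$ for every $k\geqslant 0$. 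This uses nothing about $\e$.

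\emph{The inclusion $J_\e\subset D^\sigma$.} Here the inputs are the length control from Claim \ref{4 claim start rate} and the one-sided ``holonomy'' condition recorded when $J_\e$ was built. The box $D^\sigma$ is fixed beforehand, depending only on $q$, $\eta_0$, and $q_D$, so it has a definite $\mcf_1$-width, $\mcf_2$-width, and $d_{\mcf^c_f}(q,q_D)$; I would let $2\rho_0>0$ be smaller than all three, with $B_{\rho_0}(q)\subset B_{\eta_0}(q)$, and small enough that the $\mcf_1$- and $\mcf_2$-holonomies inside $B_{\rho_0}(q)$ are defined with distortion at most a fixed $K$. Recall $J_\e$ is an $\mcf_1$-arc with $f^{k_\e}(J_\e)=f^{k_\e}(I)$, with one endpoint $a_\e\in\{p_\e,w_\e\}$ satisfying $d(a_\e,q)<\e$, and with the side condition of the construction: in the $cu$-DA case the $\mcf^u_f$-holonomy of every point of $J_\e$ onto $\mcf^c_f(q)$ lands on the closed $\sigma$-side of $q$, and in the $sc$-DA case $J_\e$ lies on one $\mcf^s_f$-leaf whose $\mcf^s_f$-holonomy meets $\mcf^c_f(q)$ on the $\sigma$-side of $q$. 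The plan is: by Claim \ref{4 claim start rate} the length $|J_\e|\leqslant C|I|=C\delta_p$ is comparable to $\delta_p$ uniformly for $\e<\e_0$, so shrinking $\delta_p$ and then choosing $\e_1\in(0,\e_0)$ with $K(\e_1+C\delta_p)<\rho_0$ gives $J_\e\subset B_{\rho_0}(q)$ for all $\e\in(0,\e_1)$.

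It then remains to see that such a short $\mcf_1$-arc through a point $\e$-close to $q$ and obeying the side condition must lie in $D^\sigma$. Parametrising $D^\sigma$ as the product of the $\mcf_2$-base through $q$ with the $\mcf_1$-plaque $[q,q_D]$ in the $\sigma$-direction, the point $a_\e$ sits within $K\e$ of the corner $q$ in this chart, and since $J_\e$ is an $\mcf_1$-arc of length $\leqslant C\delta_p$ its $\mcf_2$-coordinate varies by at most $KC\delta_p$ while its $\mcf_1$-coordinate (its holonomy projection to $\mcf^c_f(q)$) sweeps an interval of length at most $KC\delta_p$ anchored within $K\e$ of $q$. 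Hence $J_\e$ stays within $\mcf_2$-distance $K(\e+C\delta_p)<\rho_0$ of the base (so it does not leave through the $\mcf_1$-boundary leaves), and within $\mcf_1$-distance $K(\e+C\delta_p)<\rho_0<d_{\mcf^c_f}(q,q_D)$ of $q$ (so it does not reach $q_D$); the only remaining escape would be through the $\mcf_2$-boundary leaf $\mcf^u_f(q)$ (resp.\ $\mcf^s_f(q)$), and this is exactly what the side condition forbids, since the $\sigma$-sided holonomy projection cannot cross $q$. The cases $q=q^{\pm}$ are symmetric, and in the $sc$-DA case the projection is a single point, which simplifies this last step. The main obstacle, and the step needing the most care, is precisely this confinement: one must make sure the holonomy maps are defined on all of $J_\e$ (not just near $a_\e$) and that the one-sided control together with the length bound really pins $J_\e$ down instead of letting it run along its own leaf out of $B_{\rho_0}(q)$ --- which is also why $D^\sigma$ and the local foliation scales have to be fixed before $\delta_p$ and $\e_1$, and why in the $cu$-DA case one leans on the bounded unstable-holonomy geometry already used in Claim \ref{4 claim start rate}.
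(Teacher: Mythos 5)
Your proposal is correct, but both halves run on different fuel from the paper's argument, which never differentiates anything and instead pushes everything through the semi-conjugacy. For $f^k(I)\subset B_{\eta_0}(p)$ the paper takes a foliation box $D^+_p$ at $p$ analogous to $D^{\sigma}$, notes that $h(f^k(I))=A^k(h(I))$ is a shrinking subinterval of $\mcl^s(h(p))$ inside $h(D^+_p)$, and uses the $h$-injective marker on $\partial D^+_p\cap\mcf_1(p)$ together with the fact that the endpoints of $f^k(I)$ lie in $\Lambda$ to pull this back upstairs. Your contraction argument at the fixed point $p$ is also valid, but in the $cu$-DA case it consumes the hypothesis $\lambda_-<0$ (via $\lambda^1(p)\leqslant\lambda_-+\delta<0$), whereas the paper's remark after Proposition \ref{4 prop s-rigidity} records that $\lambda_-<0$ enters only in the final length comparison; your route changes that bookkeeping, harmlessly for the proposition itself since $\lambda_-<0$ is assumed there. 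For $J_\e\subset D^{\sigma}$ the paper again argues downstairs: $\bar J_\e=h(J_\e)$ is a rigid translate of $h(I)$ of length $\bar\delta_p$ anchored near $\bar q$ on the correct side, hence lies in ${\rm Int}\,h(D^{\sigma})$ for $\delta_p,\e$ small, and the leaf structure of $\partial D^{\sigma}$ plus the $h$-injectivity of $q_D$ give $J_\e\subset D^{\sigma}$; your direct confinement via holonomy distortion is the same geometry done upstairs. The one point to phrase more carefully there is that the constant $C$ of Claim \ref{4 claim start rate} is obtained for a fixed $\delta_p$ and a priori depends on it, so the smallness of $|J_\e|$ should be extracted from the proof of that claim (the upper bound is the holonomy image of a center interval of length $O(\bar\delta_p)$, which does shrink with $\delta_p$, uniformly in $\e$ by compactness) rather than from the bare inequality $|J_\e|\leqslant C\delta_p$. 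With that reading, both inclusions go through.
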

\begin{proof}[Proof of Claim \ref{4 claim I J in nbhd}]
    It is clear that when $\delta_p$ is small, we have $I\subset B_{\eta_0}(p)$. 
    For $J_\e$, we just prove the claim for $\sigma=+$, and the proof of the other case is similar. 
   
    Let $\Bar{D}^+=h(D^+)$. Note that $\bar{D}^+$ has non-empty interior on $\TT^2$. 
    Hence, when $\delta_p$ and $\e_1$ are small enough, 
    one has $\Bar{J}_\e\subset {\rm Int}\bar{D}^+$ for $\e<\e_1$. 
    By the construction of $J_\e$ and $D^+$, we obtain $J_\e\subset D^+$.

    Note that we can also take an $\mcf_1$-foliation box $D^+_p$ of $p=p^+$ 
    which satisfies three conditions of $D^+$ but using $p$ to replace $q$. 
    Let $\delta_p$ be small such that $I\subset D^+_p$. 
    It follows from $\Bar{I}\subset \mcl^s(\Bar{p})\cap h(D^+_p) $ 
    that $A^k(\Bar{I})\subset \mcl^s(\Bar{p})\cap h(D^+_p)$ for every $k\geqslant 0$. 
    Since the endpoints of $f^k(I)$ are $f^k(p^+)$ and $f^k(w^-)$, 
    one has  $f^k(I)\subset D^+_p\subset B_{\eta_0}(p)$. This ends the proof of Claim \ref{4 claim I J in nbhd}.
\end{proof}

    Now we take $\delta_p, I$ and $\e_1$ satisfying Claim \ref{4 claim I J in nbhd}. 
    For $q=q^\sigma\ (\sigma=+ \ {\rm or}\ -)$ and $\e<\e_1$, 
    we can define a $\mcf_1$-foliation box $D_\e^\sigma$ just like the construction of $D^\sigma$ as follows:
    \begin{itemize}
        \item $D_\e^\sigma\subset B_{\e}(q) $;
        \item The  boundaries of $D_\e^\sigma$ intersects with 
            local leaf $\mcf^c_f(q)$ at two points $q$ and  $q_\e\in\Lambda$ 
            such that $h^{-1}\circ h(q_\e)=\{q_\e\}$, moreover,  if $q=q^+$, then $q\prec q_\e$ and if $q=q^-$, then $q_\e\prec q$;
        \item The boundaries of $D_\e^\sigma$ are leaves of foliation $\mcf_1$ or $\mcf_2$.
    \end{itemize}
    
    Denote by
    \[K_\e:=\inf\{ k\in\NN\ |\ \exists\  p_\e\in D^\sigma_\e  
    \ {\rm s.t.} \ f^k(p_\e)=f^k(p)\}.  \]  
    Then for $K_\e$ (the specific replacement of $k_\e$), 
    we choose a corresponding curve that is still denoted by $J_\e$ 
    satisfying $f^{K_\e}(I)=f^{K_\e}(J_\e)$.
    Denote by $N_\e\in\NN$
    the first time such that $f^{N_\e}(J_\e)$ is not contained in $D^\sigma$, 
    namely, 
        \[f^{i}(J_\e)\subset D^\sigma, \ \forall\  0\leqslant i\leqslant N_\e-1 
        \ \  {\rm and}\ \  
        f^{N_\e}(J_\e)\cap \big(\TT^2\setminus D^\sigma\big)\neq \emptyset.\]
    %For short, we denote $K^\sigma_\e$ and $N^\sigma_\e$ by $K_\e$ and $N_\e$, respectively, 
    %if there is no confusion.

\begin{claim}\label{4 claim time rate}
    There exist constants $\e_2>0$ and $\alpha\in(0,1)$ such that for every $\e\in(0,\e_2)$, 
    one has 
    \[ \frac{N_\e}{K_\e}\geqslant \alpha.\]
\end{claim}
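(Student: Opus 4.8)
The plan is to compare the growth rates along $I$ and $J_\e$ over the interval of time $[0, N_\e]$ and relate both of them to the returning time $K_\e$ via the semi-conjugacy $h$ and the linear dynamics of $A$. First I would set $\bar I = h(I)$, $\bar J_\e = h(J_\e)$ and use the fact that $A^{K_\e}(\bar I) = A^{K_\e}(\bar J_\e)$ together with Claim~\ref{4 claim start rate} (so $|\bar J_\e| = |\bar I| = \bar\delta_p$ is a fixed constant independent of $\e$). On the $J_\e$ side, the curve $J_\e$ stays in the foliation box $D^\sigma$ for $0 \le i \le N_\e - 1$; since $h$ is uniformly continuous and a homeomorphism along hyperbolic leaves, the image $A^i(\bar J_\e) = h(f^i(J_\e))$ has length bounded above by a uniform constant $L_D$ (the diameter of $h(D^\sigma)$, which lives along the $\mcl^c$-direction) for all $i \le N_\e - 1$. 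But $A$ acts linearly on the center direction $\mcl^c$, expanding lengths there by the factor $\|A|_{L^u}\|$ per iterate (in the $cu$-DA case $\mcl^c = \mcl^u$; in the $sc$-DA case $\mcl^c$ is the image of $\mcf^c_F$, on which $A$ still expands). Hence $\bar\delta_p \cdot \|A|_{L^u}\|^{N_\e} \le C\cdot L_D$ for a uniform $C$, which gives an \emph{upper} bound $N_\e \le C' + \log_{\|A|_{L^u}\|}(L_D/\bar\delta_p)$... no — this is the wrong direction. Rather, the point is the reverse: because $J_\e$ eventually leaves $D^\sigma$ and the center length is expanded at a definite rate, $N_\e$ cannot be too \emph{small} compared to how close $p_\e$ starts to $q$; and the distance that $p_\e$ must be pushed is controlled by $K_\e$ through the preimage-density estimate.

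The key quantitative input is Lemma~\ref{2 lem preimage dense linear}: a $K_\e$-preimage of $h(p)$ is $C_A \gamma^{K_\e}$-dense, with $\gamma = |\det A|^{-1/2}$. By the construction of $K_\e$ as the \emph{first} time a preimage of $p$ lands in $D^\sigma_\e$, and since $D^\sigma_\e \subset B_\e(q)$, I would extract the bound that $p_\e$ is roughly $\gamma^{K_\e}$-close to $q$ along the center leaf, i.e. the initial center-displacement of $\bar J_\e$ relative to $\bar q$ is of order at least comparable to $\gamma^{K_\e}$ (we cannot do much better than the density radius, by minimality/first-time choice). Then, tracking $\bar J_\e$ forward under $A$: the endpoint nearest $\bar q$ is expanded away from $\bar q$ at rate $\|A|_{L^u}\|$ per step along $\mcl^c$, so after $N_\e$ steps the displacement is of order $\gamma^{K_\e}\cdot \|A|_{L^u}\|^{N_\e}$, and $f^{N_\e}(J_\e)$ exits $D^\sigma$ exactly when this reaches the fixed size of $D^\sigma$ (a constant). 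This yields, up to additive constants absorbed by shrinking $\e$,
\[
N_\e \ge \big(\log_{\|A|_{L^u}\|}\gamma^{-1}\big)\, K_\e - C'',
\]
and since $\log_{\|A|_{L^u}\|}\gamma^{-1} = \tfrac12 \log_{\|A|_{L^u}\|}|\det A| = 2\alpha$ with $\alpha$ as defined before the claim, for $\e$ small enough one gets $N_\e/K_\e \ge \alpha$.

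The main obstacle I expect is making the lower bound on the initial center-displacement of $\bar J_\e$ rigorous, i.e. showing that because $K_\e$ is the \emph{first} return time into $D^\sigma_\e$, the preimage point $p_\e$ cannot be much closer to $q$ than the density radius $\gamma^{K_\e}$ — in other words, that the first preimage to enter a ball of radius $\e$ does so at a time $K_\e$ with $\gamma^{K_\e} \gtrsim \e$, hence the geometry is genuinely tight and not merely an inequality in the convenient direction. This requires using the lower bound half of the preimage-distribution estimate for $A$ (not just the $C_A\gamma^k$-density upper bound but the fact that preimages at time $k$ are also roughly $\gamma^k$-\emph{separated}, or at least that one cannot find a time-$k$ preimage unreasonably close for $k$ too small), together with the monotonicity/Cantor structure from Lemma~\ref{2 lem Cantor property} to transfer the estimate from the $A$-picture on $\TT^2$ back to the $f$-picture on $\Lambda$. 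The $sc$-DA and $cu$-DA cases differ in which foliation plays the role of $\mcf^c_f$ and in whether $J_\e \subset \Lambda$ automatically, but the displacement-expansion mechanism is identical, so I would write the argument once using the neutral notation $\mcf_1, \mcf_2$ and only split where the box $D^\sigma$ and its intersection with $\Lambda$ are invoked.
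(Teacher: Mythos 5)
Your overall shape is the right one: like the paper, you reduce the claim to comparing two quantities that are both logarithmic in $\e$ --- how late the first $f$-preimage of $p$ can enter the small box $D^\sigma_\e$ (controlling $K_\e$) versus how early $J_\e$ can escape the big box $D^\sigma$ (controlling $N_\e$) --- and you land on the same constant $\alpha=\tfrac14\log_{\nu}|\det(A)|$ with $\nu=\|A|_{L^u}\|$. But the step you single out as the ``main obstacle'' is pointed in the wrong direction, and your displayed conclusion $N_\e\geqslant(\log_{\nu}\gamma^{-1})K_\e-C''$ is derived from it. The escape time of $\bar{p}_\e'$ from the big box is $\bar N_\e\approx\log_{\nu}(\bar\eta_0/d)$ where $d=d(\bar p_\e',\bar q)$ is the initial unstable displacement; this is \emph{decreasing} in $d$, so a \emph{lower} bound $d\gtrsim\gamma^{K_\e}$ (what you say must be proved, via a separation estimate for preimages) yields only an \emph{upper} bound on $N_\e$, which is useless. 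What the claim needs is an \emph{upper} bound on $d$, and that is free: $\bar p_\e'$ lies in $h(D^\sigma_\e)$, a box of size $\bar\e$, so $d\leqslant\bar\e$ and hence $N_\e\geqslant\bar N_\e\geqslant\log_{\nu}(\bar\eta_0/\bar\e)$. (You also write ``in other words, $\gamma^{K_\e}\gtrsim\e$'' as if it were equivalent to $d\gtrsim\gamma^{K_\e}$; together with $d\leqslant\e$ these two statements actually point in opposite directions.)

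Correspondingly, no ``lower half'' / separation estimate on the preimage distribution is needed anywhere, and this is where the paper's proof is simpler than what you anticipate. The only input from Lemma \ref{2 lem preimage dense linear} is the density statement exactly as written: since the $k$-preimages of $h(p)$ are $C_A\gamma^k$-dense, some $k$-preimage lies in $h(D^\sigma_\e)$ as soon as $C_A\gamma^k<\bar\e/2$, so the first hitting time satisfies $K_\e\leqslant\bar K_\e\leqslant\log_\gamma\frac{\bar\e}{2C_A}$, i.e.\ $\gamma^{K_\e}\gtrsim\bar\e$. Dividing the two bounds gives
\[
\frac{N_\e}{K_\e}\geqslant\frac{\ln\gamma}{\ln\nu}\cdot\frac{\ln\bar\eta_0-\ln\bar\e}{\ln\bar\e-\ln 2C_A}\longrightarrow\log_{\nu}\gamma^{-1}=2\alpha \quad\text{as } \bar\e\to0,
\]
which is the paper's argument. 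A secondary point: in the $sc$-DA case $\bar J_\e=h(J_\e)$ lies in a stable leaf of $A$ and is \emph{contracted}, so the escape from $D^\sigma$ is driven not by the growth of $|\bar J_\e|$ but by the growth of the unstable displacement of the single point $\bar p_\e'$ from $\bar q$; your first paragraph starts down the length-growth route before abandoning it, and your second paragraph still phrases the expansion as acting on an endpoint of $\bar J_\e$ ``along $\mcl^c$'', which only makes literal sense in the $cu$-DA case.
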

\begin{proof}[Proof of Claim \ref{4 claim time rate}]
    We just prove this claim for $q=q^+$, and the proof of the other case is similar. 
    By Lemma \ref{2 lem Cantor property}, when $\e_2<\e_1$ is small enough, 
    we have $D_\e^+\subset D^+$ for every $0<\e<\e_2$. 

    Let $\bar{D}^+_\e=h(D_\e^+)$. 
    Note that $\bar{D}^+_\e$ and $\bar{D}^+$ are both $\mcl^{s/u}$-foliation boxes of $\bar{q}$ 
    with non-empty interior on $\TT^2$, 
    since $h$ preserves the leaves. For convenience, 
    we can assume that $\bar{D}^+_\e$ and $\bar{D}^+$ are both squares 
    with side lengths $\bar{\e}$ and $\bar{\eta}_0$, respectively.

    Denote by
        \[\bar{K}_\e:=\inf\{ k\in\NN\ |\ \exists z\in \bar{D}^+_\e  \ {\rm s.t.} \ A^k(z)=A^k(\bar{p})\}.   \]
    It is clear that 
        \[K_\e\leqslant \bar{K}_\e \leqslant {\rm log}_\gamma \frac{\bar{\e}}{2C_A},\] 
    where  $\gamma\in (0,1)$ and $C_A$ are constants given in Lemma \ref{2 lem preimage dense linear}.

    Denote by $\Bar{p}_\e'$ the unique intersection point of 
    local leaves $\mcl^s(\Bar{p}_\e)$ and $\mcl^u(\Bar{q})$. 
    Denote by $\Bar{N}_\e$ the first time such that 
    $A^{\Bar{N}_\e}(\Bar{p}_\e')$ is not contained in $\bar{D}^+_\e$, namely,
        \[A^{i}(\Bar{p}_\e')\in \bar{D}^+_\e, \ \forall\  0\leqslant i\leqslant \Bar{N}_\e-1 
        \ \  {\rm and}\ \  
        A^{\Bar{N}_\e}(\Bar{p}_\e')\notin \bar{D}^+_\e.\]
    It follows from Claim \ref{4 claim I J in nbhd} that $\Bar{J}_\e\subset \Bar{D}^+$.
    Then by the construction of the foliation box $\bar{D}^+_\e$ 
    and $\Bar{J}_\e$ is a curve on a stable leaf, one has 
    \[N_\e\geqslant \Bar{N}_\e\geqslant {\rm log}_{\nu}\frac{\Bar{\eta}_0}{\Bar{\e}},\]
    where $\nu=\|A|_{L^u}\|>1$. 

    As a result, one has
        \[\frac{N_\e}{K_\e} 
        \geqslant \frac{\Bar{N}_\e}{\Bar{K_\e}}
        \geqslant \frac{{\rm ln}\gamma}{{\rm ln} \nu} 
        \cdot \frac{{\rm ln}\Bar{\eta}_0-{\rm ln}\Bar{\e}}{{\rm ln}\Bar{\e}-{\rm ln}2C_A}.\]
    Note that $\Bar{\e}$ tends to $0$ as $\e \to 0$. 
    Hence, when $\e_2$ is small enough, one has 
        \[\frac{N_\e}{K_\e} \geqslant -\frac{1}{2}{\rm log}_\nu \gamma,\]
    for every $\e\in(0,\e_2)$. 
    By Lemma \ref{2 lem preimage dense linear}, 
    one can take $\gamma=|{\rm det}(A)|^{-\frac{1}{2}}$. 
    Then it follows
        \[\alpha:=-\frac{1}{2}{\rm log}_\nu \gamma=\frac{1}{4}{\rm log}_\nu |{\rm det}(A)| \in (0,1),\] 
    since $\nu>|{\rm det}(A)|>1$.
\end{proof}

    Let $0<\e<\min\{\e_0,\e_1,\e_2\}$, 
    where $\e_0, \e_1, \e_2$ are given in 
    Claim \ref{4 claim start rate}, Claim \ref{4 claim I J in nbhd} 
    and Claim \ref{4 claim time rate}, respectively. 
    We are going to calculate the error 
    between the lengths of $f^{K_\e}(I)$ and $f^{K_\e}(J_\e)$ 
    induced by the gap of Lyapunov exponents. 
    
    Note that 
        \[{\rm log}\|Df|_{E_1(x)}\|<\delta+\lambda^1(p), 
        \ \ \forall \ x\in B_{\eta_0}(p),\]
    by Claim \ref{4 claim I J in nbhd}, we have
        \[|f^{K_\e}(I)|\leqslant |I|\cdot {\rm exp}(K_\e(\lambda_-+2\delta)).\]
    On the other hand, by $f^{i}(J_\e)\subset B_{\eta_0}(q)$ for $0\leqslant i\leqslant N_\e-1$, 
    we have
        \[ |f^{N_\e}(J_\e)|\geqslant  |J_\e|\cdot {\rm exp}(N_\e(\lambda_+-2\delta)). \]
    Although we have 
        \[{\rm log}\|Df|_{E_1(x)}\|>\lambda_--\delta, 
        \ \ \forall\ x\in\Lambda,\] 
    and 
        \[ \big|{\rm log}\|Df|_{E_1(x)}\|- {\rm log}\|Df|_{E_1(y)}\| \big| 
        <\delta,\ \ \forall\ x\in \TT^2 \ {\rm and}\ y\in B_{\eta_0}(x), \]
    we cannot calculate $f^{K_\e}(J_\e)$ directly.
    Because  $J_\e$ is not contained in $\Lambda$ for the $cu$-DA case,
    $f^k(J_\e)$ could be not in an $\eta_0$-ball for some $k>N_\e$. 
    
    This leads us to marking a specific iteration. We take $K_0\in [N_\e,K_\e]$ such that
    $K_0$ is the last time $k$ satisfying 
    $f^k(J_\e)$ is not in the  $\eta_0$-ball $B_{\eta_0}(f^k(p_\e))$, 
    namely, 
    \[f^{K_0}(J_\e)\not\subset B_{\eta_0}(f^{K_0}(p_\e)) 
    \quad {\rm and}\quad f^{k}(J_\e)\subset B_{\eta_0}(f^{k}(p_\e)) 
    \quad {\rm for}\quad K_0<k\leqslant K_\e. \]
    Here we claim that when $\e\to 0$ (so $K_\e\to +\infty$), one has $K_\e>N_\e$.
    Otherwise, if $K_\e\leqslant N_\e$, 
    then \[|f^{K_\e}(J_\e)|\geqslant |J_\e|\cdot {\rm exp}(K_\e(\lambda_+-2\delta))\] 
    and 
    \begin{align*} 
        1= \frac{|f^{K_\e}(J_\e)|}{|f^{K_\e}(I)|}
        &\geqslant \frac{|J_\e|}{|I|}\cdot {\rm exp}\big( K_\e(\lambda_+-\lambda_--4\delta)  \big) \\
        &\geqslant C^{-1}{\rm exp}\big( K_\e(\lambda_+-\lambda_--4\delta)  \big) \to +\infty,
    \end{align*}
    as $ \e\to 0$. 
    Here recall that $\delta<\frac{\lambda_+-\lambda_-}{4}$, 
    and $C$ is given in Claim \ref{4 claim start rate}. 
    Therefore, we can assume $K_\e>N_\e$ when $\e$ is small enough.
    Then there are the following possibilities for the number $K_0$.
    \begin{enumerate}
        \item If $K_0$ does not exist (or $K_0<N_\e$), that is,  
            $f^{k}(J_\e)\subset B_{\eta_0}(f^{k}(p_\e))$ for every $k\in[N_\e,K_\e]$. 
            Then one has 
            \begin{align*}
                |f^{K_\e}(J_\e)|
                &=|f^{K_\e-N_\e}\circ f^{N_\e}(J_\e)|   \\
                &\geqslant |J_\e|\cdot {\rm exp}(N_\e(\lambda_+-2\delta))\cdot 
                \prod_{k=N_\e}^{K_\e-1}{\rm exp}(\|Df|_{E_1(f^k(p_\e))}\|-\delta)   \\
                &\geqslant |J_\e|\cdot {\rm exp}\big( N_\e(\lambda_+-2\delta)+(K_\e-N_\e)(\lambda_--2\delta) \big) \\
                &\geqslant |J_\e|\cdot {\rm exp}(K_\e(\lambda_--2\delta))\cdot{\rm exp}(N_\e(\lambda_+-\lambda_-)) \\
                &\geqslant |J_\e|\cdot {\rm exp}\big(K_\e(\alpha\lambda_++(1-\alpha)\lambda_--2\delta\big),
            \end{align*}
            where $\alpha$ is given in Claim \ref{4 claim time rate}. 
            Hence, 
            \begin{align*}
                \frac{|f^{K_\e}(J_\e)|}{|f^{K_\e}(I)|}
                &\geqslant \frac{|J_\e|}{|I|}\cdot {\rm exp}\big( K_\e(\alpha(\lambda_+-\lambda_-)-4\delta) \big) \\
                &\geqslant C^{-1} {\rm exp}\big( K_\e(\alpha(\lambda_+-\lambda_-)-4\delta)  \big).
            \end{align*}
            Since $\delta<\frac{\alpha(\lambda_+-\lambda_-)}{4}$ 
            and $K_\e\to +\infty$ as $\e\to 0$, 
            one has
                \[\frac{|f^{K_\e}(J_\e)|}{|f^{K_\e}(I)|}\to +\infty, \ \ {\rm as}  \ \e\to 0.\]  
            It contradicts the fact that $f^{K_\e}(J_\e)=f^{K_\e}(I)$.
        
        \item  If $K_0=K_\e$, 
            then $f^{K_\e}(J_\e)\not\subset B_{\eta_0}(p)$ since $f^{K_\e}(p_\e)=f^{K_\e}(p)=p$. 
            It contradicts the fact that $f^{K_\e}(J_\e)=f^{K_\e}(I)\subset B_{\eta_0}(p)$, 
            see Claim \ref{4 claim I J in nbhd}.
        
        \item If $K_0\in [N_\e,K_\e)$, 
            then $f^k(J_\e)\subset B_{\eta_0}(f^k(p_\e))$ for $k>K_0$. 
            And $f^{K_0}(J_\e)\not\subset B_{\eta_0}(f^{K_0}(p_\e))$ 
            implies $f^{K_0+1}(J_\e)>\eta_0$, 
            so there exists a constant $R_0>0$ 
            which depends only on $\eta_0$ and $\min_{x\in\TT^2}\|Df|_{E_1(x)}\|$ 
            such that  $f^{K_0+1}(J_\e)>R_0$. Thus, it follows that 
            \begin{align*}
                 |f^{K_\e}(J_\e)|
                 &=|f^{K_\e-K_0-1}\circ f^{K_0+1}(J_\e)|    \\
                 &\geqslant R_0\cdot \prod_{k=K_0+1}^{K_\e-1}{\rm exp}(\|Df|_{E_1(f^k(p_\e))}\|-\delta) \\
                 &\geqslant R_0 \cdot {\rm exp}\big((K_\e-1-K_0)(\lambda_--2\delta)\big).
            \end{align*}
            Since $\lambda_-<0$ and $1+K_0\geqslant N_\e\geqslant \alpha K_\e$, one has
                \[|f^{K_\e}(J_\e)|
                \geqslant R_0\cdot {\rm exp}\big(K_\e(1-\alpha)(\lambda_--2\delta)\big).\]
            Hence,
            \begin{align*}
                \frac{|f^{K_\e}(J_\e)|}{|f^{K_\e}(I)|}
                &\geqslant \frac{R_0}{|I|}\cdot {\rm exp}\big( K_\e(1-\alpha)(\lambda_- -2\delta)
                -K_\e(\lambda_-+2\delta)  \big),\\
                &\geqslant \frac{R_0}{|I|}\cdot {\rm exp}\big( K_\e(-\alpha\lambda_- +(2\alpha-4)\delta)  \big).  
            \end{align*}
            Since $\lambda_-<0$ and $\delta<\frac{-\alpha\lambda_-}{4-2\alpha}$, 
            one has
                \[\frac{|f^{K_\e}(J_\e)|}{|f^{K_\e}(I)|}\to +\infty,\ \ {\rm as}\ \e\to 0. \]
            It contradicts the fact that $f^{K_\e}(J_\e)=f^{K_\e}(I)$.
    \end{enumerate}

    All in all, we obtain the contradiction induced by the exponent gap. 
    Hence, $\lambda^1(p)=\lambda^1(q)$ for any $p,q\in {\rm Per}(f|_\Lambda)$.
    This ends the proof of Proposition \ref{4 prop s-rigidity}.
\end{proof}

\begin{rmk}
    We mention that through the whole proof of Proposition \ref{4 prop s-rigidity}, 
    the condition $\lambda_-<0$ is used 
    only in calculating the error between $|f^{K_\e}(I)|$ and $|f^{K_\e}(J_\e)|$ 
    in the setting of $cu$-DA endomorphisms with $K_0\in[N_\e,K_\e)$.
    The existence of $\lambda_-<0$ will be shown later for the $cu$-DA case in Section \ref{sec-DA-cu}.
\end{rmk}

\section{DA endomorphisms with stable bundles}\label{sec-DA-sc}

    In this section, we prove Theorem \ref{main-thm-sc} 
    which focus on the rigidity of stable Lyapunov exponents of $sc$-DA endomorphisms on $\TT^2$. 
    Moreover, by following Proposition \ref{5 prop counter-example}, 
    we give an example of the $sc$-DA endomorphism on $\TT^2$
    which is strictly semi-conjugate to its linearization.
    Further, even some rigidity assumption on a large invariant set cannot guarantee 
    the existence of semi-conjugacy neither. 
    Notice that this phenomenon is different from the $cu$-DA case 
    where the semi-conjugacy must be injective, 
    hence, the semi-conjugacy is a topological conjugacy (see Theorem \ref{main-thm-cu}). 

    First, we prove Theorem \ref{main-thm-sc}. Recall that $f:\mtt$ is a $C^{1}$-smooth 
    $sc$-DA endomorphism 
    and $A:\mtt$ is its hyperbolic linearization. When $f$ is semi-conjugate to $A$ via $h$, 
    we denote by 
        $\Lambda:=\overline{\big\{x\in\TT^2~ |~  h^{-1}\circ h(x)=\{x\}\big\}}$ 
    the  the closure of $h$-injective points set.

\begin{proof}[Proof of Theorem \ref{main-thm-sc}]
    By Corollary \ref{2 cor semi-conj. Fc} and Theorem \ref{main-thm-scu}, 
    the existence of semi-conjugacy and $f$ being special are equivalent in the $sc$-DA case. 
    Moreover, by Proposition \ref{2 prop injective point set}, 
    we have $\overline{{\rm Per}(f|_{\Lambda})}=\Lambda$. 
    Hence, it suffices to prove the rigidity of stable Lyapunov exponents.
    By Proposition \ref{4 prop s-rigidity},
    we already have $\lambda^s(p,f)=\lambda^s(q,f)\equiv \lambda$ for any $p,q\in {\rm Per}(f|_{\Lambda})$,
    so we prove $\lambda=\lambda^s(A)$. 
    
    Let $\Gamma$ be the lifting of $\Lambda$ on $\RR^2$. 
    By Proposition \ref{4 prop adapted metric}, 
    for any $\e>0$, %up to changing the metric on $T\Gamma$, 
    one can find an adapted metric $\|\cdot\|$ such that
    \[
       \lambda-\varepsilon <\log ||DF|_{E_F^s(x)}||< \lambda+\varepsilon,\quad \forall ~x\in\Gamma.
    \]
    If $|\lambda-\lambda^s(A)|\neq 0$, 
    one can pick $\varepsilon\in(0,|\lambda-\lambda^s(A)|)$.
    Then, on the one hand, for any $y\in\mcf_F^s(x)\subset \Gamma$, 
    \begin{align}\label{LAMBDA}
        {\rm exp}(-k(\lambda+\varepsilon))
        <\frac{d_{\mcf^s_F}\big(F^{-k}(x),F^{-k}(y)\big)}{d_{\mcf^s_F}\big(x,y\big)}<
        {\rm exp}(-k(\lambda-\varepsilon)).
    \end{align}
    On the other hand,
    \begin{align*}
        |H\circ F^{-k}(x)-H\circ F^{-k}(y)|
        =|A^{-k}\circ H(x)-A^{-k}\circ H(y)|
        ={\rm exp}(-k\lambda^s(A))\cdot|H(x)-H(y)|.
    \end{align*}
    Note that for any $k\in\NN$,
    \[
    \big|\big(H\circ F^{-k}(x)-H\circ F^{-k}(y)\big)
    -\big(F^{-k}(x)-F^{-k}(y)\big)\big|<2C_H.
    \]
    where $C_H$ is given in Proposition \ref{2 prop semi-conj in R2}.
    Thus, one can get a contradiction from the quasi-isometric property of $\mcf_F^s$.
\end{proof}

Proposition \ref{main-prop} follows from the following result. 
Furthermore, as mentioned in Remark \ref{1 rmk sc} and the beginning of this section, 
we also construct an $sc$-DA endomorphism which has constant periodic stable Lyapunov exponent in a large invariant set, 
but it is not semi-conjugate to its linearization. 
Precisely, we give the following extended result of Proposition \ref{main-prop}.

\begin{prop}\label{5 prop counter-example}
    Let $A:\mtt$ be an Anosov endomorphism with a non-trivial stable bundle. 
    Then there exists a $C^\infty$-smooth %non-invertible 
    $sc$-DA endomorphism $g_0:\mtt$ such that 
    \begin{enumerate}
        \item $g_0$ is semi-conjugate to $A$, and any semi-conjugacy $h$ 
            homotopic to ${\rm Id}_{\TT^2}$ is not injective.
        \item The set \[\Lambda_{g_0}:=\overline{ \{ x\in\TT^2\ |\ h^{-1}\circ h(x)=x \}}\neq \TT^2,\]
            is a uniformly hyperbolic repeller.
    \end{enumerate}
    Moreover, there exists a $C^\infty$-smooth %non-invertible 
    $sc$-DA endomorphism $g_1:\mtt$ 
    homotopic to $A$ such that
    \begin{enumerate}
        \item The semi-conjugacy $H$ guaranteed by Proposition \ref{2 prop semi-conj in R2} 
            does not commute with the deck transformations, and $g_1$ is not special.
        \item The set \[\Gamma_{g_1}:=\{x\in\RR^2\ |\ H^{-1}\circ H(x)=\{x\} \}\] is invariant 
            and satisfies $\Bar{\Gamma}_{g_1}+\ZZ^2=\Bar{\Gamma}_{g_1}$ and $H(\Gamma_{g_1})=\RR^2$.
        %\item $H|_{\bar{\Gamma}_{g_1}}$ is smooth along the stable foliation.
        \item  The set $\Lambda_{g_1}:=\pi(\Bar{\Gamma}_{g_1})\subset \TT^2$ is compact, 
            $g_1^{\pm 1}$-invariant and $\mcf^s_{g_1}$-saturated.  
        \item $\overline{{\rm Per}(g_1|_{\Lambda_{g_1}})}=\Lambda_{g_1}$ 
            and $\lambda^s(p,g_1)=\lambda^s(A)$ for any $p\in {\rm Per}(g_1|_{\Lambda_{g_1}})$.
    \end{enumerate}
\end{prop}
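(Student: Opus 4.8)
The plan is to realize both $g_0$ and $g_1$ as Ma\~{n}\'{e}-type perturbations of $A$ supported in a small ball around its fixed point $0\in\TT^2$, written in the constant eigen-coordinates $(u,s)$ of $A$ along $L^u_A$ and $L^s_A$ (these directions are real and distinct because $A$ has a non-trivial stable bundle). In both constructions the perturbation keeps the $L^s_A$-direction exactly $Dg_i$-invariant and contracted by $\mu^s:=A|_{L^s_A}$, and equals $A$ outside the ball; being a $\ZZ^2$-periodic, compactly supported, bounded perturbation, each $g_i$ is a $C^\infty$ local diffeomorphism homotopic to $A$, hence DA with linearization $A$, and since $|Dg_i|_{E^s}|\equiv|\mu^s|$ one automatically gets $\lambda^s(p,g_i)=\log|\mu^s|=\lambda^s(A)$ for every periodic point. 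Standard cone estimates (the center cone around $L^u_A$ is forward invariant once the shears are $C^1$-small, and $(\blacklozenge)$ holds because $|Dg_i^k|_{E^s}|=|\mu^s|^k$ while the center norm is bounded below) show each $g_i$ is $sc$-DA.

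For $g_0$ I take, near $0$, $g_0(u,s)=(\psi(u),\mu^s s)$, where $\psi$ is an increasing perturbation of $u\mapsto\mu^u u$ with $\psi(0)=0$, $|\mu^s|<\psi'(0)<1$, $\psi'>0$ everywhere, $\psi'>1$ away from a small neighbourhood of $0$, and $\psi(u)=\mu^u u$ for $|u|$ outside a small interval. Then $0$ is a sink, $\psi$ acquires exactly two further (source) fixed points $\pm b$, and the constant direction $L^u_A$ is $Dg_0$-invariant, so $g_0$ is \emph{special}; by Theorem~\ref{3 thm special implies semi-conj} it is semi-conjugate to $A$, by Corollary~\ref{sc-H-semi-h-2} the lift $H$ of Proposition~\ref{2 prop semi-conj in R2} commutes with the deck transformations, the semi-conjugacy on $\TT^2$ is $h=\pi\circ H\circ\pi^{-1}$, and by uniqueness of $H$ any semi-conjugacy homotopic to ${\rm Id}_{\TT^2}$ equals $h$. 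Since $H$ maps center leaves onto unstable leaves of $A$ and conjugates $F|_{\mcf^c_F}$ to the expanding map $A|_{\tildeL^u}$ (Proposition~\ref{2-3 prop foliation on R2}, Corollary~\ref{2 cor semi-conj. Fc}), the segment of the center leaf through $0$ bounded by $\pm b$ (the maximal interval of bounded forward orbit) is collapsed to the point $h(0)$, so $h^{-1}(h(0))$ is nondegenerate and $h$ is not injective. Finally I claim $\Lambda_{g_0}=\TT^2\setminus W^s(0)$, the complement of the (open, $g_0$-fully invariant, nowhere dense) basin of the sink: a point is $h$-injective iff its forward orbit does not converge to $0$, and on $\TT^2\setminus W^s(0)$ the orbit never enters the center-contracting region $\{|u|<b\}$, so the center is uniformly expanded and the stable uniformly contracted by $|\mu^s|$. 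Hence $\Lambda_{g_0}\neq\TT^2$ is a uniformly hyperbolic set, and it is a repeller, being the maximal invariant set in the complement of a neighbourhood of $0$; the remaining identities for $\Lambda_{g_0}$ are those of Proposition~\ref{2 prop injective point set}.

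For $g_1$ the goal is to destroy the special property while keeping the lift-level $H$ injective and the stable data untouched. I take, near $0$, $g_1(u,s)=(\phi(u),\mu^s s+c_0(u))$, where $\phi$ is parabolic-repelling at $0$ ($\phi(0)=0$, $\phi'(0)=1$, $\phi'\geqslant1$ everywhere, $\phi$ increasing, $\phi(u)=\mu^u u$ off a small interval) and $c_0$ is a $C^\infty$ shear with $c_0(0)=0$, $c_0'(0)=\gamma\neq0$, compactly supported and $C^1$-small. Then $Dg_1$ still fixes and contracts $L^s_A$ by $\mu^s$, so $E^s_{g_1}$ is constant and $\lambda^s\equiv\lambda^s(A)$, but $Dg_1(0)=\big(\begin{smallmatrix}1&0\\\gamma&\mu^s\end{smallmatrix}\big)$ in the frame $(L^u_A,L^s_A)$, whose dominant eigendirection (eigenvalue $1$) is not $L^u_A$; thus the center direction at $0$ along the constant negative orbit differs from the one along a negative orbit of $0$ contained in the complement of the perturbation region (such a backward orbit exists because, $A$ being non-invertible with preimage-tree branching $\geqslant2$, some backward branch misses any sufficiently small open set), so $g_1$ is \emph{not} special. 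By Theorem~\ref{5 thm}, $g_1$ is not semi-conjugate to $A$ on $\TT^2$, hence the lift $H$ given by Proposition~\ref{2 prop semi-conj in R2} cannot commute with the deck transformations. Since $\phi$ is parabolic-repelling at $0$ and uniformly expanding elsewhere, no nondegenerate center interval stays bounded under forward iteration, so $H$ collapses no center segment (Proposition~\ref{2-3 prop foliation on R2}) and $H$ is injective; therefore $\Gamma_{g_1}=\bar\Gamma_{g_1}=\RR^2$, $H(\Gamma_{g_1})=\RR^2$, and $\Lambda_{g_1}=\TT^2$, which is trivially $g_1^{\pm1}$-invariant and $\mcf^s_{g_1}$-saturated. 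The $F^{\pm}$-invariance of $\Gamma_{g_1}$ is proved as in Proposition~\ref{2 prop injective point set}(1) (that argument does not use that $H$ commutes with deck transformations), and $\overline{{\rm Per}(g_1)}=\TT^2$ follows by shadowing for the localized perturbation $g_1$ of the transitive Anosov $A$; note $g_1$ is not Anosov (it has a parabolic fixed point), so there is no conflict with the rigidity of Anosov endomorphisms in \cite{AGGS23}.

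The main obstacle is the design of $g_1$: one must simultaneously break the special property, preserve the stable dynamics verbatim, and keep $H$ injective, and these constraints essentially force the non-hyperbolic (parabolic) center behaviour — a sink would make $H$ non-injective, while an Anosov perturbation with matching stable exponents would, by \cite{AGGS23}, be conjugate to $A$ on $\TT^2$ and hence special. The delicate verifications are that the sheared parabolic perturbation is genuinely partially hyperbolic (the center cone condition and $(\blacklozenge)$ must coexist with $\phi'(0)=1$) and that the center direction at $0$ really does depend on the negative orbit, which uses non-invertibility of $A$ together with the existence of a backward orbit avoiding the perturbation region. For $g_0$ the only nontrivial point is identifying $\Lambda_{g_0}$ precisely with the complement of the sink's basin and checking that forward orbits inside it stay in the center-expanding region, which is where uniform hyperbolicity of the repeller comes from.
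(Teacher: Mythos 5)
Your $g_0$ follows the paper's route (a Ma\~{n}\'{e}-type perturbation along $L^u_A$ at the fixed point creating a center sink, specialness giving the semi-conjugacy via Theorem \ref{3 thm special implies semi-conj}, and a collapsed center segment giving non-injectivity), but there are two genuine gaps. First, your formulas do not descend to $\TT^2$: a perturbation of the form $(u,s)\mapsto(\psi(u),\mu^s s)$ with $\psi-\mu^u\,\cdot$ supported in $\{|u|<\e\}$ is supported on a strip around $\tildeL^s(0)$, which is dense in the torus; you must also cut off in $s$. Once you do, the first component depends on $s$ and $L^s$ is no longer $Dg_0$-invariant (the cocycle is merely triangular with constant stable diagonal entry, which still rescues the stable exponents, but your stated reason is wrong). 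Second, and more seriously, the assertion that orbits in $\Lambda_{g_0}=\TT^2\setminus W^s(0)$ ``never enter the center-contracting region'' is exactly the hard point and is not justified: the contracting region $\{\|Dg_0|_{E^c}\|\leqslant 1\}$ is a two-dimensional set depending on both coordinates after the cutoff, and a point can lie in it without lying in the basin of the sink (e.g.\ near the edge of the cutoff, where the limiting $u$-dynamics has not yet taken over). The paper's Claim \ref{5 claim lambda2} spends a page establishing precisely this disjointness, using the monotonicity $V_{x_2}\subset V_{x_1}$ for $|x_1|<|x_2|$, the forward invariance $V_x'\subset V_{x'}$, and the compact inclusion of $V_0$ in the collapsed interval; without some such argument the uniform hyperbolicity of the repeller is unproven.

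Your $g_1$ is a genuinely different construction from the paper's and it does not work as written. The paper perturbs $g_0$ inside an open set $V\subset\TT^2\setminus\Lambda_{g_0}$ to destroy specialness, so that $\Lambda_{g_1}=\Lambda_{g_0}$ and $H|_{\bar\Gamma_{g_1}}=H_0|_{\widetilde\Lambda_{g_0}}$; every claimed property of $\Lambda_{g_1}$ is then inherited from the already-established $g_0$. You instead aim for $\Gamma_{g_1}=\RR^2$, so $\Lambda_{g_1}=\TT^2$ and the fourth item demands that \emph{all} periodic points of $g_1$ have stable exponent $\lambda^s(A)$ and that ${\rm Per}(g_1)$ be dense. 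Your justification for the exponents (``$Dg_1$ still fixes and contracts $L^s_A$ by $\mu^s$'') collapses for the same descent reason as above: once the shear and the parabolic perturbation are cut off in $s$ so as to live on $\TT^2$, the derivative acquires a nonzero $\partial_s$-entry in the first row and a non-constant $(2,2)$-entry, so neither $L^s$ nor the stable diagonal entry is preserved, and there is no tool to recover the exponent rigidity: Theorem \ref{5 thm} requires a semi-conjugacy on $\TT^2$, which your non-special $g_1$ does not have. Likewise, density of ${\rm Per}(g_1)$ cannot be obtained from ``shadowing for the localized perturbation of $A$'' (your $g_1$ is not hyperbolic near the parabolic point) nor from Proposition \ref{2 prop injective point set}(5) (whose proof uses that $H$ commutes with the deck transformations). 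These are not cosmetic issues; they are the reason the paper routes the construction of $g_1$ through $g_0$ and keeps $\Lambda_{g_1}\neq\TT^2$.
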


\begin{proof}
    The construction of $g_0$ is modification for the DA diffeomorphism in \cite[Section 17.2]{KH95},
    but for completeness and some difference between diffeomorphisms and endomorphisms, we give the proof as follows.
    
    Let $A:\mtt$ be a hyperbolic endomorphism with a non-trivial stable bundle. 
    Assume that $A(0)=0$. Let $U$ be a neighborhood of $0$ 
    and we can give a coordinates $(x,y)$ on $U$ diagonalizing $A$ on $U$ such that 
        \[A(x,y)=(\mu_1x,\mu_2y), \quad \forall\ (x,y)\in U, \]
    where 
        \[
        0<\mu_1<1<\mu_2\quand 
        \mu_1\cdot \mu_2={\rm det}(A)\geqslant 2. 
        \]
    In particular, we assume that $U$ is a square with the edge length $\frac{1}{2}$ centering $0$.

    Let $\phi(t):\RR\to [0,1]$ be $C^\infty$ a bump function such that 
    \begin{itemize}
        \item $\phi(t)=1$ if $|t|\leqslant \frac{1}{8}$ and $\phi(t)=0$ if $|t|\geqslant \frac{1}{4}$;
        \item $\phi(-t)=\phi(t)$;
        \item $-9<\phi'(x)<0$ when $\frac{1}{8}<t<\frac{1}{4}$.
    \end{itemize} 
    Let $g_0:\mtt$ be given as 
    \begin{itemize}
        \item $g_0|_{\TT^2\setminus U}=A|_{\TT^2\setminus U}$, 
        \item $g_0(x,y)=A(x,y)+(0,\lambda\phi(x)\phi(ky)y)$ for every $(x,y)\in U$,
    \end{itemize} 
    where $\lambda<0$ and $k\geqslant 1$ are constants and given later. 
    
    It is clear that $g_0$ is $C^{\infty}$. 
    Note that we just perturb $A$ along its unstable manifold $\mcl^u$, 
    hence, the foliation $\mcl^u$ is $g_0$-invariant 
    and gives a $Dg_0$-invariant bundle $L^u$ (may not be expanding under $Dg_0$). 
    To construct the desired $g_0$, we choose
    $\lambda$ and $k$ satisfying the following claims.
   
\begin{claim}\label{5 claim lambda}
    There exist $\lambda$ and $k$ such that
        \begin{enumerate}
            \item There exists fixed point $p\in U\setminus \{0\}$ with $g_0(p)=p$ and $p\in \mcl^u(0)$. 
            \item There exists a $Dg_0$-contracting cone field 
                    \[ \mathcal{C}_a (z):=\{(x,y) \in T_z\TT^2 \ |\ |y|\geqslant a|x| \},   \] 
                namely, $Dg_0\big( \overline{\mathcal{C}_a (z)}  \big) 
                    \subset {\rm int}\big(  \mathcal{C}_a (g_0(z))\big)$.
            \item For every point $z\in\TT^2$ and every vector $v\notin \mathcal{C}_a (z)$, 
                one has $\|D_zg_0v\|<\|v\|$.
    \end{enumerate}
\end{claim}
  
    Note that if $g_0$ satisfies the second item, 
    then there exists a $Dg_0$-invariant dominated splitting 
    $T\TT^2=E_{g_0}^1\oplus_< E_{g_0}^2$ (see \cite{CP15}),  
    in particular, $E_{g_0}^2=L^u$. 
    And $Dg_0$ contracts the bundle $E_{g_0}^1$ by the third item. 
    Hence, $g_0$ is a special $sc$-DA endomorphism,
    and we rewrite its partially hyperbolic splitting as 
        \[ T\TT^2=E_{g_0}^s\oplus_< E_{g_0}^c.\]
    According to Theorem \ref{main-thm-scu}, 
    $g_0$ is semi-conjugacy to $A$. Let surjection $h:\mtt$ homotopic to Id$_{\TT^2}$ 
    such that $h\circ f=A\circ h$. 
    Note that $h$ preserves the center foliation, 
    namely, $h(\mcf_{g_0}^c(0))=\mcl^u(0)$, 
    where $\mcf_{g_0}^c$ is the integral foliation of bundle $E_{g_0}^c$. 
    By the first item and the semi-conjugacy, it follows $h(p)=0$, 
    hence, $h$ is not injective. 
    Now we check that $g_0$ satisfies the last three properties. 

\begin{proof}[Proof of Claim \ref{5 claim lambda}]
   The second and third items automatically hold for $z\in \TT^2\setminus U$ 
    since $g_0|_{\TT^2\setminus U}=A|_{\TT^2\setminus U}$. 
    Hence, we just focus on the perturbed neighborhood $ U$. 
     Let $z=(x,y)\in U$. Then 
    \[ D_{(x,y)}g_0= 
        \begin{bmatrix}
            \mu_1,&0\\
            \lambda\phi'(x)\phi(ky)y,&\mu_2+\lambda\phi(x)\big(k\phi'(ky)y+\phi(ky)\big)
        \end{bmatrix} .\]
    To get another fixed point on $\mcl^u(0)$, 
    let $g_0(x,y)=(x,y)$ and $x=0$. 
    Then we get $\phi(ky)=\frac{1-\mu_2}{\lambda}$. 
    Hence, when $\lambda< 1-\mu_2<0$,
    the equation $\phi(ky)=\frac{1-\mu_2}{\lambda}\in [0,1]$ has a solution $y_0\in(0,\frac{1}{4k}]$. 
    Then $p=(0,y_0)\in U$ is a fixed point satisfying the first item.

    Let $(1,\pm a) \in \mathcal{C}_a(z)$. 
    To prove the second item, we will find $a\in\RR_{\geqslant 0}$ such that 
    \begin{align}
        D_zg_0(1,\pm a)\in {\rm int}\big(\mathcal{C}_a(g_0(z))\big),\quad \forall\ z\in U. 
        \label{eq. 5. cone-field}
    \end{align}
    By symmetry, we need only find $a$ when $x\geqslant 0$ and $y\geqslant 0$. 
    Let \[ D_zg_0(1,a)=(\mu_1, B(x,y)+C(x,y)a),\]
    where \[
    \begin{cases}
        B(x,y)=\lambda\phi'(x)\phi(ky)y,\\
        C(x,y)=\mu_2+\lambda\phi(x)\big(k\phi'(ky)y+\phi(ky)\big).
    \end{cases}\]
    It is easy to get that 
    $B(x,y)\in[0,-\frac{9}{4k}\lambda]$ and 
    $C(x,y)\in[\mu_2+\lambda,\mu_2-\frac{9}{4}\lambda]$,
    when $x\geqslant 0$ and $y\geqslant 0$. 
    Then \eqref{eq. 5. cone-field} follows from the following inequalities
    \begin{align}
         B(x,y)+C(x,y)a>\mu_1 a, \label{eq. 5. cone-field 1}
    \end{align}
    and 
    \begin{align}
         B(x,y)-C(x,y)a<-\mu_1 a. \label{eq. 5. cone-field 2}
    \end{align}
    Recall that $0<\mu_1<1$ and notice that $\lambda$ can be close to $1-\mu_2$. Thus, we can choose $\lambda$ such that $\mu_2+\lambda-\mu_1>0$, hence,
        $C(x,y)-\mu_1\geqslant \mu_2+\lambda-\mu_1>0$. 
    Then \eqref{eq. 5. cone-field 1} always holds when $a>0$. 
    On the other hand, 
    the function \[\frac{B(x,y)}{C(x,y)-\mu_1}\] has an upper bound $0<C_0(k)<+\infty$ 
    and $C_0(k)\to 0$ as $k\to +\infty$. 
    Hence, \eqref{eq. 5. cone-field 1} always holds when $a>C_0(k)$. 
    As a result, we take $a=2C_0(k)$, then $\mathcal{C}_a$ is the cone-field satisfying the second item.

    To get the third property of $g_0$, for $z=(x,y)\in U$ (we still assume that $x,y\geqslant 0$) 
    we just need to show that every $v=(1,b)\in T_z\TT^2$ not in $\mathcal{C}_a(z)$ 
    will always satisfy the following inequality
        \[1+b^2>\mu_1^2+B^2+C^2b^2+2BCb,\]
    where for short we denote $B(x,y),C(x,y)$ by $B,C$, respectively. Then 
        \[ (C^2-1)b^2+2BCb+B^2+\mu_1^2-1<0,\]
    holds for all $-2C_0(k) \leqslant b\leqslant 2C_0(k)$. 
    This can be done since $\mu_1\in(0,1)$, $C\in[\mu_2+\lambda,\mu_2-\frac{9}{4}\lambda]$, 
    $0\leqslant B\leqslant -\frac{9}{4k}\lambda \to 0$ and $C_0(k)\to 0$ 
    as $k\to +\infty$ for given $\lambda< 1-\mu_2$. 
\end{proof}

    From the choices of $\lambda$ in $k$ of Claim \ref{5 claim lambda}, $g_0$ is an $sc$-DA endomorphism.
    And we further choose $\lambda$ and $k$ such that $\Lambda_{g_0}$ is a uniformly hyperbolic repeller.
    
\begin{claim}\label{5 claim lambda2}
    There exist $\lambda$ and $k$ %satisfying Claim \ref{5 claim lambda} 
    such that 
    the set $\Lambda_{g_0}$ is a uniformly hyperbolic repeller.
\end{claim}
\begin{proof}[Proof of Claim \ref{5 claim lambda2}]

    It suffices to prove that $Dg_0|_{E^c(\Lambda_{g_0})}$ is uniformly expanding. 
    
    Recall that $E^c_{g_0}=L^u$, hence, the conclusion follows from that 
    there exists $C_1>1$ such that 
    \begin{align}
        C(x,y)=\mu_2+\lambda\phi(x)\big(k\phi'(ky)y+\phi(ky)\big)>C_1>1, 
        \quad \forall\ (x,y)\in\Lambda_{g_0}.
        \label{eq. 5. repeller-1}
    \end{align}
    Note that \eqref{eq. 5. repeller-1} always holds when $(x,y)\in \TT^2\setminus U$, 
    hence, we focus on $U$. 
    Let \[V:=\{(x,y)\in U\ |\ C(x,y)\leqslant 1 \}
        = \bigcup_{x\in[-{1}/{4},{1}/{4}]}\big(\{x\}\times V_{x}\big),\]
    where $V_x:=\{ y\in [-{1}/{4},{1}/{4}]\ |\ C(x,y)\leqslant 1\}$.
    By the compactness of $\Lambda_{g_0}$ and $V$, if $V\subset \TT^2\setminus \Lambda_{g_0}$, 
    then \eqref{eq. 5. repeller-1} holds.

    Let $\mu_1-\mu_2<\lambda<1-\mu_2$. Then $0$ is a sink of $g_0$. 
    It is easy to find $\lambda$ and $k$ such that 
    $(0,y_1)$ and $(0,y_2)$, with $\phi(ky_i)=\frac{1-\mu_2}{\lambda}\ (i=1,2)$, 
    are two saddle points on $\mcf^c_{g_0}(0)$.
    Moreover, we can assume that  
    $(0,y_1), (0,y_2),(0,0)$ are the only three fixed points on $\mcf^c_{g_0}(0)\cap U$, 
    hence, the only three periodic points by monotonicity of $g_0$ 
    along the center segment from $(0,y_1)$ to $(0,y_2)$. 
    Indeed, if $(0,y)\in \mcf^c_{g_0}(0)\cap U$ is a fixed point, 
    then there exists $n_0\in\NN$ such that 
        \[ \mu_2y+\lambda\phi(ky)y-y=n_0.\]
    However, for given $\mu_2, \lambda$, 
    we can assume that $U$ is small (let $\phi$ be suitable to $U$) and $k$ is big enough 
    such that $|\mu_2y-y|$ and $|\lambda\phi(ky)y|$ are small, 
    hence, $|\mu_2y+\lambda\phi(ky)y-y|<1$ and $n_0=0$. 

    Note that $C(x,y)\leqslant 1$ is equivalent to 
    \begin{align}
        \frac{d}{dy}\big( k\phi(ky)y \big) \geqslant \frac{1-\mu_2}{\lambda\phi(x)},
        \label{eq. 5. repeller}
    \end{align}
    when $\phi(x)> 0$. 
    And $(x,y)\notin V$ when $\phi(x)=0$. By \eqref{eq. 5. repeller}, one can get that 
    \begin{itemize}
        \item If $|x_1|<|x_2|$, then $V_{x_2}\subset V_{x_1}$;
        \item If $g_0(x,y)=(x',y')$ and $g_0\big(\{x\}\times V_x\big)=\{x'\}\times V_x'$, 
            then $V_x' \subset  V_{x'}$ by $C(x,y)|_{V_x}\leqslant 1$;
        \item $\{0\}\times V_0\subset \TT^2\setminus \Lambda_{g_0}$. 
            Indeed, $V_0$ is a compact subset of the interval $(y_1,y_2)$ 
            on the center leaf $\mcf^c_{g_0}(0)$,
            so there is a neighborhood $U_0$ of $0$ on $\mcf^s_{g_0}(0)$ 
            such that $\{x\}\times V_x\subset \TT^2\setminus \Lambda_{g_0}$ for every $x\in U_0$.
    \end{itemize}
    There exists $n\in\NN$ such that for every $x\in \mcf^s_{g_0}(0)\cap U=\mcl^s(0)\cap U$, 
    one has $g_0^n(x)\in U_0$, hence,
        \[g_0^n\big(\{x\}\times V_x\big) 
        \subset \{g_0^n(x)\}\times V_{g_0^n(x)} 
        \subset \TT^2\setminus \Lambda_{g_0}.\] 
    It follows that $g_0^n(V)\subset \TT^2\setminus \Lambda_{g_0}$ and $V\subset \TT^2\setminus \Lambda_{g_0}$ 
    as $\Lambda_{g_0}$ is invariant.

    Since we have proved that $g_0$ restricted on $\Lambda_{g_0}$ is uniformly hyperbolic,  
    the stable set of a point $z\in\Lambda_{g_0}$ is 
    the union of the stable manifold $\mcf^s_{g_0}$ of its generation set 
        \[P(z)=\left\{w\in\TT^2\ |\ g_0^k(z)=g_0^k(w),\ {\rm for}\ {\rm some}\ k\in\NN \right\},\] 
    that is, \[ W_{g_0}^s(z)=\bigcup_{w\in P(z)}\mcf^s_{g_0}(w).\]
    Since $\Lambda_{g_0}$ is $g_0^{\pm 1}$-invariant 
    and $\mcf^s_{g_0}$-saturated by Proposition \ref{2 prop injective point set},  
    we have $w\in P(z)\subset \Lambda_{g_0}$ and $\mcf^s_{g_0}(w)\subset \Lambda_{g_0}$. 
    Then we get that $\Lambda_{g_0}$ is a uniformly hyperbolic repeller.
\end{proof}

    Combing Claim \ref{5 claim lambda} and Claim \ref{5 claim lambda2}, 
    we obtain the existence of $g_0$ in Proposition \ref{5 prop counter-example}. 

    Now we are going to make another perturbation to obtain $g_1:\mtt$. 
    Recall that \[\Lambda_{g_0}:=\overline{ \{ x\in\TT^2\ |\ h^{-1}\circ h(x)=x \}},\]
    where $h$ is a semi-conjugacy between $g_0$ and $A$.
    Note that $\Lambda_{g_0}\neq \TT^2$ and $\TT^2\setminus \Lambda_{g_0}$ has non-empty interior. 
    By Theorem \ref{main-thm-sc} and Proposition \ref{2 prop injective point set}, 
    \[ \overline{{\rm Per}(g|_{\Lambda_{g_0}})}=\Lambda_{g_0}\] is a $g_0^{\pm 1}$-invariant 
    $\mcf^s_{g_0}$-saturated set and $\lambda^s(p,g_0)=\lambda^s(A)$ 
    for any $p\in {\rm Per}(g|_{\Lambda_{g_0}})$. 
    In fact, even for every $p\in {\rm Per}(f)$,  
    we can calculate directly that $\lambda^s(p,g_0)\equiv {\rm log}\mu_1$, since $Dg_0$ is of triangular form.

    Let $V\subset \TT^2\setminus \Lambda_{g_0}$ be an open set. 
    Let $g_1:\mtt$ be a perturbation of $g_0:\mtt$ such that 
    \begin{itemize}
        \item $g_1|_{\TT^2\setminus V}=g_0|_{\TT^2\setminus V}$;
        \item $g_1$ is an $sc$-DA endomorphism;
        \item $g_1$ is not special, namely, there is no $Dg_1$-invariant center bundle on $T\TT^2$.
    \end{itemize} 
    Note that $g_0$ is in fact a special Anosov endomorphism on $\Lambda_{g_0}$,
    the perturbation can be done by the same method of 
    perturbing a special Anosov endomorphism to be non-special, see \cite{Pr76}. 
    Then there exists a  semi-conjugacy $H:\RR^2\to\RR^2$ 
    between $G_1:\RR^2\to\RR^2$ and $A:\RR^2\to\RR^2$, the lifted maps of $g_1$ and $A$, respectively. 
    Let $H_0:\RR^2\to \RR^2$ be a lifted map of $h$ such that $H_0\circ G_0=A\circ H_0$ 
    where $G_0:\mrr$ is a lifted map of $g_0$.

    By Corollary \ref{main-cor-sc}, $H$ does not commute with the deck transformations. 
    Let \[\Gamma_{g_1}:=\{x\in\RR^2\ |\ H^{-1}\circ H(x)=\{x\} \}.\]
    Since $\Lambda_{g_0}$ is  $g_0^{\pm 1}$-invariant, 
    we have $\Lambda_{g_0}$ is  $g_1^{\pm 1}$-invariant. 
    Moreover, 
        \[\overline{\Gamma}_{g_1}=\widetilde{\Lambda}_{g_0}
        \quand
        H|_{\overline{\Gamma}_{g_1}}=H_0|_{\widetilde{\Lambda}_{g_0}},\]
    where $\widetilde{\Lambda}_{g_0}$ is the lift of $\Lambda_{g_0}$.
    Let $\Lambda_{g_1}:=\pi (\overline{\Gamma}_{g_1})=\Lambda_{g_0}$. 
    Then $g_1$ restricted on $\Lambda_{g_1}$ and $H$ restricted on $\overline{\Gamma}_{g_1}$ have
    the same properties as $g_0|_{\Lambda_{g_0}}$ and $H_0|_{\widetilde{\Lambda}_{g_0}}$. 
    This ends the proof of Proposition \ref{5 prop counter-example}.
\end{proof}

\section{DA endomorphisms with unstable cone-fields}\label{sec-DA-cu}

    In this section, we will prove Theorem \ref{main-thm-cu} and Corollary \ref{main-cor-cu} 
    which focus on the rigidity of center Lyapunov exponents of $cu$-DA endomorphisms on $\TT^2$. 
  
    Let $f$ be a $C^1$-smooth $cu$-DA endomorphism on $\TT^2$ and $A$ be its hyperbolic linearization.
    Comparing to the $sc$-DA case, there are more rigidity properties in the $cu$-DA case, 
    especially, the existence of semi-conjugacy $h$ between $f$ and $A$
    leads to $h$ being a topological conjugacy and $f$ being a special Anosov endomorphism.  
    This rigidity result mainly follows from the next two lemmas. When $f$ is semi-conjugate to $A$,  
    let $\Lambda\subset \TT^2$ be given in Proposition \ref{2 prop injective point set}. 
    The first one claims that 
    the infimum of center Lyapunov exponents of periodic points on $\Lambda$ is negative 
    so that we can apply Proposition \ref{4 prop s-rigidity}.
 
\begin{lem}\label{6 lem attractor}
    Assume that $f$ is semi-conjugate to $A$. 
    Then there exists $p\in {\rm Per}(f|_{\Lambda})$ such that $\lambda^c(p,f)<0$. 
    Moreover, $\lambda^c(q,f)=\lambda^c(p,f)$, for any $p,q\in {\rm Per}(f|_{\Lambda})$. 
    In particular, the set $\Lambda$ is a uniformly hyperbolic attractor.
\end{lem}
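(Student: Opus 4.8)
The plan is to split the statement into three parts: first, to produce a single periodic point of $f|_{\Lambda}$ with negative center exponent; second, to promote this to uniform contraction of $E^c_f$ along $\Lambda$; and third, to read off the attractor structure. I will use that, by Proposition \ref{2 prop injective point set}, $f|_{\Lambda}$ is already special, carrying a $Df$-invariant splitting $T_{\Lambda}\TT^2=E^c_f\oplus E^u_f$ in which $E^u_f$ is uniformly expanding (as $f$ is $cu$-DA), that $\Lambda$ is $\mcf^u_f$-saturated, and that $\overline{{\rm Per}(f|_{\Lambda})}=\Lambda$; in the notation of Section \ref{sec-small-LyaExp} one has $E_1=E^c_f$ and $\lambda^1(\cdot)=\lambda^c(\cdot,f)$. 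Thus, once a single $p\in{\rm Per}(f|_{\Lambda})$ with $\lambda^c(p,f)<0$ is found, Lemma \ref{4 lem measure shadowing} gives $\lambda_-<0$, Proposition \ref{4 prop s-rigidity} forces $\lambda^c(q,f)$ to equal the same negative number $\lambda_0$ for every $q\in{\rm Per}(f|_{\Lambda})$, and Proposition \ref{4 prop adapted metric} (applied with $\lambda_-=\lambda_+=\lambda_0$) supplies a metric in which $Df$ contracts $E^c_f$ uniformly on $\Lambda$; together with the uniform expansion on $E^u_f$ this makes $T_{\Lambda}\TT^2=E^c_f\oplus E^u_f$ a uniformly hyperbolic splitting, and since $\Lambda$ is $\mcf^u_f$-saturated a standard argument provides a forward-trapping neighborhood, so $\Lambda$ is a uniformly hyperbolic attractor.

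The heart of the proof is the first part, and this is where $C^{1+\alpha}$-regularity will be essential. First I would observe that, by \eqref{eq. 2. Lambda}, the map $h|_{\Lambda}\colon\Lambda\to\TT^2$ is surjective, at most two-to-one, and conjugates $f|_{\Lambda}$ to $A$; being a finite-to-one factor map it preserves topological entropy, so $h_{\rm top}(f|_{\Lambda})=h_{\rm top}(A)=\log|{\rm det}(A)|-\lambda^s(A)$ (the logarithm of the unstable eigenvalue of $A$), which is strictly larger than $\log|{\rm det}(A)|$ because $\lambda^s(A)<0$. By the variational principle and ergodic decomposition one then gets an ergodic $f$-invariant measure $\nu$ with ${\rm supp}\,\nu\subset\Lambda$ and $h_\nu(f)>\log|{\rm det}(A)|$. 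On $\Lambda$ the only Lyapunov exponent of $\nu$ that can be negative is $\lambda^c(\nu)$, since $E^u_f$ is uniformly expanding and the dominated splitting $E^c_f\oplus E^u_f$ refines the Oseledets splitting of $\nu$. Now I would invoke the Ledrappier--Young--type (backward Ruelle) inequality for $C^{1+\alpha}$ endomorphisms,
\[
h_\nu(f)\ \le\ F_\nu(f)+\int\sum_{\lambda_i(x)<0}\bigl(-\lambda_i(x)\bigr)\,d\nu(x),
\]
in which $F_\nu(f)$ is the folding entropy, always bounded above by $\log(\deg f)=\log|{\rm det}(A)|$ since each fiber of $f$ has at most $\deg f$ points. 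If $\lambda^c(\nu)\ge0$ the integral vanishes and we obtain $h_\nu(f)\le\log|{\rm det}(A)|$, contradicting the choice of $\nu$; hence $\lambda^c(\nu)<0$. Finally, Lemma \ref{4 lem measure shadowing} yields periodic points $p_n\in\Lambda$ with $\lambda^c(p_n,f)\to\lambda^c(\nu)<0$, so $\lambda_-<0$ and some $p\in{\rm Per}(f|_{\Lambda})$ has $\lambda^c(p,f)<0$, which is exactly what the first part requires.

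The hard part will be this entropy step: one must recognize that the right tool is the folding-entropy (Ledrappier--Young/backward Ruelle) inequality for $C^{1+\alpha}$ endomorphisms, and then use the numerical fact that the entropy $h_{\rm top}(A)=\log|{\rm det}(A)|-\lambda^s(A)$ forced on $f|_{\Lambda}$ by the semi-conjugacy strictly exceeds the largest possible folding entropy $\log(\deg f)$, so that the surplus must be carried by a negative (center) Lyapunov exponent. This is precisely where $C^1$-regularity does not suffice (cf.\ the discussion of $C^{1+\alpha}$ in the introduction). The remaining ingredients — passing from one negative periodic center exponent to a uniformly contracted $E^c_f$ through Propositions \ref{4 prop s-rigidity} and \ref{4 prop adapted metric}, and the fact that an $\mcf^u_f$-saturated uniformly hyperbolic set is an attractor — are routine and use only results already established.
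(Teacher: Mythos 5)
Your proof is correct and follows the same skeleton as the paper's: force a negative center exponent via entropy plus the folding-entropy (inverse Ledrappier--Young) formula, transfer it to periodic points by Lemma \ref{4 lem measure shadowing}, equalize via Proposition \ref{4 prop s-rigidity}, and uniformize via Proposition \ref{4 prop adapted metric}. The one genuine difference is how the high-entropy measure is produced. The paper takes the specific measure $\mu=h^*({\rm Leb})$, checks that the non-injectivity set $\Lambda'$ is $\mu$-null so that $(f,\mu)$ is \emph{isomorphic} to $(A,{\rm Leb})$, and then reads off the exact equalities $h_\mu(f)=h_{\rm Leb}(A)=\lambda^u(A)$ and $F_\mu(f)=F_{\rm Leb}(A)=\log|{\rm det}(A)|$ before applying \eqref{eq. 6. shulin}. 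You instead use that $h|_\Lambda$ is a finite-to-one factor map (Bowen's theorem) to get $h_{\rm top}(f|_\Lambda)=h_{\rm top}(A)=\lambda^u(A)>\log|{\rm det}(A)|$, pick an ergodic $\nu$ by the variational principle, and bound $F_\nu(f)\le\log(\deg f)=\log|{\rm det}(A)|$; the contradiction is the same. Your route avoids verifying the isomorphism $(f,\mu)\cong(A,{\rm Leb})$ at the cost of invoking Bowen's entropy bound for finite-to-one extensions and the crude fiber-cardinality bound on folding entropy; the paper's route identifies a concrete measure (in fact the natural candidate for the MME) and computes both entropies exactly. Both are valid, both need the $C^{1+\alpha}$ hypothesis exactly where you say they do, and your inequality version of the entropy formula follows from the equality \eqref{eq. 6. shulin} since $\delta^c(\nu)\le 1$.
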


    In fact, the hyperbolic attractor $\Lambda$ is the whole torus.
\begin{lem}\label{6 lem Ansoov}
    Assume that $f$ is semi-conjugate to $A$.  
    The hyperbolic attractor $\Lambda$ guaranteed by Lemma \ref{6 lem attractor} is the whole $\TT^2$, 
    hence, $f$ is a special Anosov endomorphism.
\end{lem}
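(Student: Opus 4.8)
The plan is to argue by contradiction, transporting a nondegenerate fiber of $h$ arbitrarily close to the attractor $\Lambda$ by using the density of $A$-preimages (here $\deg f\ge 2$ is crucial), and then deriving an impossible length estimate from the uniform center contraction on $\Lambda$ provided by Lemma~\ref{6 lem attractor}.

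Suppose $\Lambda\neq\TT^2$. By Corollary~\ref{2 cor semi-conj. Fc} and Proposition~\ref{2 prop injective point set}, $U:=\TT^2\setminus\Lambda$ is then a nonempty open set, a disjoint union of open center arcs, each being the interior of a fiber $h^{-1}(y)$ whose endpoints lie in $\Lambda$; since $\Lambda$ is $f^{\pm1}$-invariant and $f$ is surjective, $f(U)=U=f^{-1}(U)$. Lift to $\RR^2$: $F$ is a diffeomorphism, $H$ commutes with the deck transformations, and from $H\circ F=A\circ H$ with $A$ invertible we get $F^n\big(H^{-1}(p)\big)=H^{-1}(A^np)$ for every $p$ and $n$; moreover every fiber $H^{-1}(p)$ is a center arc tangent to $E^1$ of length $\le C$ (Proposition~\ref{2-3 prop foliation on R2}), with endpoints in $\Bar\Gamma=\pi^{-1}(\Lambda)$. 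Fix a nondegenerate fiber $H^{-1}(\tilde y_0)$ (it exists since $U\neq\emptyset$), of length $\ell_0>0$, and an injective point $z_\infty\in\Lambda$, i.e.\ $H^{-1}(\tilde z_\infty)=\{\tilde z_\infty\}$ (such points are dense in $\Lambda=\overline{\pi(\Gamma)}$). By Lemma~\ref{2 lem preimage dense linear}, $A^{-n}(y_0)$ is $C_A\gamma^n$-dense, so one may pick $z_n\in A^{-n}(y_0)$ with $z_n\to z_\infty$, and compatible lifts $\tilde z_n\to\tilde z_\infty$; then $A^n(\tilde z_n)=\tilde y_0+m_n$ for some $m_n\in\ZZ^2$. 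Since $H$ is continuous and $H^{-1}(\tilde z_\infty)$ is a single point, the arcs $H^{-1}(\tilde z_n)$ have diameters tending to $0$.

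Now I invoke the dynamics. By Lemma~\ref{6 lem attractor} and Proposition~\ref{4 prop adapted metric}, after passing to an adapted metric we may assume $\|DF|_{E^1(x)}\|<e^{-\beta}$ for all $x\in\Bar\Gamma$, for some $\beta>0$; by continuity, and by iterating a trapping region of the hyperbolic attractor forward finitely often, there is an open trapping neighborhood $V\supset\Lambda$ with $\overline{f(V)}\subset V$ and $\|Df|_{E^1}\|<e^{-\beta/2}$ on $V$; set $\tilde V=\pi^{-1}(V)$. Since $\mathrm{diam}\,H^{-1}(\tilde z_n)\to0$ and $\tilde z_n\to\tilde z_\infty\in\Bar\Gamma\subset\tilde V$ (open), we have $H^{-1}(\tilde z_n)\subset\tilde V$ for all large $n$; by trapping, $F^i\big(H^{-1}(\tilde z_n)\big)\subset\tilde V$ for all $i\ge0$, so $\|DF^n(w)|_{E^1(w)}\|<e^{-\beta n/2}$ for every $w\in H^{-1}(\tilde z_n)$. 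Integrating along the center arc $H^{-1}(\tilde z_n)$ and using $F^n\big(H^{-1}(\tilde z_n)\big)=H^{-1}(\tilde y_0)+m_n$,
\[
\ell_0=\mathrm{length}\big(H^{-1}(\tilde y_0)\big)\le\int_{H^{-1}(\tilde z_n)}\|DF^n(w)|_{E^1(w)}\|\,dw\le C\,e^{-\beta n/2}\longrightarrow0 ,
\]
contradicting $\ell_0>0$. Hence $\Lambda=\TT^2$; equivalently every fiber of $h$ is a point, so $h$ is a homeomorphism, $f=f|_\Lambda$ is special by Proposition~\ref{2 prop injective point set}, and with the everywhere negative center exponent and the $cu$-structure $f$ is a special Anosov endomorphism.

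The hard part is controlling $\|DF^n|_{E^1}\|$ along the \emph{whole} arc $H^{-1}(\tilde z_n)$: its interior lies outside $\Lambda$, where there is no a priori bound and the distortion can be arbitrarily large. This is exactly what forces the choice of $z_n$ converging to a point of $\Lambda$: it makes the tiny arc $H^{-1}(\tilde z_n)$ sit inside the trapping neighborhood, so its entire forward orbit is imprisoned in the region where the attractor's uniform center contraction applies, and the initial part of the orbit contributes essentially no distortion. The role of non-invertibility ($\deg f\ge2$) appears only through Lemma~\ref{2 lem preimage dense linear}, whose conclusion genuinely fails when $|\det A|=1$.
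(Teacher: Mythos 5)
Your argument is correct, but it is not the route taken in the paper. The paper also argues by contradiction and also relies on Lemma~\ref{2 lem preimage dense linear} and on the attractor structure from Lemma~\ref{6 lem attractor}, but it runs in the opposite direction: it fixes a ball $B_{3\delta}(x)\subset\TT^2\setminus\Lambda$, uses Lyapunov stability of the attractor to show that every $f$-preimage $x_{-k}$ of $x$ stays $\e$-far from $\Lambda$, observes that $h\big(B_\e(x_{-k})\big)$ then lies in a unit-length piece of the unstable leaf $\mcl^u\big(h(x_{-k})\big)$, and finally selects infinitely many $A$-preimages of $h(x)$ whose unit local unstable leaves are pairwise disjoint --- producing infinitely many pairwise disjoint $\e$-balls in $\TT^2$, a volume (packing) contradiction. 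So the paper's proof is qualitative, using only the stability of $\Lambda$ plus finiteness of volume, whereas yours is a length-contraction argument that exploits the quantitative uniform center contraction on a neighborhood of $\Lambda$ (via the adapted metric of Proposition~\ref{4 prop adapted metric}) together with upper semicontinuity of the fibers $p\mapsto H^{-1}(p)$; both hinge on non-invertibility only through Lemma~\ref{2 lem preimage dense linear}. Your individual steps check out: the fibers are center arcs of uniformly bounded length (Proposition~\ref{2-3 prop foliation on R2}), $F^n\big(H^{-1}(p)\big)=H^{-1}(A^np)$, the fibers over $\tilde z_n$ shrink to the singleton fiber over the limit and hence are eventually trapped near $\Lambda$, and $\ell_0\leqslant Ce^{-\beta n/2}$ gives the contradiction. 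One cosmetic point: your $z_\infty$ should live on the $A$-side --- take $z_\infty=h(q)$ for an injective point $q$, so that $H^{-1}(\tilde z_\infty)$ is genuinely a singleton; as written the notation conflates $q$ with $h(q)$, but such a point exists and the argument is unaffected.
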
  

    Combining Lemma \ref{6 lem attractor} and Lemma \ref{6 lem Ansoov},
    we can get Theorem \ref{main-thm-cu}  immediately.

\begin{proof}[Proof of Theorem \ref{main-thm-cu}]
    By  Lemma \ref{6 lem attractor} and Lemma \ref{6 lem Ansoov}, 
    the $C^{1}$-smooth $cu$-DA $f$ is indeed a special Anosov endomorphism. 
    Then, by the result of \cite[Section 3]{AGGS23}, one has that $f$ is conjugate to $A$ and
    \begin{align}
        \lambda^s(p,f)=\lambda^s(A),\quad \forall\ p\in {\rm Per}(f). 
        \label{eq. 6. s-rigidity}
    \end{align}
    Note that we can also prove \eqref{eq. 6. s-rigidity} directly. 
    Since we already have $\Lambda=\TT^2$, 
    we can apply Lemma \ref{6 lem attractor} again to get that 
    $\lambda^s(p,f)=\lambda^s(q,f)$, for any $p,q\in {\rm Per}(f)$.
    Thus, by the same argument of the proof of Theorem \ref{main-thm-sc}, 
    we also have $\lambda^s(p,f)\equiv\lambda^s(A)$, for any $p\in {\rm Per}(f)$.
\end{proof}

    In \cite{AGGS23}, it is further proved that if  $f$ is a $C^{1+}$-smooth Anosov endomorphism on $\TT^2$,  
    then $f$ is conjugate to its linearization if and only if 
    $f$ has same periodic Lyapunov exponents on stable bundle as the one of its linearization. 
    Combining this result and Theorem \ref{main-thm-cu}, we can complete the proof of Corollary \ref{main-cor-cu}. 
    For convenience, we recall the four items in Corollary \ref{main-cor-cu} as follows:
     \begin{enumerate}
      \item $f$ is semi-conjugate to $A$ via $h:\mtt$;
      \item $f$ is conjugate to $A$ via $h:\mtt$;
      \item $f$ is special;
      \item $f$ is Anosov and $\lambda^s(p,f)=\lambda^s(A)$, for any $p\in {\rm Per}(f)$.
  \end{enumerate}

\begin{proof}[Proof of Corollary \ref{main-cor-cu}]
    By  Theorem \ref{main-thm-scu} and Theorem \ref{main-thm-cu}, 
    we have $(3)\Longrightarrow (1)$ and $(1)\Longrightarrow (4)$. 
    It follows from \cite{AGGS23} that $(4)\Longrightarrow (2)$, where requires that $f$ is $C^{1+}$-smooth. 
    Finally, since $(2)\Longrightarrow (1)$ is trivial, 
    $(2)\Longrightarrow (1)\Longrightarrow (3)$ can be done by Theorem \ref{main-thm-cu}. 
    Finally, by the result of \cite[Sections 4 and 5]{AGGS23}, $h$ is smooth along each leaf of the stable foliation. 
\end{proof}

    Now we prove Lemma \ref{6 lem attractor} and Lemma \ref{6 lem Ansoov}. 
    For proving Lemma \ref{6 lem attractor}, 
    we need to use the following Ruelle inequality for endomorphisms which is given in \cite{Liu03}, and
    here we give an adaptation under the setting of this paper. 
    Let $f$ be a $C^{1}$-smooth $cu$-DA endomorphism and $\mu$ be an ergodic measure of $f$. 
    Then, by \cite[Remark 1.4]{Liu03},
    \begin{align}
        h_\mu(f) \leqslant F_\mu(f)- \min\{0,\lambda^c(\mu,f)\}, 
        \label{eq. 6. shulin}
    \end{align}
    where  $F_\mu(f)$ is the folding entropy of $(f,\mu)$ 
    whose definition and more properties can be found in \cite{Liu03, WZ21}. 
    We note that the folding entropy is invariant under an isomorphism, 
    namely, if $(f_1,\mu_1)$ is isomorphic to $(f_2,\mu_2)$, 
    then $F_{\mu_1}(f_1)=F_{\mu_2}(f_2)$. 

\begin{proof}[Proof of Lemma \ref{6 lem attractor}]
    First, we prove the existence of periodic points in $\Lambda$ with negative center Lyapunov exponents. 
    Let $h:\mtt$ be a semi-conjugacy between $f$ and $A$. 
    Let $\mu=h^*({\rm Leb})$ where Leb is the Lebesgue measure.
    Note that $(f,\mu)$ is isomorphic to $(A,{\rm Leb})$. 
    Indeed, the set 
        \[\Lambda':=\{x\in\TT^2\ |\ \# h^{-1}\circ h(x)>1 \}=\overline{\TT^2\setminus \Lambda},\] 
    has $\mu(\Lambda')={\rm Leb}(h(\Lambda'))=0$, 
    since the intersection of $\Lambda'$ 
    and every center leaf $\mcf^c_f(x)$ is the union of countable close intervals 
    whose image under $h$ is the set of countable points in $\mcl^s(h(x))$. 
    We also mention that the support of $\mu$ has supp$(\mu)=\Lambda$ 
    by the same method in \cite[Section 6]{U12}.

    Note that $(A,{\rm Leb})$ is ergodic, so is $(f,\mu)$ 
    since  $(f,\mu)$ is isomorphic to $(A,{\rm Leb})$. 
    Moreover, \[h_{\mu}(f)=h_{{\rm Leb}}(A) \quand F_{\mu}(f)=F_{\rm Leb}(A).\] 
    Then, by Formula \eqref{eq. 6. shulin}, we can obtain $\lambda^c(\mu,f)<0$. 
    Indeed, if it is false, i.e., $\lambda^c(\mu,f)\geqslant 0$, then 
    \begin{align*}
        \lambda^u(A)=h_{\rm top}(A)=h_{\rm Leb}(A)&=h_{\mu}(f)
        \leqslant F_{\mu}(f)=F_{\rm Leb}(A)={\rm log}\big(|{\rm det}(A)|\big).
    \end{align*}
    This contradicts the fact that $\lambda^u(A)>{\rm log}\big(|{\rm det}(A)|\big)$. 
    Now by Lemma \ref{4 lem measure shadowing}, 
    there exists a sequence of periodic points $p_n\in \Lambda$ 
    such that \[\lim_{n\to +\infty}\lambda^c(p_n,f)=\lambda^c(\mu,f)<0.\] 
    Hence, there exists $p\in {\rm Per}(f|_{\Lambda})$ such that $\lambda^c(p,f)<0$. 
    It follows from Proposition \ref{4 prop s-rigidity} that 
    $\lambda^c(p,f)$ is constant with respect to $p\in {\rm Per}(f|_{\Lambda})$ 
    and we denote this constant by $\lambda<0$.

    Now we are going to prove that $\Lambda$ is a uniformly hyperbolic attractor.  
    In fact, by Proposition \ref{4 prop adapted metric}, 
    there exists a Riemannian metric such that 
        \[ {\rm log}\|Df|_{E_f^c(x)}\|<\frac{\lambda}{2}<0, \quad \forall\ x\in\Lambda. \] 
    This implies that $f|_{\Lambda}$ is uniformly hyperbolic.  
    Moreover, $\Lambda$ is an attractor, 
    since the unstable set of a point $x\in\Lambda$ is exactly the unstable manifold $\mcf^u_f(x)$ 
    and $\Lambda$ is $\mcf^u_f$-saturated by Proposition \ref{2 prop injective point set}.
    This ends the proof of Lemma \ref{6 lem attractor}.
\end{proof}

    Finally, we prove $\Lambda=\TT^2$ which implies that $f$ is a special Anosov endomorphism. 
    Recall that this phenomenon does not happen in the $sc$-DA case, 
    see the counterexamples given in Proposition \ref{5 prop counter-example} 
    where $\Lambda$ is a hyperbolic repeller but a proper subset of $\TT^2$.  
    The following proof is inspired by \cite[Proposition 4.10]{HS21}.
    One can see the generation set 
        \[P(x)=\{y\in\TT^2\ |\ f^k(x)=f^k(y),\ {\rm for}\ {\rm some}\ k\in\NN \},\] 
    as the strong stable ``leaf'' of  $x\in\Lambda$.

\begin{proof}[Proof of Lemma \ref{6 lem Ansoov}]
    Note that we just need to prove $\Lambda=\TT^2$, 
    then by Lemma \ref{6 lem attractor} $f$ is Anosov and $f$ is special 
    since $f|_{\Lambda}$ is special (Proposition \ref{2 prop injective point set}) and $\Lambda=\TT^2$.

    By contradiction, we assume that $\Lambda\neq \TT^2$,
    that is, there is a point $x\in\TT^2$ with $h^{-1}\circ h(x)\neq \{x\}$. 
    It is convenient to assume that 
    $x$ is the center point of the segment $h^{-1}\circ h(x)\subset \mcf^c_f(x)$. 
    Let $\delta>0$ such that the ball of $x$, with radius $3\delta$ in $\TT^2$, has 
    \begin{align}
        B_{3\delta}(x)\cap \Lambda=\emptyset. \label{eq. 6. attrator delta}
    \end{align} 
    By the uniform continuity of $h$, 
    there exists $\e_0>0$ such that for any $y,z\in\TT^2$ if $d(y,z)<\e_0$, 
    then $d\big(h(y),h(z)\big)<{1}/{2}$. 
    And there exists $0<\e<\min\{\delta,\e_0\}$ such that 
    the $\e$-neighborhood of $\Lambda$ has 
    \begin{align}
         f^k\big(B_\e(\Lambda)\big)
        \subset B_\delta(\Lambda),\quad \forall\ k\in\NN,\label{eq. lem6.3.2}
    \end{align}
    since $\Lambda$ is a hyperbolic attractor. 
    It follows that 
    \begin{align}
          f^{-k}\big(B_\delta(x)\big)\cap B_\e(\Lambda)
        =\emptyset, \quad \forall\ k\in\NN. \label{eq. lem6.3.3}
    \end{align}
    Indeed, if there exist $k_0 \in\NN$ and $y_0\in \TT^2$ such that 
    $y_0\in f^{-k_0}\big(B_\delta(z)\big)\cap \big(B_\e(\Lambda)\big)$,
    then 
    \[f^{k_0}(y_0)\in B_\delta(x)\cap f^{k_0}\big(B_\e(\Lambda)\big)
        \subset B_\delta(x)\cap B_\delta(\Lambda).\] 
    This contradicts \eqref{eq. 6. attrator delta}. 
    Hence, for every $k\in\NN$, $x_{-k}\in f^{-k}(x)$, and $x_{-k}'=h(x_{-k})$, we have 
    \begin{align}
        B_\e(x_{-k})\cap \Lambda =\emptyset 
        \quand 
        h\big(B_\e(x_{-k})\big) \subset \mcl^u\big(x_{-k}',1\big), 
        \label{eq. 6. ball not intersect}
    \end{align}
    where $\mcl^u(x_{-k}',1)$ is the local leaf of $\mcl^u(x_{-k}')$ centering at $x_{-k}'$  
    with the length smaller than $1$.

    Note that we can choose infinitely many distinct preimages
    $x'_{-k_i}\ (i\in\NN)$ of $x'$ under $A$ satisfying 
       \begin{align}
             \mcl^u(x'_{-k_i},1)  \cap \mcl^u(x'_{-k_j},1) =\emptyset, \quad  \forall\  i\neq j\in \NN. \label{eq. lem6.3.0}
       \end{align}
    Indeed, we can get these points $x_{-k_i}'$ as follows. 
    Let $x'_{-k_1}=x'_{-1}\in A^{-1}(x')$. 
    There is small size $0<\eta\ll 1$ such that 
    \begin{align}
        \mcl^u(z,1)  \cap \mcl^u(w,1) =\emptyset, \quad \forall\ z,w\in B_{\eta}(x'_{-1})\ {\rm with}\ z\neq w.\label{eq. lem6.3.1}
    \end{align}
    By the denseness of preimage sets (Lemma \ref{2 lem preimage dense linear}), we can take 
    \[x_{-k_2}\in A^{-k_2}(x')\cap \big(B_{\eta}(x'_{-1})\setminus\{{x_{-k_1}}\}\big)\]  
    for some $k_2\in \NN$. By induction,  we can take 
    \[x_{-k_i}\in A^{-k_i}(x')\cap \big(B_{\eta}(x'_{-1})\setminus\{{x_{-k_{1}},\cdots, x_{-k_{i-1}}}\}\big)\] 
    for each $i\geqslant 2$ and some $k_i>k_{i-1}$. 
    By \eqref{eq. lem6.3.1}, these points $x_{-k_i}\ (i\in\NN)$ satisfy \eqref{eq. lem6.3.0}.
    
    Combining \eqref{eq. 6. ball not intersect} and \eqref{eq. lem6.3.0}, we obtain that 
        \[ B_\e(x_{-k_i})\cap B_\e(x_{-k_j}) =\emptyset, \quad  \forall\  i\neq j\in \NN. \]
    This is impossible, since the volume of a ball 
    $ B_\e(x_{-k_i})\ (i\in\NN)$
    with radius $\e$ has a lower bound. 
    
    Thus, $h$ is injective and $\Lambda=\TT^2$. 
    This ends the proof of Lemma \ref{6 lem Ansoov}.
\end{proof}

\begin{rmk}\label{rmk cs no hyper}
     We mention that the method in the proof of Lemma \ref{6 lem Ansoov} is invalid in the $sc$-DA case. In this case, the set $\Lambda$ may not be a repeller, 
     since one can get the information of $Df|_{E_f^s(\Lambda)}$ (see Theorem \ref{main-thm-sc})
     but has no idea about $Df|_{E_f^c(\Lambda)}$. 
     Hence, one cannot get a similar property as \eqref{eq. lem6.3.2}. 
     Furthermore, even if $\Lambda$ is a hyperbolic repeller (e.g., the $\Lambda_{g_0}$ constructed in Proposition \ref{5 prop counter-example}), one can have $B_\e(\Lambda)\subset f^k\big( B_\delta(\Lambda) \big)$ for all $k\in\NN$ instead of \eqref{eq. lem6.3.2}, this still does not imply that $f^{-k}\big(B_\e(\Lambda)\big)\subset B_\delta(\Lambda) $ for all $k\in\NN$. Thus, one cannot get $f^k\big(B_\delta(x)\big)\cap B_\e(\Lambda)=\emptyset$ for all $k\in\NN$ as a replacement of \eqref{eq. lem6.3.3} to show that $\Lambda$ is the whole tours.
\end{rmk}

%\begin{rmk}
   % {\color{black} For $cu$-DA case, there is no counter-example like Proposition \ref{5 prop counter-example}. Let $\Lambda$ (the lifting need $H(\widetilde{\Lambda})=\RR^2$?) is $f^{\pm 1}$-invariant, $\mcf^u_f$-saturated, periodic points dense, $s$-rigidity (hence, $\Lambda$ is a hyperbolic attractor: $Df|_{E^c}$ can be controlled by adapted metric.) These lead to $\Lambda=\TT^2$?? Like \cite[Proposition 4.10]{HS21},  we need semi-conjugacy between $f|_{\Lambda}$ and $A:\mtt$, this can be get by analysis on the invariant hyperbolic set $\Lambda$ by $s$-rigidity implying special and semi-conjugacy on  $\Lambda$ ???}
%\end{rmk}

\section{Applications for the volume-preserving case}\label{sec-volume-preserving}

    In this section, we prove Corollary \ref{main-cor-scvp} and Corollary \ref{main-cor-Jac-rigidity}, 
    and we also give an example of a volume-preserving Anosov endomorphism on the torus 
    whose Jacobian is not constant on periodic points. 

    First, we give the following property on the periodic boundaries. 
    {\color{black}A point $p$ is called a \textit{pre-periodic} point of $f$ if there is $n\in\NN$ such that $f^n(p)\in {\rm Per}(f)$.}
    Let $\Lambda\subset \TT^2$ be given in Proposition \ref{2 prop injective point set}. 
    Recall that the boundaries of sets $\Lambda$ and $\TT^2\setminus \Lambda$ are hyperbolic leaves. 
    Note that $x\in \partial(\TT^2\setminus \Lambda)$ if and only if 
    $h^{-1}\circ h(x)\neq \{x\}$ and $x\in \Lambda$.

\begin{lem}\label{7 lem periodic on buondary}
    Let $f:\mtt$ be a  $C^{1}$-smooth $sc$-DA endomorphism semi-conjugate to its linearization $A:\mtt$ via $h:\mtt$. 
    Let $\Lambda\subset \TT^2$ be given in Proposition \ref{2 prop injective point set} 
    and $\Lambda\neq \TT^2$. 
    If one of the following conditions holds:
    \begin{enumerate}
    \item There exists $\lambda>0$ such that $\lambda^c(p,f)>\lambda$ for any $p\in {\rm Per}(f|_{\Lambda})$.
        \item The non-wondering set $\Omega(f)=\TT^2$.
    \end{enumerate}
    Then each stable leaf of the boundary $\partial(\TT^2\setminus \Lambda)$ admits a unique \textcolor{black}{pre-periodic or periodic} point.
\end{lem}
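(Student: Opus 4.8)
The plan is to analyze the structure of a stable leaf $\ell \subset \partial(\TT^2\setminus\Lambda)$ and show that the dynamics on the "strong stable leaf" $P(x)$ of such a boundary point forces exactly one periodic orbit. First I would recall from Proposition~\ref{2 prop injective point set} that $\Lambda$ is $f^{\pm1}$-invariant and $\mcf^s_f$-saturated, so $\partial(\TT^2\setminus\Lambda)$ is a union of full stable leaves; moreover, by item (4) of that proposition, along each such boundary leaf $\ell = \mcf^s_f(x)$ every point $y$ satisfies $y = y^\sigma$ for a fixed $\sigma \in \{+,-\}$, i.e. $\ell$ is a "one-sided" boundary in a consistent way. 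The key observation is that $h(\ell)$ is a single point $h(x)$ is \emph{not} what happens — rather $h$ maps $\ell$ homeomorphically onto $\tildeL^s(h(x))$ projected down, i.e. onto the stable leaf $\mcl^s(h(x))$ — while the center interval $h^{-1}\circ h(x)$ collapses. The relevant generation/strong-stable set is $P(x) = \{y : f^k(y) = f^k(x) \text{ for some } k\}$, and I would show $\partial(\TT^2\setminus\Lambda)$ restricted near $x$ is organized by the stable foliation together with $P(\cdot)$, so that the return dynamics on $\ell$ is conjugate (via $h$) to the return dynamics of $A$ on a stable leaf through a point whose center interval under $h^{-1}$ is nondegenerate.

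Next I would use the hypothesis. The set $D := \{z \in \Lambda : h^{-1}\circ h(z) \neq \{z\}\} = \partial(\TT^2\setminus\Lambda)$ is $f^{\pm1}$-invariant and closed, and the stable leaves comprising it are permuted by $f$. I claim a stable leaf $\ell \subset D$ must be periodic, i.e. $f^m(\ell) = \ell$ for some $m$: this follows because $h(\ell)$ is contained in the countable set of stable leaves $\mcl^s(h(z))$ with $h^{-1}(h(z))$ nondegenerate (there are only countably many such collapsed center intervals, as they are disjoint intervals of positive length inside center leaves of finite total length per fundamental domain), $A$ permutes these leaves, and the corresponding lengths/combinatorics force periodicity — alternatively, invoke that $A$-preimage sets are exponentially dense (Lemma~\ref{2 lem preimage dense linear}) to pin down that a collapsed interval of maximal (or any fixed) length returns. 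On the periodic stable leaf $\ell$, let $g = f^m|_\ell$; since $\ell$ is a stable leaf, $g$ is a contraction in the $\mcf^s_f$-direction \emph{for the forward dynamics along $\ell$ as an immersed line}, but as a self-map of the leaf $\ell \cong \RR$ it is a diffeomorphism conjugate via $h$ to $A^m|_{\mcl^s(h(x))}$, which is an affine contraction of $\RR$; such a map has a unique fixed point. Pulling back, $g$ has a unique fixed point on $\ell$, which is the unique periodic point of $f$ on $\ell$. The existence of \emph{at least one} periodic point on each $\ell \subset D$ is then guaranteed by either hypothesis: under (1), $\Omega(f) = \TT^2$ together with the fact that the collapsed endpoints are recurrent forces a periodic point in $h^{-1}$ of a periodic point of $A$ on $\mcl^s(h(x))$ (which exists since $\mcl^s$ is $A$-minimal, hence $\mcl^s(h(x))$ contains periodic points of $A$ densely); under (2), the uniform expansion $\lambda^c > \lambda > 0$ on $\Lambda$ makes the center direction uniformly repelling, so the stable leaf $\ell$ together with $P$ gives a hyperbolic-like structure and the shadowing argument from Section~\ref{sec-small-LyaExp} (specifically the construction around Claim~\ref{4 claim preimage dense} and the $A$-periodic shadows on $\mcl^s$) produces the periodic point.

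Putting it together: each stable leaf $\ell$ of $\partial(\TT^2\setminus\Lambda)$ is periodic for $f$; on it $f^m$ is topologically conjugate (by $h|_\ell$) to an affine contraction of $\RR$, which has a unique fixed point; hence $f$ has exactly one periodic point on $\ell$. The main obstacle I anticipate is the \textbf{existence} part — showing that $\ell$ carries \emph{at least one} periodic orbit — rather than uniqueness. Uniqueness is essentially the dynamics of an affine contraction of the line via the semi-conjugacy $h|_\ell$ (homeomorphism onto a stable leaf of $A$), which is routine. For existence, one must genuinely use the hypotheses: in case (2), closing up an orbit segment requires a shadowing/closing lemma along the non-invertible direction, and one must be careful that the periodic point one produces actually lies on the prescribed leaf $\ell$ and in $\Lambda$ — this is where item (4) of Proposition~\ref{2 prop injective point set} (consistency of $\sigma$ along $\ell$) and Lemma~\ref{2 lem Cantor property} (Cantor-like structure of $h$ along center leaves, so that $h^{-1}$ of a periodic point of $A$ has periodic endpoints) do the work. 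In case (1), the subtlety is translating $\Omega(f) = \TT^2$ into a recurrence statement for the boundary leaves; I would argue that a point of $\partial(\TT^2\setminus\Lambda)$ whose orbit is non-wandering, combined with the local product structure of $\mcf^{s}_f$ and the center foliation near $\Lambda$, yields that its stable leaf is periodic and carries a periodic point, again via $h$ and the density of $A$-periodic points on stable leaves.
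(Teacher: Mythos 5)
Your endgame is right and matches the paper's: once a boundary stable leaf $\ell\subset\partial(\TT^2\setminus\Lambda)$ is known to satisfy $f^m(\ell)=\ell$, the map $f^m|_\ell$ is a contraction (equivalently, is conjugate via $h|_\ell$ to an affine contraction of $\tildeL^s$), so it has exactly one fixed point --- this gives existence \emph{and} uniqueness at once. But you have misidentified where the difficulty lies. The whole content of the lemma, and the only place the hypotheses (1) and (2) enter, is in proving that $\ell$ is a periodic leaf, and your argument for that step does not work. You argue that the boundary leaves form a countable $A$-invariant family and that ``lengths/combinatorics force periodicity.'' A self-map of a countable collection of leaves has no reason to admit a periodic element (think of a shift on $\ZZ$), there need not be a collapsed interval of maximal length, and nothing in Lemma~\ref{2 lem preimage dense linear} pins down a return of a \emph{specific} leaf to itself. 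Tellingly, your periodicity argument uses neither $\Omega(f)=\TT^2$ nor the center exponent bound; if it were correct, the lemma would hold with no hypothesis at all, which is not what the statement (or the rest of Section~\ref{sec-conservative}) suggests.

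For comparison, the paper forces the return of $\ell$ as follows. Under hypothesis (2), the bound $\lambda^c(p,f)>\lambda$ on ${\rm Per}(f|_\Lambda)$ is upgraded (via Lemma~\ref{4 lem measure shadowing} and the adapted metric of Proposition~\ref{4 prop adapted metric}) to uniform center expansion on a neighborhood of $\Lambda$ of size $\eta$; taking a center segment $J$ emanating from a boundary point into $\TT^2\setminus\Lambda$ and saturating by $\mcf^s_f$, the lengths $|f^k(J)|$ stay bounded below, so the sets $f^k(\mcf^s_f(J))$ have volume bounded below; if $\ell$ were not periodic these sets would be pairwise disjoint, contradicting finiteness of the volume of $\TT^2$. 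Under hypothesis (1), one takes a stable foliation box $U_I$ of a center segment $I$ at the boundary point, uses $\Omega(f)=\TT^2$ to get a return $f^k(x)\in U_I$, and then uses the $\mcf^s_f$-saturation and invariance of $\Lambda$ to conclude $f^k(\ell)=\ell$. Your later paragraphs about shadowing and closing lemmas are aimed at producing a periodic point on a leaf whose periodicity you have not established, which is the wrong order of difficulty; the shadowing machinery is not needed once the leaf returns to itself.
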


\begin{proof}
    By Theorem \ref{main-thm-sc}, $f$ is special and admits an invariant center bundle $E^c_f$ which is integrable to foliation $\mcf^c_F$. Without loss of generality, let $f$ be orientation-preserving when restricted on $\mcf^c_F$.
   
    We consider the first condition.  Let $\lambda>0$ such that $\lambda^c(p,f)>\lambda$ for all $p\in {\rm Per}(f|_{\Lambda})$.

\begin{claim}\label{7 claim uniform size of unstable leaf}
    There exists $\eta>0$ such that for any $x\in\Lambda$,  
    the length of any component $I_{k}$ of $f^{-k}\big(\mcf^c_f(x,\eta)\big)$  goes to $0$, 
    i.e., \[ |I_{k}|\to 0,\quad {\rm as}\ k\to +\infty.\]
    %where $f^k(I_k)=\mcf^c_f(x,\eta)$, 
    In particular, the center leaf of a point in $\Lambda$ with size $\eta$ is in fact an unstable manifold.
\end{claim}
\begin{proof}[Proof of Claim \ref{7 claim uniform size of unstable leaf}]
    According to Lemma \ref{4 lem measure shadowing}, for every ergodic measure $\mu$ supported on $\Lambda$, 
    one has $\lambda^c(\mu,f)\geqslant \lambda$. 
    By the same construction of the adapted metric in Proposition \ref{4 prop adapted metric},
    there exists a Riemannian metric such that  
        \[ {\rm log}\|Df|_{E_f^c(x)}\|>\frac{\lambda}{2}>0, \quad \forall\ x\in\Lambda. \]
    Then there exists $\eta>0$ such that 
    for any $x\in\Lambda$ and any $y\in \mcf^c_f(x,\eta)$, one has 
        \[ {\rm log}\|Df|_{E_f^c(y)} \|>\frac{\lambda}{4}>0.  \]
    Recall that the set $\Lambda$ is $f^{\pm}$-invariant.
    Hence, the constant $\eta$ is the size of unstable manifolds.
\end{proof}

    Let $a=a^-\in \partial(\TT^2\setminus\Lambda)$ 
    and $\eta>0$ be given in Claim \ref{7 claim uniform size of unstable leaf}. 
    Let $J$ be an open interval on $\mcf^c_f(a,\eta)$ such that 
    it has an endpoint $a^-$ and $J\subset \TT^2\setminus\Lambda$. 
    Considering the stable manifold of $J$,
        \[ \mcf^s_f(J)=\bigcup_{x\in J}\mcf^s_f(x)\subset \TT^2\setminus\Lambda,\]
    there exists $C_0>0$ such that for every $k\in\NN$
    the volume of the set $f^k\big(\mcf^s_f(J)\big)$ is bigger than $C_0$. Indeed, on the one hand, the length of curve $f^k(J)$ has a lower bound by  Claim \ref{7 claim uniform size of unstable leaf}.  
    On the other hand,  the stable bundle $E^s_f$ is uniformly transverse to the center bundle $E^c_f$. 
    Thus, we can take a curve $J_k\subset f^k(J)$ with uniform length and a stable foliation box of $J_k$ with uniform size. 
    Then the volume of this foliation box gives the lower bound $C_0$.
    %\[{\rm Vol}\big( f^k\big(\mcf^s_f(J)\big)\big)\geqslant C_0.\]
    %by Claim \ref{7 claim uniform size of unstable leaf}.  

    %When $\mcf^s_f(a)$ is a pre-periodic leaf, there will be $k_0$ with $f^{k_0}\big(\mcf^s_f(a)\big)$ being a periodic leaf, hence $f^{k_0}\big(\mcf^s_f(a)\big)$ admit a unique periodic point.
    \textcolor{black}{If $\mcf^s_f(a)$ is neither a pre-periodic leaf nor a periodic leaf,}
    then one has
    \begin{align}
      f^{i}\big(\mcf^s_f(J)\big)\cap f^{j}\big(\mcf^s_f(J)\big)=
        \emptyset,\quad \forall\  i\neq j\in\NN. \label{eq. notintersect}  
    \end{align}
    Indeed, since $f$ preserves the orientation of $\mcf^c_f$, the point
    $$f^i(a)=f^i(a^-)=(f^i(a))^-$$  
    is the negative endpoint of the curve $h^{-1}\circ h (f^i(a))$, for all $i\in \NN$.  
    If \eqref{eq. notintersect} is not true, that is, there are $i_0\neq j_0\in \NN$ such that 
    $$  
    f^{i_0}\big(\mcf^s_f(J)\big)\cap f^{j_0}\big(\mcf^s_f(J)\big)\neq \emptyset,
    $$ 
    then we have 
    $$
    f^{i_0}\big(\mcf^s_f(a)\big)\cap f^{j_0}\big(\mcf^s_f(J)\big)\neq\emptyset\ \ {\rm or}\ \ f^{j_0}\big(\mcf^s_f(a)\big)\cap f^{i_0}\big(\mcf^s_f(J)\big)\neq\emptyset.
    $$
    This contradicts the fact that 
    $$
    f^i\big(\mcf^s_f(a)\big)\cap f^j\big(\mcf^s_f(J)\big)=\emptyset, \quad \forall\  i\neq j\in\NN,
    $$ 
    since the sets $\Lambda$ (including $\mcf^s_f(a)$) and $\TT^2\setminus\Lambda$ (including $\mcf^s_f(J)$) are $f^{\pm}$-invariant. 

    Combining \eqref{eq. notintersect} and the fact that the volume of $f^k\big(\mcf^s_f(J)\big)$ has lower boundary $C_0$, 
    the volume of the union of sets $f^k\big(\mcf^s_f(J)\big)$ is infinite. However, this is impossible. 
    \textcolor{black}{Thus, $\mcf^s_f(a)$ is a pre-periodic leaf or periodic leaf and admits a unique pre-periodic or periodic point.}

    Now, we consider the other case. Let $\Omega(f)=\TT^2$. 
    In this case, the proof is inspired by \cite[Proposition A.5]{HHU08}. 
    Let $a=a^-\in \partial(\TT^2\setminus\Lambda)$. 
    Then there exists a close interval $I$ on $\mcf^c_f(a)$ with 
    $I\setminus\{a\}\subset \TT^2\setminus\Lambda$. 
    Take a stable foliation box $U_I$ of $I$. 
    Note that $U_I\subset \TT^2\setminus\Lambda$ is an open set in $\TT^2$, 
    there are $x\in U_I$ and $k\in\NN$ such that $f^k(x)\in U_I$. 
    Let $y$ be the unique intersection point of local leaves $\mcf^s_f(a)$ and $\mcf^c_f(x)$. 
    Then one has $y\in \Lambda$, since $\Lambda$ is $\mcf^s_f$-saturated. 

    We claim $f^k(y)\in \mcf^s_f(a)$. 
    Indeed,  one has $f^k(y) \notin U_I$ as $\Lambda$ is invariant. 
    If $f^k(y)\notin \mcf^s_f(a)$, 
    then the interval $ f^k(I\setminus\{a\})$ on the leaf $\mcf^c_f(f^k(x))$ 
    intersects with $\mcf^s_f(a)$ at least one point $z\in \Lambda$. 
    However, this is impossible, since $z\in f^k(I\setminus\{a\})$ and $f^k(I\setminus\{a\})\cap \Lambda=\emptyset$. 
    So one has $y, f^k(y)\in \mcf^s_f(a)$.
    Finally, by the contraction of stable leaf, there exists exactly one periodic point on $\mcf^s_f(a)$.
\end{proof}

\begin{rmk}
    Note that the center leaf in Claim \ref{4 claim topological contracting leaf} 
    is just a half leaf of a boundary point, 
    so we cannot apply it to the proof of Lemma \ref{7 lem periodic on buondary} directly.
\end{rmk}

\begin{prop}\label{7 prop}
    Let $f:\mtt$ be a $C^{1}$-smooth $sc$-DA endomorphism semi-conjugate to its linearization $A:\mtt$. 
    If $f$ satisfies one of the following conditions:
    \begin{enumerate}
    \item There exists $\lambda>0$ 
            such that $\lambda^c(p,f)>\lambda$ for any $p\in {\rm Per}(f)$.
        \item $f$ is volume-preserving.
    \end{enumerate}
    Then $f$ is a special Anosov endomorphism and $\lambda^s(p,f)=\lambda^s(A)$, for any $p\in {\rm Per}(f)$.
\end{prop}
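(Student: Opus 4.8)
The plan is to reduce the statement to showing that $\Lambda\subset\TT^2$, the closure of the $h$-injective set, is all of $\TT^2$, and then to promote this to uniform expansion of the center bundle. Since $f$ is $sc$-DA and semi-conjugate to $A$, Theorem \ref{5 thm} already yields that $f$ is special, that $\overline{\mathrm{Per}(f|_\Lambda)}=\Lambda$, and that $\lambda^s(p,f)=\lambda^s(A)$ for every $p\in\mathrm{Per}(f|_\Lambda)$; moreover, by Propositions \ref{2 prop injective point set} and \ref{2-3 prop foliation on R2}, $\Lambda$ is $f^{\pm1}$-invariant, $\mcf^s_f$-saturated, and $\partial\Lambda$ is a union of full stable leaves. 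The key observation is that $\Lambda=\TT^2$ is equivalent to $h$ being injective (the interior of any nontrivial fibre $h^{-1}(h(x))$, which is a center segment, misses $\Lambda$), hence to $h$ being a topological conjugacy, as $h$ is a continuous bijection of the compact space $\TT^2$. So once $\Lambda=\TT^2$ we automatically have $\mathrm{Per}(f)=\mathrm{Per}(f|_\Lambda)$ with the stable periodic data matched, and only the uniform expansion of $E^c_f$ remains.

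To see $\Lambda=\TT^2$, I would argue by contradiction, assuming $U:=\TT^2\setminus\Lambda\neq\emptyset$; it is open, $f$-invariant and $\mcf^s_f$-saturated. Each of the three hypotheses supplies the input of Lemma \ref{7 lem periodic on buondary}: transitivity and, by Poincar\'e recurrence, volume-preservation both give $\Omega(f)=\TT^2$, while hypothesis (3) is precisely condition (2) of that lemma since $\mathrm{Per}(f|_\Lambda)\subset\mathrm{Per}(f)$. Hence every stable leaf in $\partial U$ carries a unique periodic point. Choosing such a $p$ of period $\pi$, it is an endpoint of a nontrivial compact center segment $I_p=h^{-1}(h(p))$ with $\mathrm{int}(I_p)\subset U$; since $h\circ f=A\circ h$ is constant on $I_p$ we get $f^\pi(I_p)\subseteq I_p$ and $f^\pi(p)=p$. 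Replacing $\pi$ by a multiple so that $f^\pi$ preserves the orientation of $\mcf^c_f$, the map $\phi:=f^\pi|_{I_p}$ is an orientation-preserving injection of the segment $I_p$ into itself fixing the endpoint $p$. Passing to the nested intersection $K=\bigcap_{n\geq0}\phi^n(I_p)$ and applying the intermediate value theorem to $\phi(t)-t$ between adjacent fixed points, one extracts a periodic point $q$ (namely $p^+$, or an interior fixed point of $\phi$, or $p$ itself when $K=\{p\}$) at which $\phi$ is attracting from at least one side, so that $\lambda^c(q,f)\leq0$; together with $\lambda^s(q,f)<0$ this makes $q$ a sink, or at least produces a forward-invariant open set whose iterates shrink onto $q$. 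This contradicts hypothesis (3) directly, contradicts transitivity (an attracting basin is open, forward-invariant and absorbing, so it precludes a dense orbit), and contradicts volume-preservation (such a basin has positive Lebesgue measure and consists of non-recurrent points). Therefore $\Lambda=\TT^2$.

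Finally, with $\Lambda=\TT^2$, Lemma \ref{4 lem measure shadowing} applies to every ergodic measure $\mu$ of $f$: $\mu$ is a weak-star limit of periodic measures, so $\lambda^c(\mu,f)\in[\lambda_-,\lambda_+]$, where $\lambda_\pm$ are the extreme periodic center exponents. A negative $\lambda_-$ would, by Proposition \ref{4 prop s-rigidity}, force all periodic center exponents to coincide with one negative value, making every periodic point a sink, which the dichotomy above excludes under each hypothesis; the borderline case $\lambda_-=0$ is ruled out the same way, since a neutral-center periodic point again yields a one-sided attracting or wandering open set. Hence $\lambda_->0$, so $\lambda^c(\mu,f)\geq\lambda_-$ for all ergodic $\mu$, and the adapted-metric construction of Proposition \ref{4 prop adapted metric}, carried out for $E^c_f$, produces a Riemannian metric in which $\log\|Df|_{E^c_f(x)}\|>0$ uniformly; thus $E^c_f$ is uniformly expanding and $f$ is a special Anosov endomorphism, with $\lambda^s(p,f)=\lambda^s(A)$ for all $p\in\mathrm{Per}(f)$ by the matched stable data. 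I expect the main obstacle to be the second step: making the one-dimensional analysis of $\phi$ on a collapsing center fibre precise enough to pin down a genuinely attracting or wandering periodic object, and disposing cleanly of the neutral-center borderline case, which is exactly where the three dynamical hypotheses must be invoked in their three different ways.
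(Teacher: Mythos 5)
Your reduction to showing $\Lambda=\TT^2$, and your use of Lemma \ref{7 lem periodic on buondary} to place a periodic point $p$ on a boundary stable leaf, match the paper's strategy exactly. Where you diverge is in extracting the contradiction from the periodic collapsed segment $\bar I_p$: the paper runs three separate arguments (for transitivity, two stable-saturated open sets on opposite sides of $I_p$ whose forward iterates can never meet; for volume preservation, the component of $f^{-1}$ of a stable foliation box of $\bar I_p$ has strictly larger volume than the box, contradicting $\mathrm{Leb}(f^{-1}(U))=\mathrm{Leb}(U)$; for hypothesis (3), infinitely many interior fixed points of $f|_{\bar I_p}$ accumulating at a point where the center derivative must equal $1$), whereas you unify the three cases by producing one periodic point $q\in\bar I_p$ attracting a one-sided center neighborhood, hence with $\lambda^c(q,f)\leqslant 0$, and then feeding its stable-saturated one-sided basin into each hypothesis. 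Modulo routine details (the basin, saturated by local stable leaves and unioned over the period, is open, forward-invariant, non-dense, and consists a.e.\ of non-recurrent points), this works and is arguably cleaner than the paper's case analysis.

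The genuine gap is in your final paragraph. Propositions \ref{4 prop s-rigidity} and \ref{4 prop adapted metric} concern the bundle $E_1$, which in the $sc$-DA case is the \emph{stable} bundle, not $E^c_f$; their proofs rely on $h$ being a homeomorphism from $\mcf_1$-leaves onto $\tildeL^s$-leaves, which fails for the center foliation of an $sc$-DA map, so you cannot invoke them to equalize periodic \emph{center} exponents. Worse, the claim that a neutral-center periodic point ``again yields a one-sided attracting or wandering open set'' is false (the model $t\mapsto t+t^3$ has derivative $1$ at $0$ yet repels both sides), so the borderline case $\lambda_-=0$ is not excluded; and even knowing every periodic center exponent is positive would not give the uniform lower bound that the adapted-metric construction requires. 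The workable route, implicit in the paper via Claim \ref{7 claim uniform size of unstable leaf}, is to verify a uniform positive lower bound for $\lambda^c$ on ${\rm Per}(f)$ after $\Lambda=\TT^2$ is established: in case (3) this is the hypothesis, and in case (2) volume preservation forces $|\det Df|\geqslant 1$ everywhere (since the reciprocals of the Jacobians over each fibre sum to $1$), whence $\lambda^c(p,f)\geqslant-\lambda^s(p,f)=-\lambda^s(A)>0$ uniformly once the stable rigidity of Theorem \ref{5 thm} is in hand; your proposal does not close the transitive case, which needs a separate argument.
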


\begin{proof}
    First, we prove that 
    the set $\Lambda$ given in Theorem \ref{main-thm-sc} (see also Proposition \ref{2 prop injective point set}) is in fact $\TT^2$. 
    
    Note that when $f$ is volume-preserving, the non-wondering set of the $sc$-DA will be $\TT^2$. 
    So, by Lemma \ref{7 lem periodic on buondary}, both conditions imply that each stable leaf of the boundary $\partial(\TT^2\setminus \Lambda)$ admits \textcolor{black}{a unique pre-periodic or periodic point.} 
    For convenience, let $p^+\in \partial(\TT^2\setminus \Lambda)$ be fixed.
    %We assume that $f$ preserves the orientation of the foliations $\mcf^s_f$ and $\mcf^c_f$. 
    Then $p^-\neq p^+$ is the other endpoint of $h^{-1}\circ h(p^+)$. 
    Denote by $I_p\subset \TT^2\setminus \Lambda$ the open interval on $\mcf^c_f(p^+)$ with endpoints $p^\pm$,  
    then we have $f(\Bar{I}_p)=\bar{I}_p$. Now we deal with these two cases, respectively.

    \begin{enumerate}
        \item Let $f$ satisfy the first condition, namely, 
        there exists $\lambda>0$ such that $\lambda^c(p,f)>\lambda$ for any $p\in {\rm Per}(f)$.  
        Note that $f:\Bar{I}_p\to \Bar{I}_p$ is a $C^1$-smooth orientation-preserving map with 
            \[{\rm log}\|Df|_{E^c_f(p^\pm)}\|> \lambda>0.\]
        This implies that there exists a fixed point $z_1\in I_p$. 
        By ${\rm log}\|Df|_{E^c_f(z_1)}\|> \lambda>0$, 
        there exists another fixed point $z_2\in I_p$ between $p^-$ and $z_1$. 
        Repeating this, we get infinitely many fixed points $z_n$ on $I_p$ with an accumulation point $z\in \Bar{I}_p$. 
        Thus, $\|Df|_{E^c_f(z)}\|=1$ by the definition of derivative.
        It contradicts the assumption ${\rm log}\|Df|_{E^c_f(z)}\|> \lambda>0$. 
        
        \item  Let $f$ be volume-preserving. 
        Consider the open interval $I_p\subset \TT^2\setminus \Lambda$, let 
            \[U=\bigcup_{x\in \Bar{I}_p}\mcf^s_f(x,\e)\]
        be a stable foliation box of $\bar{I}_p$ with the volume $C_\e$.  
        Then, by the contraction of stable leaves, the component of $f^{-1}(U)$ containing $I_p$ has the volume bigger than $C_\e$. This contradicts \textcolor{black}{the assumption of the volume-preserving property of $f$.}
    \end{enumerate}
    Hence, we get that $\Lambda=\TT^2$. In particular, $f$ is indeed conjugate to $A$. 
    By Theorem  \ref{main-thm-sc}, $f$ is a special $sc$-DA endomorphism and $\lambda^s(p,f)=\lambda^s(A)$, for any $p\in {\rm Per}(f)$. 
    
    Next, we prove that $f$ is Anosov. 
    
    Note that if $f$ satisfies the first condition, namely, the infimum of the periodic center Lyapunov exponents is strictly bigger than $0$, one can choose a Riemannian metric such that $f$ is expanding on $E^c_f$ by a similar argument of Proposition \ref{4 prop adapted metric}. Notice that $f$ has the shadowing property by inheriting from $A$ via the conjugacy, this situation is actually easier than that of the non-uniformly shadowing property (Lemma \ref{4 lem measure shadowing}) which we used in the proof of Proposition \ref{4 prop adapted metric}.
    Thus, the first condition further implies that $f$ is Anosov.
    
{\color{black}
    The rest of the proof is to show that if $f$ preserves a volume measure $m$, then $f$ is also Anosov. Let $\rho:\TT^2\to \RR_+$ be the density of $m$, that is, $dm(x)=\rho(x)d{\rm Leb}(x)$. 
    Let $\lambda^s(A)=-\mu<0$. Recall that for any $p\in {\rm Per}(f)$, $\lambda^s(p,f)=\lambda^s(A)=-\mu$. 
    It suffices to prove 
    \begin{align}
       \lambda^c(p,f)\geqslant \mu>0,\quad  
        \forall\ p\in {\rm Per}(f).  \label{eq. c-exp positive}
    \end{align}
    So we will reduce the volume-preserving condition to the first case and conclude that $f$ is Anosov. 

    For any point $x\in\TT^2$,  let $B_\e(p)$ be the open ball centered at $p$ with small radius $\e>0$. 
    Then for each periodic point $p\in {\rm Per}(f)$ of period $k$, one has
    \begin{align*}
        m\big(f^k(B_\e(p)) \big)&=\int_{f^k(B_\e(p))}\rho(y)d{\rm Leb}(y)\\
        &=\int_{B_\e(p)} {\rm Jac}(Df^k(x))\frac{\rho(f^k(x))}{\rho(x)}dm(x)={\rm Jac}(Df^k(x_\e))\frac{\rho(f^k(x_\e))}{\rho(x_\e)}m\big(B_\e(p)\big),
    \end{align*}
    for some $x_\e\in B_\e(p)$. Since $f$ is volume-preserving,  
    $$  
    m\big(B_\e(p)\big)\leqslant m\big(f^{-k}\circ f^{k}(B_\e(p))\big)=m\big(f^{k}(B_\e(p))\big).
    $$ 
    Hence, ${\rm Jac}(Df^k(x_\e))\frac{\rho(f^k(x_\e))}{\rho(x_\e)}\geqslant 1$. 
    Let $\e\to 0$, then $x_\e\to p$ and ${\rm Jac}(Df^k(p))\geqslant 1$. 
    Namely, $\lambda^s(p,f)+\lambda^c(p,f)\geqslant 0$. 
    This gives us \eqref{eq. c-exp positive}. 
    Thus, $f$ is Anosov when it is volume-preserving.}
\end{proof}

    Now we can obtain Corollary \ref{main-cor-scvp} from Proposition \ref{7 prop} immediately.
\begin{proof}[Proof of Corollary \ref{main-cor-scvp}]
    The proof is similar to the proof of Corollary \ref{main-cor-cu}. In fact, we just need to replace Theorem \ref{main-thm-cu} by Proposition \ref{7 prop}.
\end{proof}

    Now we prove Corollary \ref{main-cor-Jac-rigidity}.
    It should be mentioned here that though 
    the constant periodic Jacobian data imply the existence of an invariant volume measure 
    for toral Anosov endomorphisms, 
    we cannot use this result under the assumption of Corollary \ref{main-cor-Jac-rigidity} 
    where $f$ is not \emph{a priori} Anosov.
    This indicates the important role of Proposition \ref{7 prop} as well.

\begin{proof}[Proof of Corollary \ref{main-cor-Jac-rigidity}]
    Note that for any $n\in\NN$ and $p\in{\rm Fix}(f^n)$, one has
        $${\rm Jac}(Df^n(p))=|{\rm det}(A)|^n,$$
    and $Df|_{E^s_f}$ is uniformly contracting, 
    so there exists $\lambda>0$ with $\lambda^c(p,f)>\lambda$. 
    Then, by Proposition \ref{7 prop}, $f$ is a special Anosov endomorphism. 
    Thus, according to \cite[Theorem 1.3]{GS22}, $f$ is smoothly conjugate to $A$ with the regularity we desired. 
    It follows from the smooth conjugacy that $f$ inherits the volume-preserving property from $A$.
\end{proof}

    Finally, we give a volume-preserving Anosov endomorphism which has no Jacobian rigidity.

{\color{black}   
\begin{example}\label{7 example}
    A volume-preserving Anosov endomorphism $f:\TT^2\to \TT^2$ without Jacobian rigidity.
\end{example}   

We start with a linear hyperbolic endomorphism. Let
    \[
    A = \begin{bmatrix} 3 & 1 \\ 1 & 1 \end{bmatrix}: \mathbb{T}^2 \to \mathbb{T}^2.
    \]

It is clear that $\deg(A) = \det(A) = 2$ and $A$ has a unique fixed point $O_1 = (0,0)^T$. 
Then $A^{-1}(O_1)$ consists of two points $O_1$ and $O_2 = (\frac{1}{2}, \frac{1}{2})^T$. 
Let $U_1$ and $U_2$ be two neighborhoods of points $O_1$ and $O_2$, respectively, such that
    \[
    A(U_1) = A(U_2) = U_0,
    \]
where $U_0$ is a small neighborhood of $O_1$ such that $A|_{U_1}$ and $A|_{U_2}$ are bijections onto $U_0$ 
and there exists a periodic orbit of $A$, say $\operatorname{Orb}_A(p)$, disjoint from $U_i$, for all $i = 0,1,2$. 

Let $v_u$ be the unit eigenvector corresponding to the eigenvalue $\lambda_u = 2 + \sqrt{2}$. 
Let $\eta: U_0 \to \mathbb{R}$ be a bump function on $U_0$ such that $\eta(O_1) = 0$ and $\nabla\eta(O_1) = v_u \neq 0$. 
We define a vector field $X: U_0 \to \mathbb{R}^2$ strictly aligned with the unstable manifold of $A$:
\[
    X(x) = \lambda_u \eta(x)v_u.
\]
Note that $O_1$ is a singularity of $X$, and its divergence is given by 
$\operatorname{div}X(x) = \lambda_u (\nabla\eta(x))^T v_u$. 
Then we perturb the linear inverse branches of $A$ with $B_i=(A|_{U_i})^{-1}:U_0\to U_i$ for $i=1,2$,
using the induced maps of $X$. 
Specifically, we consider two maps
\[
    g_i: U_0 \to U_i, \quad g_i(x) = B_i\left(x + (-1)^{i-1} \cdot \varepsilon \cdot X(x)\right),
\]
where $\varepsilon > 0$ is small and $i = 1,2$. Let $f: \mathbb{T}^2 \to \mathbb{T}^2$ be such that
\[
\begin{cases}
    g_1 \circ f(x) = x, & x \in U_1, \\
    g_2 \circ f(x) = x, & x \in U_2, \\
    f(x) = A(x),        & x \notin U_1 \cup U_2.
\end{cases}
\]

When $\varepsilon > 0$ is small enough,
$f$ is a smooth Anosov endomorphism with a $Df$-invariant unstable bundle $E_f^u = E_A^u$. 
Note that $f$ is topologically conjugate to $A$, while there is no smooth conjugacy between $f$ and $A$.
To the end, we show that $f$ is volume-preserving but has no Jacobian rigidity. 

By Sylvester's determinant identity for rank-1 updates (i.e., $\det(I + uv^T) = 1 + v^T u$), 
the Jacobian of $g_i$ can be explicitly computed:
Using the chain rule on $g_i(x)$, we have
\begin{align*}
    Dg_i(x) 
    &= DB_i\left(x + (-1)^{i-1} \cdot \varepsilon \cdot X(x)\right) \cdot \left( I + (-1)^{i-1} \varepsilon DX(x) \right) \\
    &= A^{-1} \cdot \left( I + (-1)^{i-1} \varepsilon \lambda_u v_u (\nabla\eta(x))^T \right),
\end{align*}
where $A$ is regarded as a matrix, then, taking the determinant of both sides, we can get
\begin{align*}
\det(Dg_i(x)) &= \det(A^{-1}) \cdot \det\left( I + (-1)^{i-1} \varepsilon \lambda_u v_u (\nabla\eta(x))^T \right) \\
&= \frac{1}{2} \left( 1 + (-1)^{i-1} \varepsilon \lambda_u (\nabla\eta(x))^T v_u \right).
\end{align*}
Note that the divergence of the vector field is exactly $\operatorname{div}X(x) = \operatorname{Tr}(DX(x)) = \lambda_u (\nabla\eta(x))^T v_u$, the expression simplifies to
\[
    \det(Dg_i(x)) = \frac{1}{2}\big(1 + (-1)^{i-1}\varepsilon \operatorname{div}X(x)\big).
\]

Moreover, by $\operatorname{div}X(O_1) = \lambda_u \neq 0$, we obtain
\[
\det(Df(O_1)) = \frac{1}{\det Dg_1(O_1)} = \frac{2}{1 + \varepsilon \operatorname{div}X(O_1)} \neq 2,
\]
while we have $g_1(O_1) = B_1(O_1) = O_1$ and $f(O_1) = O_1$ by $X(O_1) = 0$.
However, since $\operatorname{Orb}_A(p)$ is disjoint from the perturbation regions, 
we have $p \in \operatorname{Per}(f)$ and $\det(Df|_{\operatorname{Orb}_f(p)}) \equiv \det(A) = 2$. 
Thus, $f$ has different period-averaged Jacobians at $O_1$ and $p$. 

Furthermore, for any point $x \in \mathbb{T}^2$, let $f^{-1}(x) = \{x_1, x_2\}$. 
On the one hand, for $x \notin U_0$, 
\begin{equation}\label{eq. example 1}
    \frac{1}{\det(Df(x_1))} + \frac{1}{\det(Df(x_2))} = \frac{1}{2} + \frac{1}{2} = 1. 
\end{equation}
On the other hand, for $x \in U_0$, 
\begin{equation}\label{eq. example 2}
\begin{aligned}
\frac{1}{\det(Df(x_1))} + \frac{1}{\det(Df(x_2))} 
&= \det(Dg_1(x)) + \det(Dg_2(x)) \\
&= \frac{1}{2}(1 + \varepsilon \operatorname{div}X(x)) + \frac{1}{2}(1 - \varepsilon \operatorname{div}X(x)) = 1.
\end{aligned} 
\end{equation}
Hence, by \eqref{eq. example 1} and \eqref{eq. example 2}, $f$ is volume-preserving.
}

\section*{Acknowledgement}
	The authors would like to thank Yi Shi for suggesting this rigidity problem, 
	which is motivated by a question of J\'{e}r\^{o}me Buzzi 
	at the conference of Beyond Uniform Hyperbolicity. 
    The authors also would like to thank Ra\'{u}l Ures for comments and suggestions on the preliminary version of this paper, 
    and anonymous referees for comments and suggestions which are helpful to improve this paper.
    This work was supported by National Key R\&D Program of China (2021YFA1001900). 
    R. Gu was partially supported by the New Cornerstone Science Foundation through the New Cornerstone Investigator Program, 
    and M. Xia was partially supported by NSFC (Nos. 12161141002, 12501238) and
    the Fundamental Research Funds for the Central Universities (DUT24RC(3)112).

\bibliographystyle{plain}
\bibliography{Bib-DE2024}

\end{CJK}
\end{document}